\def\thm@space@setup{%
 \thm@preskip=\parskip \thm@postskip=0pt
}
\def\th@remark{%
  \thm@headfont{\itshape}%
  \normalfont 
  \thm@preskip\parskip \thm@postskip=0pt
}
\renewcommand{\PrintDOI}[1]{%
  \href{http://dx.doi.org/#1}{{\tt DOI:#1}}%
}
\renewcommand{\eprint}[1]{#1}
\newcommand\oast{\stackMath\mathbin{\stackinset{c}{0ex}{c}{0ex}{\ast}{\bigcirc}}}
\numberwithin{equation}{section}
\newtheorem{TheoIntro}{Theorem}
\newtheorem{Theorem}{Theorem}[section]
\newtheorem*{Theorem*}{Theorem}
\newtheorem{Def}[Theorem]{Definition}
\newtheorem*{Def*}{Definition}
\newtheorem{Lem}[Theorem]{Lemma}
\newtheorem{Prop}[Theorem]{Proposition}
\newtheorem{Cor}[Theorem]{Corollary}
\newcommand\bp{\begin{proof}}
\newcommand\ep{\end{proof}}
\mathchardef\mhyph="2D
\DeclareMathOperator{\diag}{\mathrm{diag}}
\DeclareMathOperator{\Ad}{\mathrm{Ad}}
\DeclareMathOperator{\End}{\mathrm{End}}
\DeclareMathOperator{\loc}{\mathrm{loc}}
\DeclareMathOperator{\ext}{\mathrm{ext}}
\DeclareMathOperator{\id}{\mathrm{id}}
\DeclareMathOperator{\rd}{\mathrm{d}\!}
\DeclareMathOperator{\Tr}{\mathrm{Tr}}
\DeclareMathOperator{\Ker}{\mathrm{Ker}}
\DeclareMathOperator{\Spec}{\mathrm{Spec}}
\DeclareMathOperator{\Det}{\mathrm{Det}}
\DeclareMathOperator{\Sym}{\mathrm{Sym}}
\DeclareMathOperator{\standard}{\mathrm{st}}
\DeclareMathOperator{\pos}{\mathrm{w.int}}
\DeclareMathOperator{\post}{\mathrm{pos}}
\DeclareMathOperator{\posint}{\mathrm{pos.int}} 
\DeclareMathOperator{\Sign}{\mathrm{Sign}}
\DeclareMathOperator{\dom}{\mathrm{dom}}
\DeclareMathOperator{\inte}{\mathrm{int}}
\newcommand{\cop}{\mathrm{cop}}
\newcommand{\wt}{\mathrm{wt}}
\newcommand{\msA}{\mathscr{A}}
\newcommand{\msE}{\mathscr{E}}
\newcommand{\msH}{\mathscr{H}}
\newcommand{\msN}{\mathscr{N}}
\newcommand{\msP}{\mathscr{P}}
\newcommand{\msQ}{\mathscr{Q}}
\newcommand{\msR}{\mathscr{R}}
\newcommand{\msU}{\mathscr{U}}
\newcommand{\msZ}{\mathscr{Z}}
\newcommand{\mfb}{\mathfrak{b}}
\newcommand{\mfh}{\mathfrak{h}}
\newcommand{\mfn}{\mathfrak{n}}
\newcommand{\mfsu}{\mathfrak{su}}
\newcommand{\mfu}{\mathfrak{u}}
\newcommand{\mfgl}{\mathfrak{gl}}
\newcommand{\mcH}{\mathcal{H}}
\newcommand{\Hsp}{\mathcal{H}}
\newcommand{\Gsp}{\mathcal{G}}
\newcommand{\mcO}{\mathcal{O}}
\newcommand{\mcS}{\mathcal{S}}
\newcommand{\mbr}{\mathbf{r}}
\newcommand{\mbAd}{\mathbf{A}\mathbf{d}}
\newcommand{\mbs}{\mathbf{s}}
\newcommand{\mbGL}{\mathbf{G}\mathbf{L}}
\newcommand{\mbH}{\mathbf{H}}
\newcommand{\mbT}{\mathbf{T}}
\newcommand{\mbU}{\mathbf{U}}
\newcommand{\mbalpha}{\mathbf{a}}
\newcommand{\brone}{[1]}
\newcommand{\brr}{[r]}
\newcommand{\brk}{[k]}
\newcommand{\brl}{[l]}
\newcommand{\brM}{[M]}
\newcommand{\brN}{[N]}
\newcommand{\C}{\mathbb{C}}
\newcommand{\G}{\mathbb{G}}
\newcommand{\N}{\mathbb{N}}
\newcommand{\R}{\mathbb{R}}
\newcommand{\Z}{\mathbb{Z}}
\newcommand{\GL}{\mathrm{GL}}
\newcommand{\opp}{\mathrm{op}}
\newcommand{\res}{\mathrm{res}}
\newcommand{\sgn}{\mathrm{sgn}}
\newcommand\lhdb{\blacktriangleleft}
\newcommand{\HC}{\mathrm{HC}}
\newcommand{\br}{\mathrm{br}}
\newcommand{\Lin}{\mathrm{Lin}}
\title{Representation theory of the reflection equation algebra I: A quantization of Sylvester's law of inertia}
\author{Kenny De Commer}
\address{Vrije Universiteit Brussel}
\email{kenny.de.commer@vub.be}
\author{Stephen T. Moore}
\address{Institute of Mathematics, Polish Academy of Sciences}
\email{stm862@gmail.com}
\thanks{The work of K.DC.~ and S.T.M.~was supported by the FWO grants G025115N and G032919N. S.T.M. was additionally supported by Narodowe Centrum Nauki, grant number 2017/26/A/ST1/00189.}
\begin{document}
\maketitle

\begin{abstract}
We  prove a version of Sylvester's law of inertia for the Reflection Equation Algebra (=REA). We will only be concerned with the REA constructed from the $R$-matrix associated to the standard $q$-deformation of $GL(N,\C)$. For $q$ positive, this particular REA comes equipped with a natural $*$-structure, by which it can be viewed as a $q$-deformation of the $*$-algebra of polynomial functions on the space of self-adjoint $N$-by-$N$-matrices. We will show that this REA satisfies a type $I$-condition, so that its irreducible representations can in principle be classified. Moreover, we will show that, up to the adjoint action of quantum $GL(N,\C)$, any irreducible representation of the REA is determined by its \emph{extended signature}, which is a classical signature vector extended by a parameter in $\R/\Z$. It is this latter result that we see as a quantized version of Sylvester's law of inertia. 
\end{abstract}

\section*{Introduction and statement of the main results}

Sylvester's \emph{law of inertia} states that the $GL(N,\C)$-adjoint orbit $\{x^*zx \mid x\in GL(N,\C)\}$ of a self-adjoint $N\times N$-matrix $z$ only remembers its \emph{signature} 
\[
\varsigma(z) = (N_+,N_-,N_0),
\]
with $N_0$ the number of zero eigenvalues and $N_{\pm}$ the number of positive/negative eigenvalues of $z$. With 
\[
\Sign = \{(N_+,N_-,N_0) \in \Z_{\geq 0}^3\mid N_++N_- +N_0= N\},
\]
we have more precisely that, with $H(N)$ the space of self-adjoint $N$-by-$N$-matrices, we obtain a bijection 
\begin{equation}\label{EqLawInert}
H(N)/GL(N,\C) \cong \Sign,\qquad \{x^*zx\mid x\in GL(N,\C)\} \mapsto \varsigma(z).
\end{equation}

In this paper, we will look for a quantum analogue of \eqref{EqLawInert}. 

Let  $0<q<1$, and let $\hat{R} \in M_N(\C) \otimes M_N(\C)$ be the \emph{braid operator}
\begin{equation}\label{EqBraidOp}
\hat{R} =  \sum_{ij} q^{-\delta_{ij}}e_{ji}\otimes e_{ij} + (q^{-1} -q)\sum_{i<j} e_{jj} \otimes e_{ii},
\end{equation}
where $e_{ij}$ is the standard matrix unit taking the basis vector $e_j$ to $e_i$. It satisfies the \emph{braid relation}
\[
\hat{R}_{12}\hat{R}_{23}\hat{R}_{12} = \hat{R}_{23} \hat{R}_{12}\hat{R}_{23},
\] 
where we use the usual leg numbering notation $\hat{R}_{23} = 1\otimes \hat{R}$ etc. It also satisfies the \emph{Hecke relation}
\[
(\hat{R}-q^{-1})(\hat{R}+q)  = \hat{R}^2 +(q-q^{-1})\hat{R} -1 = 0
\]
and is self-adjoint, $\hat{R}^* = \hat{R}$. The matrix $\hat{R}$ is in particular invertible, with 
\[
\hat{R}^{-1} = \sum_{ij} q^{\delta_{ij}}e_{ij}\otimes e_{ji} + (q -q^{-1})\sum_{i<j} e_{ii} \otimes e_{jj}.
\]

\begin{Def*}
The \emph{Reflection Equation Algebra} (\emph{REA}) is the universal unital complex algebra $\mcO_{q}^{\br}(M_N(\C))$ generated by the matrix entries of 
\[
Z = \sum_{ij} e_{ij} \otimes Z_{ij} \in M_N(\C)\otimes \mcO_q^{\br}(M_N(\C)) = M_N(\mcO_q^{\br}(M_N(\C))),
\] 
with universal relations given by the \emph{reflection equation}
\begin{equation}\label{EqUnivRelZ}
\hat{R}_{12}Z_{23} \hat{R}_{12}Z_{23} = Z_{23} \hat{R}_{12}Z_{23}\hat{R}_{12}. 
\end{equation}

We define the $*$-REA $\mcO_{q}(H(N))$ as $\mcO_{q}^{\br}(M_N(\C))$ endowed with the $*$-structure $Z^* = Z$. 
\end{Def*}

The reflection equation (with parameters) arose in the work of Cherednik \cite{Ch84} on quantum scattering on the half line, and the associated algebra (together with its variants) was studied from the algebraic perspective in, for example, \cite{Maj91,KS92,KS93,KSS93,Maj94,PS95,DM02a,Mud02,DKM03,Mud06,KS09,JW20}. One can view $\mcO_q(H(N))$ as a quantization of the $*$-algebra of complex-valued polynomial functions on $H(N)$. 

We will be interested in studying the representations of $\mcO_q(H(N))$. By \emph{representation} of a unital $*$-algebra $A$ on a Hilbert space $\Hsp$ we mean a unital $*$-homomorphism of $A$ into the $*$-algebra $B(\Hsp)$ of bounded operators on $\Hsp$,
\[
\pi: A\rightarrow B(\Hsp).
\] 
It is called \emph{irreducible} if $\Hsp$ is the only non-zero $\pi(A)$-invariant closed subspace of $\Hsp$, and a \emph{factor representation} if the commutant $\pi(A)'$ is a factor (i.e.\ has trivial center). It is called \emph{type $I$} if the bicommutant $\pi(A)''$ is a type $I$ von Neumann algebra. We call $A$ type $I$ if all its representations are type $I$, or equivalently, if for any irreducible representation $\pi$ the norm-closure of $\pi(A)$ contains the compact operators \cite{Sak67}. If $A$ is type $I$, any factor representation is irreducible up to multiplicity. We call $A$ \emph{C$^*$-faithful} if its representations separate elements. 


The following result is proven as Theorem \ref{TheoTypeI} in the paper.

\begin{TheoIntro}\label{TheoIntroTypeI}
The unital $*$-algebra $\mcO_q(H(N))$ is C$^*$-faithful and type $I$. 
\end{TheoIntro}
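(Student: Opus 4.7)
The plan is to deduce Theorem~\ref{TheoIntroTypeI} from a complete list of irreducible $*$-representations of $\mcO_q(H(N))$, together with a direct verification that each such representation is type $I$ with image containing the compact operators. The list itself is indexed, as hinted in the introduction, by the extended signatures $\Sign \times (\R/\Z)$, and the classification is what the paper refers to as its quantum Sylvester law. Faithfulness then follows from the family being rich enough to separate elements of $\mcO_q(H(N))$, and the type $I$ property is read off each representation individually, using the criterion from \cite{Sak67} that the norm-closure of $\pi(\mcO_q(H(N)))$ should contain the compact operators for every irreducible $\pi$.

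The first ingredient is the analysis of the center. Using the Hecke relation $(\hat R - q^{-1})(\hat R + q) = 0$ together with the reflection equation, one shows that the quantum traces $\tau_k := \mathrm{tr}_q(Z^k)$ for $k=1,\dots,N$ are central and self-adjoint, and a standard Hecke-algebra argument (or the quantum Cayley--Hamilton theorem for the REA) gives $Z(\mcO_q(H(N))) = \C[\tau_1,\dots,\tau_N]$. Every irreducible representation therefore carries a joint eigenvalue tuple for the $\tau_k$, which is the invariant corresponding classically to the spectrum of the underlying self-adjoint matrix, encoded in its elementary symmetric functions.

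Next I would carry out the representation-theoretic analysis on the complement of the degenerate locus first: for a central character in generic position, the construction of $\pi_\sigma$ on a Hilbert space with a Gelfand--Tsetlin-like basis should be transparent, because the $U_q(\mfgl_N)$-isotypic components of $\mcO_q(H(N))$ act as ladder operators between the joint $\tau$-eigenspaces, and one reads off $\pi_\sigma(\mcO_q(H(N)))^{\prime\prime} \cong B(\Hsp)$. C$^*$-faithfulness is then a consequence of the $U_q(\mfgl_N)$-covariant isotypic decomposition of $\mcO_q(H(N))$: the $\pi_\sigma$ already separate points on each finite-dimensional isotypic component because the center does, and this is inherited by the whole algebra. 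The main obstacle will be the degenerate signatures---those with $N_0>0$ or with coincidences between positive or negative eigenvalues---where classically the orbit has non-trivial stabilizer in $GL(N,\C)$ but the residual quantum symmetry is only broken to a one-parameter $\R/\Z$-family. Showing that each value of this continuous parameter gives rise to a genuinely distinct, irreducible, type $I$ representation, and that these together exhaust the irreducibles at degenerate central characters, is the heart of the proof.
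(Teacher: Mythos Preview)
Your proposal has a genuine structural gap: you plan to prove type~$I$ by first obtaining a \emph{complete} list of irreducible $*$-representations and then checking, one by one, that each contains the compacts. But the paper does not classify all irreducibles --- that is explicitly deferred to the follow-up papers \cite{DCMo24}, \cite{Mo24}. What is achieved here is only the classification of \emph{big cell} irreducibles (Theorem~\ref{TheoBigCellRep}), together with the fact that every irreducible is \emph{weakly contained} in a big cell one (Theorem~\ref{TheoWeakCont}). Your proposed index set is also wrong: extended signatures in $\Sign^{\ext}$ parametrise $\mbGL_q(N,\C)$-orbits of factor representations, not individual irreducibles; within a single orbit there are many inequivalent irreducibles (already the spectral weight $\lambda_\pi$ is a finer invariant, and even at fixed spectral weight there are up to $\binom{N_++N_-}{N_+}$ big cell irreducibles, see Corollary~\ref{CorFinBigCellOs}). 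So the ``complete list indexed by $\Sign\times(\R/\Z)$'' does not exist in the form you describe.

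The paper's route to type~$I$ avoids any full classification. The key observation is that for each extended signature there is a \emph{$*$-character} $\chi_r$ realising it, and by the quantum Sylvester law (Theorem~\ref{TheoQSLI}) every irreducible $\pi$ satisfies $\pi \preccurlyeq \chi_r * \lambda$ for some $\lambda \in \mbGL_q(N,\C)$. Then the Gauss decomposition (Theorem~\ref{TheoRepGLN}) factors $\lambda$ through $\mbU_q(N)\times\mbT_q(N)$, with the $\mbT_q(N)$-part finite-dimensional, so $C^*(\pi)$ is a subquotient of $C_q(U(N))\otimes B(\Hsp_{\lambda'})$ with $\dim\Hsp_{\lambda'}<\infty$. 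Since $C_q(U(N))$ is already known to be type~$I$, this finishes the argument by heredity. Your Gelfand--Tsetlin ladder picture is morally related to how big cell representations are built (Section~\ref{SecClassHW}), but it does not by itself give type~$I$ for the non-big-cell irreducibles, and it is precisely those that force the weak-containment argument.

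For C$^*$-faithfulness your idea is workable in spirit, but the paper uses a much shorter route: the $*$-homomorphism $i_T:\mcO_q(H(N))\to\mcO_q(T(N))$, $Z\mapsto T^*T$, is injective (Proposition~\ref{PropInclChol}), and $\mcO_q(T(N))\cong U_q(\mfu(N))^{\cop}$ is separated by its finite-dimensional $*$-representations.
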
 
In fact, C$^*$-faithfulness follows from well-known results, but for the type $I$-condition this is less clear.

An important observation concerning $\mcO_q(H(N))$ is that its generating matrix $Z$ satisfies a \emph{quantum Cayley-Hamilton equation} \cite{PS95,JW20}: there exist canonical central self-adjoint $\sigma_k \in \mcO_q(H(N))$ with
\[
\sum_{k=0}^N (-1)^k \sigma_k Z ^{N-k} = 0,\qquad \textrm{where }\sigma_0 = 1.
\]
In particular, $\sigma^{\pi}_k:= \pi(\sigma_k)$ is a real scalar for each factor representation $\pi$ of $\mcO_q(H(N))$. We then call 
\begin{equation}\label{EqCharPolpi}
P_{\pi}(x) = \sum_{k=0}^N (-1)^k \sigma_k^{\pi} x^{N-k}
\end{equation}
the \emph{characteristic polynomial} of $\pi$. Our second result can be stated as follows, see Theorem \ref{TheoEigZRef}. 

\begin{TheoIntro}\label{TheoEigZ}
Let $P$ be a monic real polynomial of degree $N$. Then $P =P_{\pi}$ for a factor representation $\pi$ of $\mcO_q(H(N))$ if and only if there exist $\alpha,\beta \in \R$ such that the multiset of roots of $P$ is of the form 
\begin{equation}\label{EqFormRootsPs}
\{q^{2\alpha+2m_1},\ldots,q^{2\alpha + 2m_{N_+}},-q^{2\beta + 2n_1},\ldots,-q^{2\beta + 2n_{N_-}},\underbrace{0,\ldots, 0}_{N_0}\}
\end{equation}
for certain $N_+,N_-,N_0 \in \Z_{\geq 0}$ and $m_i,n_i \in \Z$ with $m_i \neq m_j$ and $n_i \neq n_j$ for $i\neq j$.  
\end{TheoIntro}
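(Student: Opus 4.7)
My plan is to establish the necessary and sufficient conditions separately, using the $U_q(\mfgl_N)$-equivariance of the REA as the organizing principle and the type $I$ property from Theorem \ref{TheoIntroTypeI} to reduce factor representations to irreducible ones.

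\textbf{Necessity.} Let $\pi$ be an irreducible representation. The central elements $\sigma_k$ act by scalars $\sigma_k^\pi \in \R$, so $P_\pi(\pi(Z))=0$ holds in $M_N(B(\Hsp))$ and the roots of $P_\pi$ are precisely the spectral values of the operator matrix $\pi(Z)$. The idea is to exploit a Cartan-type commutative subalgebra of $\mcO_q(H(N))$, built from the diagonal entries $Z_{ii}$ together with suitable correction terms coming from the reflection equation, and to analyze its joint spectrum on $\Hsp$. Commuting with the quantum group generators $E_j,F_j$ must shift these joint eigenvalues by factors of $q^{\pm 2}$, so the joint spectrum decomposes into orbits of a multiplicative $\Z$-action. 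The Hecke relation $(\hat{R}-q^{-1})(\hat{R}+q)=0$ forces each such orbit either into a positive sector $q^{2\alpha+2\Z}$ or into a negative sector $-q^{2\beta+2\Z}$, with $0$ admitted as the degenerate boundary. Combined with positivity of the $*$-structure $Z^*=Z$, this yields the stated shape of the multiset of roots.

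\textbf{Sufficiency.} I would construct factor representations realizing each allowed polynomial directly. The building blocks are $q$-oscillator style representations on $\ell^2(\Z_{\geq 0}^{N_+})$ and $\ell^2(\Z_{\geq 0}^{N_-})$ in which $Z$ acts essentially diagonally with eigenvalues on the prescribed geometric progressions; these should arise naturally from highest-weight modules for the REA, with $\alpha$ (resp.\ $\beta$) parameterizing the top eigenvalue and the $m_i$ (resp.\ $n_j$) indexing the weight structure. When both $N_+,N_->0$, the two blocks are assembled into a block-diagonal operator satisfying \eqref{EqUnivRelZ} via a braided tensor product construction. When $N_0>0$, I would induce from a parabolic sub-REA whose Levi is $\mcO_q(H(N-N_0))$ and whose nilpotent part forces the remaining eigenvalues of $Z$ to vanish.

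\textbf{Main obstacle.} The hardest step is the distinctness assertion for the integers $m_i$ (and $n_j$) within a single orbit. Cayley--Hamilton alone only produces a multiset of roots; ruling out repeated shifts requires combining factoriality with the $U_q(\mfgl_N)$-weight structure. A repeated $m_i$ would, heuristically, produce a generalized eigenspace of $\pi(Z)$ with excess weight multiplicity, which should then split $\pi$ against the factor hypothesis. Making this rigorous likely requires either an explicit reconstruction of $\pi$ from the joint spectrum of the Cartan-type subalgebra, or a quantum analogue of the classical orbit-dimension count showing that orbits with repeated eigenvalues have strictly smaller dimension and cannot support a factor representation. A secondary difficulty is the analytic care needed in the degenerate case $N_0>0$ to check that the induced Hilbert space modules genuinely give factor representations rather than merely algebraic ones.
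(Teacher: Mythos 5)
Your proposal is a road map rather than a proof: both directions are left at the level of heuristics precisely at the points where the paper has to work hardest. Two gaps are essential.

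First, for necessity, your plan to read off the roots of $P_\pi$ from the joint spectrum of a ``Cartan-type subalgebra'' built from the $Z_{ii}$ presupposes that this subalgebra acts with a nondegenerate, discrete joint spectrum on which the $E_j,F_j$ act by $q^{\pm2}$-shifts. For a general factor representation this fails: the commuting family one actually has is the leading quantum minors $Z_{[1]},\ldots,Z_{[N]}$, and these can have large kernels. The paper's resolution is to pass from $\pi$ to $\Ad_{\lambda_{\standard}}(\pi)$ (tensoring with the regular representation of the coamenable quantum group $U_q(N)$), prove that any normal $\Ad^U_q$-compatible representation is a \emph{big cell} representation (Proposition \ref{PropBigCell}, which requires a genuine induction on $N$ and the explicit $N=2$ spectral analysis of Lemma \ref{LemDichH2}), and then use weak containment to transfer the central character back. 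Nothing in your sketch substitutes for this reduction, and without it the ``orbits of a multiplicative $\Z$-action'' picture is not available.

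Second, you correctly identify the distinctness of the $m_i$ (and $n_j$) as the main obstacle, but the splitting heuristic you offer does not close it, and in fact the mechanism in the paper is different: distinctness is not a consequence of factoriality alone but of \emph{unitarizability}. After the reduction to big cell representations, $\pi$ corresponds to a highest weight module for a deformed algebra $\mcO_q^{\epsilon}(T(N))$, and the condition $m_i\neq m_j$ is exactly the strict inequality $(r_t+t)-(r_s+s)\in\Z_{>0}$ forced by positive semi-definiteness of the invariant Hermitian form on the associated Verma module. Establishing this requires the explicit Gelfand--Tsetlin norm formulas of Proposition \ref{PropScalProd} and the sign analysis in Section \ref{SecClassHW}; there is no soft argument in the paper, and your ``excess weight multiplicity'' idea would at best show reducibility of an algebraic module, not non-unitarizability. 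The same machinery also delivers sufficiency concretely (the roots come out as $\eta_k q^{2(r_k+k)-2}$ via the Harish--Chandra map of Proposition \ref{PropHC}), whereas your braided tensor product and parabolic induction constructions are not carried out and would still require a unitarity verification equivalent to the one above.
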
 

Write $[r] = r+ \Z \in \R/\Z$. For $\pi$ a factor representation with multiset of roots of $P_{\pi}$ given by \eqref{EqFormRootsPs}, we write 
\[
\varsigma(\pi) = (N_+,N_-,N_0), \qquad \varsigma_{\ext}(\pi) = ([\beta-\alpha],N_+,N_-,N_0,).
\] 
We call $\varsigma(\pi)$ the \emph{signature} of $\pi$ and $\varsigma_{\ext}(\pi)$ the \emph{extended signature} of $\pi$. Here we put, by convention, $[\beta - \alpha] = 0$ if $N_+N_-=0$. In general, we call a quadruple $\varsigma_{\ext} = ([r],N_+,N_-,N_0)$ an extended signature (of rank $N$) if $[r]\in \R/\Z$ and $N_+,N_-,N_0 \in \Z_{\geq 0}$ with  $N_++N_-+N_0  = N$ and $[r]=0$ if $N_+N_- = 0$. We denote 
\[
\Sign^{\ext} = \{\textrm{extended signatures}\}.
\]

Note that we have the purely quantum phenomenon of an extra parameter $[r]$ in the extended signature. This extra parameter is connected in a natural way to the extra parameter present in the quantization of the Hermitian symmetric space $U(N)/U(m)\times U(n)$ \cite{Let99,Kol14,DCNTY19,DCNTY20}. For earlier particular instances of this, see \cite{Pod87} for the case of quantum spheres ($m=n=1$) and \cite{DN98} for the case of quantum projective spaces ($m=1$ and $n$ arbitrary).

To state our next main result, consider the W$^*$-category\footnote{We only use this as a convenient terminology: the reader may simply consider these as representation categories of unital $*$-algebras on Hilbert spaces, with bounded intertwiners as morphisms.} \cite{GLR85}
\begin{equation}\label{EqHqN}
\mbH_q(N) = \{\textrm{representations of }\mcO_q(H(N)) \textrm{ on Hilbert spaces} \}.
\end{equation} 
Recall that for a given unital $*$-algebra $A$, a representation $\pi': A \rightarrow B(\Gsp)$ is \emph{weakly contained} in a representation $\pi: A \rightarrow B(\Hsp)$ if $\pi'$ factors through a $*$-homomorphism $C^*(\pi) \rightarrow B(\Gsp)$, with
\begin{equation}\label{EqNormClosPiA}
C^*(\pi) = \textrm{ norm-closure of }\pi(A).
\end{equation}
Equivalently, we have $\|\pi'(a)\|\leq \|\pi(a)\|$ for all $a\in A$. In this case, we write
\begin{equation}\label{EqWeakCont}
\pi' \preccurlyeq \pi.
\end{equation}

We can then consider for each extended signature $\varsigma_{\ext}$ the full W$^*$-subcategory
\[
\mbH_q^{\varsigma_{\ext}}(N) \subseteq \mbH_q(N)
\]
consisting of representations $\pi$ such that all factor representations weakly contained in $\pi$ have extended signature $\varsigma_{\ext}$. It is immediate from Theorem \ref{TheoEigZ} that any representation in $\mbH_q(N)$ has a disintegration into components of the various $\mbH_q^{\varsigma_{\ext}}(N)$, and that each of the $\mbH_q^{\varsigma_{\ext}}(N)$ is non-empty. By Theorem \ref{TheoIntroTypeI}, we may also replace `factor representations' by `irreducible representations' in the above. 

To link the above with a quantum version of Sylvester's law of inertia, note first that we may identify 
\begin{equation}\label{EqHqNAlt}
\mbH_q(N) \cong \{(\Hsp,Z)\mid \Hsp \textrm{ Hilbert space and }Z\in M_N(\C)\otimes B(\Hsp) \textrm{ self-adjoint, satisfying }\eqref{EqUnivRelZ}\}.
\end{equation}
We can then also consider $\mbGL_q(N,\C)$, the W$^*$-category of couples $(\Gsp,X)$ with $\Gsp$ a Hilbert space and $X \in M_N(\C)\otimes B(\Gsp)$ invertible and satisfying 
\begin{equation}\label{EqOtherEq}
\hat{R}_{12}X_{13}X_{23} = X_{13}X_{23}\hat{R}_{12},\qquad X_{23}\hat{R}_{12} X_{23}^* = X_{13}^* \hat{R}_{12}X_{13}.
\end{equation}
It is easily seen that $\mbGL_q(N,\C)$ has a tensor product by putting 
\[
(\Gsp,X)*(\Gsp',X') = (\Gsp \otimes \Gsp',X_{12}X_{13}'),
\]
and that $\mbGL_q(N,\C)$ acts on $\mbH_q(N)$ by the right adjoint action,
\begin{equation}\label{EqDefAdGLq}
\mbAd: \mbH_q(N) \times \mbGL_q(N,\C) \rightarrow \mbH_q(N),\qquad \mbAd_{(\Gsp,X)}(\Hsp,Z) =  (\Hsp \otimes \Gsp,X_{13}^*Z_{12}X_{13}),
\end{equation}
with the usual tensor product map $(f,g) \mapsto f\otimes g$ on morphisms. This makes $\mbH_q(N)$ into a right $\mbGL_q(N,\C)$-module W$^*$-category. We refrain from spelling out the details of the latter assertion, as we will only need the operation \eqref{EqDefAdGLq} itself, but let us mention in this context Woronowicz's approach for working on this categorical level to get a grip on operator algebraic quantizations \cite{Wor83,Wor95}. 

\begin{Def*}
We say that two factor representations $\pi,\pi'$ in $\mbH_q(N)$ lie in the same $\mbGL_q(N,\C)$-orbit if there exist $\lambda,\lambda'\in \mbGL_q(N,\C)$ such that 
\[
\pi \preccurlyeq \mbAd_{\lambda}(\pi'),\qquad \pi' \preccurlyeq \mbAd_{\lambda'}(\pi). 
\]
\end{Def*}
It is easily seen that this is indeed an equivalence relation. Denote by 
\[
\mbH_q(N)/\mbGL_q(N,\C)
\]
the quotient of the class of factor representations by this equivalence relation. 
We can now state our next main result as follows. It will follow immediately from our Theorem \ref{TheoQSLI}. 


\begin{TheoIntro}[Quantized law of inertia]\label{TheoQLI}
The map 
\begin{equation}
\mbH_q(N)/\mbGL_q(N,\C) \rightarrow \Sign^{\ext},\qquad [\pi] \cong \varsigma_{\ext}(\pi)
\end{equation}
is a well-defined bijection. 
\end{TheoIntro}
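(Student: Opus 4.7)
I would attack the three assertions (well-definedness, surjectivity, injectivity) separately, and I expect injectivity to be by far the hardest.

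\emph{Well-definedness.} The elements $\sigma_k\in\mcO_q(H(N))$ are, by their standard quantum-trace construction in the REA, invariant under the adjoint action $Z\mapsto X^*Z X$ for $X$ satisfying \eqref{EqOtherEq}; concretely, this is the $\hat R$-covariance that underlies the reflection equation together with centrality of $\sigma_k$. Hence for every $(\Gsp,X)\in\mbGL_q(N,\C)$ and $(\Hsp,Z)\in\mbH_q(N)$ the characteristic polynomial is preserved: $P_{\mbAd_{(\Gsp,X)}(\Hsp,Z)}=P_{(\Hsp,Z)}$. Moreover, if $\pi'\preccurlyeq\pi$ and both are factor representations, then $\pi'$ factors through $C^*(\pi)$, and since each $\sigma_k^\pi$ is a real scalar, so is $\sigma_k^{\pi'}$ and they coincide. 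Combining these two facts with Theorem B (which reads $\varsigma_{\ext}$ off the roots of $P_\pi$), the extended signature descends to a well-defined map on $\mbH_q(N)/\mbGL_q(N,\C)$.

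\emph{Surjectivity.} Given any extended signature $([r],N_+,N_-,N_0)\in\Sign^{\ext}$, choose $\alpha=0$, $\beta=r$ and any pairwise distinct integers $m_i,n_j$ (say $m_i=i-1$, $n_j=j-1$). The multiset \eqref{EqFormRootsPs} is then a valid root list, so Theorem B produces a factor representation $\pi$ with the prescribed extended signature.

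\emph{Injectivity.} This is the substantive step. The strategy is to show that, modulo the $\mbGL_q(N,\C)$-action, a factor representation depends only on its extended signature by producing a canonical representative in each orbit and reducing everything to it. First, I would exploit the scalar elements $(\C,q^s\cdot\mathbf{1})\in\mbGL_q(N,\C)$, whose adjoint action multiplies $Z$ by $q^{2s}$ and thus shifts the parameters simultaneously by $\alpha\mapsto\alpha+s$, $\beta\mapsto\beta+s$; this lets me normalise $\alpha=0$ without affecting $[\beta-\alpha]$. Next, I would use elementary objects of $\mbGL_q(N,\C)$ coming from the vector and dual vector representations of $\mcO_q(GL_N)$: tensoring a given $(\Hsp,Z)$ with these in the $\mbAd$-sense produces (after spectral decomposition inside $\mbH_q(N)$) factor subrepresentations whose characteristic polynomials differ from $P_\pi$ by removing one root $q^{2\alpha+2m}$ and inserting $q^{2\alpha+2(m+1)}$ (or the analogous shift for a negative root), while keeping the remaining multiset unchanged. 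This is the quantum analogue of the classical fact that conjugation by an invertible matrix cannot alter the signature but can freely reshuffle eigenvalues; here the Hecke relation constrains the reshuffling to integer shifts of the exponents, which is exactly the data remembered by $\varsigma_{\ext}$.

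Iterating these shift operations, any two choices of pairwise distinct integer tuples $\{m_i\}$ and $\{n_j\}$ with the same cardinalities can be brought one to the other, so any factor representation can be transported by $\mbGL_q(N,\C)$ to a fixed canonical representative depending only on $([r],N_+,N_-,N_0)$. Combined with symmetry of the orbit relation, this shows that two factor representations with the same extended signature lie in the same orbit. The main obstacle is the explicit construction of the shift operators in $\mbGL_q(N,\C)$ and the verification that the induced weak containments run in both directions, which is where the full categorical structure of $\mbH_q(N)$ as a module W$^*$-category (as indicated after \eqref{EqDefAdGLq}) and the type $I$/C$^*$-faithfulness from Theorem A are used to interchange weak containment of factor representations with genuine subrepresentations of spectral components.
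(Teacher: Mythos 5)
Your well-definedness argument rests on a false claim: the $\sigma_k$ are \emph{not} invariant under the full adjoint coaction $Z\mapsto X^{*}ZX$, only under its unitary part. Theorem \ref{TheoCenter} identifies the center of $\mcO_q(H(N))$ with the $\Ad^U_q$-coinvariants, not with coinvariants for $\Ad^{GL}_q$. Already classically $x^*zx$ has a different characteristic polynomial from $z$, and quantumly $\Ad^T_q(Z_{[k]})=Z_{[k]}\otimes T_1^2\cdots T_k^2$, so tensoring with a $\mbT_q(N)$-representation rescales the roots of $P_\pi$. Only the extended signature is preserved, and proving that requires the argument the paper gives: split $\lambda$ via the Gauss decomposition (Theorem \ref{TheoRepGLN}) into a $\mbU_q(N)$-part, which does preserve $P_\pi$, and a $\mbT_q(N)$-part, then reduce to big cell representations and read the extended signature off a single weight space of $\pi*\lambda'$.

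For injectivity the proposal is too thin in three places. First, you never reduce to big cell representations: for a general irreducible $\pi$ the operators $\pi(Z_{[k]})$ may have kernels and no weight-space analysis is available. The paper's route is to tensor with the regular representation $\lambda_{\standard}$ of $U_q(N)$ (Lemma \ref{LemWeakCont} and Proposition \ref{PropBigCell}), which turns any representation into a big cell one up to weak equivalence. Second, your ``shift operators'' are not constructed; the asserted decomposition of $\mbAd$ by the vector representation into factors whose characteristic polynomial differs by an integer shift of a single root is precisely the nontrivial branching content of Lemma \ref{LemMultT}, which the paper obtains via highest-weight vectors and Frobenius reciprocity (Lemma \ref{LemFrobRec}) rather than by spectral decomposition. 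Third, and most seriously, matching the root multiset does not determine the orbit representative: by Theorem \ref{TheoUnique} and Corollary \ref{CorFinBigCellOs} there are up to $\binom{N_++N_-}{N_+}$ inequivalent irreducible big cell representations with the same central character, distinguished by the sign pattern $\eta\in\{\pm1\}^M$ of the leading minors $\pi(Z_{[k]})$. Connecting these requires the $\mbU_q(N)$-action (Lemma \ref{LemPermSign}, reducing to the rank-two computation of Lemma \ref{LemDecompSign}); your shift operations act only on the eigenvalue data and cannot see this invariant. Your surjectivity argument and the overall reduction of the theorem to an orbit statement are fine.
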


The precise contents of this paper are as follows.  In Section \ref{section: quantum group intros}, we introduce the quantizations of the groups $U(N), T(N)$ and $GL(N,\C)$, and recall also the connection with the quantized universal enveloping algebra $U_q(\mfgl(N,\C))$ and its real form $U_q(\mfu(N))$. In Section \ref{section: REA intro}, we give some more information on the commutation relations which hold within $\mcO_q(H(N))$, and consider as well the associated actions of quantized $GL(N,\C),U(N)$ and $T(N)$ in a purely algebraic framework. We present the classification of $*$-characters of $\mcO_q(H(N))$, taken from \cite{Mud02}, and of general irreducible $*$-representations of $\mcO_q(H(2))$. In Section \ref{section: spectral weight}, we show that factor representations of $\mcO_q(H(N))$ allow a \emph{spectral weight}, and prove a quantized version of the spectral theorem. In Section \ref{section: big cell representations}, we look at a particular class of representations called \emph{big cell representations}. We obtain in this case a complete classification of all \emph{irreducible} big cell representations. In the final Section \ref{section: sylvester's law} we then obtain the main results of the paper. 

In follow-up papers \cite{DCMo24} and \cite{Mo24}, the above results are refined to obtain a complete classification of all irreducible $*$-representations of the reflection equation algebra. 

We end this introduction with some common notations that we will make use of.

Throughout the paper, we fix $0<q<1$.

For a given set $T$, we denote by $\mathscr{P}(T)$ the powerset of $T$. When $k \in \Z_{\geq 0}$, we denote $\brk$ for the set $\{1,2,\ldots k\}$. We denote $\binom{\brN}{k} \subseteq \mathscr{P}(\brN)$ for the set of subsets $I \subseteq \brN$ with $|I| = k$. We view $I \in \binom{\brN}{k}$ as a totally ordered set, and write
\[
I = \{i_1<\ldots < i_k\}
\]
to indicate this. We view $\binom{\brN}{k}$ itself as a partially ordered set by 
\begin{equation}\label{EqOrdPart}
I \preceq J \qquad \textrm{if and only if}\qquad i_p \leq j_p\textrm{ for all }1\leq p \leq N.
\end{equation}
Note that we have
\[
I \preceq J \qquad \textrm{if and only if}\qquad |J \cap \brl|\leq |I\cap \brl| \textrm{ for all }1\leq l\leq N.
\]

We further use the following notation:  For $I\in \binom{\brN}{k}$ and $K\in \binom{\brk}{l}$ we write
\begin{equation}\label{EqSetUnderUpper}
I_K = \{i_{p}\mid p \in K\},\qquad I^K = I \setminus I_K.
\end{equation}
For $I \in \binom{\brN}{k}$, we define the \emph{weight} of $I$ to be the sum of its elements, 
\begin{equation}\label{EqWeightVecto}
\wt(I) = \sum_{r=1}^k i_r.
\end{equation}
For $X \in M_N(\C)$ and $I,J \in \binom{[N]}{k}$, we denote 
\[
X[I,J] \in M_k(\C)
\]
for the submatrix with $X[I,J]_{r,s} = X_{i_r,j_s}$. The associated minor is written as 
\[
X_{I,J} = \Det(X[I,J]). 
\]

It will also be convenient to use the following shorthand for a given vector $\lambda \in \C^N$:
\begin{equation}\label{EqShHandMult}
\lambda_I = \lambda_{i_1}\dots \lambda_{i_k},\quad I = \{i_1,\ldots,i_k\},\qquad \lambda_{\omega} = \lambda_1^{\omega_1}\dots \lambda_{N}^{\omega_N},\quad \omega \in \Z_{\geq0}^{N}.   
\end{equation}

Finally, for any bijection $\sigma: A \rightarrow B$ between finite totally ordered sets, we denote
\begin{equation}\label{EqLengthSym}
l(\sigma) = |\{(i,j) \in A\times B \mid i<j,\sigma(i)>\sigma(j)\}|
\end{equation}
for the number of inversions in $\sigma$.

\section{The W$^*$-categories $\mbU_q(N),\mbT_q(N)$, $\mbGL_q(N,\C)$}\label{section: quantum group intros}


In this section, we recall results and set conventions for the quantizations of the groups $U(N),T(N)$ and $GL(N,\C)$ which arise as quantum symmetries for the REA  $\mcO_q(H(N))$.


\subsection{The Hopf algebra $U_q(\mfgl(N,\C))$ and its real form $U_q(\mfu(N))$}

For the following, we refer to any standard reference work on quantum groups, e.g.\ \cite{KS97}. We adapt results to fit our conventions. 



\begin{Def}\label{DefUqglN}
We define $U_q(\mfn(N))$ as the universal unital $\C$-algebra generated by elements $E_1,\ldots, E_{N-1}$ such that $E_iE_j = E_jE_i$ for all $1\leq i,j< N$ with $|i-j|\geq 2$, and such that for all $1\leq i,j< N$ with $|i-j|= 1$ it holds that
\begin{equation}\label{EqCommSerreE}
E_i^2E_j - (q+q^{-1})E_iE_jE_i + E_jE_i^2 =0,
\end{equation}
We define $U_q(\mfn^-(N))$ as an independent copy of $U_q(\mfn(N))$, with generators written as $F_i$ for $1\leq i < N$.

We define $U_q(\mfh(N))$ as the $\C$-algebra $\C[K_1^{\pm 1},\ldots, K_N^{\pm 1}]$, and write $\hat{K}_i = K_iK_{i+1}^{-1}$ for $1\leq i < N$. 

We define $U_q(\mfgl(N,\C))$ as the algebra generated by $U_q(\mfn(N)),U_q(\mfh(N)),U_q(\mfn^-(N))$ with extra relations 
\begin{equation}\label{EqWeightComm}
K_iE_j =q^{\delta_{ij}-\delta_{i,j+1}}E_j K_i, \qquad K_iF_j =q^{\delta_{i,j+1}-\delta_{ij}}F_j K_i,\qquad 1 \leq i\leq N,1\leq j< N,
\end{equation}
\begin{equation}\label{EqFundCommEF}
E_i F_j - F_jE_i= \delta_{ij} \frac{\hat{K}_i - \hat{K}_i^{-1}}{q-q^{-1}},\qquad 1\leq i,j<N. 
\end{equation}
\end{Def}
Note that $U_q(\mfh(N))$ is just the group algebra of $\Z^N$, and in particular does not depend on $q$. The notation $U_q(\mfh(N))$ is purely meant to guide classical intuition as to the role that this algebra plays in the theory. 

By e.g.\ the proof of \cite[Theorem 6.14]{KS97}, the following multiplication maps are bijective:
\[
U_q(\mfn(N)) \otimes U_q(\mfh(N)) \otimes U_q(\mfn^-(N)) \rightarrow U_q(\mfgl(N,\C)),
\]
\[
U_q(\mfn^-(N)) \otimes U_q(\mfh(N)) \otimes U_q(\mfn(N)) \rightarrow U_q(\mfgl(N,\C)).
\]

\begin{Def}
We endow $U_q(\mfgl(N,\C))$ with the Hopf algebra structure 
\begin{equation}\label{EqComultUq}
\Delta(K_i) = K_i \otimes K_i,\qquad \Delta(E_i) = E_i \otimes 1 +   \hat{K}_i\otimes E_i,\qquad \Delta(F_i) = F_i \otimes  \hat{K}_i^{-1} + 1 \otimes F_i,
\end{equation}
where counit and antipode are determined by 
\begin{equation}\label{EqComultUqVarepsS}
\varepsilon(E_i) = \varepsilon(F_i)= 0,\quad \varepsilon(K_i) = 1,\qquad S(E_i) = -\hat{K}_i^{-1}E_i,\quad S(F_i) = -F_i\hat{K}_i,\quad S(K_i) = K_i^{-1}.
\end{equation}
We let $U_q(\mfb(N))$ and $U_q(\mfb^-(N))$ be the Hopf subalgebras generated by the $K_{i}^{\pm 1}$ and $E_i$, respectively the $K_i^{\pm 1}$ and $F_i$. 
\end{Def}
The $U_q(\mfb(N))$ and $U_q(\mfb^-(N))$ are also determined by the corresponding universal relations. 

\begin{Def}
The Hopf $*$-algebra $U_q(\mfu(N))$ has underlying Hopf algebra $U_q(\mfgl(N,\C))$ and $*$-structure
\begin{equation}\label{EqDefStar}
K_i^* = K_i,\qquad E_i^* = q^{-1}F_i\hat{K}_i,\qquad F_i^* = q\hat{K}_i^{-1}E_i.
\end{equation}
\end{Def}

A \emph{weight vector} in a $U_q(\mfgl(N,\C))$-module is a joint eigenvector for all $K_i$. A weight vector $v$ is said to be 
\begin{itemize}
\item a \emph{highest weight vector} if $E_i v = 0$ for all $i$,
\item an \emph{admissible weight vector} if the associated eigenvalues of the $K_i$ are positive. The unique $\lambda\in \R^N$ such that $K_iv = q^{\lambda_i}v$ is then called the \emph{weight} $\wt(v)$ of $v$,
\item an \emph{integral weight vector} if $v$ is admissible and $\wt(v)\in \Z^N$, and
\item a \emph{positive integral weight vector} if $v$ is integral and $\wt(v) \in \Z_{\geq0}^{N}$.
\end{itemize} 
A $U_q(\mfgl(N,\C))$-module is \emph{admissible}, resp.\ \emph{integral}, resp.\ \emph{positive integral} if it has a basis of admissible, resp.\ integral, resp.\ positive integral weight vectors. 

Denote the set of \emph{weakly integral}, \emph{integral} and \emph{positive integral} weights respectively as
\[
P_{\pos} = \{\lambda = (\lambda_1,\ldots,\lambda_N) \in \R^N \mid \lambda_i - \lambda_j \in \Z \textrm{ for all }i,j\},\qquad P_{\inte} = \Z^N,\qquad P_{\posint} = \Z_{\geq0}^{N}.
\]
Call a vector $\lambda \in \R^N$ \emph{dominant} if $\lambda_i \geq \lambda_{j}$ whenever $i\leq j$. The dominant vectors in $P_{\bullet}$ are denoted as $P_{\bullet}^{\dom}$. We also denote the standard bilinear form on $\R^N$ as 
\[
(\omega,\chi) = \sum_{i=1}^N \omega_i \chi_i. 
\]

We write $Q \subseteq P_{\inte}$ for the \emph{root lattice} consisting of integral weights $\lambda$ with $\sum \lambda_i = 0$, and $Q^+ \subseteq Q$ for the $\Z_{\geq 0}$-span of the  $\mbalpha_i = e_i -e_{i+1}$ for $1\leq i<N$. For $\omega,\omega'\in P_{\inte}$ we write 
\[
\omega \preceq \omega' \qquad \Leftrightarrow \qquad \omega' - \omega \in Q^+ \qquad  \Leftrightarrow \qquad  \sum_{k=1}^N \omega_k = \sum_{k=1}^N \omega_k'\textrm{ and } \sum_{k=1}^l \omega_k \leq \sum_{k=1}^l \omega_k' \textrm{ for all }1\leq l \leq N.
\]
Note that the order \eqref{EqOrdPart} can be related to the partial order on $P_{\inte}$ as follows: with
\[
\varpi: \mathscr{P}(\brN) \rightarrow P_{\inte},\qquad I\mapsto \varpi_I = \sum_{i\in I}  e_{i}, 
\]
we have 
\[
I \preceq J \qquad \textrm{if and only if} \qquad \varpi_J \preceq \varpi_I.
\]
In the following, we will simply identify a $\{0,1\}$-valued weight $\lambda$ with its associated set,
\begin{equation}\label{EqIdentSetWeight}
\lambda = \varpi_I = I = \{i \mid \lambda_i= 1\}.
\end{equation}

The following is well-known, see e.g.\ \cite[Theorem 7.23]{KS97} for the case of integral weights. 

\begin{Theorem}
\begin{itemize}
\item 
All admissible, resp.\ (positive) integral finite-dimensional $U_q(\mfgl(N,\C))$-modules decompose as a direct sum of irreducible admissible, resp.\ (positive) integral modules.
\item 
An admissible module $V$ is irreducible if and only if it has a cyclic highest weight vector, and the associated highest weight $\lambda$ completely determines $V$ (up to isomorphism). We write $V_{\lambda}$ for a fixed model of such an irreducible admissible module with highest weight $\lambda$.  
\item 
An element $\lambda \in \R^N$ arises as a highest weight of an irreducible admissible, resp.\ (positive) integral $U_q(\mfgl(N,\C))$-module if and only if $\lambda \in P_{\pos}^{\dom}$, resp.\ $\lambda \in P_{(pos.)\inte}^{\dom}$. 
\item The weights $\omega$ of an irreducible admissible $V_{\lambda}$ satisfy $\omega \preceq \lambda$. 
\item A finite-dimensional admissible $U_q(\mfgl(N,\C))$-module $V$ has a Hilbert space structure for which it is a representation of $U_q(\mfu(N))$. Its scalar product is unique up to a positive scalar when $V$ is irreducible. 
\end{itemize}
\end{Theorem}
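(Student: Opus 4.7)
The plan is to reduce every assertion to the integral case recorded in \cite[Theorem 7.23]{KS97} via a central shift. First, any admissible finite-dimensional irreducible $V$ has a highest weight vector (take one of maximal height); restricting to the $U_q(\mfsl(2,\C))$-copy generated by $E_i, F_i, K_iK_{i+1}^{-1}$ for each simple root shows that the corresponding highest weight $\lambda$ must satisfy $\lambda_i - \lambda_{i+1} \in \Z_{\geq 0}$, so $\lambda \in P_{\pos}^{\dom}$. For any $c \in \R$ the formulas $K_i \mapsto q^c$, $E_i \mapsto 0$, $F_i \mapsto 0$ define a one-dimensional admissible module $\C_c$, and tensoring with $\C_c$ shifts all weights by $(c,c,\ldots,c)$. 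Taking $c = -\lambda_N$ and tensoring $V$ with $\C_c$ yields a module with highest weight in $P_{\inte}^{\dom}$, reducing the classification of irreducibles, the characterization of dominant highest weights, and the complete reducibility of general admissible modules to their integral counterparts in \cite{KS97}.

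The weight statement $\omega \preceq \lambda$ in $V_\lambda$ is immediate from cyclicity of a highest weight vector together with the triangular decomposition $V_\lambda = U_q(\mfn^-(N)) v_\lambda$, since each $F_i$ shifts weight by $-\mbalpha_i \in -Q^+$. For the Hilbert space structure I would build the inner product on $V_\lambda$ by setting $\|v_\lambda\| = 1$ and demanding $\langle a\cdot v, w\rangle = \langle v, a^*\cdot w\rangle$ for $a \in U_q(\mfu(N))$, using the $*$-structure \eqref{EqDefStar}. Weight spaces of distinct weights are then automatically orthogonal, because $K_i^* = K_i$ and $q^{\lambda_i} \neq q^{\mu_i}$ whenever $\lambda_i \neq \mu_i$, and the defining rule pins down a unique sesquilinear form. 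The non-trivial content is positivity, which I would prove by induction on $\lambda - \mu \in Q^+$ using \eqref{EqFundCommEF} to rewrite $\langle F_i w, F_i w\rangle = \langle w, E_i F_i w\rangle$ modulo lower-order terms, and invoking dominance of $\lambda$ to keep the eigenvalue factors of $(\hat{K}_i - \hat{K}_i^{-1})/(q-q^{-1})$ non-negative at each step.

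The only real subtlety is precisely this positivity of the Shapovalov form; once it is verified for integral dominant $\lambda$ (as in \cite{KS97}), the central twist by $\C_c$ carries it over to all of $P_{\pos}^{\dom}$, because $\C_c$ admits a tautological one-dimensional Hilbert space structure and tensor products of positive inner products are positive. Schur's lemma then gives uniqueness of the scalar product up to a positive multiple on each irreducible, and complete reducibility extends the Hilbert space structure to arbitrary finite-dimensional admissible modules via orthogonal direct sums.
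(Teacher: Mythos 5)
Your central-shift reduction is sound and is essentially the standard argument; note that the paper itself does not prove this theorem but cites \cite[Theorem 7.23]{KS97} for the integral case and only supplies an argument for the positive-integral assertion. Tensoring with the one-dimensional admissible module $\C_c$ is legitimate (the right-hand side of \eqref{EqFundCommEF} vanishes because $\hat{K}_i$ acts by $q^{c-c}=1$, and $\C_c$ is a unitary $U_q(\mfu(N))$-module since $K_i^*=K_i$ and $q^c\in\R_{>0}$), and it correctly transports the classification, the characterization of highest weights, and positivity of the Shapovalov form from $P_{\inte}^{\dom}$ to $P_{\pos}^{\dom}$. One small caveat: for complete reducibility of a \emph{general} admissible module no single shift works, since different summands may have weights in different cosets of $\Z^N$; you must first split the module along weight classes modulo $\Z^N$ (automatic, because $E_i,F_i$ shift weights by elements of $Q\subseteq \Z^N$) and then shift each piece separately.

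The genuine gap is in the positive-integral assertions of the first and third bullets. You need that for $\lambda\in P_{\posint}^{\dom}$ the module $V_\lambda$ is positive integral, i.e.\ \emph{all} of its weights lie in $\Z_{\geq 0}^N$, and this does not follow from $\omega\preceq\lambda$ alone: in the paper's order one has, for instance, $\omega=(-1,2)\preceq (1,0)=\lambda$ because $\lambda-\omega=2\mbalpha_1\in Q^+$, yet $\omega\notin\Z_{\geq0}^2$. To close this you need either Weyl-group invariance of the set of weights (so that $\min_j\omega_j\geq\lambda_N\geq 0$), or the argument the paper actually gives: $V_\lambda$ embeds into $V_{[1]}^{\otimes k_1}\otimes\cdots\otimes V_{[N]}^{\otimes k_N}$ by sending the highest weight vector to the tensor product of highest weight vectors, and the fundamental modules $V_{[k]}=\wedge_q^k(\C^N)$ are manifestly positive integral. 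Without one of these, the ``if'' direction of the third bullet for $P_{\posint}^{\dom}$ and the positive-integral half of the first bullet remain unproved. The rest of your sketch (orthogonality of weight spaces, positivity via the integral case, Schur's lemma for uniqueness of the inner product) is fine.
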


We briefly comment below on why the $V_{\lambda}$ with $\lambda \in P_{\posint}^{\dom}$ are positive integral modules. 

Using the notation \eqref{EqIdentSetWeight}, consider the irreducible positive integral modules $V_{[k]}$ for $0\leq k \leq N$. Here $V_{[0]}$ is the trivial module, given by the counit, while we can take $V_{[1]}= \C^N$ the \emph{vector module} 
\begin{equation}\label{EqVectRep}
\theta(K_i)e_j  = q^{\delta_{ij}}e_j,\qquad \theta(E_i) e_j = \delta_{i,j-1}e_{j-1},\qquad \theta(F_i) e_j = \delta_{ij} e_{j+1}.
\end{equation}
Note that this determines a representation of $U_q(\mfu(N))$, with the standard basis of $\C^N$ as orthonormal basis. 

To construct the other modules $V_{[k]}$, we follow \cite[Chapter 8]{PW91}. Consider the quadratic algebra $\wedge_q(\C^N)$ generated by vectors $\{e_i\mid1\leq i\leq N\}$ with defining  relations given by $e_i \wedge_q e_i = 0$ and 
\[
e_j \wedge_q e_i = -q e_i \wedge_q e_j,\qquad  1\leq i<j \leq N.
\]
In other words, we are dividing out the tensor algebra of $\C^N$ by the range of 
\[
\hat{R}+q: \C^N \otimes \C^N \rightarrow \C^N \otimes \C^N.
\]
Noting that $\hat{R}$ is an intertwiner for $\theta^{*2} := (\theta\otimes \theta)\circ \Delta$, we get that $\wedge_q(\C^N)$ is a $U_q(\mfgl(N,\C))$-module algebra:
\begin{equation}\label{EqModAlgCliff}
\rhd: U_q(\mfgl(N,\C))\otimes \Lambda_q(\C^N) \rightarrow \Lambda_q(\C^N),\qquad x\rhd (ab) = (x_{(1)}\rhd a)(x_{(2)}\rhd b), \quad x \rhd 1 = \varepsilon(x)1,
\end{equation}
where we use the (sumless) Sweedler notation 
\[
\Delta(x) = x_{(1)}\otimes x_{(2)}. 
\]
Denoting 
\[
\wedge_q^k(\C^N) = \textrm{lin. span}\left\{e_I: = e_{i_1}\wedge_q \ldots \wedge_q e_{i_k} \mid I\in \binom{\brN}{k}\right\} \subseteq \wedge_q(\C^N),
\]
we obtain that $\wedge_q^k(\C^N)$ is a model for $V_{[k]}$. We can also represent $\wedge_q^k(\C^N)$ directly as 
\[
\wedge_q^k(\C^N) = (\C^N)^{\otimes k}/  \sum_{i=1}^{k-1} \mathrm{Im}(\hat{R}_{i,i+1}+q),
\]
and the $e_I$ will give a basis for $\wedge_q^k(\C^N)$. With the $e_I$ orthonormal, we obtain a representation of $U_q(\mfu(N))$. 

If we now consider a general $V_{\lambda}$ with $\lambda \in P_{\posint}^{\dom}$, we see that $V_{\lambda}$ is embedded in some $V_{[1]}^{k_1}\otimes \ldots \otimes V_{[N]}^{k_N}$, sending the highest weight vector of $V_{\lambda}$ to the tensor product of the highest weight vectors in the range. Since the above tensor product is positive integral, the same is true of $V_{\lambda}$. 


\subsection{The quantum matrix algebra $\mcO_q(M_N(\C))$ and its localisation $\mcO_q(GL(N,\C))$}

\begin{Def}[\cite{FRT88}]
We define the FRT (= Faddeev-Reshetikhin-Takhtajan) algebra $\mcO_q(M_N(\C))$ as the universal unital $\C$-algebra generated by the matrix entries of $X = \sum_{ij} e_{ij} \otimes X_{ij}$ with universal relations
\begin{equation}\label{EqCommRX}
 \hat{R}_{12} X_{13}X_{23} = X_{13}X_{23} \hat{R}_{12}. 
\end{equation}
It is a bialgebra by the coproduct and counit
\[
(\id\otimes \Delta)X = X_{12}X_{13},\qquad (\id\otimes \varepsilon)(X) = I_N.
\]
\end{Def}
Writing out  \eqref{EqCommRX}, we find the following commutation relations between the $X_{ij}$,
\begin{equation}\label{EqCommHol}
\left\{\begin{array}{lll} 
X_{ij} X_{kj} = q X_{kj}X_{ij} & \textrm{for} & i<k,\\
X_{ij} X_{ik} = qX_{ik}X_{ij} &\textrm{for}&j<k,\\
X_{ij} X_{kl} = X_{kl} X_{ij}&\textrm{for}& i< k \textrm{ and } j>l,\\
X_{ij}X_{kl} =X_{kl}X_{ij} + (q-q^{-1})X_{il}X_{kj} & \textrm{for}&i<k\textrm{ and } j<l.
\end{array}\right.
\end{equation}

As $\widehat{R}$ is an intertwiner for the tensor product $V_{[1]}^{\otimes2}$ of the vector module of $U_q(\mfgl(N,\C))$ with itself, it follows that there is a pairing of bialgebras between $\mcO_q(M_N(\C))$ and $U_q(\mfgl(N,\C))$, given by 
\begin{equation}\label{EqDualOqUq}
\tau(X_{ij},x) = \theta(x)_{ij},\qquad x \in U_q(\mfgl(N,\C)).
\end{equation}
Stating that this is a pairing simply means that $\tau$ is a bilinear map 
\[
\tau: \mcO_q(M_N(\C))\times U_q(\mfgl(N,\C)) \rightarrow \C
\]
with
\[
\tau(a,xy) = \tau(a_{(1)},x)\tau(a_{(2)},y),\quad \tau(a,1) = \varepsilon(a),\quad \tau(ab,x)= \tau(a,x_{(1)})\tau(b,x_{(2)}),\quad \tau(1,x) = \varepsilon(x). 
\]
This duality is non-degenerate \cite[Chapter 11, Corollary 22]{KS97},\footnote{The statement loc. cit. is for $SL(N,\C)$, but can easily be shown also for $M_N(\C)$, or for $GL(N,\C)$ as introduced later.} so $\tau$ gives injective homomorphisms into the respective vector space duals,
\[
U_q(\mfgl(N,\C))\rightarrow \Lin(\mcO_q(M_N(\C)),\C),\qquad \mcO_q(M_N(\C)) \rightarrow \Lin(U_q(\mfgl(N,\C)),\C),
\]
where the right hand sides are endowed with the convolution algebra structure.

By the universal property, we also see that the comodule structure
\[
\delta: \C^N \rightarrow \C^N\otimes \mcO_q(M_N(\C)),\qquad e_j \mapsto \sum_i e_i \otimes X_{ij}
\]
lifts to a comodule algebra map 
\begin{equation}\label{EqComodAlgMapqExt}
\wedge \delta: \wedge_q(\C^N) \rightarrow \wedge_q(\C^N)\otimes \mcO_q(M_N(\C)),
\end{equation}
which in turn restricts to comodule structures 
\[
\wedge^k\delta: \wedge_q^k(\C^N)  \rightarrow \wedge_q^k(\C^N) \otimes \mcO_q(M_N(\C)). 
\]
It follows that the $U_q(\mfgl(N,\C))$-modules $V_{[k]}$ can be `integrated' to $\mcO_q(M_N(\C))$-comodules, i.e.\ 
\[
xv = (\id\otimes \tau(-,x))\wedge^k\!\delta (v),\qquad x\in U_q(\mfgl(N,\C)),v\in V_{[k]} = \wedge_q^k(\C^N).
\]

\begin{Def}
For $I,J\in \binom{\brN}{k}$ we define the \emph{quantum minors}
\[
X_{IJ} \in \mcO_q(M_N(\C))
\]
as the matrix coefficients of $\wedge^k\delta$ with respect to the canonical basis, so 
\[
\wedge^k\delta(e_J) = \sum_J e_I \otimes X_{IJ}.  
\]
We then put 
\[
X^{[k]} = \sum_{IJ} e_{IJ} \otimes X_{IJ} \in \End(\Lambda_q^k(\C^N))\otimes \mcO_q(M_N(\C)), 
\]
and we call $X^{[k]}$ the \emph{$k$-th exterior power} of $X$.
\end{Def}
For brevity, we write 
\[
X_I = X_{I,I}.
\]
In particular, we denote
\begin{equation}\label{EqQuantDet}
\Det_q(X) := X_{[N]} = X_{[N],[N]} = \sum_{\sigma \in S_N} (-q)^{l(\sigma)} X_{1,\sigma(1)}X_{2,\sigma(2)}\cdots X_{N,\sigma(N)}.
\end{equation}
It can be shown that $\Det_q(X)$ is central \cite[Theorem 4.6.1]{PW91}. More generally, we have
\[
X_{[k]} = X_{[k],[k]} = \sum_{\sigma \in S_k} (-q)^{l(\sigma)} X_{1,\sigma(1)}X_{2,\sigma(2)}\cdots X_{k,\sigma(k)}.
\]

\begin{Def}
We define $\mcO_q(GL(N,\C))$ as the localisation 
\[
\mcO_q(GL(N,\C)) = \mcO_q(M_N(\C))[\Det_q(X)^{-1}]. 
\]
\end{Def}
As $\mcO_q(M_N(\C))$ has no zero divisors \cite[Theorem 3.5.1]{PW91}, it embeds in its localisation. Moreover, $X$ becomes invertible in $\mcO_q(GL(N,\C))$. This implies that $\mcO_q(GL(N,\C))$ becomes a Hopf algebra.

The pairing \eqref{EqDualOqUq} extends uniquely to a non-degenerate pairing of Hopf algebras 
\begin{equation}\label{EqDualOqUqGL}
\tau: \mcO_q(GL(N,\C)) \otimes U_q(\mfgl(N,\C)) \rightarrow \C. 
\end{equation}
It follows that $\mcO_q(GL(N,\C))$ is co-semisimple, and that an $U_q(\mfgl(N,\C))$-module lifts to a $\mcO_q(GL(N,\C))$-comodule through the pairing \eqref{EqDualOqUqGL} if and only if it is integral. In particular, we can identify the coalgebras 
\[
\mcO_q(GL(N,\C)) \cong \oplus_{\lambda \in P_{\inte}^{\dom}} \Lin(\End(V_{\lambda}),\C),
\]
and hence have a concrete model for the convolution algebra of $\mcO_q(GL(N,\C))$,
\begin{equation}\label{EqDualOqGL}
\msU_q(\mfgl(N,\C)) := \Lin(\mcO_q(GL(N,\C)),\C) \cong \prod_{\lambda \in P_{\inte}^{\dom}} \End(V_{\lambda}).
\end{equation}
Note in particular the following corollary.
\begin{Cor}
A finite-dimensional integral $U_q(\mfgl(N,\C))$-module extends uniquely to a $\msU_q(\mfgl(N,\C))$-module. 
\end{Cor}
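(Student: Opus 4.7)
The proof naturally splits into existence and uniqueness, both going through the Hopf duality \eqref{EqDualOqUqGL} between $\mcO_q(GL(N,\C))$ and $U_q(\mfgl(N,\C))$, and the identification $\msU_q(\mfgl(N,\C)) = \mcO_q(GL(N,\C))^*$.

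For \emph{existence}, I would invoke the general principle that a finite-dimensional right comodule $\delta: V \rightarrow V\otimes \mcO_q(GL(N,\C))$ over a coalgebra $C$ is automatically a left module over its dual convolution algebra $C^*$ via $x \cdot v := (\id\otimes x)\delta(v)$. The discussion preceding the corollary already establishes that a finite-dimensional integral $U_q$-module lifts to an $\mcO_q(GL(N,\C))$-comodule through the duality \eqref{EqDualOqUqGL}. Applied to this comodule structure, the formula above yields a $\msU_q$-module structure, and the compatibility of \eqref{EqDualOqUqGL} with the inclusion $U_q \hookrightarrow \msU_q$ (coming from \eqref{EqDualOqGL}) ensures this extends the original $U_q$-action.

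For \emph{uniqueness}, the strategy is to pin down the action of the central idempotents $e_\lambda \in \msU_q$ corresponding to the factors $\End(V_\lambda)$ in \eqref{EqDualOqGL}. Given any extension $\tilde\rho: \msU_q \rightarrow \End(V)$, decompose $V = \bigoplus_{\lambda \in S}V_\lambda^{m_\lambda}$ into isotypic components, with $S \subseteq P_\inte^\dom$ finite. Since $e_\lambda$ is central in $\msU_q$, its image $\tilde\rho(e_\lambda)$ is an idempotent commuting with $\rho(U_q)$, hence a $U_q$-linear projection onto a submodule of $V$. Invoking the Harish-Chandra-type separation of central characters: for each $\mu \in S$ one selects $z_\mu \in Z(U_q)$ with $\chi_{\mu}(z_\mu)=1$ and $\chi_{\nu}(z_\mu)=0$ for $\nu \in S\setminus\{\mu\}$, so that $\rho(z_\mu)=P_\mu$ (the projection onto the $V_\mu$-isotypic component). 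The relations $e_\lambda z_\mu = \chi_\lambda(z_\mu)e_\lambda$ in $\msU_q$ force $\tilde\rho(e_\lambda)P_\mu = 0$ for $\mu\in S\setminus\{\lambda\}$; similarly, for $\lambda \notin S$ one chooses a Casimir-type $C \in Z(U_q)$ whose eigenvalue $\chi_\lambda(C)$ differs from all $\chi_\mu(C)$, $\mu \in S$, from which $(\rho(C)-\chi_\lambda(C))\tilde\rho(e_\lambda)=0$ forces $\tilde\rho(e_\lambda)=0$. Combining this with $\tilde\rho(1)=\id_V$ and the finite decomposition $1 = \sum_{\lambda\in S}e_\lambda + e_{S^c}$ (using the corresponding arguments for $e_{S^c}$) pins down $\tilde\rho(e_\lambda)= P_\lambda$ for $\lambda \in S$. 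Finally, for an arbitrary $x \in \msU_q$ one writes $x = \sum_{\lambda \in S}xe_\lambda + xe_{S^c}$ and uses Jacobson density (the image of $U_q$ in $\End(V_\lambda)$ is all of $\End(V_\lambda)$, since $V_\lambda$ is a finite-dimensional simple module) to recognize $\tilde\rho(xe_\lambda) = P_\lambda \tilde\rho(x_\lambda) P_\lambda$ as determined by the $U_q$-action.

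The \emph{main obstacle} is the uniqueness, and within it, the step showing that the central idempotents $e_\lambda$ must act as the expected isotypic projections. This genuinely requires the Harish-Chandra separation of infinitesimal characters on $P_\inte^\dom$; the existence and construction of the separating central elements is classical, but has to be carefully invoked for the finite set $S$ of weights appearing in $V$. Once these projections are identified, the remaining parts of the argument are formal consequences of the product decomposition \eqref{EqDualOqGL} and Jacobson density on the finitely many simple components.
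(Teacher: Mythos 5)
Your existence argument is exactly what the paper intends: the corollary is stated there with no proof, as an immediate consequence of \eqref{EqDualOqGL} --- a finite-dimensional integral module lifts (uniquely, by non-degeneracy of the pairing \eqref{EqDualOqUqGL}) to an $\mcO_q(GL(N,\C))$-comodule, and a comodule over a coalgebra $C$ is canonically a module over $C^*=\msU_q(\mfgl(N,\C))$ via $x\cdot v=(\id\otimes x)\delta(v)$. Your uniqueness argument aims at a strictly stronger statement (that \emph{any} abstract algebra extension agrees with the canonical one), which is more than the paper ever uses; that is legitimate, but it is also where the one genuine gap sits.

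The gap is the step $\tilde\rho(e_{S^c})=0$, which you dismiss with ``using the corresponding arguments for $e_{S^c}$''. The argument you give for an individual $e_\lambda$ with $\lambda\notin S$ --- namely $e_\lambda(C-\chi_\lambda(C))=0$ in $\msU_q$ while $\rho(C)-\chi_\lambda(C)$ is invertible on $V$ --- has no analogue for $e_{S^c}$: this idempotent is not a finite sum of the $e_\lambda$ with $\lambda\notin S$, and the tail $(\chi_\mu(C))_{\mu\notin S}$ takes infinitely many values, so no single relation of that form is available. Without $\tilde\rho(e_{S^c})=0$ you only obtain that each $\tilde\rho(e_\lambda)$, $\lambda\in S$, is \emph{some} idempotent inside $P_\lambda\End(V)P_\lambda$ commuting with the $U_q$-action, not that it equals $P_\lambda$. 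This is precisely the point where one must exclude homomorphisms out of the infinite product $\prod_\lambda\End(V_\lambda)$ that do not factor through a finite sub-product (such homomorphisms exist, e.g.\ ultrafilter characters of $\prod_\lambda\C$), so it cannot be waved away. The gap is closable: choose $z\in Z(U_q(\mfgl(N,\C)))$ with $\chi_\nu(z)=0$ for all $\nu\in S$ but $\chi_\mu(z)\neq 0$ for \emph{every} $\mu\notin S$ (possible because the $\chi_\mu$ are pairwise distinct characters of a finitely generated commutative algebra, and the set of elements of $\bigcap_{\nu\in S}\Ker\chi_\nu$ lying in some $\Ker\chi_\mu$, $\mu\notin S$, is a countable union of proper subspaces of a $\C$-vector space). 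Then $\rho(z)=0$ while $ze_{S^c}$ is invertible in the unital corner $e_{S^c}\msU_q e_{S^c}$ with corner inverse $w$, so $\tilde\rho(e_{S^c})=\tilde\rho(ze_{S^c})\tilde\rho(w)=\rho(z)\tilde\rho(e_{S^c})\tilde\rho(w)=0$. With that supplied, your Harish-Chandra separation for the finitely many $\lambda\in S$ and the final Jacobson density step go through as written.
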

Similar statements hold for tensor products. For example, any tensor product of finite-dimensional integral $U_q(\mfgl(N,\C))$-modules uniquely extends to a module of
\[
\msU_q(\mfgl(N,\C)) \widehat{\otimes}\msU_q(\mfgl(N,\C)) := \Lin(\mcO_q(GL(N,\C))\otimes \mcO_q(GL(N,\C)),\C),
\]
and we have an identification 
\begin{equation}\label{EqDualOqGL2}
\msU_q(\mfgl(N,\C)) \widehat{\otimes}\msU_q(\mfgl(N,\C)) \cong \prod_{\lambda,\lambda' \in P_{\inte}^{\dom}} \End(V_{\lambda})\otimes \End(V_{\lambda'}). 
\end{equation}

Going back to $\mcO_q(M_N(\C))$, also the following is well-known. Let us give a brief proof.  
\begin{Lem}\label{LemPosIntWellDef}
A finite-dimensional $U_q(\mfgl(N,\C))$-module lifts to a $\mcO_q(M_N(\C))$-comodule if and only if it is positive integral. 
\end{Lem}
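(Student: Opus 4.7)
The plan is to address the two directions separately, using the fundamental modules $V_{[k]}$ -- which are already $\mcO_q(M_N(\C))$-comodules by \eqref{EqComodAlgMapqExt} -- as a bridge. For the \emph{if} direction, I would decompose $\lambda \in P_{\posint}^{\dom}$ as $\lambda = \sum_{k=1}^N \mu_k \varpi_{[k]}$ with $\mu_k = \lambda_k - \lambda_{k+1}\geq 0$ (setting $\lambda_{N+1} = 0$), and use Cartan multiplication to realize $V_\lambda$ as the cyclic $U_q(\mfgl(N,\C))$-submodule of $V_{[1]}^{\otimes \mu_1}\otimes \cdots \otimes V_{[N]}^{\otimes \mu_N}$ generated by the tensor product of highest weight vectors. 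Tensor products of $\mcO_q(M_N(\C))$-comodules are $\mcO_q(M_N(\C))$-comodules, so the only remaining point is that this $U_q(\mfgl(N,\C))$-submodule is actually a subcomodule; this follows from the non-degeneracy of $\tau$, since if $W\subseteq V$ is $U_q(\mfgl(N,\C))$-stable and $\delta(v) = \sum_j v_j \otimes a_j$ for $v \in W$ and a basis of $V$ extending one of $W$, stability forces $\tau(a_j,x) = 0$ for all $x \in U_q(\mfgl(N,\C))$ whenever $v_j \notin W$, hence $a_j = 0$. Direct sums then handle general positive integral modules.

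For the \emph{only if} direction, the inclusion $\mcO_q(M_N(\C))\hookrightarrow \mcO_q(GL(N,\C))$ already renders any $\mcO_q(M_N(\C))$-comodule integral as a $U_q(\mfgl(N,\C))$-module, so it remains to show every $U_q(\mfgl(N,\C))$-weight $\mu$ appearing in $V$ lies in $P_{\posint}$. The key observation is a weight bound on $\mcO_q(M_N(\C))$ itself. Under the left $U_q(\mfgl(N,\C))$-action $x\cdot a = (\id \otimes \tau(-,x))\Delta(a)$ coming from the right regular coaction, a direct calculation using $\tau(X_{cb},K_i) = q^{\delta_{ic}}\delta_{cb}$ yields $K_i\cdot X_{ab} = q^{\delta_{ib}} X_{ab}$, so $X_{ab}$ has weight $e_b \in P_{\posint}$. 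Since $K_i$ is group-like, this action is by algebra maps, so every monomial in the $X_{ab}$'s is a weight vector with weight in $P_{\posint}$, and hence every weight appearing in $\mcO_q(M_N(\C))$ lies in $P_{\posint}$.

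To transfer this bound to $V$, I would fix a weight vector $v\in V$ of weight $\mu$ and expand $\delta(v) = \sum_j v_j \otimes a_j$ with $\{v_j\}$ linearly independent. Coassociativity $(\delta\otimes\id)\delta = (\id\otimes\Delta)\delta$ combined with the group-likeness of $K_i$ gives, after pairing the rightmost tensorand with $K_i$, the identity $K_i\cdot a_j = q^{\mu_i} a_j$ for every $j$, so each $a_j \in \mcO_q(M_N(\C))$ is a weight vector of weight $\mu$. The counit identity $v = \sum_j \varepsilon(a_j) v_j$ ensures some $a_j \neq 0$ when $v \neq 0$, and the weight bound on $\mcO_q(M_N(\C))$ then forces $\mu \in P_{\posint}$, completing the argument. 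I do not anticipate a serious obstacle; the most delicate step is the coassociativity-based weight propagation that passes the weight of $v$ onto every $a_j$, but this is a routine Sweedler-notation computation exploiting that $K_i$ is group-like.
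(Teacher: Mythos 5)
Your proof is correct. The \emph{if} direction is essentially the paper's: both realize $V_{\lambda}$ for $\lambda\in P_{\posint}^{\dom}$ inside a tensor product of the fundamental comodules $V_{[k]}$ (the paper says ``direct summand'', you take the cyclic submodule generated by the product of highest weight vectors and then check, via non-degeneracy of $\tau$, that $U_q(\mfgl(N,\C))$-submodules of comodules are subcomodules — a correct and standard supplement). The \emph{only if} direction is where you genuinely diverge. The paper argues structurally: since the $X_{ij}$ generate $\mcO_q(M_N(\C))$, every finite-dimensional comodule is a direct summand of some $\theta^{*m}$, which is positive integral; this implicitly leans on cosemisimplicity (inherited from $\mcO_q(GL(N,\C))$) to pass from ``coefficient coalgebra contained in that of $\theta^{*m}$'' to ``direct summand''. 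You instead bound the weights of $\mcO_q(M_N(\C))$ itself under the action $x\cdot a=(\id\otimes\tau(-,x))\Delta(a)$ — each monomial in the $X_{ab}$ has weight in $\Z_{\geq 0}^N$ because $K_i$ is grouplike and $K_i\cdot X_{ab}=q^{\delta_{ib}}X_{ab}$ — and then propagate the weight of a vector $v$ to its matrix coefficients $a_j$ via coassociativity, using $(\id\otimes\varepsilon)\delta=\id$ to see some $a_j\neq 0$. This is more elementary and avoids any appeal to semisimplicity of the comodule category in this direction, at the cost of being longer; the paper's version is shorter but rests on the standard direct-summand fact. Both are complete.
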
 
\begin{proof}
As the $X_{ij}$ generate $\mcO_q(M_N(\C))$, any comodule with coefficients in $\mcO_q(M_N(\C))$ must be a direct summand of $\theta^{* m}$ for some $m$. As these are positive integral modules, we see that the condition in the lemma is necessary. The condition is also sufficient since any comodule with dominant weight $\lambda \in \Z_{\geq 0}^{N}$ is clearly a direct summand of a tensor product of the $V_{[k]}$. 
\end{proof}
We can hence denote the dual convolution algebra of $\mcO_q(M_N(\C))$ as 
\begin{equation}\label{EqDualOqMN}
\msU_q^{\post}(\mfgl(N,\C)) := \Lin(\mcO_q(M_N(\C)),\C) \cong \prod_{\lambda \in P_{\posint}^{\dom}} \End(V_{\lambda}).
\end{equation}
Since the positive integral representations of $U_q(\mfgl(N,\C))$ are stable under tensor products, $\msU_q^{\post}(\mfgl(N,\C))$ is still endowed with a coproduct
\[
\Delta^{\post}:  \msU_q^{\post}(\mfgl(N,\C)) \rightarrow \msU_q^{\post}(\mfgl(N,\C)) \widehat{\otimes} \msU_q^{\post}(\mfgl(N,\C)).
\]

\subsection{The W$^*$-category $\mbU_q(N)$}

The Hopf algebra $\mcO_q(GL(N,\C))$ gives a quantization of $GL(N,\C)$ as a complex group variety. To obtain its real form $U(N)$, we endow $\mcO_q(GL(N,\C))$ with the following $*$-structure. 

\begin{Def}
The Hopf $*$-algebra $\mcO_q(U(N)) = (\mcO_q(GL(N,\C)),*)$ is determined uniquely by
\[
X^* = X^{-1}. 
\]
\end{Def}


One can note that this is well-defined as the composition $S(-)^*$ is well-defined, given that $S(X_{ij})^* = X_{ji}$. This $*$-structure is compatible with the real form $U_q(\mfu(N))$ under the pairing $\tau$ of \eqref{EqDualOqUqGL}, in the sense that 
\begin{equation}\label{EqPairingStar}
\tau(f,x^*) = \overline{\tau(S(f)^*,x)},\qquad \tau(f^*,x) = \overline{\tau(f,S(x)^*)},\qquad f\in \mcO_q(U(N)),x \in U_q(\mfu(N)).
\end{equation}
When considering $\mcO_q(U(N))$, we will denote the generating matrix as $U$ rather than $X$.

\begin{Def}
We let $\mbU_q(N)$ be the W$^*$-category of representations of $\mcO_q(U(N))$. 
\end{Def}
Equivalently, we may see this as the full W$^*$-subcategory of $\mbGL_q(N,\C)$ (defined above \eqref{EqOtherEq}) given by couples $(\mcH,U)$ with $U$ \emph{unitary}. This is a tensor W$^*$-subcategory, and the tensor product coincides with the one on $\mcO_q(U(N))$-representations given by the coproduct, 
\begin{equation}\label{EqNotTens}
\pi*\pi' := (\pi\otimes \pi')\Delta.
\end{equation}

It follows by the Tannaka-Krein-reconstruction theorem \cite{Wor88}, together with the fact that $\mcO_q(U(N))$ is non-degenerately paired with $U_q(\mfu(N))$, that $\mcO_q(U(N))$ is C$^*$-faithful. This implies that the Hopf $*$-algebra $\mcO_q(U(N))$ defines a \emph{compact quantum group} \cite{Wor87,LS91,DK94}: there exists on $\mcO_q(U(N))$ a (necessarily faithful, necessarily unique) invariant state
\[
\int_{U_q(N)}: \mcO_q(U(N)) \rightarrow \C,
\] 
called the \emph{Haar integral}. The above conditions mean that, for all $f\in \mcO_q(U(N))$, 
\[
\int_{U_q(N)} f^*f \geq 0,\qquad \int_{U_q(N)}1 = 1,\qquad \left(\int_{U_q(N)}\otimes \id\right)\Delta(f) = \int_{U_q(N)}f = \left(\id\otimes \int_{U_q(N)}\right)\Delta(f).
\]
We can then consider the universal C$^*$-envelope $C_q(U(N))$ of $\mcO_q(U(N))$. On the other hand, one can also consider the associated reduced C$^*$-envelope, given by completing $\mcO_q(U(N))$ in its natural representation $\lambda_{\standard}$ on the GNS-space $L^2_q(U(N))$ with respect to $\int_{U_q(N)}$.

\begin{Prop}\cite[Theorem 2.7.14]{NT13}
The compact quantum group $U_q(N)$ is \emph{coamenable}:  the GNS-representation $\lambda_{\standard}$ lifts to a \emph{faithful} representation of $C_q(U(N))$.
\end{Prop}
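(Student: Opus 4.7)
The plan is to invoke the standard equivalence between coamenability of a compact quantum group $G$ and amenability of its discrete dual $\widehat{G}$: the GNS representation $\lambda_\standard$ lifts to a faithful representation of $C_q(U(N))$ if and only if the counit $\varepsilon: \mcO_q(U(N)) \to \C$ extends continuously to the reduced C$^*$-algebra $C^r_q(U(N))$ obtained as the norm-closure of $\lambda_\standard(\mcO_q(U(N)))$. Thus the statement reduces to constructing such a continuous extension of $\varepsilon$.

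The tool of choice is a Kesten--Hulanicki-type criterion: for a compact quantum group with discrete dual, amenability of $\widehat{G}$ is equivalent to the operator norm of $\lambda_\standard(\chi_u+\chi_u^*)$ in $C^r(G)$ being equal to $2\dim(u)$ for the fundamental representation $u$, where $\dim(u)$ denotes classical (not quantum) dimension. Equivalently, the fusion semi-ring of $G$ must admit a F\o{}lner sequence with respect to the classical dimension function $\lambda \mapsto \dim V_\lambda$.

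The decisive observation is that for $0<q<1$ (in particular, $q$ not a root of unity), the representation semi-ring of $U_q(N)$ is canonically isomorphic to that of classical $U(N)$: both are indexed by $P_\inte^\dom$, and the decomposition of $V_\lambda \otimes V_\mu$ in the quantum case is governed by the classical Littlewood--Richardson rule, with each $V_\lambda$ retaining its classical dimension. Since $U(N)$ is a compact (hence amenable) Lie group, the classical representation semi-ring satisfies a F\o{}lner condition --- a concrete choice being sets of the form $F_n = \{\lambda \in P_\inte^\dom : \|\lambda\|_\infty \leq n\}$, weighted by $\dim V_\lambda^2$, where dominance of interior weights over boundary weights is a standard lattice-counting estimate. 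Transporting this F\o{}lner sequence through the fusion-ring isomorphism gives amenability of $\widehat{U_q(N)}$, hence coamenability of $U_q(N)$.

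The main technical obstacle is turning the fusion-ring F\o{}lner condition into an actual continuity statement for $\varepsilon$ on $C^r_q(U(N))$; this passage goes via the operator norm of characters and relies crucially on the fact that quantum characters still act on $L^2_q(U(N))$ through the classical Clebsch--Gordan multiplicities. Everything beyond this is essentially classical combinatorics of dominant weight lattices, and the full argument --- in more abstract language --- is precisely what is carried out in [NT13, Section 2.7].
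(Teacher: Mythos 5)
Your argument is exactly the one behind the cited reference: the paper offers no proof of its own beyond pointing to \cite[Theorem 2.7.14]{NT13}, and that theorem is proved precisely by reducing coamenability to amenability of the fusion algebra with respect to the \emph{classical} dimension function, then observing that the fusion rules and classical dimensions of $U_q(N)$ for $0<q<1$ coincide with those of the (trivially coamenable) compact group $U(N)$. So your proposal is correct and follows essentially the same route as the source the paper relies on.
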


Let us look at the finer structure of $\mbU_q(N)$. 

First, recall that the Hopf $*$-algebra $\mcO_q(SU(2))$ is the universal unital Hopf $*$-algebra generated by $a,c$ with the requirement that $U=\begin{pmatrix} a & -qc^* \\ c& a^*\end{pmatrix}$ is a unitary corepresentation matrix.  Any irreducible representation of $\mcO_q(SU(2))$ is equivalent to one of the following list of mutually inequivalent irreducible representations: the family of $*$-characters $\chi_{\theta}$ and the family of representations $\mbs*\chi_{\theta}$, where $\chi_{\theta}$ is the $*$-character 
 \[
\chi_{\theta}(U) = \begin{pmatrix} e^{2\pi i\theta} & 0 \\0 & e^{-2\pi i\theta}\end{pmatrix},\qquad \theta \in \R/\Z,
\] 
and where $\mbs$ is the representation on $l^2(\N)$ given by 
\[
\mbs(a)e_n = (1-q^{2n})^{1/2}e_{n-1},\qquad \mbs(c)e_n = q^{n} e_{n}. 
\]
Now $\mcO_q(SU(2))$ is dual to $U_q(\mfsu(2))$, defined as the $*$-subalgebra of $U_q(\mfu(2))$ generated by $E,F,\hat{K}^{\pm 1}$. Since $U_q(\mfsu(2))$ has for each $1\leq i < N$ a copy in $U_q(\mfu(N))$, generated by $E_i, F_i$ and $\hat{K}_i$, we obtain a corresponding restriction map 
\[
\res_i: \mcO_q(U(N)) \rightarrow \mcO_q(SU(2)),\qquad 1\leq i <N.
\]
We denote $\mbs_i = \mbs \circ \res_i$. On the other hand, we also have for $\theta \in (\R/\Z)^N$ the $*$-character
\[
\chi_{\theta}: \mcO_q(U(N)) \rightarrow \C,\quad  U_{kl} \mapsto \delta_{kl} e^{2\pi i \theta_k}.
\]


\begin{Theorem}[\cite{Koe91,LS91}]
The C$^*$-algebra $C_q(U(N))$ is type $I$.  Any irreducible representation of $\mcO_q(U(N))$ is isomorphic to a representation $\mbs_w* \chi_{\theta}$, where $w \in \Sym(\brN)$ and $\mbs_w = \mbs_{i_1}*\ldots *\mbs_{i_k}$ for $w = s_{i_1}s_{i_2}\ldots s_{i_k}$ a reduced expression in terms of the elementary transpositions $s_i = (i,i+1)$. Any two of these irreducible representations are unitarily equivalent if and only if the corresponding elements in $\Sym(\brN)$ and $\R^N/\Z^N$ are equal. 
\end{Theorem}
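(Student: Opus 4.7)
I would follow Soibelman's strategy, viewing $\mcO_q(U(N))$ as a quantization of the Poisson--Lie group $U(N)$ whose symplectic leaves are indexed by pairs $(w,\theta) \in \Sym([N]) \times (\R/\Z)^N$. The first step is to observe that the basic representations are irreducible: $\mbs$ is irreducible because $\mbs(c)$ has simple spectrum $\{q^n\}_{n\in\N}$ with one-dimensional eigenspaces $\C e_n$, while $\mbs(a)$ acts as a weighted unilateral shift connecting them, so no proper closed invariant subspace can exist; the characters $\chi_\theta$ are trivially irreducible.

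Next, I would show that $\mbs_w$ is well-defined (up to equivalence) by verifying the braid relations $\mbs_i*\mbs_j \cong \mbs_j *\mbs_i$ for $|i-j|\geq 2$ and $\mbs_i*\mbs_{i+1}*\mbs_i \cong \mbs_{i+1}*\mbs_i*\mbs_{i+1}$. The commuting relation follows by a direct computation, as the relevant matrix coefficients involve essentially disjoint generators of $\mcO_q(U(N))$. The braid identity is the essential nontrivial input and can be reduced by restriction to the rank-two case $N=3$, where it is dual to Lusztig's braid automorphism on $U_q(\mfsl_3)$. Irreducibility of $\mbs_w*\chi_\theta$ then follows by induction on $\ell(w)$: the diagonal generators $\pi(U_{kk})$ have explicit joint spectra involving points of the form $q^{2n}e^{2\pi i \theta_k}$, the off-diagonal generators provide shift operators between joint eigenspaces, and any nonzero invariant subspace is forced to be cyclic under these shifts.

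The main obstacle is \textbf{exhaustion}: showing that every irreducible representation arises this way. Given an irreducible $\pi$, I would restrict to the commutative subalgebra generated by the diagonal entries $U_{kk}$ and decompose spectrally. The joint spectrum modulo $q^{2\Z}$ determines $\theta$, while the combinatorial shape of the spectrum (which coordinates exhibit continuous spectrum of the form $q^{2\N}e^{2\pi i \theta_k}$ versus a single point) encodes a Weyl group element $w$; this combinatorial data is rigid because the Hecke/Serre-type commutation rules for the $U_{ij}$ force any propagation of joint eigenvectors to follow a reduced word pattern. One then constructs an explicit unitary intertwiner to $\mbs_w*\chi_\theta$ by matching joint eigenvectors of the diagonal generators. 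Distinctness of the representations indexed by distinct $(w,\theta)$ is immediate from the torus restriction together with the spectral shape, and the type $I$ claim for $C_q(U(N))$ follows because each $\mbs_w*\chi_\theta$ contains the compact operators in its image (the finite-dimensional pieces contributing only scalars).
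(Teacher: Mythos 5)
The paper does not prove this theorem; it is quoted from \cite{Koe91,LS91}, and your overall strategy (irreducibility of $\mbs$, well-definedness of $\mbs_w$ via the braid relations reduced to rank two, then exhaustion and separation) is indeed the Soibelman--Koelink route that those references follow. The first two stages of your sketch are essentially correct, although the appeal to ``duality with Lusztig's braid automorphism'' for $\mbs_i*\mbs_{i+1}*\mbs_i\cong\mbs_{i+1}*\mbs_i*\mbs_{i+1}$ is an assertion rather than an argument; in the literature this is done either by exhibiting an explicit intertwiner or by showing both sides are irreducible with the same kernel.

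The genuine gap is in your exhaustion step. The diagonal entries $U_{kk}$ do \emph{not} generate a commutative $*$-subalgebra, and they are not even normal: already in $\mcO_q(SU(2))$ one has $aa^*=1-q^2c^*c$ while $a^*a=1-c^*c$, and in general $U_{kk}U_{kk}^*-U_{kk}^*U_{kk}=(1-q^2)\bigl(\sum_{l>k}U_{lk}^*U_{lk}-\sum_{l<k}U_{kl}U_{kl}^*\bigr)\neq 0$, and moreover $U_{11}U_{22}\neq U_{22}U_{11}$ by \eqref{EqCommHol}. Consequently there is no ``joint spectrum'' or ``joint eigenvector'' decomposition for the diagonal generators, and your proposed construction of a unitary intertwiner by matching such eigenvectors breaks down at the very first nontrivial case: in $\mbs$ the operator $\mbs(a)$ is a weighted shift with no orthonormal eigenbasis, and the vectors $e_n$ you would want to match are eigenvectors of the \emph{off-diagonal normal} element $c$, not of $a$. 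The actual arguments run differently: one observes that the corner entry $U_{N1}$ (resp.\ $c$ for $N=2$) \emph{is} normal, uses its $q$-commutation relations to show that in an irreducible representation its spectrum is either $\{0\}$ or a $q^{\Z_{\geq0}}$-orbit, and proceeds by induction on $N$ via restriction to the quantum subgroup $U_q(N-1)\subseteq U_q(N)$; the torus parameter $\theta$ is read off from the quotient onto the classical torus (killing the off-diagonal entries), not from spectra of the $U_{kk}$. Your final remarks on separation and on type $I$ are fine in substance once the classification is in place, but as written the classification itself is not established.
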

In particular, $\mbs_w$ only depends on the reduced decomposition of $w$ up to unitary equivalence.


We also have the following theorem which is a direct consequence of \cite[Theorem 5.2]{RY01}. We call a representation of $\mcO_q(U(N))$ \emph{normal} if it lifts to a normal representation of the von Neumann algebra
\[
L^{\infty}_q(U(N)) := \lambda_{\standard}(\mcO_q(U(N)))''. 
\]

\begin{Theorem}
Let $M=\binom{N}{2}$, and let $s_{i_1}\ldots s_{i_{M}}$ be a fixed reduced expression for the longest word $w_0$ in $\Sym(\brN)$. Let $\mbs_{w_0} =  \mbs_{i_1}*\ldots *\mbs_{i_{M}}$. Then 
\[
\lambda_0 = \int_{(\R/\Z)^N} \mbs_{w_0}*\chi_{\theta}\rd \theta
\]  
is a faithful normal representation of  $\mcO_q(U(N))$. Moreover,
\[
\lambda_{\standard} \cong  \int_{(\R/\Z)^N}\mbs_{w_0}*\mbs_{w_0}*\chi_{\theta}\rd \theta  \cong \oplus_{\N} \lambda_0.
\] 
\end{Theorem}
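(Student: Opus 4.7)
The plan is to deduce this theorem from Theorem 5.2 of \cite{RY01}, which provides a Plancherel-type decomposition of the standard representation $\lambda_{\standard}$ of the coamenable compact quantum group $U_q(N)$ in terms of its generic irreducibles. Throughout, I will use the classification of irreducibles of $\mcO_q(U(N))$ recalled above, together with the fact that, by coamenability, $\lambda_{\standard}$ extends to a faithful normal representation of $L^\infty_q(U(N))$.

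First I would verify that $\lambda_0 = \int \mbs_{w_0}*\chi_{\theta}\rd \theta$ is a normal representation. Each fiber $\mbs_{w_0}*\chi_\theta$ is irreducible and hence (by coamenability and the classification) weakly contained in $\lambda_{\standard}$, so it extends to a normal representation of $L^\infty_q(U(N))$. The standard direct integral theory applied to the measurable field $\theta \mapsto \mbs_{w_0}*\chi_\theta$ over the torus $(\R/\Z)^N$ then yields a normal representation $\lambda_0$.

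Next I would establish the central isomorphism $\lambda_{\standard} \cong \int_{(\R/\Z)^N}\mbs_{w_0}*\mbs_{w_0}*\chi_{\theta}\rd \theta$. This is essentially the content of \cite[Theorem 5.2]{RY01}: the Plancherel measure for $\lambda_{\standard}$ is concentrated on the generic stratum of the spectrum of $C_q(U(N))$, parameterized by the longest Weyl element $w_0$ together with a torus variable $\theta \in (\R/\Z)^N$, and the fiber representation there is precisely $\mbs_{w_0}*\mbs_{w_0}*\chi_\theta$. The appearance of two copies of $\mbs_{w_0}$ reflects the bi-invariance of the Haar state: the GNS representation of $\int_{U_q(N)}$ couples the left and right regular representations, each of which is Plancherel-supported on the $w_0$-stratum. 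I would present this as a translation of the \cite{RY01} result into our notation, with the reduced-expression independence of $\mbs_{w_0}$ (up to unitary equivalence) ensuring the identification is well-defined.

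For the second isomorphism $\int_{(\R/\Z)^N}\mbs_{w_0}*\mbs_{w_0}*\chi_{\theta}\rd \theta \cong \oplus_{\N}\lambda_0$, the plan is to analyze the action of the maximal torus $\mcO_q(T^N) \subseteq \mcO_q(U(N))$ on $\mbs_{w_0}*\mbs_{w_0}$, which acts on $\ell^2(\N)^{\otimes 2M}$. The joint spectrum of the diagonal generators $U_{ii}$ under $\mbs_{w_0}*\mbs_{w_0}$ consists of discrete orbits indexed by $\Z^N$-shifts of a base character, so that $\mbs_{w_0}*\mbs_{w_0}*\chi_\theta$ decomposes as a countable direct sum of $\mbs_{w_0}*\chi_{\theta+n}$ over $n \in \Z^N$ (a conclusion I would reduce to the rank-one case $\mcO_q(SU(2))$ by decomposing $w_0$ into a product of simple reflections and using the explicit formulas for $\mbs$). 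Integrating over $\theta \in (\R/\Z)^N$ then folds the $\Z^N$-indexed direct sum into countably many copies of $\lambda_0$. Faithfulness of $\lambda_0$ follows immediately, as $\lambda_{\standard}$ is faithful and has $\lambda_0$ as a (direct sum) summand.

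The main obstacle I expect is the third step: matching the abstract Plancherel fiber from \cite{RY01} with the concrete tensor-product form $\mbs_{w_0}*\mbs_{w_0}*\chi_\theta$, and extracting the discrete $\Z^N$-shift structure of the joint torus spectrum on the doubled Hilbert space. This bookkeeping, while essentially a direct computation on the $\mbs_i$ factors using the commutation relations between $\mbs$ and $\chi_\theta$ within $\mcO_q(SU(2))$, is where the geometric input of \cite{RY01} most needs to be made operational.
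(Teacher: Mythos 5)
Your overall route coincides with the paper's: the paper gives no proof beyond the remark that the theorem ``is a direct consequence of [RY01, Theorem 5.2]'', and your first two steps (normality of $\lambda_0$ via direct integral theory, and reading the identification $\lambda_{\standard}\cong\int\mbs_{w_0}*\mbs_{w_0}*\chi_{\theta}\,\rd\theta$ off the Reshetikhin--Yakimov Plancherel decomposition) are the natural way to make that citation explicit.

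However, the mechanism you propose for the second isomorphism is incorrect. You claim that the joint spectrum of the $U_{ii}$ under $\mbs_{w_0}*\mbs_{w_0}$ consists of discrete $\Z^N$-orbits, so that $\mbs_{w_0}*\mbs_{w_0}*\chi_{\theta}$ is a countable direct sum of the $\mbs_{w_0}*\chi_{\theta+n}$, $n\in\Z^N$. As written this is vacuous, since $\theta+n=\theta$ in $(\R/\Z)^N$; more substantively, already for $SU_q(2)$ the operator $\mbs(a)$ is a weighted shift rather than a diagonal operator, and the decomposition of $\mbs*\mbs$ into irreducibles is a \emph{continuous} direct integral with infinite multiplicity, $\mbs*\mbs\cong\oplus_{\N}\int_{\R/\Z}\mbs*\chi_{\sigma}\,\rd\sigma$: the center of $(\mbs*\mbs)(\mcO_q(SU(2)))''$ is the diffuse algebra $L^{\infty}(\T)$, so no decomposition into a discrete sum of irreducibles exists. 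The correct derivation of $\int_{\theta}\mbs_{w_0}*\mbs_{w_0}*\chi_{\theta}\,\rd\theta\cong\oplus_{\N}\lambda_0$ uses exactly this continuous decomposition: writing $\mbs_{w_0}*\mbs_{w_0}\cong\oplus_{\N}\int\mbs_{w_0}*\chi_{\sigma}\,\rd\sigma$ and using translation invariance of Haar measure on the torus, the two integrals fold into a countable multiple of $\lambda_0$. Relatedly, your faithfulness argument is stated backwards: a direct summand of a faithful representation need not be faithful. What you need is that $\lambda_{\standard}$ is a \emph{multiple} of $\lambda_0$ (which is precisely what $\lambda_{\standard}\cong\oplus_{\N}\lambda_0$ asserts), so that $\Ker\lambda_0=\Ker\lambda_{\standard}=0$.
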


\subsection{The W$^*$-category $\mbT_q(N)$}

\begin{Def}
We define $\mcO_q(B(N))$ as the quotient algebra of $\mcO_q(GL(N,\C))$ by putting $X_{ij} =0$ for $i>j$. We denote by 
\[
\pi_B: \mcO_q(GL(N,\C)) \rightarrow \mcO_q(B(N))
\] 
the quotient map. We write the resulting images of $X_{ij}$ as $T_{ij}$ (when not zero).
\end{Def}

In the following, we put $T_{ii} = T_i$. Then $\mcO_q(B(N))$ is the universal algebra generated by invertible elements $T_{i}$ for $1\leq i\leq N$ and elements $T_{ij}$ with $1\leq i<j\leq N$ such that the following relations hold:
\[
T_i T_{kl} = q^{\delta_{ik}-\delta_{il}}T_{kl}T_i,\qquad k\leq l, 
\]
\begin{equation}\label{EqCommHolT}
\left\{\begin{array}{lll} 
T_{ij} T_{kj} = q T_{kj}T_{ij} & \textrm{for} & i<k<j,\\
T_{ij} T_{ik} = qT_{ik}T_{ij} &\textrm{for}&i<j<k,\\
T_{ij} T_{kl} = T_{kl} T_{ij}&\textrm{for}& i< k \textrm{ and } j>l,\\
T_{ij}T_{kl} =T_{kl}T_{ij} + (q-q^{-1})T_{il}T_{kj} & \textrm{for}&i<k\textrm{ and } j<l.
\end{array}\right.
\end{equation}



The pairing \eqref{EqDualOqUqGL} restricts to a non-degenerate pairing of Hopf algebras
\begin{equation}\label{EqNonDegPair}
\tau: \mcO_q(B(N))\times U_q(\mfb(N))\rightarrow \C.
\end{equation} 
On the other hand, we have the following result which is for example contained in \cite[Theorem 8.33]{KS97}. 
\begin{Lem}
There is an isomorphism of Hopf algebras
\begin{equation}\label{EqUnivRelInv}
\kappa: U_q(\mfb^-(N))^{\cop}\rightarrow \mcO_q(B(N)),\qquad K_i \mapsto T_{ii}^{-1},\quad F_i \mapsto (q^{-1}-q)^{-1} T_{i,i+1}T_{i+1,i+1}^{-1}. 
\end{equation}
We denote the inverse as 
\begin{equation}\label{EqUnivRelAlt}
\iota: \mcO_q(B(N))\rightarrow U_q(\mfb^-(N))^{\cop},
\end{equation}
so in particular
\[
\iota(T_{i}) = K_i^{-1},\qquad \iota(T_{i,i+1}) = (q^{-1}-q)F_iK_{i+1}^{-1}.
\]
\end{Lem}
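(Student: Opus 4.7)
The proof will proceed in three movements: first checking that $\kappa$ is a well-defined algebra map, next verifying Hopf-algebra compatibility with the opposite coalgebra structure, and finally establishing bijectivity.

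For well-definedness, the plan is to use the universal property of $U_q(\mfb^-(N))$ and verify that the proposed images in $\mcO_q(B(N))$ satisfy all defining relations. Mutual commutativity of $\kappa(K_i) = T_{ii}^{-1}$ follows by specializing the relation $T_i T_{kl} = q^{\delta_{ik}-\delta_{il}}T_{kl}T_i$ to $k=l=j$. The weight relation $\kappa(K_i)\kappa(F_j) = q^{\delta_{i,j+1}-\delta_{ij}}\kappa(F_j)\kappa(K_i)$ reduces directly to the same displayed relation with $k=j$, $l=j+1$, combined with commutativity of the diagonal. The commutation $\kappa(F_i)\kappa(F_j) = \kappa(F_j)\kappa(F_i)$ for $|i-j|\geq 2$ follows from case 3 of \eqref{EqCommHolT} after moving the inverse factors $T_{i+1,i+1}^{-1}$ to one side using weight commutations. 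The computational heart, and the main obstacle, is the quantum Serre relation for $|i-j|=1$: after using the weight relations to collect all $T_{j+1,j+1}^{-1}$ factors on the right, the identity reduces to a polynomial relation among $T_{i,i+1}, T_{i+1,i+2}$ and $T_{j,j}^{\pm 1}$, which is then verified by repeated application of the third and fourth relations in \eqref{EqCommHolT}.

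For Hopf compatibility, I would check the coproduct on generators. Since $\Delta(T_{ij}) = \sum_k T_{ik}\otimes T_{kj}$ and $T_{ab}=0$ for $a>b$ in $\mcO_q(B(N))$, one computes $\Delta(T_{ii}^{-1}) = T_{ii}^{-1}\otimes T_{ii}^{-1}$, matching $\Delta(K_i) = K_i\otimes K_i$ (which equals $\Delta^{\cop}(K_i)$), and
\[
\Delta(T_{i,i+1}T_{i+1,i+1}^{-1}) = T_{ii}T_{i+1,i+1}^{-1}\otimes T_{i,i+1}T_{i+1,i+1}^{-1} + T_{i,i+1}T_{i+1,i+1}^{-1}\otimes 1,
\]
which matches $(\kappa\otimes\kappa)\Delta^{\cop}(F_i) = \kappa(\hat{K}_i^{-1})\otimes \kappa(F_i)+\kappa(F_i)\otimes 1$ using $\kappa(\hat{K}_i^{-1}) = T_{ii}T_{i+1,i+1}^{-1}$. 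Counit compatibility is immediate on generators, and antipode compatibility is then automatic for a bialgebra map between Hopf algebras.

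For bijectivity, I would handle surjectivity and injectivity separately. Surjectivity reduces to showing that every $T_{ij}$ with $i<j$ lies in the subalgebra generated by $\{T_{kk}^{\pm 1},T_{k,k+1}\mid 1\leq k<N\}$; this is proved by induction on $j-i$, the base $j=i+1$ being trivial, and the inductive step following from the relation $T_{i,j-1}T_{j-1,j} = T_{j-1,j}T_{i,j-1}+(q-q^{-1})T_{ij}T_{j-1,j-1}$ (fourth case of \eqref{EqCommHolT}, valid for $i<j-1$), which solves for $T_{ij}$ as a $q$-commutator divided by $T_{j-1,j-1}$. For injectivity, I would compare PBW-type bases: $U_q(\mfb^-(N))$ decomposes as $U_q(\mfn^-(N))\otimes U_q(\mfh(N))$ with a root-vector basis on the nilpotent factor, while $\mcO_q(B(N))$ admits an ordered-monomial basis in the $T_{ij}$ ($i\le j$) by the defining relations \eqref{EqCommHolT}; matching the natural $\Z^N$-grading and counting graded dimensions gives the result. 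Alternatively, one combines the non-degenerate pairing \eqref{EqNonDegPair} with the non-degenerate skew Hopf pairing between $U_q(\mfb(N))$ and $U_q(\mfb^-(N))^{\cop}$ implicit in the Drinfeld double presentation of $U_q(\mfgl(N,\C))$ to deduce injectivity directly.
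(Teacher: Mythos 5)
Your proposal is correct, but it is worth noting that the paper does not prove this lemma at all: it is quoted from \cite[Theorem 8.33]{KS97}, and the only additional content the paper supplies is the conceptual description of the inverse $\iota$ as $f\mapsto (\tau(f,-)\otimes\id)\msR$ via the universal $R$-matrix. Your direct verification is therefore a genuinely self-contained alternative. All the individual steps check out: the weight and commutation relations for $\kappa(K_i),\kappa(F_j)$ reduce to \eqref{EqCommHolT} exactly as you say; the coproduct computation correctly matches $\Delta^{\cop}(F_i)=\hat K_i^{-1}\otimes F_i+F_i\otimes 1$ against the triangular form of $\Delta(T_{i,i+1})$; and the surjectivity induction via $T_{ij}=(q-q^{-1})^{-1}\bigl(T_{i,j-1}T_{j-1,j}-T_{j-1,j}T_{i,j-1}\bigr)T_{j-1,j-1}^{-1}$ is valid since the hypothesis $i<j-1$ of the fourth relation in \eqref{EqCommHolT} is exactly the inductive case. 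The one step you leave as a sketch, the quantum Serre relation, does go through: setting $a=T_{i,i+1}$, $b=T_{i+1,i+2}$ and $w=T_{i,i+2}T_{i+1,i+1}$, one finds $ab=ba+(q-q^{-1})w$ with $aw=q^{2}wa$ and $bw=q^{-2}wb$, after which both Serre identities collapse to $q^{-1}a^2b-(q+q^{-1})aba+qba^2=0$ and its mirror, each of which vanishes by a two-line computation. The only place where your argument is thinner than it looks is injectivity: the ordered-monomial (PBW) basis for $\mcO_q(B(N))$ is itself a nontrivial diamond-lemma fact, so the cleaner route is the one you mention second, namely playing the non-degenerate pairing \eqref{EqNonDegPair} against the Drinfeld-double pairing of $U_q(\mfb(N))$ with $U_q(\mfb^-(N))$ --- which is essentially how the cited reference and the paper's $R$-matrix formula \eqref{EqUnivRel} for $\iota$ proceed.
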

The inverse map $\iota$ has a more conceptual description as follows. With $\Sigma$ the flip map, consider the \emph{Yang-Baxter matrix}
\[
R = \Sigma \circ \hat{R} = \sum_{ij} q^{-\delta_{ij}}e_{ii}\otimes e_{jj} + (q^{-1} -q)\sum_{i<j} e_{ij} \otimes e_{ji}.
\]
From $R$ one obtains the \emph{skew bicharacter} $\mbr: \mcO_q(M_N(\C))\otimes \mcO_q(M_N(\C)) \rightarrow \C$, uniquely determined by 
\begin{equation}\label{EqSkewBichar}
(\id\otimes \id\otimes \mbr)(X_{13}X_{24}) = R. 
\end{equation}
This follows from the relations of $\mcO_q(M_n(\C))$, together with the defining properties of a skew bicharacter: 
\begin{equation}\label{EqDefProp}
\mbr(ab,c) = \mbr(a,c_{(1)})\mbr(b,c_{(2)}),\quad \mbr(a,bc) = \mbr(a_{(2)},b)\mbr(a_{(1)},c),\qquad \mbr(a,1) = \varepsilon(a),\quad \mbr(1,b) = \varepsilon(b).
\end{equation}
This skew bicharacter automatically extends to $\mcO_q(GL(N,\C))$.

Through \eqref{EqDualOqGL2}, we can view $\mbr$ as being induced by an element
\[
\msR \in \msU_q(\mfgl(N,\C)) \widehat{\otimes} \msU_q(\mfgl(N,\C)) = \prod_{\lambda,\kappa\in P_{\inte}^{\dom}} \End(V_{\lambda}) \otimes \End(V_{\kappa}),
\]
the \emph{universal $R$-matrix}, which then satisfies
\begin{equation}\label{EqPropR}
(\Delta\otimes \id)\msR = \msR_{13}\msR_{23},\quad (\id\otimes \Delta)\msR = \msR_{13}\msR_{12},
\end{equation}
\begin{equation}\label{EqPropRReversing}
 \msR \Delta(-) = \Delta^{\opp}(-)\msR.
\end{equation}
These identities can be interpreted within the dual vector space of $\otimes_{i=1}^n \mcO_q(GL(N,\C))$ for appropriate $n$. We further have that
\begin{equation}\label{EqPropR2}
\msR(\xi\otimes \eta) = q^{-(\wt(\xi),\wt(\eta))}\xi\otimes \eta
\end{equation}
for a highest weight vector $\xi$ and lowest weight vector $\eta$. Together with \eqref{EqPropRReversing} this uniquely determines $\msR$. 

We will need the following information on $\msR$. For $\alpha \in Q^+ \subseteq \R^N$, consider the finite-dimensional spaces
\begin{equation}\label{EqPosRootAdj}
U_q(\mfn(N))_{\alpha} = \{X \in U_q(\mfn(N))\mid K_i XK_i^{-1} = q^{\alpha_i}X\},
\end{equation}
\begin{equation}\label{EqNegRootAdj}
U_q(\mfn^-(N))_{-\alpha} = \{X \in U_q(\mfn^-(N))\mid K_i XK_i^{-1} = q^{-\alpha_i}X\}.
\end{equation}
Then one can factorize 
\begin{equation}\label{EqFactR}
\msR = \widetilde{\msR} \msQ,
\end{equation}
where for general weight vectors $\xi,\eta$
\begin{equation}\label{EqPropR3}
\msQ(\xi\otimes \eta) = q^{-(\wt(\xi),\wt(\eta))}\xi\otimes \eta 
\end{equation}
and
\begin{equation}\label{EqPropR5}
\widetilde{\msR} = \sum_{\alpha \in Q^+} \widetilde{\msR}_{\alpha},\qquad \widetilde{\msR}_{\alpha} \in U_q(\mfn(N))_{\alpha} \otimes U_q(\mfn^-(N))_{-\alpha}.
\end{equation}
The latter sum converges weakly as a functional on $\mcO_q(GL(N,\C))^{\otimes 2}$: for any $f,g\in \mcO_q(GL(N,\C))$, the sum 
\[
\sum_{\alpha} \tau(f\otimes g,\widetilde{\msR}_{\alpha})
\]
contains only finitely many non-zero terms. We have for example, for $\mbalpha_r = e_r - e_{r+1}$, 
\begin{equation}\label{EqPropR4}
\widetilde{\msR}_{0} = 1\otimes 1,\qquad \widetilde{\msR}_{\mbalpha_r} = (q^{-1}-q) E_r \otimes F_r.
\end{equation}

It can now be checked that the map 
\begin{equation}\label{EqiotaGL}
\mcO_q(GL(N,\C)) \rightarrow \msU_q(\mfgl(N,\C)),\qquad f\mapsto (\tau(f,-)\otimes \id)\msR
\end{equation}
factors through a map $\mcO_q(B(N)) \rightarrow U_q(\mfb^-(N))$, which is precisely the map $\iota$:
\begin{equation}\label{EqUnivRel}
\iota: \mcO_q(B(N)) \rightarrow U_q(\mfb^-(N))^{\cop},\quad f\mapsto (\tau(f,-)\otimes \id)\msR.
\end{equation}
%



The above gives a quantization of the group of invertible upper triangular matrices as a complex algebraic group. We now want to quantize the underlying real space. 

If $A$ is a complex algebra, we denote by $A^{*}$ its formal anti-linear, anti-isomorphic copy: it is the unique structure of complex algebra on the set of formal symbols $\{a^*\mid a\in A\}$ such that 
\[
A \rightarrow A^*,\qquad a \mapsto a^*
\]
is an anti-linear, anti-multiplicative map. 

\begin{Def}
The unital $*$-algebra $\mcO_q^{\R}(B(N))$ is generated by $\mcO_q(B(N))$ and $\mcO_q(B(N))^*$, with relations 
\begin{equation}\label{EqUniRelMN}
T_{23}\hat{R}_{12}T^*_{23} = T^*_{13}\hat{R}_{12}T_{13}.
\end{equation}
We define $\mcO_q(T(N))$ as the quotient of $\mcO_q^{\R}(B(N))$ by putting $T_{i}^* = T_i$ for all $i$. 
\end{Def}

We have that $\mcO_q(T(N))$ is the universal unital $*$-algebra generated by invertible self-adjoint elements $T_i=T_{ii}$  for $1\leq i \leq N$ and elements $T_{ij}$ with $1\leq i<j\leq N$ such that \eqref{EqCommHolT} holds, as well as
\begin{equation}\label{EqXComStarClassT}
\left\{\begin{array}{lllll}
T_{kj} T_{li}^{*} - T_{li}^{*}T_{kj}&=&0,&& i\neq j, k\neq l,\\
T_{kj}T_{lj}^{*} - qT_{lj}^{*}T_{kj}&=& -(1-q^2)\sum_{i<j} T_{ki}T_{li}^{*} ,&&k\neq l,\\
qT_{kj}T_{ki}^{*}- T_{ki}^{*}T_{kj} &=&  (1-q^2)\sum_{k<l\leq\mathrm{min}\{i,j\}} T_{li}^{*}T_{lj},&& i\neq j, \\
T_{kj}T_{kj}^{*}- T_{kj}^{*}T_{kj} &=& (1-q^2)\left(\sum_{k<l\leq j} T_{lj}^{*}T_{lj} - \sum_{k\leq l<j} T_{kl}T_{kl}^{*}\right). &&
\end{array}\right. 
\end{equation}

Then $\mcO_q(T(N))$ is a Hopf $*$-algebra for the unique coproduct such that 
\begin{equation}\label{EqCoprodT}
(\id\otimes \Delta)(T) = T_{12}T_{13}. 
\end{equation}

The Hopf algebra isomorphism \eqref{EqUnivRelInv} extends uniquely to a Hopf $*$-algebra isomorphism 
\begin{equation}
\kappa: U_q(\mfu(N))^{\cop} \rightarrow \mcO_q(T(N)).
\end{equation}
whose inverse we again denote by 
\begin{equation}\label{EqUnivRelReal}
\iota: \mcO_q(T(N)) \rightarrow U_q(\mfu(N))^{\cop}  .
\end{equation}

Consider now the W$^*$-category $\mbT_q(N)$ of representations $\lambda$ of $\mcO_q(T(N))$ for which the $\lambda(T_{i})$ are positive operators. Equivalently, we can view $\mbT_q(N)$ as the monoidal W$^*$-subcategory of $\mbGL_q(N,\C)$ consisting of all $(\Hsp,T)$ for which $T$ is upper triangular, with positive, invertible operators on the diagonal. Again, the monoidal structure is the same as the one induced by the coproduct of $\mcO_q(T(N))$.  

By the $*$-isomorphism \eqref{EqUnivRelReal}, a finite-dimensional admissible $U_q(\mfu(N))$-representation becomes a $\mcO_q(T(N))$-representation in $\mbT_q(N)$, and we can transfer the unitary irreducible admissible representations $V_{\lambda}$ for $\lambda\in P_{\pos}^{\dom}$ to representations of $\mcO_q(T(N))$. We then have the following. 

\begin{Theorem}
Every representation in $\mbT_q(N)$ is a direct integral (with multiplicities) of irreducible representations. The irreducible representations in $\mbT_q(N)$ are finite-dimensional, and parametrized by $P_{\pos}^{\dom}$. 
\end{Theorem}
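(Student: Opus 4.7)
My plan is to transfer the problem to $*$-representation theory of $U_q(\mfu(N))$ via the Hopf $*$-algebra isomorphism $\iota$ of \eqref{EqUnivRelReal}, and then to exploit the joint spectral theorem for the commuting positive operators $\pi(K_i)$ together with the weight-shifting action of $\pi(E_i),\pi(F_i)$ to reduce to the finite-dimensional classification of admissible modules recalled earlier in the section. Concretely, given $(\Hsp,\lambda)\in\mbT_q(N)$, the composition $\pi = \lambda\circ\kappa: U_q(\mfu(N))\to B(\Hsp)$ is a bounded $*$-representation with $\pi(K_i) = \lambda(T_i)^{-1}$ positive, bounded, and bounded below. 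The theorem then becomes the statement that any such \emph{bounded admissible} $*$-representation of $U_q(\mfu(N))$ is a direct integral (with multiplicities) of the irreducible modules $V_\lambda$, $\lambda\in P_{\pos}^{\dom}$.

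The commuting positive family $\{\pi(K_i)\}_{i=1}^N$ has compact joint spectrum $\Sigma\subset\R_+^N$, so the joint spectral theorem gives a weight decomposition indexed by $W := \log_q\Sigma$, a compact subset of $\R^N$. The relations \eqref{EqWeightComm} make $\pi(E_i)$ and $\pi(F_i)$ act as measurable shifts by $\pm\mbalpha_i$ between spectral subspaces, so $W$ is a union of orbits for the root lattice $Q$. Because $Q$ has rank $N-1$ in $\R^N$, each $Q$-coset meets the compact set $W$ in at most finitely many points. Choosing a Borel transversal $\Theta$ for the $Q$-action on $\R^N$, disintegration yields
\[
\Hsp = \int^{\oplus}_{\Theta}\Hsp_{[w]}\,d\nu([w]),
\]
with each $\Hsp_{[w]}$ a $U_q(\mfu(N))$-invariant subrepresentation whose set of occurring weights is the finite set $(w+Q)\cap W$.

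Within each fiber $\Hsp_{[w]}$, finiteness of the weight set forces iterated applications of $\pi(E_i)$ to terminate, producing a highest-weight subspace. For each highest weight $\lambda$ that occurs, the $U_q(\mfn^-(N))$-module generated by a corresponding vector is a quotient of the Verma module $M(\lambda)$; the $*$-structure $E_i^* = q^{-1}F_i\hat K_i$ forces the induced Shapovalov form to be positive semidefinite, which pins $\lambda$ to $P_{\pos}^{\dom}\cap(w+Q)$ and collapses the quotient to the finite-dimensional irreducible $V_\lambda$. A Peter--Weyl-type argument then identifies $\Hsp_{[w]}\cong \bigoplus_{\lambda\in P_{\pos}^{\dom}\cap(w+Q)} V_\lambda\otimes M_\lambda$ for multiplicity Hilbert spaces $M_\lambda$, and reassembly over $\Theta$ yields the desired direct integral; the classification of irreducibles as the $V_\lambda$, $\lambda\in P_{\pos}^{\dom}$, follows immediately. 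The main obstacle is this last step: extracting the $V_\lambda$-decomposition in the bounded admissible setting. The crucial input is the positivity of $\pi(F_i^k)\pi(F_i^k)^*$ combined with $E_i^* = q^{-1}F_i\hat K_i$, which forces every simple-root $\mfsl_2$-weight of a highest-weight vector to be a nonnegative integer, hence locates the highest weight precisely in $P_{\pos}^{\dom}$.
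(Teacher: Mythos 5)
Your proof is correct, and it follows the same overall strategy as the paper's (which is explicitly only a sketch): disintegrate along a commutative subalgebra of the commutant, then reduce to the finite-dimensional highest-weight classification. The genuine difference is the choice of that subalgebra. The paper disintegrates only over the spectrum of the single central element $\Det_q(T)=T_1\cdots T_N$ and then invokes ``general C$^*$-algebra theory'' to split each fibre into joint eigenspaces; you instead take the joint spectral measure of all the $T_i$ at once, periodize it over the root lattice $Q$ (the resulting projections $P(A+Q)$ commute with $\pi(E_i),\pi(F_i)$ because these shift weights by roots), and disintegrate over $W/Q$. Since $Q$ is discrete, each coset meets the compact log-spectrum $W$ in finitely many points, so every fibre has only finitely many weights; the existence of highest weight vectors and the Peter--Weyl splitting are then immediate, which is precisely the step the paper's sketch glosses over. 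Your $\mfsl_2$-by-$\mfsl_2$ Shapovalov positivity argument likewise supplies the details behind the paper's one-line assertion that boundedness and the commutation relations force the irreducibles to be the $V_{\lambda}$ with $\lambda\in P_{\pos}^{\dom}$. Two harmless imprecisions: the support $W$ need not literally be a union of $Q$-orbits (the weight set of $V_{\lambda}$ certainly is not), but you only ever use the commutation of the $Q$-periodic spectral projections with the representation, which does hold; and the fibres $\Hsp_{[w]}$ of a direct integral are not invariant subspaces of $\Hsp$, though the disintegration of the representation along that commutative von Neumann algebra delivers exactly what you need.
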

\begin{proof} We only sketch a possible proof. The boundedness of $T_i$ and $T_i^{-1}$, together with their $q$-commutation relations with $T_{i,i+1}$ and $T_{i,i+1}^*$,  implies that each irreducible representation in $\mbT_q(N)$ must be of the form $V_{\lambda}$ for $\lambda \in P_{\pos}^{\dom}$. It then follows from general C$^*$-algebra theory that if we have a representation in $\mbT_q(N)$ for which the central element $\Det_q(T)=T_1\ldots T_n$ acts as a scalar, the representation must be a direct sum of joint eigenspaces for the $T_i$, and hence must be a (possibly infinite) direct sum of $V_{\lambda}$'s. On the other hand, any representation in $\mbT_q(N)$ can be written as a direct integral of representations for which the central element $\Det_q(T)$ is a positive scalar. Combining these two facts, we obtain the result in the theorem.
\end{proof} 


\subsection{The W$^*$-category $\mbGL_q(N,\C)$}

\begin{Def}
We define the \emph{FRT double} $\mcO_{q}^{(2)}(M_N(\C))$ as the universal $\C$-algebra generated by the matrix entries of $X = \sum_{ij} e_{ij} \otimes X_{ij}$ and $Y =  \sum_{ij} e_{ij} \otimes Y_{ij}$ with universal relations
\begin{equation}\label{EqUniRelMN}
\hat{R}_{12}X_{13}X_{23} = X_{13}X_{23}\hat{R}_{12},\qquad 
\hat{R}_{12}Y_{23}Y_{13} = Y_ {23}Y_{13}\hat{R}_{12},\qquad 
X_{23}\hat{R}_{12}Y_{23} = Y_{13}\hat{R}_{12}X_{13}.
\end{equation}
We define $\mcO_{q}^{\R}(M_N(\C))$ as $\mcO_{q}^{(2)}(M_N(\C))$ endowed with the $*$-structure
\[
X^{*} = Y,\qquad Y^{*} = X.
\]
\end{Def}
The commutation relations can be written out as follows: apart from the relations in \eqref{EqCommHol}, we also have
\begin{equation}\label{EqXComStarClass}
\left\{\begin{array}{lllll}
X_{kj} X_{li}^{*} - X_{li}^{*}X_{kj}&=&0,&& i\neq j, k\neq l,\\
X_{kj}X_{lj}^{*} - qX_{lj}^{*}X_{kj}&=& -(1-q^2)\sum_{i<j} X_{ki}X_{li}^{*} ,&&k\neq l,\\
qX_{kj}X_{ki}^{*}- X_{ki}^{*}X_{kj} &=&  (1-q^2)\sum_{k<l} X_{li}^{*}X_{lj},&& i\neq j, \\
X_{kj}X_{kj}^{*}- X_{kj}^{*}X_{kj} &=& (1-q^2)\left(\sum_{k<l} X_{lj}^{*}X_{lj} - \sum_{l<j} X_{kl}X_{kl}^{*}\right). &&
\end{array}\right. 
\end{equation}

The element $\Det_q(X)$ is central in $\mcO_{q}^{\R}(M_N(\C))$ \cite[Lemma 3.5]{DCF19}, so with $\Det_q(X^{*}) := \Det_q(X)^{*}$, we can define the Hopf $*$-algebra 
$\mcO_q^{\R}(GL(N,\C))$ as the localisation
\[
\mcO_{q}^{\R}(GL(N,\C)) = \mcO_{q}^{\R}(M_N(\C))[\Det_q(X)^{-1},\Det_q(X^{*})^{-1}].
\]

The W$^*$-category $\mbGL_q(N,\C)$, defined below \eqref{EqHqNAlt}, can be identified with the representation category of $\mcO_q^{\R}(GL(N,\C))$: we only need to observe that since $\Det_q(X)$ arises as a direct summand of $X^{\otimes n}$, any representation of $\mcO_q^{\R}(M_N(\C))$ for which the image of $X$ is invertible, extends to a $\mcO_q^{\R}(\GL(N,\C))$-representation. 

We have natural quotient maps of Hopf $*$-algebras
\[
\pi_U: \mcO_q^{\R}(GL(N,\C)) \rightarrow \mcO_q(U(N)),\qquad X \mapsto U,
\]
\[
\pi_T: \mcO_q^{\R}(GL(N,\C))\rightarrow \mcO_q(T(N)),\quad X\mapsto T,
\]
leading to W$^*$-functors  
\[
\mbU_q(N) \times \mbT_q(N) \rightarrow \mbGL_q(N,\C),\qquad (\lambda,\lambda')\mapsto \lambda \oast \lambda' =  (\lambda \circ \pi_U)*(\lambda'\circ \pi_T),
\]
\[
\mbT_q(N) \times \mbU_q(N) \rightarrow \mbGL_q(N,\C),\qquad (\lambda',\lambda)\mapsto \lambda' \oast \lambda = (\lambda'\circ \pi_T)*(\lambda\circ \pi_U).
\]
The following is a categorical analogue of the Gauss decomposition.
\begin{Theorem}\label{TheoRepGLN}
For any irreducible representation $\lambda$ in  $\mbGL_q(N,\C)$, there exist unique (up to unitary equivalence) irreducible representations $\lambda_U,\lambda_U'$ in $\mbU_q(N)$ and unique (up to unitary equivalence) irreducible representations $\lambda_T,\lambda_T'$ in $\mbT_q(N)$ such that 
\[
\lambda \cong \lambda_U \oast \lambda_T \cong \lambda_T' \oast \lambda_U'. 
\]
\end{Theorem}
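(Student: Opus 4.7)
The theorem is the representation-theoretic analogue of the classical QR and RQ decompositions: any $X\in GL(N,\C)$ factors uniquely as $X = UT$ with $U\in U(N)$ and $T$ upper triangular with positive diagonal, and similarly as $X = T'U'$. My plan is to establish a $*$-algebra level version of this decomposition and then transport it to representations.

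The central claim I would try to establish is that the composition
\[
\phi := (\pi_U \otimes \pi_T)\circ \Delta\colon \mcO_q^{\R}(GL(N,\C)) \longrightarrow \mcO_q(U(N)) \otimes \mcO_q(T(N))
\]
is an isomorphism of $*$-algebras, and analogously for $(\pi_T\otimes \pi_U)\circ \Delta$. Granted this, a representation of $\mcO_q^{\R}(GL(N,\C))$ pulls back bijectively from a representation of the tensor product on the right, which in turn decomposes into $\oast$-products of representations in $\mbU_q(N)$ and $\mbT_q(N)$. Irreducibility of $\lambda$ then forces the two factors to be irreducible, and uniqueness up to unitary equivalence is immediate from Schur's lemma applied to this tensor product decomposition.

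To exhibit an inverse of $\phi$, I would construct lifts $\widetilde{U}_{ij}, \widetilde{T}_{ij}$ inside a suitable completion of $\mcO_q^{\R}(GL(N,\C))$ (or directly at the level of any given representation) with $\pi_U(\widetilde{U}_{ij}) = U_{ij}$, $\pi_T(\widetilde{T}_{ij}) = T_{ij}$, and $X_{ij} = \sum_k \widetilde{U}_{ik}\widetilde{T}_{kj}$, via a quantum Gram--Schmidt recursion. Concretely, the diagonal entries $\widetilde{T}_{kk}$ should be positive square roots of ratios of principal quantum minors of $X^*X$; the off-diagonal $\widetilde{T}_{ij}$ for $i<j$ are then determined inductively from the columns of $X^*X$ using the cross-commutation relations \eqref{EqXComStarClass}; and finally $\widetilde{U} := X\widetilde{T}^{-1}$ is verified to be unitary by means of the reflection-type identity in \eqref{EqOtherEq}. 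The assignment $U_{ij}\otimes 1 \mapsto \widetilde{U}_{ij}$, $1\otimes T_{ij}\mapsto \widetilde{T}_{ij}$ then provides the inverse of $\phi$, once one has checked that the defining relations on the left are preserved.

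The main obstacle is making the Gram--Schmidt step rigorous: one must extract positive square roots and inverses of quantum minors, and then verify by a careful induction on $k$ that the resulting operators satisfy the precise cross-commutation relations \eqref{EqCommHolT} and \eqref{EqXComStarClassT} of $\mcO_q(T(N))$. The reflection equation \eqref{EqOtherEq} should encode all of these identities, but the combinatorial bookkeeping with quantum minors of $X^*X$ is delicate, and I expect this is where the bulk of the technical work would lie.
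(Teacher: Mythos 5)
The guiding idea — a quantum Gram--Schmidt/Iwasawa decomposition $X=UT$ — is indeed the right one, and it is essentially what the cited proof of \cite[Proposition 4.18]{DCF19} (which the paper adapts) carries out. However, your central claim, that
\[
\phi=(\pi_U\otimes\pi_T)\circ\Delta\colon \mcO_q^{\R}(GL(N,\C))\to \mcO_q(U(N))\otimes\mcO_q(T(N))
\]
is an isomorphism of $*$-algebras, is false, and already fails at $q=1$: the coordinate ring of $GL(N,\C)$ as a real affine variety is not isomorphic to $\mcO(U(N))\otimes\mcO(T(N))$, because the inverse of the multiplication map $(U,T)\mapsto UT$ involves square roots of minors of $X^*X$ and is therefore not polynomial. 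So $\phi$ is not surjective, and the entire reduction "representations of the left pull back bijectively from representations of the right" collapses. Your own construction betrays this: the lifts $\widetilde U_{ij},\widetilde T_{ij}$ live only in a completion (concretely, in the von Neumann algebra $\lambda(\mcO_q^{\R}(GL(N,\C)))''$ of a given representation), not in the algebra itself. The argument must therefore be run at the level of a fixed representation $(\Hsp,X)$ from the start: one forms the positive invertible operator $X^*X$, which satisfies the reflection equation, extracts $T$ by operator-valued Cholesky (functional calculus for square roots and inverses of the leading quantum minors), sets $U=XT^{-1}$, and verifies the relations.

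Even granting that construction, two further steps are missing before you may conclude $\lambda\cong\lambda_U\oast\lambda_T$. First, $\lambda_U\oast\lambda_T$ acts on a \emph{tensor product} Hilbert space with $U$ in the first leg and $T$ in the second, so you must prove that the operators $U_{ij}$ and $T_{kl}$ produced by Gram--Schmidt \emph{commute elementwise}; this is a nontrivial consequence of the relations \eqref{EqUniRelMN} and is not addressed by your verification of \eqref{EqCommHolT} and \eqref{EqXComStarClassT} separately. Second, "Schur's lemma" alone does not split an irreducible representation of a tensor product of two $*$-algebras into a tensor product of irreducibles; one needs a type $I$ hypothesis. Here it is supplied by the facts that $C_q(U(N))$ is type $I$ and that the relevant irreducible representations of $\mcO_q(T(N))$ in $\mbT_q(N)$ are finite dimensional, after which the factorization and its uniqueness up to unitary equivalence follow from the standard structure theory of irreducible representations of tensor products. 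Without the commutation step and the type $I$ input, the proof is incomplete.
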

\begin{proof}
The proof of \cite[Proposition 4.18]{DCF19} can be immediately adapted. 
\end{proof}

More generally, \cite[Proposition 4.18]{DCF19} implies that any representation of $\mcO_q^{\R}(GL(N,\C))$ is weakly contained in a representation of the form $\lambda \oast \lambda'$ for $\lambda \in  \mbU_q(N)$ and $\lambda'\in\mbT_q(N)$.

\section{The W$^*$-category $\mbH_q(N)$}\label{section: REA intro}

\subsection{Quantum hermitian matrices}

Consider $\mcO_q(H(N))$ as defined via \eqref{EqUnivRelZ}. These relations can be written out entrywise as 
\begin{multline}\label{EqCommOqHN}
q^{-\delta_{ik} -\delta_{jk}} Z_{ij} Z_{kl} + \delta_{k<i}(q^{-1}-q) q^{-\delta_{ij}} Z_{kj}Z_{il} +\delta_{jk} (q^{-1}-q)q^{-\delta_{ij}}\sum_{p<j} Z_{ip} Z_{pl} + \delta_{ij} \delta_{k<i}(q^{-1}-q)^2  \sum_{p<i} Z_{kp}Z_{pl} \\
= q^{-\delta_{il}-\delta_{jl}} Z_{kl} Z_{ij} + \delta_{l<j}(q^{-1}-q)q^{-\delta_{ij}}  Z_{kj}Z_{il} + \delta_{il}(q^{-1}-q) q^{-\delta_{ij}} \sum_{p<i} Z_{kp}Z_{pj} +\delta_{ij}\delta_{l<j} (q^{-1}-q)^2  \sum_{p<j} Z_{kp}Z_{pl}. 
\end{multline}


In \cite[Theorem 2.2]{Mud02}, using just the above relations, Mudrov was able to classify all characters of the reflection equation algebra. From this, we can easily obtain also a classification of all \emph{$*$-characters}\footnote{Note that the convention for the $R$-matrix is different in \cite{Mud02}, but we can easily pass to our convention by replacing $q \leftrightarrow q^{-1}$ and $k \leftrightarrow N+1-k$.}
\[
\chi: \mcO_q(H(N)) \rightarrow \C.
\]

\begin{Theorem}\label{Character classification theorem}
Each $*$-character $\chi$ of $\mcO_q(H(N))$ is of the form $\chi = \chi_{k,l,a,c,y}$ where 
\begin{itemize}
\item $k,l\in \Z_{\geq0}$ with $k+l \leq N-l$,
\item $a>0$ and $c\in \R^{\times}$, 
\item $y = (y_{0},y_{1},\ldots,y_{l-1})$ with $|y_i| = 1$, 
\end{itemize}
such that
\[
\chi(Z) = c\left(a \sum_{i = k+l+1}^{N} e_{ii} - a^{-1} \sum_{i=N-l+1}^N e_{ii} +   \sum_{i=0}^{l-1} (y_{i} e_{k+i+1,N-i} + \overline{y_{i}} e_{N-i,k+i+1})\right).
\] 
\end{Theorem}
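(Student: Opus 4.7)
The plan is to reduce to Mudrov's general (non-$*$-preserving) character classification and then impose self-adjointness. First, I would invoke \cite[Theorem 2.2]{Mud02}, which classifies all algebra homomorphisms $\chi:\mcO_q^{\br}(M_N(\C))\to\C$ by describing, after translating conventions via $q\leftrightarrow q^{-1}$ and $k\leftrightarrow N+1-k$, the matrix $\chi(Z)\in M_N(\C)$ in a standardized block form depending on a tuple of complex parameters: a scalar block along a portion of the diagonal, together with a collection of rank-$2$ anti-diagonal blocks stacked against the upper-right corner and a vanishing upper-left corner. The distinct blocks reflect the Jordan-type structure forced by the reflection equation \eqref{EqUnivRelZ}, which is nontrivial precisely because of the braid corrections on the right-hand side of \eqref{EqCommOqHN}.

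Next, I would observe that a $*$-character of $\mcO_q(H(N))$ is simply an algebra character of $\mcO_q^{\br}(M_N(\C))$ that is compatible with $Z^*=Z$. Because the $Z_{ij}$ generate the algebra, compatibility reduces to the single matrix identity
\[
\chi(Z)=\chi(Z)^*\in M_N(\C).
\]
Imposing this on Mudrov's parametrization, the diagonal scalar block value must be real, which I would separate into a sign $\sgn(c)\in\{\pm 1\}$ and a positive magnitude $|c|a>0$; the two diagonal values of each anti-diagonal block must come out to be conjugate pairs of the form $(c\cdot 0,-c/a)$, forcing the relative sign seen in the formula; and the pair of off-diagonal entries of each such block must satisfy the Hermitian condition, which after absorbing the common factor $c$ collapses to the unimodularity $|y_i|=1$. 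The bound $k+l\leq N-l$ comes from the fact that the scalar block and the anti-diagonal blocks cannot overlap.

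Finally, I would check the converse direction, namely that every tuple $(k,l,a,c,y)$ in the stated range produces a well-defined $*$-character. This is a direct verification: substituting the proposed matrix into \eqref{EqCommOqHN} and checking that the identity holds entry-wise. The computation breaks into a few cases depending on which of the indices $i,j,k,l$ fall in the zero-, scalar-, or anti-diagonal region, and each case is essentially classical since the braid corrections either vanish or recombine neatly via the rank-$2$ block structure.

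The main obstacle I anticipate is bookkeeping: correctly translating Mudrov's conventions (different $R$-matrix, opposite ordering of basis) into the conventions used here and matching the parameter labels, so that the reality conditions imposed by self-adjointness appear in the clean form $a>0$, $c\in\R^\times$, $|y_i|=1$ rather than as an over-parametrized set of relations. Beyond that, the argument is mechanical.
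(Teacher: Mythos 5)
Your proposal follows exactly the route the paper takes: its proof consists of citing \cite[Theorem 2.2]{Mud02} (with the same convention translation $q\leftrightarrow q^{-1}$, $k\leftrightarrow N+1-k$ noted in the footnote) and then checking which of Mudrov's solutions are self-adjoint. Your additional converse verification is not needed, since any self-adjoint matrix already appearing in Mudrov's list of characters automatically yields a $*$-character.
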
 
\begin{proof}
It follows immediately from \cite[Theorem 2.2]{Mud02} upon checking self-adjointness of the solutions presented there. 
\end{proof}

For example, for $N= 4$ we have the following types of solutions, where empty entries signify $0$:
\[
\begin{pmatrix} &&& \\ &&& \\ &&& \\ &&& \end{pmatrix},\quad \begin{pmatrix} &&& \\ &&& \\ &&& \\ &&&  c \end{pmatrix},\quad \begin{pmatrix} &&& \\ &&& \\ &&  c& \\ &&&  c \end{pmatrix}, \quad \begin{pmatrix} &&& \\ &  c&& \\ && c& \\ &&&  c \end{pmatrix},\quad \begin{pmatrix}  c &&& \\ &  c&&\\ && c& \\ &&&  c \end{pmatrix}, 
\]
\[
 c \begin{pmatrix} &&& \\ &&& \\ &&& y_0 \\ && \overline{y_0} & a-a^{-1}\end{pmatrix},\quad  c\begin{pmatrix} &&& \\ &&& y_0 \\ && a& \\ &\overline{y_0} && a-a^{-1}\end{pmatrix},\quad  c\begin{pmatrix} &&& y_0 \\ & a && \\ && a & \\ \overline{y_0} &&& a-a^{-1}\end{pmatrix},\]
\[
 c\begin{pmatrix} &&& y_0  \\ &&y_1 & \\ &\overline{y_1} & a-a^{-1} & \\ \overline{y_0} &&&a-a^{-1}\end{pmatrix}.
\]

\subsection{Classification of irreducible representations for $N=2$}\label{SecExH2}

For $\mcO_q(H(2))$, one can work directly from the defining relations to obtain a classification of the irreducible representations. 

Fix in the following $N=2$, and write
\[
Z =  \begin{pmatrix} z & w \\ v & u\end{pmatrix} \in M_2(\mcO_q(H(2))).
\]
Define the \emph{quantum trace} $T$ and \emph{quantum determinant} $D$ to be 
\[
T = \Tr_q(Z) = qz + q^{-1}u,\qquad D = \Det_q(Z) = uz -q^{-2}vw.
\]
We find by a tedious but direct comparison with \eqref{EqCommOqHN} that the universal relations of $\mcO_q(H(2))$ are centrality and self-adjointness of $T,D$ together with the relations 
\[
z^* = z,\quad w^* = v,\quad zw = q^2wz,\qquad vz = q^2zv
\]
and
\[
q^{-2}vw = -D+qTz - q^{2}z^2,\qquad q^{-2}wv = -D+q^{-1}Tz -q^{-2}z^2.
\]
For $A$ a unital commutative $*$-algebra, write $\Spec_*(A)$ for the space of unital $*$-characters. For $A$ a general $*$-algebra, write $\msZ(A)$ for the center of $A$. 

Exploiting the fact that $z$ either commutes or $q^{\pm 2}$-commutes with all generators, one sees that $\pi(z)$ must be diagonalizable for any irreducible representation $\pi$, and that moreover $\pi(v)$ must vanish on some eigenvector of $\pi(z)$ if $\pi(z)$ is not identically zero. The reader will then have no difficulty in providing the details for the following proposition.
\begin{Prop}\label{PropRank1Tensor}
We have $\msZ(\mcO_q(H(2))) \cong \C[D,T]$. Identifying $\Spec_*(\C[D,T])\cong \R^2/\Sym(2)$ via 
\[
\chi \mapsto \{\textrm{roots of }\lambda^2 -\chi(T)\lambda + \chi(D)\},
\] 
we have that there exists a (non-zero) irreducible representation of $\mcO_q(H(2))$ fibering over $\lambda\in \R^2/\Sym(2)$ if and only if 
\[
\lambda \in  \Lambda_q = \Lambda_- \cup \Lambda_0 \cup \Lambda_+
\] 
with 
\begin{eqnarray*}
\Lambda_0 &=& \{\{0,\lambda\}\mid \lambda \in \R\},\\
\Lambda_+ &=& \{\{cq^{n+1},cq^{-n-1}\}\mid c\in \R^{\times}, n\in \Z_{\geq0}\},
\\
\Lambda_- &=& \{\{\lambda_+,\lambda_-\}\mid \lambda_+ \in \R_{>0},\lambda_- \in \R_{<0}\}.
\end{eqnarray*}
More precisely: 
\begin{enumerate}
\item[(1)]\label{PropRank1Tensor1}
There is a unique irreducible representation fibering over $\{cq^{n+1},cq^{-n-1}\}\in \Lambda_+$, namely 
\[
\mcS_{c^2,n}\cong \C^{n+1},\qquad ze_k= cq^{-n+2k}e_k,\quad ve_0 = 0.
\]
We say that it has rank $2$ and shape $\begin{pmatrix} \sgn(c) & 0 \\ 0 & \sgn(c) \end{pmatrix}$.
\item[(2)]\label{PropRank1Tensor2}
For $\{0,\lambda\} \in \Lambda_0$ we have the $*$-character $\chi(z) = \chi(v) = \chi(w) =0$. When $\lambda=0$ this is the only irreducible representation, and we say that it has rank $0$. If $\lambda\neq 0$ the only other irreducible representation is given by 
\[
\mcS_{0,\lambda} \cong l^2(\N),\quad ze_n = q^{2n+1}\lambda e_n,\quad ve_0 = 0,
\] 
and we say that it has rank $1$ and shape $\begin{pmatrix} 1 & 0 \\ 0 & 0 \end{pmatrix}$.
\item[(3)]\label{PropRank1Tensor3}
For $\{ca,-ca^{-1}\} \in \Lambda_-$ with $a,c>0$, we have as complete list of non-equivalent irreducible representations the following.
\begin{enumerate}
\item The $*$-characters $\chi_{\theta}$ for $\theta \in [0,1)$, with $\chi_{\theta}(z)= 0$ and $\chi_{\theta}(v) = qce^{2\pi i\theta}$. We say that they have rank $2$ and shape $\begin{pmatrix} 0 & e^{-2\pi i\theta} \\ e^{2\pi i\theta} & 0 \end{pmatrix}$.
\item The representations
\[
\mcS^{\pm}_{-c^2,a} \cong l^2(\N),\qquad ze_n = \pm ca^{\pm 1} q^{2n+1}e_n,\qquad ve_0 =0.
\]
We say that they have rank $2$ and shape $\begin{pmatrix} \pm 1 & 0 \\ 0 & \mp 1\end{pmatrix}$.
\end{enumerate}
\end{enumerate}
\end{Prop}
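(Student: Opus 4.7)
\emph{Proof plan.} First I would set up the central data. From \eqref{EqCommOqHN} at $N=2$ one directly checks that $T=qz+q^{-1}u$ and $D=uz-q^{-2}vw$ commute with each generator and are self-adjoint. A PBW-basis argument, using $T$ to eliminate $u$ and $D$ to eliminate $vw$ so that $\{z^a v^b w^c T^i D^j\}$ spans, then identifies $\msZ(\mcO_q(H(2)))=\C[D,T]$. Consequently, in any irreducible $*$-representation $\pi$ on a Hilbert space $\Hsp$, the elements $T,D$ act by real scalars $t,d$, and the fiber of $\pi$ in $\Spec_*(\C[D,T])\cong\R^2/\Sym(2)$ is the unordered pair of roots $\{\lambda_+,\lambda_-\}$ of $x^2-tx+d$.

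The key structural step is to understand $\pi(z)$. The relations $zw=q^2wz$ and $vz=q^2zv$ translate, via Borel functional calculus on the bounded self-adjoint operator $\pi(z)$, into intertwining formulas $\pi(v)E_J=E_{q^{-2}J}\pi(v)$ and $\pi(w)E_J=E_{q^{2}J}\pi(w)$ for the spectral projections $E_J$. Hence each of $E_{(0,\infty)}$, $E_{\{0\}}$, $E_{(-\infty,0)}$ commutes with all of $\pi(\mcO_q(H(2)))$, and irreducibility forces exactly one of the alternatives $\pi(z)>0$, $\pi(z)=0$, $\pi(z)<0$. When $\pi(z)\neq 0$ I would further argue that the extremum $\mu^*$ of the modulus of the spectrum of $\pi(z)$ is attained as a genuine eigenvalue: applied to approximate eigenvectors at $\mu^*$, the identity
\[
\|\pi(w)\xi\|^2=\langle\xi,P(\pi(z))\xi\rangle,\qquad P(x):=-q^2d+q^3tx-q^4x^2,
\]
together with the $q^{-2}$-shift of the spectrum under $\pi(v)$, forces $P(\mu^*)=0$, so $\mu^*=q\lambda$ for a root $\lambda$ of $x^2-tx+d$, and the spectral projection $E_{\{\mu^*\}}$ is nonzero.

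I would then run a highest-weight construction. Picking $e_0$ with $\pi(v)e_0=0$ and $\pi(z)e_0=q\lambda\,e_0$ and setting $e_n:=\pi(w)^n e_0$, the $e_n$ are $\pi(z)$-eigenvectors at $q^{2n+1}\lambda$. Evaluating the polynomial identity $vw=-q^2D+q^3Tz-q^4z^2$ on $e_n$ gives the lowering relation $\pi(v)e_{n+1}=\beta_n e_n$ with
\[
\beta_n=q^2\lambda\,(1-q^{2n+2})\bigl(q^{2n+2}\lambda-\lambda'\bigr),
\]
where $\lambda'$ is the other root of $x^2-tx+d$. The $\Span$ of the $e_n$ is therefore $\pi$-invariant, hence equal to $\Hsp$ by irreducibility; and the norm recursion $\|e_{n+1}\|^2=\beta_n\|e_n\|^2$ requires $\beta_n\ge 0$ for every $n$ below the dimension, with the chain truncating at length $N+1$ iff $\beta_N=0$. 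A case-by-case sign check of the explicit formula reproduces the statement: for $\{cq^{n+1},cq^{-n-1}\}\in\Lambda_+$ only the root of larger modulus gives $\beta_m\ge 0$ and the chain truncates at step $n$, yielding $\mcS_{c^2,n}$; for $\{0,\lambda\}\in\Lambda_0$ only the nonzero root survives and the chain is infinite, yielding $\mcS_{0,\lambda}$; for $\{ca,-ca^{-1}\}\in\Lambda_-$ both roots yield a strictly positive recursion, yielding $\mcS^{\pm}_{-c^2,a}$.

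Finally, when $\pi(z)=0$ the relations collapse to $\pi(u)=qt$ and $\pi(v)\pi(v)^*=\pi(v)^*\pi(v)=-q^2d$, so $\pi(v)$ is a normal operator of constant modulus; irreducibility makes it scalar, giving no representation for $d>0$, the trivial $*$-character for $d=0$, and the family $\chi_\theta$ for $d<0$. I expect the main obstacle to be the spectral step of the second paragraph — producing an honest highest-weight eigenvector for $\pi(z)$ from the supremum of its spectrum — since once this is in hand the rest reduces to the direct sign analysis of the explicit recursion $\beta_n$.
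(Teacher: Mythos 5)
Your proposal is correct and is precisely the argument the paper has in mind: the paper's proof only records that $\pi(z)$ is diagonalizable (via its $q^{\pm2}$-commutation with the generators) and that $\pi(v)$ must vanish on an eigenvector when $\pi(z)\neq 0$, explicitly leaving the remaining details — your highest-weight recursion and the sign analysis of $\beta_n$, both of which check out — to the reader. One small mix-up in your spectral step: to show that $\pi(v)$ annihilates the top of the spectrum you should use $\|\pi(v)\xi\|^2=\langle\xi,Q(\pi(z))\xi\rangle$ with $Q(x)=-q^2d+qtx-x^2$ coming from $wv$, whose roots are $q\lambda_{\pm}$; the polynomial $P$ you quote comes from $vw$ and has roots $q^{-1}\lambda_{\pm}$, so ``$P(\mu^*)=0$'' would give $\mu^*=q^{-1}\lambda$, inconsistent with the value $\mu^*=q\lambda$ that you (correctly) use in the rest of the argument.
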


A general definition of the rank of a representation will be given in Section \ref{section: rank of rep}. The terminology `shape' will be further motivated and generalized in \cite{DCMo24}. 

\begin{Cor}\label{CorTypeIH2}
The $*$-algebra $\mcO_q(H(2))$ is type $I$.
\end{Cor}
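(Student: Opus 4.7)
The plan is to invoke the Sakai characterization of type $I$ cited in the paper: it suffices to verify that for every irreducible representation $\pi$ of $\mcO_q(H(2))$, the norm-closure $C^*(\pi)$ contains the algebra $\mcK(\Hsp_\pi)$ of compact operators. Proposition \ref{PropRank1Tensor} gives us an explicit and exhaustive list of the irreducible representations up to unitary equivalence, so we can simply inspect each family.

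First I would dispatch the finite-dimensional cases. The $*$-characters in (2) and (3)(a), as well as the finite-dimensional representations $\mcS_{c^2,n}$ in (1), are all finite-dimensional and irreducible, so by Burnside's theorem $\pi(\mcO_q(H(2))) = B(\Hsp_\pi) = \mcK(\Hsp_\pi)$ automatically. These cases are therefore trivially type $I$.

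For the infinite-dimensional irreducible representations, namely $\mcS_{0,\lambda}$ on $l^2(\N)$ (for $\lambda\neq 0$) and $\mcS^{\pm}_{-c^2,a}$ on $l^2(\N)$, I would observe that in every such case the operator $\pi(z)$ is diagonal in the chosen basis with eigenvalues of the form $\gamma q^{2n+1}$ for some nonzero constant $\gamma$. Since $0<q<1$, these eigenvalues tend to zero, so $\pi(z)$ is a nonzero compact operator. Thus $C^*(\pi)$ is an irreducibly acting C$^*$-subalgebra of $B(l^2(\N))$ containing a nonzero compact, and by the standard corollary of the spectral theorem for compact operators (an irreducible C$^*$-algebra meeting $\mcK(\Hsp)$ nontrivially contains all of $\mcK(\Hsp)$), we conclude $\mcK(l^2(\N)) \subseteq C^*(\pi)$.

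There is no real obstacle here; the only point that requires a moment of care is confirming that the list in Proposition \ref{PropRank1Tensor} really is exhaustive (so that no further irreducible representation can escape the analysis), but that is already built into the statement we are allowed to use. Once the list is in hand, the argument is a short case check, and the conclusion is immediate from Sakai's criterion.
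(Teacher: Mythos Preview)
Your proposal is correct and follows essentially the same approach as the paper: the paper's proof simply notes that $\pi(z)$ is a nonzero compact operator for every non-scalar irreducible representation listed in Proposition \ref{PropRank1Tensor}, and you have spelled out this observation in detail, splitting into finite-dimensional versus infinite-dimensional cases rather than scalar versus non-scalar. The key point in both arguments is identical.
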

\begin{proof}
One immediately checks that $\pi(z)$ is always a non-zero compact operator, for any of the non-scalar irreducible representations $\pi$ in Proposition \ref{PropRank1Tensor}.
\end{proof}

\subsection{Covariantisation}

For the following, see e.g.~ \cite{Maj95}, \cite[Chapter 8-10]{KS97} or \cite{DKM03}.

Recall the skew bicharacter $\mbr$ from \eqref{EqSkewBichar}. This defines a \emph{coquasitriangular structure} on $\mcO_q(GL(N,\C))$, in the sense that we also have the `braided commutativity' property
\begin{equation}\label{EqBraidComM}
\mbr(a_{(1)},b_{(1)})a_{(2)}b_{(2)} = b_{(1)}a_{(1)} \mbr(a_{(2)},b_{(2)}).
\end{equation}


Since $R$ is invertible, the functional $\mbr: A\otimes A \rightarrow \C$ has a convolution inverse $\mbr^{-1}$, where we endow $A\otimes A$ with the tensor product coalgebra structure. Then \eqref{EqBraidComM} can alternatively be written as 
\begin{equation}\label{EqBraidComMInv}
\mbr^{-1}(b_{(1)},a_{(1)})a_{(2)}b_{(2)} = b_{(1)}a_{(1)} \mbr^{-1}(b_{(2)},a_{(2)}),
\end{equation}
or as 
\begin{equation}\label{EqBraidComMInvAlt}
ab = \mbr^{-1}(a_{(1)},b_{(1)})b_{(2)}a_{(2)} \mbr(a_{(3)},b_{(3)}) = \mbr(b_{(1)},a_{(1)})b_{(2)}a_{(2)} \mbr^{-1}(b_{(3)},a_{(3)}).
\end{equation}

However, $\mbr$ also has a convolution inverse $\mbr'$ when considered as a functional on $(A,\Delta)\otimes (A,\Delta^{\opp})$. Then $\mbr'$ is uniquely determined by 
\[
\mbr'(ab,c) = \mbr'(a,c_{(2)})\mbr'(b,c_{(1)}),\quad \mbr'(a,bc) = \mbr'(a_{(1)},b)\mbr'(a_{(2)},c),\qquad \mbr'(a,1) = \varepsilon(a),\quad \mbr'(1,b) = \varepsilon(b)
\]
and, with $t$ the ordinary transpose of a matrix,
\[
(\id\otimes \id\otimes \mbr')(X_{13}X_{24}) = R' := ((R^{\id\otimes t})^{-1})^{\id\otimes t} = \sum_{ij} q^{\delta_{ij}}e_{ii}\otimes e_{jj} + (q-q^{-1})\sum_{i<j} q^{2(j-i)} e_{ij}\otimes e_{ji}. 
\]

These properties of $\mbr$ allow to endow $\mcO_q(M_N(\C))$ with the new product 
\begin{equation}\label{EqBraidProd}
f*g = \mbr(f_{(1)},g_{(2)})f_{(2)}g_{(3)} \mbr'(f_{(3)},g_{(1)}), 
\end{equation}
which can be checked to be associative and unital (with the same unit). Inverting \eqref{EqBraidProd}, we obtain that the original product can be obtained from the deformed product via
\begin{equation}\label{EqRevBraid}
fg = \mbr^{-1}(f_{(1)},g_{(1)})f_{(2)}*g_{(3)} \mbr(f_{(3)},g_{(2)}). 
\end{equation}

One has the following result, see e.g. \cite[Section 7.4]{Maj95} or \cite[Example 10.18]{KS97}. 
\begin{Theorem}\label{LemCommBraid}
There exists a (unique) isomorphism of algebras
\begin{equation}\label{EqDefPhi}
\Phi: \mcO_q(H(N)) \rightarrow (\mcO_q(M_N(\C)),*),\qquad Z \mapsto X. 
\end{equation}
\end{Theorem}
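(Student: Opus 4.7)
The plan is to deduce $\Phi$ from the universal property of $\mcO_q(H(N))$, and then to invert the braiding procedure in order to obtain bijectivity.

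\emph{Step 1: Construction of $\Phi$.} I would verify directly that the matrix $X$, viewed as an element of $M_N(\C)\otimes (\mcO_q(M_N(\C)),*)$, satisfies the reflection equation \eqref{EqUnivRelZ} relative to the braided product $*$. The cleanest path is to first translate the braided product of matrix entries into matrix language: using $(\id\otimes\id\otimes\mbr)(X_{13}X_{24})=R$ together with the analogous identity for the dual skew bicharacter and the formula \eqref{EqBraidProd}, one expresses $X_{13}*X_{23}$ as the ordinary FRT product $X_{13}X_{23}$ dressed on its two auxiliary $M_N(\C)$-legs by explicit $R$-matrix factors. Substituting this into both sides of the reflection equation and repeatedly invoking the FRT relation \eqref{EqCommRX}, the braid equation for $\hat R$, and the Hecke identity $(\hat R-q^{-1})(\hat R+q)=0$, the desired identity reduces to a pure $R$-matrix equality on $\End((\C^N)^{\otimes 3})$ which holds by Yang--Baxter. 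The universal property of the REA then yields the unique algebra homomorphism $\Phi$ with $\Phi(Z)=X$; uniqueness is automatic since the $Z_{ij}$ generate $\mcO_q(H(N))$.

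\emph{Step 2: Bijectivity.} Surjectivity follows from the inversion formula \eqref{EqRevBraid}, which expresses the undeformed product of two generators $X_{ij}X_{kl}$ as a finite linear combination of $*$-products of matrix entries of $X$, so $(\mcO_q(M_N(\C)),*)$ is generated as a unital algebra by the $X_{ij}$. For injectivity, I would compare graded dimensions. The braided product preserves the natural $\Z_{\geq 0}$-grading on $\mcO_q(M_N(\C))$ in which each $X_{ij}$ has degree one, and $\mcO_q(M_N(\C))$ has the classical Hilbert series by the PBW theorem for quantum matrices. On the REA side, the commutation relations \eqref{EqCommOqHN} are triangular with respect to a suitable lexicographic order on the $Z_{ij}$, so the diamond lemma yields a PBW basis in ordered monomials and hence the same Hilbert series. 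Since $\Phi$ is a degree-preserving surjection between graded spaces whose graded pieces have the same finite dimension, it must be an isomorphism.

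\emph{Main obstacle.} The technical heart of the argument is Step~1. A head-on expansion of $X_{ij}*X_{kl}$ using \eqref{EqBraidProd} generates a jungle of scalar $\mbr$ and $\mbr'$ evaluations, and the only efficient way through it is to stay at the matrix level, encoding $*$ as explicit $R$-matrix conjugation on chosen auxiliary legs, and manipulating everything as identities in $\End((\C^N)^{\otimes n})\otimes \mcO_q(M_N(\C))$. Once that dictionary is installed, the reflection equation becomes precisely the $R$-matrix identity that transforms an RTT-type algebra into an RE-type algebra under the braided twisting, and the remaining steps are routine bookkeeping.
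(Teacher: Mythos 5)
Your overall strategy is sound and is essentially the standard transmutation/covariantization argument; note that the paper itself offers no proof of this theorem, deferring entirely to \cite[Section 7.4]{Maj95} and \cite[Example 10.18]{KS97}, so your write-up differs from "the paper's proof" mainly in that it supplies one. Step 1 is exactly what those references do: encode $*$ on the auxiliary legs via explicit $R$-matrix factors, push the computation through the FRT relation, the braid relation and the Hecke identity, and then invoke the universal property of $\mcO_q(H(N))$ (uniqueness being automatic on generators). Surjectivity via the inversion formula \eqref{EqRevBraid} is likewise standard.

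The one step you should not wave through is injectivity. Your appeal to "the diamond lemma" asserts a full PBW theorem for $\mcO_q(H(N))$, i.e.\ confluence of the rewriting system coming from \eqref{EqCommOqHN}; resolving the overlap ambiguities there is a genuinely nontrivial computation, and in the literature the PBW basis of the REA is usually \emph{deduced from} the very isomorphism you are proving, so as written your argument flirts with circularity. Fortunately you do not need linear independence of the ordered monomials on the source, only that they span: the relations \eqref{EqCommOqHN} rewrite each unordered quadratic monomial as a combination of monomials that are smaller in a suitable order, so ordered monomials span each graded piece of $\mcO_q(H(N))$ without any confluence check. Since $\Phi$ is degree-preserving (the relations \eqref{EqCommOqHN} are homogeneous quadratic and the braided product is graded) and surjective onto a graded algebra whose degree-$n$ piece has dimension exactly the number of ordered degree-$n$ monomials (the twist does not change the underlying graded vector space of $\mcO_q(M_N(\C))$, whose Hilbert series is known from \cite[Theorem 3.5.1]{PW91}), a dimension count in each degree then forces injectivity. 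Alternatively, you can bypass the REA-side combinatorics altogether: the passage from the ordinary to the braided product is an invertible twist of comodule algebras, so the ideal of relations of $(\mcO_q(M_N(\C)),*)$ as a quotient of the free algebra on the $X_{ij}$ is the image of the FRT ideal under the (degree-preserving, invertible) twisting map, hence is generated in degree two by the twisted FRT relations --- and a direct computation identifies these with the reflection equation relations. That is the argument underlying \cite{Maj91,Maj95} and \cite{KS97}, and it yields the PBW basis of $\mcO_q(H(N))$ as a corollary rather than an input.
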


The $*$-structure of $\mcO_q(H(N))$ can be transported along $\Phi$ to a $*$-structure on $(\mcO_q(M_N(\C)),*)$, which we denote by $f \mapsto f^{\#}$. Concretely, we have 
\begin{equation}\label{EqBraidStar}
X_{ij}^{\#} = X_{ji}.
\end{equation}

The above isomorphism allows to define quantum minors inside $\mcO_q(H(N))$. 

\begin{Def}
For $I,J \in \binom{[N]}{k}$, we define the \emph{quantum minors}
\begin{equation}\label{EqBraidMinor}
Z_{IJ} = \Phi^{-1}(X_{IJ}). 
\end{equation}
\end{Def}

More generally, if $V$ is a $\mcO_q(M_N(\C))$-comodule, we denote by 
\begin{equation}\label{EqGenComodNot}
X_V(\xi,\eta) \in \mcO_q(M_N(\C)),\qquad \xi,\eta\in V
\end{equation}
the corresponding matrix units, and we similarly write 
\[
Z_V(\xi,\eta) = \Phi^{-1}(X_V(\xi,\eta)) \in \mcO_q(H(N)). 
\]
Then the quantum minors $Z_{IJ}$ arise as matrix coefficients of 
\begin{equation}\label{EqBraidMinorMatrix}
Z^{[k]} := (\id\otimes \Phi^{-1})X^{[k]}. 
\end{equation}

\begin{Lem}
The $*$-structure on $\mcO_q(H(N))$ satisfies 
\begin{equation}\label{EqStarHerm}
Z_{IJ}^* = Z_{JI},\qquad Z_V(\xi,\eta)^* = Z_V(\eta,\xi),
\end{equation}
\end{Lem}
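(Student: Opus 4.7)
The plan is to first reduce the first identity to the second. Taking $V = V_{[k]}$ with the orthonormal basis $\{e_I\}_{I\in\binom{[N]}{k}}$, the defining relation $\wedge^k\delta(e_J) = \sum_I e_I \otimes X_{IJ}$ gives $Z_V(e_J,e_I) = Z_{IJ}$, so the first identity follows from the second applied to $\xi = e_J$, $\eta = e_I$. It therefore suffices to establish the second identity for every finite-dimensional positive integral $\mcO_q(M_N(\C))$-comodule $V$, with orthonormal basis fixed via the $U_q(\mfu(N))$-invariant Hilbert space structure on $V$.

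Next I would transport along $\Phi$: the $*$-structure on $\mcO_q(H(N))$ becomes an antilinear involution $\#$ on $(\mcO_q(M_N(\C)),*)$, antimultiplicative for the braided product, and satisfying $X_{ij}^\# = X_{ji}$. Writing $\natural$ for the combined involution on $\End(V) \otimes (\mcO_q(M_N(\C)),*)$ that applies hermitian adjoint to the endomorphism factor and $\#$ to the algebra factor, the claim reads $X_V^\natural = X_V$ for all such $V$. The strategy is then a two-step reduction: first propagate $\natural$-invariance from $V = V_{[1]}$ (where $X_V = X$ and $X^\natural = X$ is the defining transfer) up to the tensor powers $V_{[1]}^{\otimes m}$, and then down to any sub-comodule by compression. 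The descent step is easy: if $p \in \End(V_{[1]}^{\otimes m})$ is the self-adjoint orthogonal projection onto a sub-comodule $V$, then $X_V$ is the compression $(p\otimes 1)X_{V_{[1]}^{\otimes m}}(p\otimes 1)$, and since $(p\otimes 1)^\natural = (p\otimes 1)$, the $\natural$-invariance passes from $V_{[1]}^{\otimes m}$ to $V$.

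The central remaining step is to show $(X_{V_{[1]}^{\otimes m}})^\natural = X_{V_{[1]}^{\otimes m}}$. Here the factorization $X_{V\otimes W} = X_V^{(1)} \cdot X_W^{(2)}$ uses the ordinary (non-braided) product of $\mcO_q(M_N(\C))$, whereas $\natural$ is antimultiplicative only for the braided product. Converting between the two products by \eqref{EqRevBraid} introduces bicharacter factors involving $\hat{R}^{\pm 1}$, and the reversal of tensor-product order on $\End((\C^N)^{\otimes m})$ caused by the hermitian adjoint must be balanced against the reversal of the algebra product caused by $\#$; the self-adjointness $\hat{R}^* = \hat{R}$ is the key input ensuring these two reversals cancel. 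An induction on $m$, starting from the already-established identity on $V_{[1]}$, then yields the result.

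The main obstacle will be the detailed bookkeeping in this inductive step, namely verifying that the $\hat{R}$-factors arising from the ordinary-to-braided product conversion cancel exactly against the reordering produced by the hermitian adjoint on $\End((\C^N)^{\otimes m})$. A more transparent alternative, which I expect can be made to work with the same outcome, is to bypass the braided picture entirely and work inside $\mcO_q(H(N))$ by defining $Z^{[k]}$ recursively through the reflection-equation fusion procedure (in which $Z^{[k]}$ is assembled from $Z$ by an iterated $\hat{R}$-conjugation compatible with quantum antisymmetrization) and proving $(Z^{[k]})^* = Z^{[k]}$ by induction on $k$, using only $Z^* = Z$, $\hat{R}^* = \hat{R}$, and the reflection equation itself.
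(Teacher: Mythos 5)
Your reduction of the first identity to the second, and the descent from tensor powers of $V_{[1]}$ to arbitrary sub-comodules by compressing with self-adjoint projections, are both fine. The problem is that the entire content of the lemma sits in the one step you explicitly defer: showing $(X_{V\otimes W})^{\natural}=X_{V\otimes W}$ when the factorization $X_{V\otimes W}=X_{V,13}X_{W,23}$ uses the ordinary product of $\mcO_q(M_N(\C))$ while your $\#$ is only known to be anti-multiplicative for the braided one. As written this is a plan, not a proof, and the bookkeeping is not innocuous: applying $\#$ reverses the order of the two algebra factors (the hermitian adjoint does \emph{not} swap the endomorphism legs, contrary to what your phrasing suggests), so after converting back to the ordinary product via \eqref{EqRevBraid} you must still commute $X_W$ past $X_V$ using \eqref{EqBraidComM}, and all the resulting bicharacter factors have to be shown to cancel. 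The input actually needed for this is the symmetry $\mbr(f^{\#},g^{\#})=\overline{\mbr(g,f)}$ for all $f,g$, of which $\hat{R}^{*}=\hat{R}$ is only the generator case; it must be propagated to all of $\mcO_q(M_N(\C))$ via \eqref{EqDefProp}. Your ``fusion'' alternative is likewise not carried out, and would in any case only treat the exterior powers rather than a general comodule $V$.

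The paper's proof inverts your logic and thereby avoids the induction entirely. One first observes that $X_{ij}\mapsto X_{ji}$ extends to an anti-linear \emph{automorphism} $\#$ of $\mcO_q(M_N(\C))$ with its \emph{original} product (the FRT relations \eqref{EqCommHol} are visibly preserved under this map). For that $\#$ the identity $X_V(\xi,\eta)^{\#}=X_V(\eta,\xi)$ is immediate, since matrix coefficients of tensor products multiply in the ordinary product and $\#$ is multiplicative for it. The only thing left to check is that this $\#$ coincides with the $*$-structure transported from $\mcO_q(H(N))$, for which it suffices that $\#$ be anti-multiplicative for the braided product; this follows from $\mbr(f^{\#},g^{\#})=\overline{\mbr(g,f)}$ together with \eqref{EqBraidProd} and \eqref{EqBraidComM}, and then one compares on generators. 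If you want to salvage your route, prove the bicharacter symmetry first and reorganize in this order; the product conversions you are worried about then reduce to one-line checks.
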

\begin{proof}
First note that $\#$, as defined by \eqref{EqBraidStar}, extends uniquely to an anti-linear \emph{automorphism} on $\mcO_q(M_N(\C))$ with the original product. This automorphism will then be anti-comultiplicative: 
\[
\Delta(f^{\#}) =f_{(2)}^{\;\#}\otimes f_{(1)}^{\;\#},\qquad f\in \mcO_q(M_N(\C)). 
\] 
As then 
\begin{equation}\label{EqProprSharp}
\mbr(f^{\#},g^{\#}) = \overline{\mbr(g,f)},\qquad f,g\in \{X_{ij}\mid 1\leq i,j\leq N\},
\end{equation}
it follows by \eqref{EqDefProp} that \eqref{EqProprSharp} holds for all $f,g\in \mcO_q(M_N(\C))$. The same condition must then hold for $\mbr'$. But this is easily seen to imply (using \eqref{EqBraidProd} and \eqref{EqBraidComM}) that 
\[
(f*g)^{\#} = g^{\#}*f^{\#},\qquad f,g\in \mcO_q(M_N(\C)). 
\]
It follows that $z \mapsto \Phi^{-1}(\Phi(z)^{\#})$ must coincide with the original $*$-structure on $\mcO_q(H(N))$, as it coincides on the generators $Z_{ij}$. 

The conclusion now follows by the easy verification that 
\[
X_V(\xi,\eta)^{\#} = X_V(\eta,\xi),
\]
using that $\#$ is multiplicative on $\mcO_q(M_N(\C))$ with the original product. 
\end{proof}


\subsection{Commutation relations and Laplace expansions}

For $I,J\in \binom{[N]}{k}$ and $I',J'\in \binom{[N]}{l}$, denote
\begin{equation}\label{EqCoeffsRBraid}
\hat{R}^{IJ}_{I'J'} := \mbr(X_{JI},X_{I'J'})
\end{equation}
and 
\[
\hat{R}^{[k],[l]} = \sum_{I,J\in \binom{\brN}{k},I',J' \in \binom{\brN}{l}}\hat{R}^{IJ}_{I'J'} e_{I'I}\otimes e_{JJ'}. 
\]
By the braided commutativity \eqref{EqBraidComM}, one gets that 
\begin{equation}\label{EqBraidComm}
\hat{R}^{[k],[l]}_{12}X^{[k]}_{13} X^{[l]}_{23} = X^{[l]}_{13} X^{[k]}_{23} \hat{R}^{[k],[l]}_{12}.
\end{equation}

\begin{Lem}
We have
\begin{equation}\label{EqCondR}
\hat{R}^{IJ}_{I'J'} \neq 0 \quad \textrm{only if}\quad  J\preceq I,J'\preceq I'\quad \textrm{and}\quad J \setminus I = J'\setminus I',\;I\setminus J = I'\setminus J'.
\end{equation}
Moreover,
\[
\hat{R}^{II}_{I'I'} = q^{- |I\cap I'|}. 
\] 
\end{Lem}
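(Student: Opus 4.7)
The plan is to compute $\hat{R}^{IJ}_{I'J'}$ by interpreting it as a matrix coefficient of the universal $R$-matrix $\msR$ acting on $\Lambda_q^k(\C^N)\otimes \Lambda_q^l(\C^N)$. Recall from \eqref{EqBraidMinorMatrix} that the quantum minor $X_{JI}$ is the $(J,I)$-matrix coefficient of the integral $\mcO_q(GL(N,\C))$-comodule $\Lambda_q^k(\C^N)$, i.e.\ $\tau(X_{JI},x) = \langle e_J,\theta^{[k]}(x)e_I\rangle$ for $x \in U_q(\mfgl(N,\C))$. Since $\mbr(\cdot,\cdot) = \tau\otimes\tau(\cdot\otimes\cdot,\msR)$ via the identification \eqref{EqDualOqGL2}, the first step is to observe that
\[
\hat{R}^{IJ}_{I'J'} = \mbr(X_{JI},X_{I'J'}) = \bigl\langle e_J \otimes e_{I'},\; \msR\,(e_I\otimes e_{J'})\bigr\rangle,
\]
where $\msR$ acts on $\Lambda_q^k(\C^N)\otimes \Lambda_q^l(\C^N)$.

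Next I will exploit the factorization $\msR = \widetilde{\msR}\,\msQ$ from \eqref{EqFactR}--\eqref{EqPropR5}. Since $\wt(e_I)=\varpi_I$ in the exterior power (as $K_je_I = q^{(\varpi_I)_j}e_I$), $\msQ$ acts diagonally with eigenvalue $q^{-(\varpi_I,\varpi_{J'})}$ on $e_I\otimes e_{J'}$. The summands $\widetilde{\msR}_{\alpha}\in U_q(\mfn(N))_{\alpha}\otimes U_q(\mfn^-(N))_{-\alpha}$ raise/lower weights by $\alpha$ in the first/second factor, so
\[
\widetilde{\msR}_{\alpha}(e_I\otimes e_{J'}) \in V_{\varpi_I+\alpha}\otimes V_{\varpi_{J'}-\alpha}.
\]
Pairing this with $e_J\otimes e_{I'}$ is non-zero only if $\varpi_J = \varpi_I+\alpha$ and $\varpi_{I'} = \varpi_{J'}-\alpha$ simultaneously, so that necessarily
\[
\alpha = \varpi_J - \varpi_I = \varpi_{J'} - \varpi_{I'},\qquad \alpha \in Q^+.
\]

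From here the conditions in \eqref{EqCondR} follow directly. Equality $\varpi_J-\varpi_I = \varpi_{J'}-\varpi_{I'}$ of $\{-1,0,1\}$-valued vectors is equivalent to the equality of both their positive and negative supports, i.e.\ $J\setminus I = J'\setminus I'$ and $I\setminus J = I'\setminus J'$. The condition $\alpha\in Q^+$ translates (using the characterization $\sum_{k=1}^{l}\alpha_k\geq 0$ for all $l$, together with the remark below \eqref{EqOrdPart}) into $|J\cap\brl|\geq |I\cap\brl|$ for all $l$, that is, $J\preceq I$; symmetrically $J'\preceq I'$.

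For the diagonal formula, take $J=I$ and $J'=I'$. Then the only admissible $\alpha$ is $\alpha=0$, and by \eqref{EqPropR4} we have $\widetilde{\msR}_0 = 1\otimes 1$, so $\widetilde{\msR}_0(e_I\otimes e_{I'})= e_I\otimes e_{I'}$. All other terms in $\widetilde{\msR}$ produce vectors of different weight and contribute nothing to the $e_I\otimes e_{I'}$ coefficient. Hence
\[
\hat{R}^{II}_{I'I'} = q^{-(\varpi_I,\varpi_{I'})} = q^{-|I\cap I'|},
\]
which is the desired formula. The only non-routine aspect is the initial identification of $\hat{R}^{IJ}_{I'J'}$ with a matrix entry of $\msR$ on the exterior tensor product; once this is in place, everything reduces to tracking weights through $\widetilde{\msR}$ and $\msQ$.
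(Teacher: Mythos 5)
Your proof is correct and follows exactly the paper's route: the paper's proof is the one-line citation of \eqref{EqFactR}, \eqref{EqPropR3}, \eqref{EqPropR5} and \eqref{EqCoeffsRBraid}, which is precisely the weight-tracking argument through $\msR = \widetilde{\msR}\msQ$ on $\Lambda_q^k(\C^N)\otimes\Lambda_q^l(\C^N)$ that you spell out in detail.
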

\begin{proof}
This follows from \eqref{EqFactR}, \eqref{EqPropR3}, \eqref{EqPropR5} and the defining equation \eqref{EqCoeffsRBraid}.
\end{proof}

We also write $\hat{R}_{[k],[l]}^{-1} = (\hat{R}^{[k],[l]})^{-1}$ and
\[
\hat{R}_{[k],[l]}^{-1} = \sum_{I,J\in \binom{\brN}{k},I',J' \in \binom{\brN}{l}} (\hat{R}^{-1})_{I'J'}^{IJ} e_{JJ'} \otimes e_{I'I},\qquad (\hat{R}^{-1})_{I'J'}^{IJ}  = \mbr^{-1}(X_{JI},X_{I'J'}).
\]
The $(\hat{R}^{-1})_{I'J'}^{IJ}$ satisfy the same conditions \eqref{EqCondR} except that now $(\hat{R}^{-1})^{II}_{I'I'} = q^{|I\cap I'|}$. 

We have the following $q$-analogue of the Laplace expansion \cite[Theorem 4.4.3]{PW91}, which can be obtained by using the comodule algebra $\wedge_q(\C^N)$. We use the notation as introduced in \eqref{EqSetUnderUpper}.

\begin{Prop}\label{PropLaplace}
For $I,J\in \binom{\brN}{k}$ and $K,K'\in \binom{\brk}{l}$ the following identities hold in $\mcO_q(M_N(\C))$,
\begin{eqnarray}
\delta_{K,K'}X_{I,J} &=& \sum_{P \in \binom{\brk}{l}} (-q)^{\wt(P) -\wt(K)}X_{I_K,J_P}\; X_{I^{K'},J^P} \label{EqLaplaceRow}\\
&=& \sum_{P \in \binom{\brk}{l}} (-q)^{\wt(P) -\wt(K)}X_{I_P,J_K}\; X_{I^P,J^{K'}}.  \label{EqLaplaceColumn}
\end{eqnarray}
\end{Prop}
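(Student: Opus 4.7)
The natural framework is the right $\mcO_q(M_N(\C))$-comodule algebra structure on $\wedge_q(\C^N)$ from \eqref{EqComodAlgMapqExt}, whose matrix coefficients in the canonical basis $\{e_I\}$ are, by definition, the quantum minors, so that $\wedge^k\delta(e_J) = \sum_I e_I \otimes X_{I,J}$. The strategy is to decompose $e_J$ inside $\wedge_q^k(\C^N)$ as a wedge product of two factors and then apply the coaction. Concretely, I would first establish the following sign lemma: for $J \in \binom{\brN}{k}$ and $K,K' \in \binom{\brk}{l}$,
\[
e_{J_K} \wedge_q e_{J^{K'}} \;=\; \delta_{K,K'}\,(-q)^{\wt(K) - \binom{l+1}{2}}\, e_J.
\]
When $K \neq K'$, the sets $J_K$ and $J^{K'}$ must intersect (since $J_K \cap J^{K'} = J_K \setminus J_{K'}$ is non-empty for distinct size-$l$ subsets of $J$), so the wedge vanishes via $e_i \wedge_q e_i = 0$. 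When $K = K'$, the indices of $e_{J_K} \wedge_q e_{J^K}$ form an $(l,k-l)$-shuffle of the sorted tuple $(j_1,\ldots,j_k)$, and reordering via $e_j \wedge_q e_i = -q\, e_i \wedge_q e_j$ yields a factor $(-q)^{\mathrm{inv}}$; an elementary count (for each $s\leq l$, counting elements of $[k]\setminus K$ smaller than $p_s$) gives $\mathrm{inv} = \sum_{s=1}^l (p_s - s) = \wt(K) - \binom{l+1}{2}$.

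Applying the algebra homomorphism $\wedge\delta$ to both sides of the sign lemma, the left-hand side becomes $\delta_{K,K'}(-q)^{\wt(K)-\binom{l+1}{2}}\sum_I e_I \otimes X_{I,J}$, while the right-hand side expands as $\sum_{L,M} e_L \wedge_q e_M \otimes X_{L,J_K}\,X_{M,J^{K'}}$, with $L,M$ ranging over subsets of $\brN$ of sizes $l$ and $k-l$. A second application of the sign lemma on the row side (writing $L=I_P$ and $M=I^P$ inside $I = L\cup M$) rewrites this as $\sum_I \sum_P (-q)^{\wt(P) - \binom{l+1}{2}} e_I \otimes X_{I_P,J_K}\,X_{I^P,J^{K'}}$. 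Comparing coefficients of $e_I$ and cancelling the common factor $(-q)^{-\binom{l+1}{2}}$ produces \eqref{EqLaplaceColumn}.

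For the row expansion \eqref{EqLaplaceRow}, one may either repeat the entire argument with a splitting of $e_I$ on the other side via a dual (left) comodule structure, or more conveniently apply \eqref{EqLaplaceColumn} inside $\mcO_{q^{-1}}(M_N(\C))^{\mathrm{op}}$ under the anti-algebra isomorphism $X_{ij}\mapsto X_{ji}$, which exchanges the two indices of each quantum minor. The principal technical obstacle is the combinatorial bookkeeping of signs: verifying the shuffle count $\mathrm{inv} = \wt(K) - \binom{l+1}{2}$, and making sure that the two applications of the sign lemma (one for the columns, one for the rows) combine to yield exactly the factor $(-q)^{\wt(P) - \wt(K)}$ appearing in the statement, with the $q$-powers $\binom{l+1}{2}$ cancelling cleanly.
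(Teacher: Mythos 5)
Your argument is correct and is essentially the route the paper itself indicates: the paper gives no proof of Proposition \ref{PropLaplace}, citing \cite[Theorem 4.4.3]{PW91} and remarking only that the identity ``can be obtained by using the comodule algebra $\wedge_q(\C^N)$,'' which is precisely what you carry out (your sign lemma and inversion count $\wt(K)-\binom{l+1}{2}$ are right, and the two applications of it do cancel the $\binom{l+1}{2}$ as claimed). One caveat on your preferred derivation of \eqref{EqLaplaceRow}: the detour through $\mcO_{q^{-1}}(M_N(\C))^{\mathrm{op}}$ is less clean than you suggest, since one must track how the $q^{-1}$-minors and the reversed products of the anti-isomorphism recombine into $(-q)^{\wt(P)-\wt(K)}$; the standard shortcut is that $X_{ij}\mapsto X_{ji}$ is already an algebra \emph{automorphism} of $\mcO_q(M_N(\C))$ itself sending $X_{I,J}$ to $X_{J,I}$, and your first option (the left coaction on $\wedge_q(\C^N)$) also works, provided one invokes the row/column symmetry of quantum minors to identify its matrix coefficients with the same $X_{I,J}$.
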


Using Theorem \ref{LemCommBraid} and \eqref{EqBraidComm}, we see that the quantum minors from \eqref{EqBraidMinorMatrix} satisfy
\begin{equation}\label{EqCommZGen}
\hat{R}^{[l],[k]}_{12} Z^{[k]}_{23} \hat{R}^{[k],[l]}_{12} Z^{[l]}_{23} = Z^{[l]}_{23} \hat{R}^{[l],[k]}_{12} Z^{[k]}_{23} \hat{R}^{[k],[l]}_{12}.
\end{equation}

By \eqref{EqCommZGen} we obtain the following commutation relations between the $Z_{I,J}$: for all $I,J,I',J'$
\begin{equation}\label{EqGenCommRel}
\sum_{K,L,L'} \left(\sum_{P'}\hat{R}_{JK}^{P'I'}\hat{R}_{P'L'}^{IL}\right)  Z_{K,L} Z_{L',J'} =   \sum_{K,L,L'}  \left(\sum_{P'}\hat{R}_{JK}^{P'L'}\hat{R}_{P'J'}^{IL}\right) Z_{I',L'} Z_{K,L}.
\end{equation}

Putting $I= J = [k]$ and using \eqref{EqCondR}, we obtain from \eqref{EqGenCommRel} that the \emph{leading quantum minors} 
\[
Z_{\brk} = Z_{\brk,\brk}
\] 
are mutually commuting self-adjoint elements which satisfy $q$-commutation relations with the $Z_{I,J}$, namely 
\begin{equation}\label{EqCommDs}
Z_{\brk} Z_{I,J} = q^{2|I\cap [k]| - 2|J\cap [k]|}Z_{I,J}Z_{\brk}.
\end{equation}
In particular, we can consider the localisation 
\begin{equation}\label{EqLocHN}
\mcO_q^{\loc}(H(N)) = \mcO_q(H(N))[Z_{[1]}^{-1},\ldots,Z_{\brN}^{-1}].
\end{equation}

Also the Laplace expansion has an analogue in $\mcO_q(H(N))$.

\begin{Prop}
For $I,J \in \binom{\brN}{k}$ and $K\in \binom{\brk}{m}$ with $m\leq k \leq N$ we have
\begin{equation}\label{EqLaplExp1}
Z_{I,J}= \sum_{P \in \binom{\brk}{m}}\sum_{S,T\in \binom{\brN}{m}}\sum_{S',T'\in \binom{\brN}{k-m}}(-q)^{\wt(P) -\wt(K)} (\hat{R}^{-1})^{S,I_K}_{I^K,T'}  \hat{R}^{J_P,T}_{T',S'} Z_{S,T} Z_{S',J^P}
\end{equation}
and
\begin{equation}\label{EqLaplExp2}
Z_{I,J} =  \sum_{P \in \binom{\brk}{m}}\sum_{S,T\in \binom{\brN}{m}}\sum_{S',T'\in \binom{\brN}{k-m}}(-q)^{\wt(P) -\wt(K)} (\hat{R}^{-1})^{T,J_K}_{J^K,S'}  \hat{R}^{I_P,S}_{S',T'} Z_{I^P,T'} Z_{S,T}.
\end{equation}
\end{Prop}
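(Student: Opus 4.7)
The plan is to transport the classical Laplace expansion \eqref{EqLaplaceRow}--\eqref{EqLaplaceColumn} from $\mcO_q(M_N(\C))$ to $\mcO_q(H(N))$ through the isomorphism $\Phi$ of Theorem \ref{LemCommBraid}, converting the ordinary product of quantum minors into the braided one via \eqref{EqRevBraid} and then reading off the $\mbr,\mbr^{-1}$ values on minors in terms of the matrix coefficients \eqref{EqCoeffsRBraid}.

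More precisely, apply $\Phi^{-1}$ to \eqref{EqLaplaceRow} with $K=K'$, so that
\[
Z_{I,J} \;=\; \sum_{P\in\binom{\brk}{m}} (-q)^{\wt(P)-\wt(K)}\;\Phi^{-1}\!\left(X_{I_K,J_P}\,X_{I^K,J^P}\right).
\]
For the product inside $\Phi^{-1}$, use \eqref{EqRevBraid} to write
\[
X_{I_K,J_P}\,X_{I^K,J^P} \;=\; \mbr^{-1}(a_{(1)},b_{(1)})\, a_{(2)}*b_{(3)}\, \mbr(a_{(3)},b_{(2)})
\]
with $a=X_{I_K,J_P}$ and $b=X_{I^K,J^P}$. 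The iterated coproducts on quantum minors give
\[
\Delta^{(2)}(a)=\sum_{S,T\in\binom{\brN}{m}} X_{I_K,S}\otimes X_{S,T}\otimes X_{T,J_P},\qquad \Delta^{(2)}(b)=\sum_{S',T'\in\binom{\brN}{k-m}} X_{I^K,S'}\otimes X_{S',T'}\otimes X_{T',J^P},
\]
and from the definition \eqref{EqCoeffsRBraid} one immediately reads
\[
\mbr^{-1}(X_{I_K,S},X_{I^K,T'}) \;=\; (\hat R^{-1})^{S,I_K}_{I^K,T'},\qquad \mbr(X_{T,J_P},X_{T',S'}) \;=\; \hat R^{J_P,T}_{T',S'},
\]
where I have already swapped the dummy labels $S'\leftrightarrow T'$ inside the $\mbr^{-1}$-factor to match the target form. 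Using that $\Phi^{-1}$ sends the braided product $*$ to the ordinary product of $\mcO_q(H(N))$ and the generator $X_{I,J}$ to $Z_{I,J}$, the identity \eqref{EqLaplExp1} drops out after collecting the scalar coefficients (which commute past the $Z_{S,T}Z_{S',J^P}$).

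The second identity \eqref{EqLaplExp2} is obtained by exactly the same procedure, starting instead from the column-version \eqref{EqLaplaceColumn} of the classical Laplace expansion and grouping the factor $X_{I_P,J_K}X_{I^P,J^{K'}}$ in the analogous way. The main (and essentially only) difficulty is bookkeeping: one must be careful that the Sweedler legs end up on the correct side in \eqref{EqRevBraid}, and that the identification of the matrix coefficients of $\mbr$ and $\mbr^{-1}$ on minors agrees with the index conventions in \eqref{EqCoeffsRBraid}. Once the dummy variables are relabelled so that the inner index of $(\hat R^{-1})$ matches the inner index of $\hat R$, the claimed formulas appear directly.
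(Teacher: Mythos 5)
Your derivation of \eqref{EqLaplExp1} is correct and is essentially the paper's proof: apply $\Phi^{-1}$ to \eqref{EqLaplaceRow} and rewrite each product $X_{I_K,J_P}X_{I^K,J^P}$ as a sum of braided products via \eqref{EqRevBraid}; your identification of the $\mbr^{\pm1}$-values on minors with the entries of $\hat R^{\pm1}$ from \eqref{EqCoeffsRBraid} checks out after the relabelling you indicate.

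The route you propose for \eqref{EqLaplExp2}, however, does not produce that identity. Applying the same transmutation \eqref{EqRevBraid} to the column expansion \eqref{EqLaplaceColumn}, i.e.\ to $f=X_{I_P,J_K}$ and $g=X_{I^P,J^{K}}$, yields
\[
Z_{I,J}=\sum_{P}(-q)^{\wt(P)-\wt(K)}\sum_{A,B,C,D}(\hat R^{-1})^{A,I_P}_{I^P,B}\,\hat R^{J_K,C}_{B,D}\,Z_{A,C}\,Z_{D,J^K},
\]
which is a valid identity but a different one: here the $m\times m$ minor sits on the \emph{left}, the fixed index $J^K$ occupies the \emph{column} slot of the right factor, and the summation label $P$ enters through the $\hat R^{-1}$-coefficient. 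In \eqref{EqLaplExp2} the $(k-m)$-minor $Z_{I^P,T'}$ is on the left with $I^P$ in the row slot, and $P$ enters through the $\hat R$-coefficient. To pass from the displayed formula to \eqref{EqLaplExp2} you would still have to commute the two factors, i.e.\ essentially reprove the minor commutation relations \eqref{EqCommZGen}. The intended (and quickest) argument, which is what the paper does, is to apply $*$ to \eqref{EqLaplExp1}: by \eqref{EqStarHerm} one has $(Z_{S,T}Z_{S',J^P})^*=Z_{J^P,S'}Z_{T,S}$, the coefficients $\hat R^{IJ}_{I'J'}$ and $(\hat R^{-1})^{IJ}_{I'J'}$ are real, and after interchanging the roles of $I$ and $J$ and relabelling the dummy indices one lands exactly on \eqref{EqLaplExp2}.
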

\begin{proof} 
Using \eqref{EqRevBraid}, we see that 
\begin{equation}\label{EqRevBraidConc}
X_{I,J}X_{I',J'} = \sum_{A,B,C,D} (\hat{R}^{-1})_{I',B}^{A,I} \hat{R}_{B,D}^{J,C} X_{A,C}*X_{D,J'}.
\end{equation}
Applying \eqref{EqRevBraidConc} to \eqref{EqLaplaceRow} and invoking Theorem \ref{LemCommBraid} and the concrete formulas \eqref{EqBraidProd}, we obtain \eqref{EqLaplExp1}. Applying $*$ we obtain \eqref{EqLaplExp2}. 
\end{proof}

\subsection{Actions by quantum groups}

The right action \eqref{EqDefAdGLq} of $\mbGL_q(N,\C)$ on $\mbH_q(N)$ is implemented by a coaction (= comodule $*$-algebra structure)
\begin{equation}\label{EqActAdqGL}
\Ad^{GL}_q: \mcO_q(H(N)) \rightarrow \mcO_q(H(N)) \otimes \mcO_q^{\R}(GL(N,\C)),\qquad Z \mapsto X_{13}^{*}Z_{12}X_{13}.
\end{equation}
The well-definedness of $\Ad^{GL}_q$ is immediate from the defining relations.

Similarly, the restrictions of the above action to actions of $\mbU_q(N)$ and $\mbT_q(N)$ on $\mbH_q(N)$, arise from coactions
\begin{equation}\label{EqActAdqU}
\Ad^{U}_q: \mcO_q(H(N)) \rightarrow \mcO_q(H(N)) \otimes \mcO_q(U(N)),\qquad Z \mapsto U_{13}^{*}Z_{12}U_{13}.
\end{equation}
\begin{equation}\label{EqActAdqT}
\Ad^{T}_q: \mcO_q(H(N)) \rightarrow \mcO_q(H(N)) \otimes \mcO_q(T(N)),\qquad Z \mapsto T_{13}^{*}Z_{12}T_{13}.
\end{equation}

Using the pairing \eqref{EqDualOqUqGL} between $\mcO_q(U(N))$ and $U_q(\mfu(N))$, the $*$-algebra $\mcO_q(H(N))$ becomes a left $U_q(\mfu(N))$-module $*$-algebra by considering the infinitesimal action dual to $\Ad^U_q$, 
\begin{equation}\label{EqInfAdj}
x\rhd z = (\id\otimes \tau(-,x))\Ad^U_q(z),\qquad x\in U_q(\mfu(N)),z\in\mcO_q(H(N)).
\end{equation}
One has the global formula
\begin{equation}\label{EqGlobForm}
x\rhd Z_{V}(\xi,\eta)= Z_{V}(S(x_{(1)})^*\xi,x_{(2)}\eta).
\end{equation}
Indeed, one notes that the right hand side also defines a module $*$-algebra structure on $\mcO_q(H(N))$, so that the identity only needs to be checked on the generating matrix, where it is obvious. 

Classically, the $U(N)$-orbit of the identity matrix in $H(N)$ is of course trivial under the adjoint action. On the other hand, by the Cholesky decomposition one has that the $T(N)$-orbit of the identity matrix under the adjoint action gives the space of all positive-definite matrices. As the latter are Zariski dense in $H(N)$, the following proposition is not surprising. We will use in the statement the map $\iota$ from \eqref{EqiotaGL}, the infinitesimal adjoint action \eqref{EqInfAdj} as well as the right adjoint action $\lhdb$ of $\mcO_q(T(N))$ on $\mcO_{q}(T(N))$, given by  
\begin{equation}\label{EqAdjT}
x \lhdb g = S(g_{(1)})xg_{(2)},\qquad x,g\in \mcO_q(T(N)),
\end{equation}
with $S$ denoting the antipode. 

\begin{Prop}\label{PropInclChol}
The $*$-homomorphism 
\[
i_T:\mcO_q(H(N)) \rightarrow \mcO_q(T(N)),\qquad Z \mapsto T^*T 
\]
is injective, and 
\[
i_{T}(\iota(S(g))\rhd f) = i_{T}(f) \lhdb g,\qquad f\in \mcO_q(H(N)), g\in \mcO_q(T(N)).
\]
\end{Prop}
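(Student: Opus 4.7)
The plan has three steps.

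\textbf{Step 1 (well-definedness).} The matrix $Z = I_N$ satisfies $Z^* = Z$ and trivially solves the reflection equation \eqref{EqUnivRelZ} (both sides collapse to $\hat{R}_{12}^2$), so there is a $*$-character $\chi_I : \mcO_q(H(N)) \to \C$ with $\chi_I(Z) = I_N$ (the $k=l=0$, $a=c=1$ case of Theorem \ref{Character classification theorem}). Then $i_T = (\chi_I \otimes \id) \circ \Ad^T_q$ is a $*$-homomorphism, as the composition of a $*$-coaction with a $*$-character, and on the generator gives $(\chi_I \otimes \id)(T^* Z T) = T^* T$.

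\textbf{Step 2 (intertwining).} Both sides of the proposed identity depend linearly on $f$ and on $g$. Since $\rhd$ and $\lhdb$ are both module-algebra actions for the respective Hopf algebra structures, and $\iota$ is an algebra isomorphism intertwining the coproducts up to the $\cop$-reversal, a direct bookkeeping argument propagates the identity from generators to arbitrary products, reducing the problem to the case $f = Z_{kl}$ and $g$ a generator of $\mcO_q(T(N))$. The essential tool is the identity
\[
\tau(h, \iota(g)) = \mbr(g, h),\qquad g \in \mcO_q(B(N)),\; h \in \mcO_q(GL(N,\C)),
\]
which is immediate from $\iota(g) = (\tau(g, \cdot) \otimes \id)\msR$ together with $\mbr(a,b) = \tau(a \otimes b, \msR)$; this extends to arguments in the real forms by the $*$-compatibility of $\iota$. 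Unfolding the LHS via \eqref{EqGlobForm} and $\Ad^U_q(Z_{kl}) = \sum_{ab} Z_{ab} \otimes S(U_{ak}) U_{bl}$ expresses it as a sum of the $(T^*T)_{ab}$ weighted by products of values of $\mbr$. On the RHS, the braided commutation relations \eqref{EqUniRelMN} and \eqref{EqCommHolT} allow one to move $g$ across the entries of $T^*$ and $T$, producing the same weights; equality then follows.

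\textbf{Step 3 (injectivity).} Both algebras carry natural ascending filtrations by degree in the generators (treating $T_{ij}$, $T_{ij}^*$ and $T_i^{-1}$ as degree one), and $i_T$ preserves them. In the associated classical limit (or equivalently at the level of the associated graded with respect to a suitable $q$-adic or deformation filtration), $i_T$ becomes the commutative Cholesky map $T \mapsto T^*T$ from the coordinate ring of invertible upper triangular matrices to that of Hermitian matrices. Its image consists of the positive-definite matrices, which are Zariski-dense in $H(N)$, so the classical map is injective; a standard PBW/deformation argument lifts this to injectivity of $i_T$ itself.

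The main obstacle is Step 2: although conceptually clear, the verification requires careful tracking of antipodes, the interplay between $U_q(\mfu(N))$ and $U_q(\mfu(N))^{\cop}$ introduced by $\iota$, and the braided rearrangements in $\mcO_q(T(N))$. Steps 1 and 3 are essentially formal.
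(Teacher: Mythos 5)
Your Step 1 is correct and clean: $Z=I_N$ does solve the reflection equation, and writing $i_T=(\chi_I\otimes\id)\circ\Ad^T_q$ immediately gives a well-defined $*$-homomorphism. Step 2 is only a plan rather than a proof, but the plan is sound: the reduction to generators via the module-algebra property is legitimate (note $\Delta(\iota(S(g)))=\iota(S(g_{(1)}))\otimes\iota(S(g_{(2)}))$ precisely because $\iota$ lands in the \emph{co-opposite}), and the key identity $\tau(h,\iota(g))=\mbr(g,h)$ is the right tool; I would accept this modulo the unexecuted generator computation (watch the index transposition: $U_{ak}^*=S(U_{ka})$, not $S(U_{ak})$). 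For comparison, the paper does not prove the proposition at all but cites Baumann's separation-of-variables theorem \cite[Theorem 3]{Bau00}, where injectivity comes from the non-degeneracy of the quantum Killing (Rosso) form, i.e.\ from the injectivity of $f\mapsto(\tau(f,-)\otimes\id)(\msR_{21}\msR)$ into $U_q(\mfgl(N,\C))$.

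The genuine gap is Step 3. First, the associated graded of $\mcO_q(T(N))$ with respect to the generator filtration at the \emph{fixed} value $0<q<1$ is a quasi-polynomial (non-commutative) algebra, not the coordinate ring of triangular matrices, so "the classical Cholesky map'' is not what the associated graded map is; to reach the commutative picture you must specialize $q\to 1$, which is a different operation. Second, and more seriously, injectivity does not transfer from the fiber $q=1$ to a fixed $q_0\in(0,1)$ by any "standard deformation argument'': for each filtration degree $d$, injectivity on $F_d$ is governed by the non-vanishing of certain minors of a matrix with entries in $\C[q^{\pm1}]$, so injectivity at $q=1$ only yields injectivity for $q$ outside a finite exceptional set \emph{depending on $d$}, and the union over all $d$ is a countable set that you have no a priori control over (the trivial example of multiplication by $q-q_0$ on $\C[q^{\pm1}]$ shows specialization can kill injectivity at any prescribed point). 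To close this you would need to show the exceptional locus consists only of roots of unity, which amounts to establishing a PBW-type linear independence of the elements $i_T(Z^{\omega})$ at the given $q$ — i.e.\ essentially redoing Baumann's or Joseph--Letzter's argument. As written, Step 3 does not prove injectivity.
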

\begin{proof}
This is well-known, cf.~\cite[Theorem 3]{Bau00}.
\end{proof} 


More generally, we have for any positive integral $U_q(\mfu(N))$-representation $V$ that
\begin{equation}\label{EqFormGlobClas}
(\id\otimes i_T) Z_V = T_V^*T_V,
\end{equation}
writing $T_V$ for the image of $X_V$ in $\End(V)\otimes \mcO_q(T(N))$.  See the proof of Lemma \ref{LemGlobalFormMapi} for a more general statement. In particular, 
\begin{equation}\label{EqDetFormT}
i_T(Z_{[k]}) =T_1^2\ldots T_k^2,\qquad 1 \leq k \leq N.
\end{equation}

\subsection{Inclusions and quotients between the $\mcO_q(H(N))$}

One can map $\mcO_q(H(K))$ into $\mcO_q(H(N))$ by placement in the upper left corner: 
\begin{equation}\label{EqInclCorn}
\mcO_q(H(K)) \rightarrow \mcO_q(H(N)),\qquad Z_{ij}\mapsto Z_{ij}. 
\end{equation}

Similarly, we can also project down to the right bottom corner:

\begin{Lem}\label{Lem OqHn quotient} Let $J_M$ be the 2-sided $*$-ideal generated by the $Z_{ij}$ with $1\leq i \leq M$ and $1\leq j \leq N$. Then 
\[
\mcO_q(H(N))/J_M \cong \mcO_q(H(N-M)),\qquad (Z_{ij})_{1\leq i,j\leq N} \mapsto \begin{pmatrix} 0_M & 0_{M,N-M} \\ 0_{N-M,M} & (Z_{ij})_{1\leq i,j\leq N-M}\end{pmatrix}
\]
is a $*$-homomorphism. 
\end{Lem}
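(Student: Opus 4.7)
The plan is to construct the inverse homomorphism and verify well-definedness. Let $W$ denote the generating matrix of $\mcO_q(H(N-M))$, and define a candidate map
\[
\phi:\mcO_q(H(N))\to\mcO_q(H(N-M)),\qquad \phi(Z_{ij})=\begin{cases} W_{i-M,j-M}, & i,j>M,\\ 0, & \text{otherwise}.\end{cases}
\]
Once $\phi$ is shown to be a well-defined $*$-homomorphism, the inclusion $J_M\subseteq\ker\phi$ is immediate, producing a surjective $*$-homomorphism $\bar\phi:\mcO_q(H(N))/J_M\to\mcO_q(H(N-M))$. For injectivity, observe that $J_M$ being a $*$-ideal makes it coincide with the ideal generated by $\{Z_{ij}:\min(i,j)\le M\}$, so the quotient is generated by the cosets of $Z_{i+M,j+M}$. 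Since these cosets satisfy the reflection relations of $\mcO_q(H(N-M))$ (which is essentially what the well-definedness of $\phi$ establishes), the universal property yields an inverse $\psi:\mcO_q(H(N-M))\to\mcO_q(H(N))/J_M$.

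The main task is thus to show that the matrix $\tilde Z:=(\id\otimes\phi)(Z)\in M_N(\mcO_q(H(N-M)))$ is self-adjoint and satisfies the reflection equation \eqref{EqUnivRelZ}. Self-adjointness is trivial from $W^*=W$. For the reflection equation, the key structural input is this observation about $\hat R$: by inspection of \eqref{EqBraidOp}, each $\hat R(e_a\otimes e_b)$ is a linear combination of basis vectors $e_c\otimes e_d$ with $\{c,d\}=\{a,b\}$ as multisets. Letting $V_H=\mathrm{span}(e_{M+1},\ldots,e_N)$ and $P\in M_N$ the orthogonal projection onto $V_H$, this implies that $\hat R$ preserves each of $V_H\otimes V_H$, $V_L\otimes V_L$, and $V_L\otimes V_H\oplus V_H\otimes V_L$. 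Consequently $\hat R$ commutes with $P\otimes P$, and the compression $\bar R:=(P\otimes P)\hat R(P\otimes P)$ coincides, under the identification $V_H\cong\C^{N-M}$, with the braid operator $\hat R^{(N-M)}$ of $\mcO_q(H(N-M))$.

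By construction $\tilde Z=(P\otimes 1)\tilde Z(P\otimes 1)$, and the reflection equation for $W$ translates to
\[
\bar R_{12}\tilde Z_{23}\bar R_{12}\tilde Z_{23}=\tilde Z_{23}\bar R_{12}\tilde Z_{23}\bar R_{12}
\]
in $\End(\C^N)^{\otimes 2}\otimes\mcO_q(H(N-M))$. To upgrade this to the statement with $\hat R$ in place of $\bar R$, I will verify the sub-identity $P_2\hat R_{12}P_2=\bar R_{12}$, where $P_2=1\otimes P\otimes 1$. This follows from the two facts $\hat R P_2=P_1\hat R P_2$ (equivalent to $\hat R(V\otimes V_H)\subseteq V_H\otimes V$, read off \eqref{EqBraidOp}) and $P_2\hat R=P_2\hat R P_1$ (its adjoint, using $\hat R^*=\hat R$). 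Since $\tilde Z_{23}P_2=P_2\tilde Z_{23}=\tilde Z_{23}$, inserting $P_2$'s around the middle $\hat R_{12}$ replaces it by $\bar R_{12}$ on both sides of the desired equation. Setting $S:=\tilde Z_{23}\bar R_{12}\tilde Z_{23}$, a quick check shows $S=(P\otimes P)_{12}S(P\otimes P)_{12}$, so using that $\hat R$ commutes with $P\otimes P$ we obtain $\hat R_{12}S=\bar R_{12}S$ and $S\hat R_{12}=S\bar R_{12}$. The reflection equation for $\tilde Z$ with respect to $\hat R$ therefore reduces to $\bar R_{12}S=S\bar R_{12}$, which is the already-known reflection equation for $W$.

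The main obstacle is the careful bookkeeping of projector insertions, but every step rests on the elementary fact that $\hat R$ respects the splitting $\C^N=V_L\oplus V_H$. As a more elementary alternative, one can check the entrywise relations \eqref{EqCommOqHN} directly: when $i,j,k,l>M$ the relation reduces to the $\mcO_q(H(N-M))$-relation with shifted indices (the sums $\sum_{p<j}$ truncating to $\sum_{M<p<j}$ since the remaining terms are in $J_M$), while when some index is $\le M$, the Kronecker $\delta$-constraints in \eqref{EqCommOqHN} force every surviving term into $J_M$ (for instance $\delta_{k<i}$ forces $k\le M$ whenever $i\le M$).
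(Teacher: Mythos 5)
Your proof is correct. The paper's entire proof is the one line ``This follows immediately from the universal relations \eqref{EqCommOqHN}'' --- i.e.\ exactly the entrywise check you sketch as your ``more elementary alternative'' in the last paragraph: when all indices exceed $M$ the relation \eqref{EqCommOqHN} reduces to the shifted $\mcO_q(H(N-M))$-relation, and otherwise every term acquires a factor $Z_{ab}$ with $\min(a,b)\le M$. Your main argument is a genuinely different, more structural route: you exploit that $\hat R(e_a\otimes e_b)$ only involves $e_b\otimes e_a$ and $e_a\otimes e_b$, so $\hat R$ respects the splitting $\C^N=V_L\oplus V_H$, commutes with $P\otimes P$, compresses to the lower-rank braid operator, and satisfies $\hat R P_2=P_1\hat R P_2$ (hence $P_2\hat R P_2=\bar R$). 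This buys you a computation-free verification that zero-padding (and, in the reverse direction, compressing) preserves the reflection equation, and the same projector identities handle both directions needed for the isomorphism. Your phrasing that the cosets satisfying the lower-rank reflection relations ``is essentially what the well-definedness of $\phi$ establishes'' is slightly loose --- well-definedness of $\phi$ is the padding direction, while the inverse $\psi$ needs the compression direction --- but the compression follows from the identical facts ($S'=(P\otimes P)_{12}S'(P\otimes P)_{12}$ and $\hat R_{12}S'=\bar R_{12}S'$), so this is cosmetic. The trade-off is clear: the paper's check is shorter but opaque; yours isolates the relevant structural property of $\hat R$ and would generalize to any braid operator preserving a coordinate flag.
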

\begin{proof}
This follows immediately from the universal relations \eqref{EqCommOqHN}.
\end{proof}

We will now show that, upon a small modification, the corner \eqref{EqInclCorn} can also be embedded in a different way. Namely, let $1\leq k \leq l \leq N$, and put $M = l-k+1$. Let $U_q(\mfu(M))_{k,l} \subseteq U_q(\mfu(N))$ be generated by the $E_i,F_i$ with $k\leq i <l$ and the $K_i^{\pm 1}$ with $k \leq i \leq l$. Then we can consider the restriction of the infinitesimal adjoint action $\rhd$ to $U_q(\mfu(M))$ via $U_q(\mfu(M)) \cong U_q(\mfu(M))_{k,l}$. 

\begin{Prop}\label{PropEquik}
There is a unique $U_q(\mfu(M))$-equivariant $*$-homomorphism
\begin{equation}\label{EqMapk}
\rho_{k,l}: \mcO_q(H(M)) \rightarrow \mcO_q(H(N))
\end{equation}
such that $Z_{\brone}\mapsto Z_{\brk}$. Moreover, we then have $\rho_{k,l}(Z_{\brM}) = Z_{[k-1]}^{M-1}Z_{\brl}$.
\end{Prop}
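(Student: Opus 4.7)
The plan is to realise $\rho_{k,l}$ by compressing the $k$-th exterior REA onto an $M$-dimensional sub-representation of $V_{[k]}(N) = \wedge_q^k(\C^N)$. Let $W_k \subseteq V_{[k]}(N)$ be the subspace spanned by $\{e_{[k-1] \cup \{j\}} : k \leq j \leq l\}$. Under the identification $U_q(\mfu(M)) \cong U_q(\mfu(M))_{k,l}$, the space $W_k$ is a $U_q(\mfu(M))$-submodule isomorphic to $V_{[1]}(M)$, with $e_{[k-1] \cup \{k+i-1\}}$ corresponding to $e_i$. Setting
\[
\tilde{Z}_{ij} := Z_{V_{[k]}(N)}(e_{[k-1] \cup \{k+i-1\}},\, e_{[k-1] \cup \{k+j-1\}}),
\]
so that $\tilde{Z}_{11} = Z_{\brk}$, I propose that $\rho_{k,l}$ send the generators $Z^{(M)}_{ij}$ of $\mcO_q(H(M))$ to $\tilde{Z}_{ij}$. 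Uniqueness is the easy part: the span of $\{Z^{(M)}_{ij}\}$ decomposes as a $U_q(\mfu(M))$-module into a trivial plus an adjoint summand, and the zero-weight vector $Z^{(M)}_{11}$ has non-trivial projection to both, so its $U_q(\mfu(M))$-orbit spans the whole generator space and thus generates $\mcO_q(H(M))$ as an algebra.

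The main obstacle is verifying the reflection equation for $\tilde{Z}$. The starting point is \eqref{EqCommZGen} with $k=l$, which is the REA satisfied by the full $Z^{[k]}$ with respect to $\hat{R}^{[k],[k]}$. A direct inspection via the coefficient conditions \eqref{EqCondR} shows that $\hat{R}^{[k],[k]}$ preserves $W_k \otimes W_k$, and its restriction coincides, up to the overall scalar $q^{-(k-1)}$, with the small braid operator $\hat{R}^{(M)}$ under the identification $W_k \cong V_{[1]}(M)$; this scalar is immaterial because the REA is homogeneous of degree two in $\hat{R}$. The delicate point is that $Z^{[k]}$ itself does \emph{not} preserve $W_k$, so when the two outer matrix legs are compressed to $W_k$ by the projection $P\otimes P$, the intermediate $Z^{[k]}_{23}$ factors produce components outside $W_k$ that must cancel between the two sides of the REA. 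My preferred resolution uses $U_q(\mfu(M))_{k,l}$-equivariance: decomposing $V_{[k]}(N)$ into $U_q(\mfu(M))_{k,l}$-isotypic components and using that $\hat{R}^{[k],[k]}$ respects this decomposition forces the non-$W_k$ contributions to match symmetrically on both sides and hence cancel. The remaining verifications are lighter: self-adjointness $\tilde{Z}^{*} = \tilde{Z}$ is immediate from \eqref{EqStarHerm}, and the $U_q(\mfu(M))$-equivariance of $\rho_{k,l}$ follows from \eqref{EqGlobForm} together with the weight-matching on $W_k$.

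For the final identity, I would expand $\rho_{k,l}(Z_{\brM}^{(M)})$ via the quantum-determinant formula \eqref{EqQuantDet} applied to $\tilde{Z}$ and interpret the resulting sum as a matrix coefficient of the top $q$-wedge $\wedge_q^M W_k$ sitting inside $V_{[k]}(N)^{\otimes M}$. Since $W_k$ has dimension $M$, this top wedge is one-dimensional, and merging wedges embeds it into $V_{[l]}(N)$; the $M$-fold repetition of the ``shared tail'' $[k-1]$ across the wedge factors can be separated out, via the $q$-commutation relations \eqref{EqCommDs} between leading minors, as exactly $M-1$ copies of $Z_{[k-1]}$ multiplied by the single remaining factor $Z_{\brl}$. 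This is a quantum analogue of the classical Sylvester/Jacobi identity for minors of minors.
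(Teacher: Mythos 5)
Your candidate map is the correct one, and your uniqueness argument (that $Z_{11}$ generates $\mcO_q(H(M))$ as a $U_q(\mfu(M))$-module $*$-algebra) is essentially the paper's. But the heart of the proposition is well-definedness, i.e.\ that the compressed matrix $\tilde{Z} = (P\otimes 1)Z^{[k]}(P\otimes 1)$ satisfies the reflection equation, and there your argument has a genuine gap. You correctly identify the obstruction — in the compressed identity
\[
P_1P_2\,\hat{R}^{[k],[k]}_{12}Z^{[k]}_{23}\hat{R}^{[k],[k]}_{12}Z^{[k]}_{23}\,P_1P_2 \;=\; P_1P_2\,Z^{[k]}_{23}\hat{R}^{[k],[k]}_{12}Z^{[k]}_{23}\hat{R}^{[k],[k]}_{12}\,P_1P_2
\]
the intermediate index of the product $Z^{[k]}_{23}\hat{R}^{[k],[k]}_{12}Z^{[k]}_{23}$ is summed over \emph{all} of $\binom{\brN}{k}$, not over $W_k$ — but the proposed resolution is not a proof. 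The braiding $\hat{R}^{[k],[k]}$ is indeed a $U_q(\mfu(M))_{k,l}$-intertwiner, so it preserves isotypic components of $V_{[k]}\otimes V_{[k]}$, but the problematic sum lives in a \emph{single} tensor leg, and the equivariance of $Z^{[k]}$ is with respect to the adjoint coaction on the algebra side, not an action permuting matrix indices. Nothing in the isotypic decomposition forces the contributions with intermediate index outside $W_k$ to ``match symmetrically and cancel''; these terms are individually nonzero (e.g.\ under $i_T$ one has $Z_{A,J}=\sum_{L\preceq A}T_{L,A}^*T_{L,J}$, and $T_{L,J}\neq 0$ for $L\in W_k$, $J\notin W_k$ in general), so an actual cancellation mechanism must be exhibited. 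As stated, the claim you need is exactly as hard as the proposition itself.

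The paper sidesteps this entirely by routing through the injective Cholesky map $i_T:\mcO_q(H(N))\to\mcO_q(T(N))$, $Z\mapsto T^*T$ of Proposition \ref{PropInclChol}: there is an obvious corner embedding of Hopf $*$-algebras $\mcO_q(T(M))\to\mcO_q(T(N))$, $T_{ij}\mapsto T_{i+k-1,j+k-1}$, which one twists by the commuting central-like factor $T_1\cdots T_{k-1}$ and then conjugates by the two maps $i_T$. Because $T$ is upper triangular, the minors $T_{L,J}$ with $J\in W_k$ vanish unless $L\in W_k$, so the compression is automatically multiplicative — this is precisely the triangularity your direct approach with $Z$ lacks. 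The same mechanism gives $\rho_{k,l}(Z_{\brM})=Z_{[k-1]}^{M-1}Z_{\brl}$ immediately from $i_T(Z_{\brk})=T_1^2\cdots T_k^2$, whereas your quantum Sylvester identity for minors of minors is again only sketched. If you want to salvage your approach, you should either prove the cross-term cancellation directly (which amounts to a nontrivial identity among quantum minors) or, more economically, transport the whole verification through $i_T$ — at which point you have reproduced the paper's proof.
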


\begin{proof}
By \eqref{EqXComStarClassT} and \eqref{EqCoprodT}, it follows immediately that we have a Hopf $*$-algebra homomorphism
\[
\kappa_{k,l}: \mcO_q(T(M)) \rightarrow \mcO_q(T(N)),\qquad T_{ij} \mapsto T_{i+k-1,j+k-1}.
\]
Then as the adjoint action $\lhdb$, defined in \eqref{EqAdjT}, is inner, we have
\[
\kappa_{k,l}(x \lhdb y) = \kappa_{k,l}(x) \lhdb \kappa_{k,l}(y),\qquad x,y\in \mcO_q(T(M)).
\]
Consider now the modified $*$-homomorphism 
\[
\widetilde{\kappa}_{k,l}: \mcO_q(T(M)) \rightarrow \mcO_q(T(N)),\quad T_{ij} \mapsto T_1\ldots T_{k-1} \kappa_{k,l}(T_{ij}),
\]
using that $T_i$ for $i<k$ commutes with all elements of $\kappa_{k,l}(\mcO_q(T(M)))$. Then we clearly still have 
\[
\widetilde{\kappa}_{k,l}(x \lhdb y) = \widetilde{\kappa}_{k,l}(x) \lhdb \kappa_{k,l}(y), \qquad x,y\in \mcO_q(T(M)).
\]
Pulling $\widetilde{\kappa}_{k,l}$ back through the respective embeddings $i_T$, we hence obtain from Proposition \ref{PropInclChol} a $U_q(\mfu(M))$-equivariant $*$-homomorphism \eqref{EqMapk}, which must be unique since $Z_{11}$ is a generator for $\mcO_q(H(M))$ as a module $*$-algebra. It is also immediate from the above construction and \eqref{EqDetFormT} that $\rho_{k,l}(Z_{\brM}) = Z_{[k-1]}^{M-1}Z_{\brl}$. 
\end{proof}

\subsection{Rank of a representation}\label{section: rank of rep}

\begin{Def}\label{DefQuotRankM}
Let $1\leq M \leq N$. We define $I_M$ to be the 2-sided $*$-ideal of $\mcO_q(H(N))$ generated by all $Z_{I,J}$ with $I,J \in \binom{\brN}{k}$ for $k\geq M$.  We define 
\[
\mcO_q^{\leq M}(H(N)) = \mcO_q(H(N))/I_{M+1}.
\]
\end{Def}

By convention, we put $I_{N+1} =\{0\}$.

\begin{Lem}
The ideal $I_M$ is generated as a 2-sided $*$-ideal by the $Z_{I,J}$ with $I,J\in \binom{\brN}{M}$. 
\end{Lem}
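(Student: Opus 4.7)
The plan is to show that every generator of $I_M$, namely every $Z_{I,J}$ with $|I|=|J|=k\geq M$, already lies in the 2-sided $*$-ideal $J_M$ generated by the size-$M$ quantum minors $\{Z_{I',J'}\mid I',J'\in\binom{\brN}{M}\}$. Since clearly $J_M\subseteq I_M$, this yields equality.

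For $k=M$ there is nothing to prove. For $k>M$, I would apply the Laplace expansion \eqref{EqLaplExp1} with $m=M$ and $K$ any fixed element of $\binom{\brk}{M}$ (e.g.\ $K=[M]$). This writes
\[
Z_{I,J} = \sum_{P,S,T,S',T'} (-q)^{\wt(P)-\wt(K)}\,(\hat{R}^{-1})^{S,I_K}_{I^K,T'}\,\hat{R}^{J_P,T}_{T',S'}\,Z_{S,T}\,Z_{S',J^P},
\]
where $S,T\in\binom{\brN}{M}$. Each term on the right contains the factor $Z_{S,T}$ with $|S|=|T|=M$, hence lies in $J_M$. Thus $Z_{I,J}\in J_M$ for every $k\geq M$, and therefore $I_M\subseteq J_M$.

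Finally, to see that $J_M$ is a $*$-ideal (and not merely a 2-sided ideal), one uses \eqref{EqStarHerm}: $Z_{I,J}^*=Z_{J,I}$, which again has both index sets of size $M$. Hence the set of generators is closed under $*$, and the 2-sided ideal they generate is automatically $*$-closed.

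The only potentially delicate step is the Laplace expansion itself, but this has already been established in the excerpt (Proposition preceding the lemma), so the argument is a direct application and contains no substantive obstacle.
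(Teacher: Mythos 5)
Your argument is correct and is precisely the paper's proof spelled out: the paper simply cites the Laplace expansion \eqref{EqLaplExp1}, and applying it with $m=M$ exhibits each $Z_{I,J}$ with $|I|=|J|=k>M$ as a combination of products having a left factor $Z_{S,T}$ of size $M$, while \eqref{EqStarHerm} gives $*$-closure of the generating set. (Only a cosmetic point: your symbol $J_M$ clashes with the ideal of the same name in Lemma \ref{Lem OqHn quotient}.)
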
 
\begin{proof}
This follows directly from the Laplace expansion \eqref{EqLaplExp1}.
\end{proof}

\begin{Def}
We say that a representation $\pi$ of $\mcO_q(H(N))$ is of rank $M$ if $I_{M+1} \subseteq \Ker(\pi)$ but $I_{M} \nsubseteq \Ker(\pi)$. 
\end{Def}

\section{Spectral weight}\label{section: spectral weight}


\subsection{Spectral weight}\label{SecSpectWeight}


Let $\msZ(\mcO_q(H(N)))$ be the center of $\mcO_q(H(N))$, and let $\mcO_q(H(N))^{\Ad^U_q}$ be the $*$-algebra of coinvariants for the coaction \eqref{EqActAdqU},
\[
\mcO_q(H(N))^{\Ad^U_q} = \{z\in \mcO_q(H(N)) \mid \Ad^U_q(z) = z\otimes 1\}.
\]

The following result is well-known. For the convenience of a direct reference for the $GL$-setting, we refer to \cite[Lemma 3.30]{DCF19}.
\begin{Theorem}\label{TheoCenter}
An element of $\mcO_q(H(N))$ is central if and only if it is $\Ad^U_q$-coinvariant, i.e.
\[
\msZ(\mcO_q(H(N))) = \mcO_q(H(N))^{\Ad^U_q}.
\]
\end{Theorem}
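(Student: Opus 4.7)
The plan is to reduce the statement to an analogous statement about $\mcO_q(M_N(\C))$ with its braided product, via the covariantisation isomorphism $\Phi$ of Theorem \ref{LemCommBraid}, and then use the coquasitriangular structure.

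First I would check that $\Phi\colon \mcO_q(H(N)) \to (\mcO_q(M_N(\C)),*)$ intertwines the relevant $U_q(\mfu(N))$-actions. By construction the transmutation only changes multiplication, not the underlying coalgebra/comodule structure, so equivariance of $\Phi$ amounts to checking that the right adjoint coaction $X\mapsto U^*XU$ on $\mcO_q(M_N(\C))$, dualised via the pairing $\tau$, sends $X_{ij}$ to the expression obtained from \eqref{EqGlobForm} with $V=\C^N$ the vector module. This is immediate on generators and then follows on all of $\mcO_q(M_N(\C))$ since both sides are module algebra structures. Because $\Phi$ is simultaneously an algebra isomorphism and a $U_q(\mfu(N))$-module isomorphism, it identifies
\[
\msZ(\mcO_q(H(N))) \cong \msZ(\mcO_q(M_N(\C)),*), \qquad \mcO_q(H(N))^{\Ad^U_q} \cong \mcO_q(M_N(\C))^{\Ad^U_q},
\]
and the problem reduces to showing equality of these two subspaces of $\mcO_q(M_N(\C))$.

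Next I would prove the inclusion $\mcO_q(M_N(\C))^{\Ad^U_q} \subseteq \msZ(\mcO_q(M_N(\C)),*)$. Coinvariance of $f$ under $U^*fU = f\otimes 1$ is equivalent, via non-degeneracy of $\tau$ and \eqref{EqPairingStar}, to the condition that the left-right regular $U_q(\mfu(N))$-action satisfies $S(x_{(1)})\!\rhd f\!\lhd\! x_{(2)} = \varepsilon(x)f$. Substituting this condition into the definition \eqref{EqBraidProd} of $*$ and using the two braided commutativity identities \eqref{EqBraidComM} and \eqref{EqBraidComMInv} for $\mbr$ and its convolution inverse $\mbr'$, one computes directly that $f*g = g*f$ for every $g$. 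This is the computation carried out in \cite[Lemma 3.30]{DCF19} for the $GL$-setting, and it transfers verbatim since $\mcO_q(M_N(\C))$ is a subbialgebra of $\mcO_q(GL(N,\C))$ on which both $\mbr$ and $\mbr'$ restrict.

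Finally I would prove the reverse inclusion. By Lemma \ref{LemPosIntWellDef} and co-semisimplicity, $\mcO_q(M_N(\C))$ decomposes as a direct sum of finite-dimensional irreducible positive-integral $U_q(\mfu(N))$-submodules under the adjoint action; the isotypic component of the trivial representation is exactly $\mcO_q(M_N(\C))^{\Ad^U_q}$. If $f\in \msZ(\mcO_q(M_N(\C)),*)$ had a non-trivial isotypic component, pick a non-zero weight vector $f_\mu$ appearing in its decomposition, with $\mu\neq 0$. The braided commutativity \eqref{EqBraidComM} implies that $*$-commutators with a weight $\mu$ element, expressed through the non-degenerate pairing with $U_q$, would be non-zero on suitable generators (for instance on a matrix coefficient of the vector module of opposite weight), contradicting centrality. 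Hence only the trivial isotypic component appears, i.e.\ $f$ is coinvariant.

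The hard part is the computation in Step 2: although conceptually it is just the braided commutativity rephrased, the bookkeeping with the two skew bicharacters $\mbr,\mbr'$ and with Sweedler indices needs to be handled carefully. Fortunately the explicit reference \cite[Lemma 3.30]{DCF19} does precisely this and can be invoked, which is why the statement is simply reduced to it.
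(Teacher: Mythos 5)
Your Steps 1 and 2 are sound: transmutation leaves the underlying comodule untouched, so $\Phi$ intertwines the adjoint structures, and ``coinvariant $\Rightarrow$ central'' is indeed a formal consequence of braided commutativity. Note, though, that this is the \emph{easy} direction, and that the paper's entire proof of the theorem is the citation of \cite[Lemma 3.30]{DCF19}, so deferring the Sweedler bookkeeping there is exactly what the paper does anyway. The genuine gap is in your Step 3, the implication ``central $\Rightarrow$ coinvariant'', which is the substantive half of the statement. You begin by picking ``a non-zero weight vector $f_\mu$ appearing in the decomposition of $f$ with $\mu\neq 0$''. But a non-coinvariant element can lie entirely in the zero weight space of the adjoint action: $K_m\rhd Z_{ij}=q^{\delta_{jm}-\delta_{im}}Z_{ij}$, so every $Z_{ii}$ has weight $0$, yet $Z_{11}$ is neither coinvariant nor central (it $q^{\pm2}$-commutes with $Z_{12}$, $Z_{21}$). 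For an $f$ supported in the zero weight space your argument produces no $f_\mu$ with $\mu\neq 0$ and therefore says nothing. If instead you intended to pick a nonzero-weight vector inside the \emph{submodule generated by} $f$, you would first need to know that the center of the braided algebra is stable under the adjoint action (i.e.\ that $x\rhd f$ is again central whenever $f$ is); this is not automatic for a module algebra and is essentially equivalent to the statement you are proving.

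Even for an honest nonzero-weight component, the key claim --- that its $*$-commutator with a suitably chosen matrix coefficient of the vector module is nonzero --- is asserted rather than proved, and it is precisely the content of the theorem. Braided commutativity only tells you that $f*g$ equals a twist of $g*f$ by $\mbr$ evaluated on the adjoint legs; extracting the non-vanishing of a commutator from this requires a real argument, e.g.\ a filtration/leading-term analysis using the factorization \eqref{EqFactR} of $\msR$, or the fact that the adjoint action of the generators of $U_q(\mfu(N))$ is implemented by (braided) commutators with the $Z_{ij}$ (the quantum moment map), which converts centrality directly into $\Ad^U_q$-invariance. You should either supply one of these arguments for the hard direction, or simply observe that \cite[Lemma 3.30]{DCF19} proves both implications and rest the whole theorem on that reference, as the paper itself does.
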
 

The center of $\mcO_q(H(N))$ can further be described explicitly, see \cite[Theorem 1.3]{JW20} or also \cite[(1.12)]{Flo20}.

\begin{Theorem}\label{TheoCentrPol}
We have $\msZ(\mcO_q(H(N))) \cong \C[\sigma_1,\ldots,\sigma_N]$, where the $\sigma_k$ are self-adjoint, algebraically independent elements given by\footnote{We have rescaled the elements with respect to the above references.} 
\[
\sigma_k =  q^{2Nk} \sum_{I\in \binom{\brN}{k}} \sum_{\sigma \in \Sym(I)}q^{-2\wt(I)} (-q)^{-l(\sigma)} q^{-a(\sigma)} Z_{i_k\sigma(i_k)}\ldots Z_{i_1 \sigma(i_1)},
\] 
where $\Sym(I)$ is the group of permutations of $\brN$ leaving the complement of $I = \{i_1<\ldots < i_k\}$ pointwise fixed, where $\wt(I)$ is as in \eqref{EqWeightVecto}, and where $a(\sigma) = |\{l\mid \sigma(l)<l\}|$ (called the \emph{anti-exceedance}). 

Moreover, the following \emph{quantum Cayley-Hamilton} identity holds: with $\sigma_0 = 1$, we have
\begin{equation}\label{EqCHQuant}
\sum_{k=0}^N (-1)^k \sigma_k Z ^{N-k} = 0.
\end{equation} 
\end{Theorem}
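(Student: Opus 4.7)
The plan begins with Theorem \ref{TheoCenter}, which reduces finding $\msZ(\mcO_q(H(N)))$ to computing $\mcO_q(H(N))^{\Ad^U_q}$. I would transport the problem through the braided isomorphism $\Phi$ of Theorem \ref{LemCommBraid} to $(\mcO_q(M_N(\C)),*)$, on which $\Ad^U_q$ corresponds to the usual coregular conjugation coaction $f\mapsto U^{-1}fU$. The natural coinvariants there are quantum characters: for each finite-dimensional positive integral $U_q(\mfu(N))$-module $V$ with orthonormal weight basis $\{\xi_\alpha\}$, the twisted trace
\[
\chi_V := \sum_\alpha X_V(q^{-2\rho}\xi_\alpha,\xi_\alpha) \in \mcO_q(M_N(\C))
\]
is $\Ad^U_q$-invariant by cyclicity of the $q$-trace (with $q^{-2\rho}$ the pivotal element). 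Pulling back by $\Phi^{-1}$ produces an infinite supply of central elements of $\mcO_q(H(N))$.

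Specializing to $V=\wedge_q^k(\C^N)$ with basis $\{e_I : I\in\binom{\brN}{k}\}$, the twisted trace $\chi_{\wedge_q^k(\C^N)}$ unfolds — via the definition of the quantum minors $Z_{I,J}$ in \eqref{EqBraidMinor}, the Laplace expansion \eqref{EqLaplExp1}, and the fact that the pivotal $q^{-2\rho}$ acts on $e_I$ by a power of $q$ controlled by $\wt(I)$ — into a double sum over $I\in\binom{\brN}{k}$ and $\sigma\in\Sym(I)$, in which $(-q)^{-l(\sigma)}$ comes from the $q$-antisymmetrization inside $\wedge_q^k(\C^N)$ and $q^{-a(\sigma)}$ from the pivotal factor. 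I would define $\sigma_k$ as that element (normalized by $q^{2Nk}$). To see that the $\sigma_k$ freely generate $\msZ(\mcO_q(H(N)))$, equip $\mcO_q(H(N))$ with its standard total-degree filtration in the generators $Z_{ij}$: the associated graded algebra is the classical polynomial ring $\mcO(H(N))$, the $\Ad^U_q$-coaction degenerates to the ordinary conjugation action of $U(N)$, and the leading symbols of $\sigma_1,\ldots,\sigma_N$ are (up to sign) the classical elementary symmetric functions of the eigenvalues, which freely generate $\mcO(H(N))^{U(N)}=\C[e_1,\ldots,e_N]$. A Hilbert-series comparison — both $\mcO_q(H(N))$ and $\mcO(H(N))$ are flat deformations with matching PBW-type dimensions in every degree, and likewise for their invariant subrings — then lifts algebraic independence and generation to the quantum level.

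For the Cayley--Hamilton identity \eqref{EqCHQuant}, the plan is to use the \emph{fusion} technique of $R$-matrix theory, as in \cite{PS95, JW20}. For each $0\leq k\leq N$ one constructs a fused operator $Z^{(k)}$ on $\wedge_q^k(\C^N)$ out of $k$ suitably $\hat{R}$-braided copies of $Z$, so that its twisted trace recovers (up to normalization) the central element $\sigma_k$ while the powers $Z^{N-k}$ can be re-expressed through $Z^{(k)}$ via natural intertwiners of the form $\wedge_q^N(\C^N)\otimes\wedge_q^k(\C^N)\to \wedge_q^{N+1}(\C^N)\otimes\wedge_q^{k-1}(\C^N)$. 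The vanishing $\wedge_q^{N+1}(\C^N)=0$ then forces $\sum_{k=0}^N(-1)^k\sigma_k Z^{N-k}=0$. \emph{The main obstacle} will be the precise bookkeeping of the $\hat{R}$-insertions needed so that fusion remains compatible with the reflection equation \eqref{EqUnivRelZ}, and the verification that the pivotal factor appearing in the quantum traces reproduces exactly the anti-exceedance weight $q^{-a(\sigma)}$ appearing in the formula for $\sigma_k$; this combinatorial matching constitutes the technical core of \cite{JW20, Flo20}, to which I would ultimately defer for the explicit bookkeeping.
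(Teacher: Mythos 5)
The paper does not actually prove this theorem: it is quoted from \cite[Theorem 1.3]{JW20} (see also \cite[(1.12)]{Flo20}), and your outline is essentially a sketch of the strategy of those references --- central elements as quantum (twisted) traces of the exterior powers $Z^{[k]}$, generation/independence by comparison with the classical invariant ring, and Cayley--Hamilton by fusion into $\wedge_q^{N+1}(\C^N)=0$. Since you yourself defer the combinatorial core (the anti-exceedance formula and the $\hat R$-bookkeeping of the fusion) to \cite{JW20,Flo20}, your proposal is at the same level of detail as the paper's citation, and the overall architecture is sound.

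One intermediate claim as written is false, however. The reflection equation \eqref{EqUnivRelZ} is \emph{homogeneous} quadratic in the generators $Z_{ij}$, so $\mcO_q(H(N))$ is already a graded algebra and its associated graded with respect to the total-degree filtration is $\mcO_q(H(N))$ itself --- a noncommutative algebra --- not the classical polynomial ring $\mcO(H(N))$. The degeneration you want is the specialization $q\to 1$ (or an evaluation argument), not an associated-graded construction. The repair is standard: by cosemisimplicity of $\mcO_q(U(N))$ and flatness of the comodule multiplicities in each degree, the Hilbert series of the coinvariant subalgebra agrees with the classical one, which gives generation; and algebraic independence of the $\sigma_k$ follows most cleanly from the Harish--Chandra map of Proposition \ref{PropHC}, which sends $\sigma_k$ to the elementary symmetric polynomial $e_k$ in the commuting, algebraically independent variables $q^{2i-2}T_i^2$. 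With that substitution your argument for the first assertion goes through; the Cayley--Hamilton identity remains entirely outsourced to \cite{JW20}, exactly as in the paper.
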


In particular, we define the \emph{quantum trace} and \emph{quantum determinant}
\[
\Tr_q(Z) =  q^{-(N-1)}\sigma_1 = q^{N+1}\sum_{i=1}^N q^{-2i}Z_{ii},
\]
\[
\Det_q(Z) = q^{-N(N-1)}\sigma_N = \sum_{\sigma \in \Sym([N])} (-q)^{-l(\sigma)} q^{-a(\sigma)} Z_{N,\sigma(N)}\ldots Z_{1,\sigma(1)},
\]
It can be shown \cite{JW20} that  
\[
Z_{\brN} = \Det_q(Z) .
\]
As we have a unital $*$-homomorphism $\mcO_q(H(K)) \rightarrow \mcO_q(H(N))$ by $Z \mapsto (Z_{ij})_{1\leq i,j\leq K}$, we  obtain
\begin{equation}\label{EqFormMinorZ}
Z_{\brk} = \sum_{\sigma \in \Sym([k])} (-q)^{-l(\sigma)} q^{-a(\sigma)} Z_{k,\sigma(k)}\ldots Z_{1,\sigma(1)},\qquad 1\leq k \leq N.
\end{equation}
Applying $*$, we also obtain the alternative version 
\begin{equation}\label{EqFormMinorZAlt}
Z_{\brk} = \sum_{\sigma \in \Sym([k])} (-q)^{-l(\sigma)} q^{-a(\sigma)} Z_{\sigma(k),k}\ldots Z_{\sigma(1),1},\qquad 1\leq k \leq N.
\end{equation}

If $\pi$ is a factor representation of $\mcO_q(H(N))$, then $\pi$ restricts to a $*$-character on $\msZ(\mcO_q(H(N))$. 

\begin{Def}
If $\pi: \mcO_q(H(N)) \rightarrow B(\Hsp)$ is a factor representation, we denote 
\[
\chi_{\pi}^{\msZ}: \msZ(\mcO_q(H(N))) \rightarrow \C,\qquad \sigma_k \mapsto \sigma_k^{\pi}
\]
for the associated $*$-character on $\msZ(\mcO_q(H(N))$, and call it the \emph{central $*$-character} of $\pi$.

We call $s\in \R^N$ \emph{centrally admissible} if there exists a factor representation $\pi$ such that 
\[
\sigma_k^{\pi} = s_k,\qquad  1\leq k \leq N.
\]  
We call $\lambda \in \R^N/\Sym(N)$ a \emph{spectral weight} if there exists a centrally admissible $s \in \R^N$ for which $\lambda$ is the multiset of roots of the polynomial
\[
P_s(x) = \sum_k (-1)^k s_k x^{k},
\]
and we denote 
\[
\Lambda_q = \{ \textrm{spectral weights} \}.
\]
For $\pi$ a factor representation of $\mcO_q(H(N))$, we denote by $\lambda_{\pi}$ the spectral weight of $\pi$, given as the multiset of roots of $P_s$ for $s_k = \sigma_k^{\pi}$. 
\end{Def}

Note that Theorem \ref{TheoEigZ} concretely identifies the set $\Lambda_q$. For now, we will content ourselves to show in the next section that $\Lambda_q$ parametrises $\mbU_q(N)$-orbits.

\subsection{Orbits under $\mbU_q(N)$}\label{SecApCQG}

The results in this section will be a bit more general than needed for our immediate purposes. We start with the following well-known general fact.

\begin{Prop}\label{PropAlwaysBounded}
Let $\mathbb{G}$ be a compact quantum group with associated Hopf $*$-algebra $H = \mcO(\mathbb{G})$, and let $A$ be a unital $*$-algebra with coaction 
\[
\alpha: A \rightarrow A\otimes H.
\]
Assume that the algebra of coinvariants $A^{\alpha} = \{a \in A \mid \alpha(a) = a\otimes 1\}$ is trivial, 
\[
A^{\alpha} = \C. 
\]
Then all representations of $A$ on pre-Hilbert spaces are bounded, and $A$ admits a universal C$^*$-envelope.
\end{Prop}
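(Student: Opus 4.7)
The plan is to use the Haar state $h$ of $\mathbb{G}$, together with the coinvariance hypothesis $A^\alpha = \C$, to produce for each $a\in A$ an algebraic sum-of-squares identity
\[
C_a\cdot 1 - a^*a \;=\; \sum_k b_k^*b_k \qquad \text{in } A,
\]
with $b_k\in A$ and $C_a\geq 0$ depending only on $a$. From such an identity every pre-Hilbert $*$-representation $\pi$ of $A$ automatically satisfies $\|\pi(a)\xi\|^2 \leq C_a\|\xi\|^2$, so that boundedness is uniform in $\pi$, and the universal C$^*$-envelope is then the completion of $A$ in the C$^*$-seminorm $\|a\|_u := \sup_\pi \|\pi(a)\|$.

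To construct the identity, I would first write $\alpha(a) = \sum_{i=1}^n a_i\otimes h_i$ with the $h_i\in H$ linearly independent, so the $a_i\in A$ are uniquely determined and $a = (\id\otimes \varepsilon)\alpha(a) = \sum_i \varepsilon(h_i)\,a_i$. Applying $(\id\otimes h)$ to $\alpha(a^*a) = \alpha(a)^*\alpha(a)$ gives
\[
(\id\otimes h)\alpha(a^*a) \;=\; \sum_{ij}M_{ij}\,a_i^*a_j, \qquad M_{ij}:= h(h_i^*h_j).
\]
By coinvariance the left-hand side equals $\lambda\cdot 1$ for some $\lambda\in \C$, and by faithfulness of the Haar state on $\mcO(\mathbb{G})$ together with the linear independence of the $h_i$, the Gram matrix $M$ is strictly positive definite.

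The crux is then to promote this averaged identity to a bound on $a^*a$ itself. Setting $w_i := \varepsilon(h_i)$ one has $a^*a = \sum_{ij}\overline{w_i}w_j\,a_i^*a_j$. Taking $C := \vec{w}^*M^{-1}\vec{w}$ makes the Hermitian matrix $CM - \vec{w}\vec{w}^*$ positive semi-definite; factoring it as $P^*P$ in $M_n(\C)$ yields
\[
C\lambda\cdot 1 - a^*a \;=\; \sum_{ij}\bigl(CM_{ij} - \overline{w_i}w_j\bigr)\,a_i^*a_j \;=\; \sum_k b_k^*b_k, \qquad b_k := \sum_i P_{ki}\,a_i \in A,
\]
as required, with $C_a := \max(C\lambda,0)$. (In fact $\lambda\geq 0$, but this is not needed to conclude boundedness: if $C\lambda<0$ the identity would force $\pi(a)=0$ in every representation.) The final step is purely formal: $\|\cdot\|_u$ inherits submultiplicativity, $*$-invariance and the C$^*$-identity from each $\|\pi(\cdot)\|$, so it defines a C$^*$-seminorm, and the universal C$^*$-envelope is the Hausdorff completion.

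The step I expect to require most care is the passage from $\sum_{ij}M_{ij}a_i^*a_j = \lambda\cdot 1$ --- which only controls \emph{one} quadratic form in the $a_i$ --- to a bound on $a^*a$, which is a \emph{different} quadratic form in the same $a_i$. This passage rests on the strict positive-definiteness of $M$, and that in turn is precisely where faithfulness of the Haar state of the compact quantum group $\mathbb{G}$ enters essentially.
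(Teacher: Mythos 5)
Your proof is correct, but it runs along a different track than the one in the paper. The paper's argument is representation-theoretic: since $\mcO(\mathbb{G})$ is a CQG algebra, the element $a$ lies in a finite-dimensional subcomodule $V\subseteq A$, which can be equipped with an invariant inner product making the restricted coaction a unitary corepresentation; if $b_1,\ldots,b_n$ is an orthonormal basis of $V$, unitarity of the corepresentation matrix forces $\sum_i b_i^*b_i$ to be coinvariant, hence a scalar $C$, and then $\|\pi(b_i)\|^2\leq C$ for every $i$ and every $\pi$, bounding $a$ by the triangle inequality. You instead apply the conditional expectation $E=(\id\otimes h)\alpha$ directly to $a^*a$, observe that $E(a^*a)=\sum_{ij}h(h_i^*h_j)\,a_i^*a_j$ is a scalar by $A^\alpha=\C$, and then convert the resulting identity into a domination of $a^*a$ via strict positivity of the Gram matrix $M$ and the Schur-complement inequality $CM\succeq \vec w\vec w^*$ with $C=\vec w^*M^{-1}\vec w$. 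Both proofs ultimately rest on the same two inputs — invariance and faithfulness/positivity of the Haar state, plus triviality of the coinvariants — but yours trades the unitarization of a finite-dimensional comodule for a linear-algebra argument, and in return produces something slightly stronger: an explicit sum-of-squares certificate $C_a\cdot 1-a^*a=\sum_k b_k^*b_k$ in $A$ itself, rather than only a uniform bound on a spanning set. The only step you leave implicit is that $(\id\otimes h)\alpha$ really lands in $A^\alpha$ (coassociativity plus invariance of $h$), which is standard and is used elsewhere in the paper in the same way.
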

\begin{proof}
Any element $v\in A$ lies in a finite dimensional comodule $V$, which can be given an invariant Hilbert space structure. Denoting $b_1,\ldots,b_n$ for an orthonormal basis with respect to this Hilbert space structure, we have that $\sum_i b_i^*b_i$ is a coinvariant vector, hence equal to a scalar $C$. Then all $b_i$, and in particular $v$, are (uniformly) bounded under any representation of $A$ on a pre-Hilbert space. 
\end{proof}

To apply this to the representation theory of $\mcO_q(H(N))$, note that, by Theorem \ref{TheoCentrPol}, any $s\in \R^N$ determines a $*$-character $\chi_s: \msZ(\mcO_q(H(N))) \rightarrow \C$ such that $\chi_{s}(\sigma_k) = s_k$ for all $1\leq k\leq N$. 

\begin{Def}\label{DefQuantOrb}
For $s\in \R^N$, we define the $*$-algebra $\mcO_q(O_s)$ as the quotient of $\mcO_q(H(N))$ by the extra relations 
\[
\sigma_k = s_k.
\]
We call $\mcO_q(O_s)$ the \emph{quantum orbit $*$-algebra} at parameter $s$. 
\end{Def}
These quantum orbit algebras were studied from the viewpoint of deformation quantization in \cite{DM02b}. 

The following is clear by Theorem \ref{TheoCenter}.
\begin{Prop}\label{PropTrivializesCent}
The coaction $\Ad^U_q$ descends to a coaction of $\mcO_q(U(N))$ on $\mcO_q(O_s)$, and 
\begin{equation}\label{EqTrivCoinv}
\mcO_q(O_s)^{\Ad^U_q} = \C. 
\end{equation}
\end{Prop}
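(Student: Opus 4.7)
The first claim is essentially formal. By Theorem \ref{TheoCenter} combined with Theorem \ref{TheoCentrPol}, each generator $\sigma_k$ of the center is $\Ad_q^U$-coinvariant, so $\Ad_q^U(\sigma_k - s_k) = (\sigma_k - s_k)\otimes 1$. Since the $\sigma_k - s_k$ are moreover central, the two-sided ideal $J_s \subseteq \mcO_q(H(N))$ they generate is simply $\sum_k (\sigma_k - s_k)\,\mcO_q(H(N))$, and one immediately checks $\Ad_q^U(J_s)\subseteq J_s \otimes \mcO_q(U(N))$. Hence the coaction descends to the quotient $\mcO_q(O_s)$.

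For the triviality of coinvariants, the plan is to exploit the Haar state $\int_{U_q(N)}$ on the coamenable CQG algebra $\mcO_q(U(N))$ to construct a Reynolds operator
\[
E: \mcO_q(H(N)) \to \mcO_q(H(N))^{\Ad_q^U}, \qquad z \mapsto (\id\otimes\! \textstyle\int_{U_q(N)})\Ad_q^U(z),
\]
which is a conditional expectation onto the coinvariants. By Theorems \ref{TheoCenter} and \ref{TheoCentrPol}, the target is exactly $\C[\sigma_1,\ldots,\sigma_N]$. Because $J_s$ is $\Ad_q^U$-invariant, $E$ restricts to a projection of $J_s$ onto $J_s \cap \C[\sigma_1,\ldots,\sigma_N]$, which gives the short exact sequence of coinvariants
\[
0 \to J_s \cap \mcO_q(H(N))^{\Ad_q^U} \to \mcO_q(H(N))^{\Ad_q^U} \to \mcO_q(O_s)^{\Ad_q^U} \to 0.
\]

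It then suffices to identify $J_s\cap \C[\sigma_1,\ldots,\sigma_N]$ with the maximal ideal $\mathfrak{m}_s = \sum_k (\sigma_k-s_k)\,\C[\sigma_1,\ldots,\sigma_N]$ at the point $s$. One inclusion is obvious. For the other, any element of $J_s$ has the form $\sum_k a_k(\sigma_k - s_k)$ with $a_k \in \mcO_q(H(N))$, and since $\sigma_k - s_k$ is coinvariant and central, $E(\sum_k a_k(\sigma_k - s_k)) = \sum_k E(a_k)(\sigma_k - s_k) \in \mathfrak{m}_s$. Applying $E$ and using that it fixes $J_s \cap \C[\sigma_1,\ldots,\sigma_N]$ pointwise yields the required equality. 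Consequently $\mcO_q(O_s)^{\Ad_q^U} = \C[\sigma_1,\ldots,\sigma_N]/\mathfrak{m}_s = \C$.

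The only conceptual step is verifying that the Reynolds operator $E$ behaves well on $J_s$; once centrality and coinvariance of the $\sigma_k - s_k$ are combined, this becomes a one-line calculation, so no genuine obstacle is expected. The whole argument is a direct compact-quantum-group analogue of the classical fact that $\C[H(N)]^{U(N)} = \C[\sigma_1,\ldots,\sigma_N]$ together with the Nullstellensatz for the fiber over $s$.
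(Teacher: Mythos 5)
Your proposal is correct, and it supplies exactly the details that the paper elides (the paper merely declares the proposition ``clear by Theorem \ref{TheoCenter}''). The descent of the coaction is immediate since the $\sigma_k-s_k$ are central, hence coinvariant; for the triviality of coinvariants, your use of the Reynolds operator $E=(\id\otimes\int_{U_q(N)})\Ad^U_q$ is the standard and intended mechanism (the same conditional expectation reappears in the paper in Proposition \ref{PropNorComp} and Corollary \ref{CorWeakContErg}), and the module property $E(ab)=E(a)b$ for coinvariant $b$ correctly reduces everything to $J_s\cap\C[\sigma_1,\ldots,\sigma_N]=\sum_k(\sigma_k-s_k)\C[\sigma_1,\ldots,\sigma_N]$, whence the quotient of coinvariants is $\C$ by the algebraic independence of the $\sigma_k$ from Theorem \ref{TheoCentrPol}. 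Note only that coamenability is not needed here — the existence of the Haar state on the Hopf $*$-algebra $\mcO_q(U(N))$ suffices.
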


By Proposition \ref{PropAlwaysBounded}, we now obtain:
\begin{Cor}\label{CorUniversalEnv}
Each $\mcO_q(O_s)$ admits a universal C$^*$-envelope $C_q(O_s)$. More generally, a $*$-representation of $\mcO_q(H(N))$ on a pre-Hilbert space is bounded as soon as it is bounded on its center.
\end{Cor}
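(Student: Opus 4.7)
The corollary has two parts, and both reduce cleanly to the machinery already in place. My plan is as follows.

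For the first assertion, I would simply apply Proposition \ref{PropAlwaysBounded} to $A=\mcO_q(O_s)$ with the induced coaction of $\mcO_q(U(N))$ (which descends from $\Ad^U_q$ by Proposition \ref{PropTrivializesCent}, since the $\sigma_k$ are coinvariant). The coinvariance hypothesis $A^{\alpha}=\C$ is exactly the content of Proposition \ref{PropTrivializesCent}. Unpacking the argument in Proposition \ref{PropAlwaysBounded}, for each $a\in\mcO_q(O_s)$ one obtains an explicit bound $\|\pi(a)\|\le C_a$ \emph{independent of the representation} $\pi$ (the constant comes from a scalar $\sum_i b_i^*b_i\in \C$). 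This uniform bound is precisely what is needed to form the universal C$^*$-envelope $C_q(O_s)$ as the completion of $\mcO_q(O_s)$ under $a\mapsto \sup_\pi\|\pi(a)\|$.

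For the more general statement, the plan is to rerun the proof of Proposition \ref{PropAlwaysBounded} verbatim, with the single modification that Theorem \ref{TheoCenter} upgrades ``coinvariant'' to ``central''. Concretely, fix a pre-Hilbert representation $\pi$ of $\mcO_q(H(N))$ that is bounded on $\msZ(\mcO_q(H(N)))$ and pick any $a\in\mcO_q(H(N))$. Enclose $a$ in a finite-dimensional $\mcO_q(U(N))$-subcomodule $W\subseteq\mcO_q(H(N))$ (possible since $\Ad^U_q$ is locally finite, as $\mcO_q(U(N))$ is the Hopf $*$-algebra of a compact quantum group), endow $W$ with an invariant Hilbert space structure, and choose an orthonormal basis $b_1,\ldots,b_n$. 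Unitarity of the corresponding corepresentation matrix $(u_{ij})$ gives
\[
\Ad^U_q\Bigl(\sum_i b_i^*b_i\Bigr)=\sum_{i,k}b_i^*b_k\otimes\sum_j u_{ij}^* u_{kj}=\Bigl(\sum_i b_i^*b_i\Bigr)\otimes 1,
\]
so $C:=\sum_i b_i^*b_i$ is coinvariant, hence central by Theorem \ref{TheoCenter}. Since $\pi(C)$ is bounded by assumption, for any $v$ in the pre-Hilbert space
\[
\sum_i\|\pi(b_i)v\|^2=\langle\pi(C)v,v\rangle\le\|\pi(C)\|\,\|v\|^2,
\]
which gives $\|\pi(b_i)\|\le\|\pi(C)\|^{1/2}$ for each $i$, and hence a bound on $\pi(a)$ since $a$ is a linear combination of the $b_i$.

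The ``main obstacle'' is really just cosmetic bookkeeping: one has to be sure that $\Ad^U_q$ is locally finite on $\mcO_q(H(N))$ and that any finite-dimensional $\mcO_q(U(N))$-subcomodule admits a unitary structure (so that $(u_{ij})$ is a unitary matrix and the identity $\sum_j u_{ij}^*u_{kj}=\delta_{ik}$ holds). Both points are standard for compact quantum group coactions, so in the end the corollary is essentially a one-line reduction to the combination of Proposition \ref{PropAlwaysBounded}, Proposition \ref{PropTrivializesCent}, and Theorem \ref{TheoCenter}.
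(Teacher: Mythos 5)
Your route is exactly the paper's: the corollary is stated there with no proof beyond the citation of Proposition \ref{PropAlwaysBounded} (combined with Proposition \ref{PropTrivializesCent} for the first claim), and for the second claim the intended argument is precisely yours --- rerun that proof, replacing ``coinvariant, hence scalar'' by ``coinvariant, hence central by Theorem \ref{TheoCenter}, hence bounded under $\pi$ by hypothesis''. So in structure the proposal matches the paper.

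The one step that does not survive scrutiny is the identity $\sum_j u_{ij}^*u_{kj}=\delta_{ik}$, which you single out as the crux. Unitarity of $u$ gives $\sum_j u_{ji}^*u_{jk}=\delta_{ik}$ and $\sum_j u_{ij}u_{kj}^*=\delta_{ik}$; the sum you actually need equals $(\overline{u}\,\overline{u}^{\,*})_{ik}$, i.e.\ it is $\delta_{ik}$ exactly when the \emph{conjugate} corepresentation $\overline{u}$ is unitary in the chosen basis. Since $U_q(N)$ is not of Kac type for $q\neq 1$, this fails for a general orthonormal basis of a unitary subcomodule, so $\sum_i b_i^*b_i$ need not be coinvariant as written. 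Two standard repairs: (i) choose the inner product on $W$ so that the conjugate corepresentation is unitary --- possible because $W^*$ is again a finite-dimensional subcomodule ($\Ad^U_q$ being a $*$-homomorphism) and every finite-dimensional comodule of a compact quantum group is unitarizable; or (ii) keep your basis and use $C=\sum_i b_ib_i^*$ instead, which is coinvariant by $uu^*=1$; then $\sum_i\|\pi(b_i^*)v\|^2=\langle\pi(C)v,v\rangle\le\|\pi(C)\|\,\|v\|^2$ bounds each $\pi(b_i^*)$, and hence each $\pi(b_i)$ via $\langle\pi(b_i)v,w\rangle=\langle v,\pi(b_i^*)w\rangle$. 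With either fix the rest of your argument (local finiteness of the coaction, the Cauchy--Schwarz estimate, linearity) goes through and the corollary follows; the same imprecision already appears in the paper's own sketch of Proposition \ref{PropAlwaysBounded}, so you are reproducing rather than introducing it.
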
 

Note that the canonical map $\mcO_q(O_s) \rightarrow C_q(O_s)$ need not be injective, even when $s$ is centrally admissible.






Let us return now to a general compact quantum group $\mathbb{G}$ with associated Hopf $*$-algebra $H = \mcO(\mathbb{G})$. Let $L^{\infty}(\mathbb{G})$ be its associated von Neumann algebra, obtained as the double commutant of $\mcO(\mathbb{G})$ in its GNS-representation $\lambda_{\standard}$ with respect to its invariant state $\int_{\mathbb{G}}$. 

\begin{Def}
Let $A$ be a unital $*$-algebra with a coaction 
\[
\alpha: A \rightarrow A\otimes \mcO(\mathbb{G}).
\]
We say that a representation $\pi$ of $A$ is $\alpha$-compatible if $\alpha$ descends to a coaction on $\pi(A)$. We say that moreover $\pi$ is \emph{normal $\alpha$-compatible} if $\alpha$ extends to a normal $*$-homomorphism $\pi(A)'' \rightarrow \pi(A)''\overline{\otimes} L^{\infty}(\mathbb{G})$. 
\end{Def}

When $\pi_1,\pi_2$ are representations of a unital $*$-algebra $A$, we say that $\pi_1$ and $\pi_2$ are \emph{stably unitarily equivalent} if $\oplus_{i\in I} \pi_1$ is unitarily equivalent to $\oplus_{j\in J} \pi_2$ for some index sets $I,J$. 

\begin{Prop}\label{PropNorComp}
Let $A$ be a unital $*$-algebra with coaction $\alpha: A \rightarrow A\otimes \mcO(\mathbb{G})$.
\begin{enumerate}
\item\label{EqFirstPoint} If $\pi$ is any representation of $A$, then $\pi *\lambda_{\standard}$ is a normal $\alpha$-compatible representation, where $\pi*\lambda_{\standard} = (\pi\otimes \lambda_{\standard})\circ \alpha$. Conversely, any normal $\alpha$-compatible representation $\pi$ is stably unitarily equivalent to $\pi * \lambda_{\standard}$. 
\item\label{EqSecondPoint} If $\pi$ is a normal $\alpha$-compatible representation of $A$, and $\pi_{\mathbb{G},\mathbb{H}}: \mcO(\mathbb{G}) \rightarrow \mcO(\mathbb{H})$ defines a compact quantum subgroup, then $\alpha_{\mid \mathbb{H}} = (\id\otimes \pi_{\mathbb{G},\mathbb{H}})\alpha$ is a coaction by $\mathcal{O}(\mathbb{H})$ and $\pi$ is a normal $\alpha_{\mid \mathbb{H}}$-compatible representation.
\end{enumerate}
\end{Prop}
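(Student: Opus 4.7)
For part (1), first direction: set $\rho := \pi*\lambda_{\standard}$. The coproduct $\Delta$ extends to a normal $*$-homomorphism $L^{\infty}(\mathbb{G}) \to L^{\infty}(\mathbb{G}) \overline{\otimes} L^{\infty}(\mathbb{G})$, so $\widetilde{\beta} := \id \otimes \Delta$ defines a normal $*$-homomorphism on $B(\Hsp_\pi) \overline{\otimes} L^{\infty}(\mathbb{G})$. Pushing the coaction identity $(\alpha \otimes \id)\alpha = (\id \otimes \Delta)\alpha$ through $\pi \otimes \lambda_{\standard} \otimes \lambda_{\standard}$ gives $\widetilde{\beta}(\rho(a)) = \rho(a_{(0)}) \otimes \lambda_{\standard}(a_{(1)})$, using Sweedler notation for $\alpha$. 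This lies in $\rho(A) \otimes L^{\infty}(\mathbb{G})$, so $\widetilde{\beta}$ restricts to the descended coaction $\beta(\rho(a)) = \rho(a_{(0)}) \otimes a_{(1)}$ on $\rho(A)$ and supplies its normal extension to $\rho(A)'' \overline{\otimes} L^{\infty}(\mathbb{G})$, as required.

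For the converse: given a normal $\alpha$-compatible $\pi$ with normal extension $\widetilde{\alpha}: \pi(A)'' \to \pi(A)'' \overline{\otimes} L^{\infty}(\mathbb{G})$ of the descended coaction, we invoke the standard implementation theorem for normal coactions of compact quantum groups. After amplifying $\pi$ to put $\pi(A)''$ in standard form, one obtains a unitary corepresentation $V \in B(\Hsp_\pi) \overline{\otimes} L^{\infty}(\mathbb{G})$ with $V(\pi(a) \otimes 1)V^* = \widetilde{\alpha}(\pi(a)) = \rho(a)$ for all $a \in A$ (after identifying $\mcO(\mathbb{G}) \subset L^{\infty}(\mathbb{G})$ via $\lambda_{\standard}$). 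Thus $V$ intertwines the amplified $\pi$ with $\rho$, and absorbing both amplifications on the two sides yields the desired stable unitary equivalence.

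For part (2), the coaction identity for $\alpha_{\mid\mathbb{H}} = (\id \otimes \pi_{\mathbb{G},\mathbb{H}})\alpha$ reduces to applying $(\id \otimes \pi_{\mathbb{G},\mathbb{H}} \otimes \pi_{\mathbb{G},\mathbb{H}})$ to $(\alpha \otimes \id)\alpha = (\id \otimes \Delta_{\mathbb{G}})\alpha$ and invoking that $\pi_{\mathbb{G},\mathbb{H}}$ is a Hopf $*$-algebra morphism, so $(\pi_{\mathbb{G},\mathbb{H}} \otimes \pi_{\mathbb{G},\mathbb{H}})\Delta_{\mathbb{G}} = \Delta_{\mathbb{H}} \circ \pi_{\mathbb{G},\mathbb{H}}$; this yields $(\alpha_{\mid\mathbb{H}} \otimes \id)\alpha_{\mid\mathbb{H}} = (\id \otimes \Delta_{\mathbb{H}})\alpha_{\mid\mathbb{H}}$. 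For normal compatibility, use that the quantum subgroup morphism extends to a normal $*$-morphism $\widetilde{\pi}_{\mathbb{G},\mathbb{H}}: L^{\infty}(\mathbb{G}) \to L^{\infty}(\mathbb{H})$ (standard in the coamenable setting of interest here), and compose the normal extension $\widetilde{\alpha}$ with $\id \otimes \widetilde{\pi}_{\mathbb{G},\mathbb{H}}$.

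The main obstacle will be the converse direction in (1), which relies on the (non-trivial) implementation theorem for normal coactions of compact quantum groups and the somewhat delicate passage through standard form to extract stable unitary equivalence from a spatial implementation. All remaining steps are routine manipulations of the coaction identities and the defining compatibilities between Hopf $*$-algebra morphisms and coproducts.
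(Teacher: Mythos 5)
Your treatment of part (1) is correct and follows essentially the same route as the paper: the forward direction amounts to observing that $\id\otimes\Delta$ (equivalently, conjugation by the right regular corepresentation $V_{23}$) restricts to the required normal extension on $(\pi*\lambda_{\standard})(A)''$, and the converse uses a unitary implementation of the normal coaction on a standard-form representation of $\pi(A)''$ together with stable equivalence of faithful normal representations. The paper constructs the implementing unitary explicitly from the invariant conditional expectation $E=(\id\otimes\int_{\mathbb{G}})\alpha_{\pi}$ onto the fixed-point algebra, where you cite the general implementation theorem; either is acceptable.

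Part (2), however, has a genuine gap. Your argument rests on the claim that the quantum subgroup morphism $\pi_{\mathbb{G},\mathbb{H}}:\mcO(\mathbb{G})\to\mcO(\mathbb{H})$ extends to a \emph{normal} $*$-homomorphism $L^{\infty}(\mathbb{G})\to L^{\infty}(\mathbb{H})$. Such an extension does not exist in general, and coamenability does not help. Already classically: for a closed subgroup $H$ of a compact group $G$ with Haar measure zero in $G$ (e.g.\ $U(N-1)\subseteq U(N)$), the restriction map $C(G)\to C(H)$ does not factor through $L^{\infty}(G)$, since functions agreeing almost everywhere on $G$ may disagree everywhere on $H$; normality of a hypothetical extension would force exactly such a factorization. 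The subgroups actually used later in the paper ($U_q(N-1)\subseteq U_q(N)$ and the copies of $SU_q(2)$ given by $\res_i$) are of this non-open type, so your argument fails precisely where it is needed. This is the difficulty the paper's proof is designed to circumvent: rather than pushing the coaction forward through a (nonexistent) map on the von Neumann algebras, one restricts the \emph{implementing unitary}. Concretely, the corepresentation $V_{\mathbb{G},\mathbb{H}}$ of $\mathbb{H}$ on $L^2(\mathbb{G})$, given by $f\otimes h\mapsto(\id\otimes\pi_{\mathbb{G},\mathbb{H}})\Delta(f)(1\otimes h)$, always exists, and \cite[Theorem 2.7]{Vae01} then yields a normal coaction $\alpha_{\pi,\mathbb{H}}$ of $L^{\infty}(\mathbb{H})$ on $N=\pi(A)''$ determined by its compatibility with $V_{\pi,\alpha}$ and $V_{\mathbb{G},\mathbb{H}}$; one checks on $\pi(A)$ that it extends $(\id\otimes\pi_{\mathbb{G},\mathbb{H}})\alpha$. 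Your purely algebraic verification that $\alpha_{\mid\mathbb{H}}$ is a coaction of $\mcO(\mathbb{H})$ on $A$ is fine, but the normal-compatibility step needs to be replaced by an argument of this kind.
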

\begin{proof}
Let $V$ be the right regular unitary corepresentation
\[
L^2(\G) \otimes L^2(\G) \rightarrow L^2(\G) \otimes L^2(\G), \quad f\otimes g \mapsto \Delta(f)(1\otimes g),\qquad f,g\in\mcO(\G).
\]
If then $\pi$ is a representation of $A$, we have that 
\[
(\pi*\lambda_{\standard}\otimes \lambda_{\standard})(\alpha(x))= V_{23} (\pi*\lambda_{\standard}(x)\otimes 1)V_{23}^*,
\]
proving that $\pi*\lambda_{\standard}$ is normal $\alpha$-compatible. This proves the first half of \eqref{EqFirstPoint}. 

Conversely, if $\pi$ is normal  $\alpha$-compatible, let $N = \pi(A)''$ and consider the normal coaction $\alpha_{\pi}: N \rightarrow N \overline{\otimes}L^{\infty}(\G)$. Let $E: N \rightarrow N^{\alpha}$ be the faithful normal conditional expectation 
\[
E(n) = \left(\id\otimes \int_{\G}\right)\alpha_{\pi}(n) \in N^{\alpha},\qquad n \in N.
\]
Then on $L^2(N)$, as obtained from $L^2(N^{\alpha})$ through induction by $E$, we obtain the unitary 
\[
V_{\pi,\alpha}: L^2(N) \otimes L^2(\G) \rightarrow L^2(N) \otimes L^2(\G),\qquad n\xi\otimes \eta \mapsto \alpha(n)(\xi\otimes \eta),\qquad n\in N,\xi\in L^2(N^{\alpha}),\eta\in L^2(\G).
\]
Then, with $\pi_N$ the GNS-representation of $N$, 
\[
\pi_N*\lambda_{\standard}(n) = V_{\pi,\alpha}(\pi_N(n)\otimes 1)V_{\pi,\alpha}^*.
\]
As $\pi$ and $\pi_N$ are stably unitarily equivalent (being faithful normal representations of $N$), this finishes the proof of \eqref{EqFirstPoint}. Moreover, if $\mathbb{H}$ is a subgroup of $\G$, consider 
\[
V_{\G,\mathbb{H}}: L^2(\G) \otimes L^2(\mathbb{H})\rightarrow L^2(\G) \otimes L^2(\mathbb{H}),\quad f\otimes h \mapsto (\id\otimes \pi_{\G,\mathbb{H}})\Delta(f)(1\otimes h),\qquad f\in \mcO(\G),h\in \mcO(\mathbb{H}). 
\]
Then by \cite[Theorem 2.7]{Vae01}, we obtain a unique normal coaction $\alpha_{\pi,\mathbb{H}}$ of $L^{\infty}(\mathbb{H})$ on $N$ such that 
\[
(V_{\G,\mathbb{H}})_{23}\alpha_{\pi}(n)_{12}(V_{\G,\mathbb{H}})_{23}^* = (V_{\pi,\alpha})_{12}\alpha_{\pi,\mathbb{H}}(n)_{13}(V_{\pi,\alpha})_{12}^*.
\]
It is then easily verified that 
\[
\alpha_{\pi,\mathbb{H}}(\pi(a)) = (\pi \otimes \pi_{\mathbb{G},\mathbb{H}})\alpha(a),\qquad a\in A,
\]
proving \eqref{EqSecondPoint}.
\end{proof}

The first point of the previous proposition can be strengthened under the condition that $A$ has trivial coinvariants.
\begin{Prop}\label{PropNorCompErg}
Let $A$ be a unital $*$-algebra with coaction $\alpha: A \rightarrow A\otimes \mcO(\mathbb{G})$, and assume $A^{\alpha} = \C$. Then any two normal $\alpha$-compatible representations are stably unitarily equivalent. 
\end{Prop}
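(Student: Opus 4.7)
The plan is to exploit the ergodicity hypothesis $A^{\alpha}=\C$ to identify all normal $\alpha$-compatible representations, up to stable unitary equivalence, with a single canonical representation built from the pair $(A,\omega)$, where $\omega$ is the invariant state constructed below. By Proposition~\ref{PropNorComp}(1), every normal $\alpha$-compatible representation $\pi$ is stably equivalent to $\pi*\lambda_{\standard}$, so it suffices to show that $\pi_1*\lambda_{\standard}$ and $\pi_2*\lambda_{\standard}$ are stably unitarily equivalent for any two representations $\pi_1,\pi_2$ of $A$.

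First I would construct the invariant state $\omega:A\to\C$ via $\omega(a)\cdot 1_A=(\id\otimes\int_{\G})\alpha(a)$. This is well-defined because the right-hand side lies in $A^{\alpha}=\C\cdot 1_A$; positivity and the normalisation $\omega(1)=1$ follow from complete positivity of $\id\otimes\int_{\G}$ together with $\alpha(1)=1\otimes 1$, and a coassociativity computation using biinvariance of the Haar state gives $(\omega\otimes\id)\alpha=\omega(\cdot)\cdot 1_{\mcO(\G)}$. Next I would show that any normal $\alpha$-compatible $\pi$ automatically inherits ergodicity on the von Neumann level: for $N=\pi(A)''$, the conditional expectation $E:N\to N^{\alpha_{\pi}}$ of Proposition~\ref{PropNorComp}(1) satisfies $E(\pi(a))=\omega(a)\cdot 1\in\C$, and normality of $E$ combined with $\sigma$-weak density of $\pi(A)$ in $N$ force its image to lie in $\C\cdot 1$. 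Hence $N^{\alpha_{\pi}}=\C\cdot 1$, so $(N,\alpha_{\pi})$ is an ergodic coaction with a unique normal invariant state $\omega_N$ restricting to $\omega$ along $\pi$.

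The main step, and the principal obstacle, is to deduce that $(N,\pi)$ is determined up to canonical $A$-compatible isomorphism by $(A,\omega)$. The approach is a Peter--Weyl type spectral analysis: the coaction decomposes $A=\bigoplus_{\nu\in\Irr(\G)} A_{\nu}$ into $\nu$-isotypical subspaces, each a finite multiple of $V_{\nu}$ with multiplicity spaces carrying canonical inner products induced by $\omega$. Completing yields a canonical Hilbert space $L^2(A,\omega)$ together with a canonical unitary $\mcO(\G)$-coaction and a commuting normal $A$-action, whose weak closure is a fixed von Neumann algebra $M$; matching spectral subspaces identifies each $N=\pi_i(A)''$ with $M$ in an $A$-equivariant way, so the standard forms of $\pi_i(A)''$ coincide and $\pi_1,\pi_2$ are stably unitarily equivalent via their faithful normal standard representations. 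The delicate point will be establishing faithfulness of $\omega_N$ on $N$, needed for the standard form identification to be non-degenerate; this is automatic in the coamenable setting (such as $\G=U_q(N)$ in the paper), where any ergodic action is known to admit a faithful invariant state.
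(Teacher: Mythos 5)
Your first two steps coincide with the paper's proof: the invariant state $\omega$ on $A$, the observation that the conditional expectation $E:N\to N^{\alpha_\pi}$ of Proposition \ref{PropNorComp} collapses to a normal state $\varphi$ on $N=\pi(A)''$ with $\varphi\circ\pi=\omega$, and the ergodicity of $\alpha_\pi$ on $N$. Where you diverge is the final step. The paper finishes in one line: since $\varphi$ is a \emph{faithful} normal state on $N$ and $\pi(A)$ is $\sigma$-weakly dense, $N$ is canonically the von Neumann algebra generated by $A$ in the GNS representation of $\omega$; as $\omega$ does not depend on $\pi$, neither does $N$ (with its dense copy of $A$), and any two faithful normal representations of a fixed von Neumann algebra are stably unitarily equivalent. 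Your Peter--Weyl decomposition of $A$ into isotypical components and the construction of $L^2(A,\omega)$ with matching of spectral subspaces is a correct but much heavier route to exactly this GNS identification; it buys nothing here, and it forces you to worry about finiteness of multiplicity spaces, which is irrelevant to the argument.

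One point needs correcting. You flag faithfulness of $\varphi$ on $N$ as the delicate issue and resolve it only by appealing to coamenability. Taken at face value this would restrict the proposition to coamenable $\mathbb{G}$, whereas the statement is for an arbitrary compact quantum group. No such restriction is needed: the conditional expectation $E=(\id\otimes\int_{\mathbb{G}})\circ\alpha_\pi$ is already asserted (and used) to be faithful in the proof of Proposition \ref{PropNorComp}, because the Haar state is faithful on $L^\infty(\mathbb{G})$ and the normal coaction $\alpha_\pi$ is injective. So $\varphi=E$ is faithful in complete generality, and your proof closes without the coamenability hypothesis.
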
 
\begin{proof}
Reprising the notation of the proof of the previous proposition, the conditional expectation $E$ now becomes a faithful normal state $\varphi$. It is then clear that $N$ must be the von Neumann algebra obtained from the GNS-representation of $A$ with respect to $\varphi$. But as this means $N$ is independent of $\pi$, this proves the proposition.   
\end{proof}

\begin{Lem}\label{LemWeakContGen}
Let $\mathbb{G}$ be a coamenable compact quantum group, and let $\alpha:A \rightarrow A \otimes \mcO(\mathbb{G})$ be a coaction on a unital $*$-algebra $A$. Let $\pi$ be a representation of $A$. Then $\pi$ is weakly contained in $\pi*\lambda_{\standard} = (\pi \otimes \lambda_{\standard})\circ \alpha$. 
\end{Lem}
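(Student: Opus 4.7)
The strategy is to use coamenability to extract $\pi$ from $\pi*\lambda_{\standard}$ via the counit. By definition, weak containment amounts to the norm bound
\[
\|\pi(a)\| \leq \|\pi*\lambda_{\standard}(a)\|, \qquad a\in A,
\]
so the plan is to exhibit $\pi(a)$ as the image of $\pi*\lambda_{\standard}(a)$ under a contractive map. The coamenability hypothesis supplies exactly the right tool: the counit $\varepsilon: \mcO(\G)\to\C$ factors through $\lambda_{\standard}$ and extends to a $*$-character $\varepsilon_r$ on the reduced C$^*$-envelope $C_r(\G) = \overline{\lambda_{\standard}(\mcO(\G))}$, with $\varepsilon_r\circ\lambda_{\standard} = \varepsilon$.

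Given this, I would argue as follows. Fix $a\in A$. The element $\pi*\lambda_{\standard}(a) = (\pi\otimes\lambda_{\standard})\alpha(a)$ lies in the algebraic tensor product $\pi(A)\otimes_{\alg}\lambda_{\standard}(\mcO(\G))$, and hence can be viewed inside the minimal C$^*$-tensor product $B(\Hsp)\otimes_{\min} C_r(\G)$, where its norm coincides with its norm as an operator on $\Hsp\otimes L^2(\G)$. Since $\varepsilon_r$ is a $*$-character (in particular a state), the slice map
\[
\id\otimes\varepsilon_r: B(\Hsp)\otimes_{\min}C_r(\G) \longrightarrow B(\Hsp)
\]
is a unital $*$-homomorphism, and therefore contractive.

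Evaluation then gives, using the counit axiom $(\id\otimes\varepsilon)\alpha = \id$ of the coaction $\alpha$,
\[
(\id\otimes\varepsilon_r)\bigl(\pi*\lambda_{\standard}(a)\bigr) = (\pi\otimes\varepsilon_r\lambda_{\standard})\alpha(a) = (\pi\otimes\varepsilon)\alpha(a) = \pi(a).
\]
Contractivity of the slice map yields $\|\pi(a)\|\leq\|\pi*\lambda_{\standard}(a)\|$, which is the required weak containment.

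The only non-routine input is the coamenability step, which delivers the continuous extension $\varepsilon_r$; once this is in hand, the argument is a one-line application of the counit identity and the fact that slicing by a state is contractive. So I do not expect any real obstacle beyond quoting the coamenability of $\G$ (which is given by hypothesis).
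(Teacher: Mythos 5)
Your proposal is correct and follows essentially the same route as the paper: coamenability gives the counit as a $*$-character on the reduced C$^*$-completion, and slicing $\pi*\lambda_{\standard}$ by it recovers $\pi$, yielding the norm inequality. The extra detail you supply (contractivity of the slice map on the minimal tensor product) is a correct fleshing-out of what the paper leaves implicit.
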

\begin{proof}
Let $C(\mathbb{G})$ be the closure of $\mcO(\mathbb{G})$ in its standard representation. As $\mathbb{G}$ is coamenable, the C$^*$-algebra $C(\mathbb{G})$ admits the counit $*$-character $\varepsilon$. So $\pi$ factors through $\pi *\lambda_{\standard}$ by applying $\iota \otimes \varepsilon$. 
\end{proof}

\begin{Cor}\label{CorWeakContErg}
Let $\mathbb{G}$ be a coamenable compact quantum group, and let $A \rightarrow A \otimes \mcO(\mathbb{G})$ be a coaction on a unital $*$-algebra $A$ with trivial coinvariants. Let $\pi,\pi'$ be representations of $A$. Then $\pi'\preccurlyeq  \pi *\lambda_{\standard}$. 
\end{Cor}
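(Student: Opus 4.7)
The plan is to combine the three immediately preceding results: the coamenability lemma (Lemma \ref{LemWeakContGen}), the fact that $\pi *\lambda_{\standard}$ is always normal $\alpha$-compatible (Proposition \ref{PropNorComp}\eqref{EqFirstPoint}), and the ergodicity statement that any two normal $\alpha$-compatible representations of a $*$-algebra with trivial coinvariants are stably unitarily equivalent (Proposition \ref{PropNorCompErg}).

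First I would apply Proposition \ref{PropNorComp}\eqref{EqFirstPoint} to both $\pi$ and $\pi'$ to produce two normal $\alpha$-compatible representations $\pi*\lambda_{\standard}$ and $\pi'*\lambda_{\standard}$. Since $A^{\alpha} = \C$, Proposition \ref{PropNorCompErg} then yields that these two representations are stably unitarily equivalent. Stable unitary equivalence is stronger than mutual weak containment, so in particular
\[
\|\pi'*\lambda_{\standard}(a)\| = \|\pi*\lambda_{\standard}(a)\|, \qquad a\in A.
\]

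Next, since $\mathbb{G}$ is coamenable, Lemma \ref{LemWeakContGen} applied to $\pi'$ gives $\pi' \preccurlyeq \pi'*\lambda_{\standard}$, i.e.\ $\|\pi'(a)\|\leq \|\pi'*\lambda_{\standard}(a)\|$ for all $a\in A$. Chaining this with the norm equality from the previous step gives
\[
\|\pi'(a)\|\leq \|\pi'*\lambda_{\standard}(a)\| = \|\pi*\lambda_{\standard}(a)\|,\qquad a \in A,
\]
which is exactly the weak containment $\pi' \preccurlyeq \pi*\lambda_{\standard}$. There is no real obstacle here: the whole content sits in the ergodicity proposition, and this corollary is essentially a packaging of the preceding three results.
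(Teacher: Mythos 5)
Your argument is correct, and it is a genuinely different route from the one taken in the paper. The paper's proof works at the C$^*$-level: it passes to the universal C$^*$-envelope $A_u$ (which exists by Proposition \ref{PropAlwaysBounded}), extends the coaction to $A_u$, and uses triviality of the coinvariants together with faithfulness of the Haar state on $C(\mathbb{G})$ (this is where coamenability enters) to show that the induced invariant state on $A_u$ is faithful; this identifies $C^*(\pi*\lambda_{\standard})$ with $A_u$ itself, from which the weak containment of any $\pi'$ is immediate. You instead stay at the W$^*$-level and reuse the machinery of Propositions \ref{PropNorComp} and \ref{PropNorCompErg}: both $\pi*\lambda_{\standard}$ and $\pi'*\lambda_{\standard}$ are normal $\alpha$-compatible, hence stably unitarily equivalent by ergodicity, hence isometric in C$^*$-norm, and coamenability enters only through Lemma \ref{LemWeakContGen} to absorb $\pi'$ into $\pi'*\lambda_{\standard}$. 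Each step you invoke is legitimate (stable unitary equivalence does give equality of operator norms, and there is no circularity, as the three cited results precede the corollary and do not depend on it). The trade-off is mild: your version is more economical given what has already been established, while the paper's version yields the slightly stronger structural fact that $\pi*\lambda_{\standard}$ always realizes the universal C$^*$-norm on $A$, i.e.\ $C^*(\pi*\lambda_{\standard})\cong A_u$ independently of $\pi$. Both proofs ultimately rest on the same two ingredients: triviality of the coinvariants and faithfulness of the Haar state.
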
 
\begin{proof}
Let $A_u$ be the universal C$^*$-envelope of $A$, and let $\pi_u$ be some faithful representation of $A_u$. Then $\alpha$ extends to a coaction $\alpha_u: A_u \rightarrow A_u \otimes C(\mathbb{G})$ which still has trivial coinvariants. In particular, we obtain on $A_u$ a unique state $\varphi$ determined by 
\[
\varphi(x)1_{A_u} = \left(\id\otimes \int_{\mathbb{G}}\right)\alpha_u(x). 
\]
Since $\int_{\mathbb{G}}$ is faithful on $C(\mathbb{G})$, and $\alpha_u$ is faithful, it follows that $\varphi$ is faithful. Hence $A_u \cong C^*(\pi*\lambda_{\standard})$, and in particular $\pi' \preccurlyeq  \pi*\lambda_{\standard}$. 
\end{proof}

Returning now to $\mathbb{G} = U_q(N)$, let us say that two factor representations $\pi,\pi'$ of $\mcO_q(H(N))$ lie in the same $\mbU_q(N)$-orbit if there exist $\lambda,\lambda'\in \mbU_q(N)$ such that 
\[
\pi \preccurlyeq \pi'*\lambda,\qquad \pi'\preccurlyeq \pi*\lambda.
\]

The following theorem is a quantum analogue of the spectral theorem for finite-dimensional self-adjoint matrices.
\begin{Theorem}
Two factor representations in $\mbH_q(N)$ lie in the same $\mbU_q(N)$-orbit if and only if they have the same spectral weight. 
\end{Theorem}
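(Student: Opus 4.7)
The plan is to reduce the orbit structure under $\mbU_q(N)$ to the ergodicity machinery developed in Section~\ref{SecApCQG}, using that the spectral weight is nothing but a repackaging of the central character.

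For the forward direction, suppose $\pi \preccurlyeq \pi' * \lambda$ for some $\lambda \in \mbU_q(N)$. Each generator $\sigma_k$ of $\msZ(\mcO_q(H(N)))$ is $\Ad^U_q$-coinvariant by Theorem~\ref{TheoCenter}, so
\[
(\pi' * \lambda)(\sigma_k) = (\pi' \otimes \lambda)\Ad^U_q(\sigma_k) = \sigma_k^{\pi'} \otimes 1.
\]
Since $\pi$ is a factor representation and weak containment gives $\|\pi(\sigma_k - \sigma_k^{\pi'})\| \leq \|(\pi'*\lambda)(\sigma_k - \sigma_k^{\pi'})\| = 0$, we conclude $\sigma_k^{\pi} = \sigma_k^{\pi'}$ for each $k$. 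Hence the characteristic polynomials $P_\pi$ and $P_{\pi'}$ of \eqref{EqCharPolpi} coincide, so $\lambda_\pi = \lambda_{\pi'}$.

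For the reverse direction, assume $\pi$ and $\pi'$ have the same spectral weight. Since the coefficients of $P_\pi$ are the elementary symmetric functions (up to sign) of its multiset of roots, and since by Theorem~\ref{TheoCentrPol} the $\sigma_k$ are algebraically independent generators of the center, equality of spectral weights forces $\sigma_k^{\pi} = \sigma_k^{\pi'} =: s_k$ for all $k$. Consequently both $\pi$ and $\pi'$ descend to representations of the quantum orbit algebra $\mcO_q(O_s)$ of Definition~\ref{DefQuantOrb}. By Proposition~\ref{PropTrivializesCent} the $\mcO_q(U(N))$-coaction descends to $\mcO_q(O_s)$ with trivial coinvariants, and $U_q(N)$ is coamenable, so Corollary~\ref{CorWeakContErg} applies in both directions and yields
\[
\pi \preccurlyeq \pi' * \lambda_{\standard}, \qquad \pi' \preccurlyeq \pi * \lambda_{\standard},
\]
where $\lambda_{\standard} \in \mbU_q(N)$ is the standard representation on $L^2_q(U(N))$. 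Taking $\lambda = \lambda' = \lambda_{\standard}$ in the definition of $\mbU_q(N)$-orbit completes the argument.

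Essentially no obstacle is anticipated: both directions are one-line applications of results established earlier. The only point deserving care is verifying that the representation $\pi * \lambda_{\standard}$ appearing in the definition of orbit (interpreted via $\Ad^U_q$ on $\mcO_q(H(N))$) matches the corresponding construction for $\mcO_q(O_s)$ appearing in Corollary~\ref{CorWeakContErg}, which is immediate from the fact that $\Ad^U_q$ fixes the central generators $\sigma_k$ and therefore descends consistently to the quotient.
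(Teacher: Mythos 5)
Your proof is correct and follows essentially the same route as the paper: coinvariance of the center (Theorem \ref{TheoCenter}) gives that weak containment in $\pi'*\lambda$ forces equality of central characters and hence of spectral weights, while the converse reduces both representations to the quantum orbit algebra $\mcO_q(O_s)$ and invokes Corollary \ref{CorWeakContErg}. The extra details you supply (the norm inequality for $\sigma_k-\sigma_k^{\pi'}$, and the recovery of the $s_k$ from the multiset of roots via elementary symmetric functions) are correct and merely make explicit what the paper leaves implicit.
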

\begin{proof}
Assume that $\pi,\pi'$ have the same spectral weight. Then they factor through a representation of $\mcO_q(O_s)$ for some $s$, and so $\pi,\pi'$ lie in the same $\mbU_q(N)$-orbit by Corollary \ref{CorWeakContErg}.

Conversely, assume $\pi,\pi'$ lie in the same $\mbU_q(N)$-orbit, and choose a representation $\rho \in \mbU_q(N)$ such that 
\[
\pi' \preccurlyeq \pi*\rho.
\]
Since $\mcO_q(H(N))^{\Ad_q} = \msZ(\mcO_q(H(N)))$, we find that 
\[
(\pi*\rho)(\sigma_k) = \pi(\sigma_k)\otimes 1,\qquad 1\leq k \leq N
\]
so that $\pi*\rho$ factors over $\mcO_q(O_s)$. As $\pi' \preccurlyeq \pi*\rho$, the same must be true for $\pi'$. This implies that the spectral weights of $\pi$ and $\pi'$ must coincide.
\end{proof}








\section{Big cell representations}\label{section: big cell representations}

\subsection{Big cell representations}

\begin{Def}
We define a representation $\pi$ of $\mcO_q(H(N))$ to be a \emph{big cell representation of rank $M$} if $\pi$ is of rank $M$ and $\Ker(\pi(Z_{\brk})) = 0$ for all $k\leq M$. 

We define a representation to be a (generic) big cell representation if it is a direct sum of big cell representations (of possibly varying rank).    
\end{Def}



To understand big cell representations, it is convenient to consider a deformation of $\mcO_q(T(N))$. This is what we turn to next. 


\subsection{Deformed quantized function algebras $\mcO_q^{\epsilon}(T(N))$}

For $\epsilon \in\R^N$, let 
\begin{equation}\label{EqEtaEps}
\epsilon_{(i,j]} = \epsilon_{i+1}\ldots \epsilon_j,\qquad (\eta_{\epsilon})_j = \epsilon_{[j]} = \epsilon_{(0,j]}.
\end{equation}
Define, for $\epsilon \in \R^N$, the \emph{$\epsilon$-deformed braid operator} by
\[
\hat{R}_{\epsilon} = \sum_{ij} q^{-\delta_{ij}} e_{ji}\otimes e_{ij} +(q^{-1}-q)\sum_{i<j} \epsilon_{(i,j]} e_{jj}\otimes e_{ii},
\]
When $\epsilon_i = 1$ for all $i$, we write $\epsilon = +$, so e.g.\ $\hat{R}_+ = \hat{R}$.  

\begin{Def}
We define $\mcO_{q,\epsilon}^{\R}(B(N))$ to be the algebra generated by $\mcO_q(B(N))$ and an independent copy of $\mcO_q(B(N))^*$, together with the extra relations 
\begin{equation}\label{EqUniRelMN}
T_{23}\hat{R}_{12}T^*_{23} = T^*_{13}\hat{R}_{\epsilon,12}T_{13}.
\end{equation}
We define $\mcO_q^{\epsilon}(T(N))$ as the quotient of $\mcO_{q,\epsilon}^{\R}(B(N))$ by the further relations $T_{i}^* = T_i$ for all $i$. 
\end{Def}

We then have that $\mcO_q^{\epsilon}(T(N))$ is the universal unital $*$-algebra generated by invertible elements $T_i$ for $1\leq i \leq N$ and elements $T_{ij}$ with $1\leq i<j\leq N$ such that \eqref{EqCommHolT} hold, as well as
\begin{equation}\label{EqXComStarClassT}
\left\{\begin{array}{lllll}
T_{kj} T_{li}^{*} - T_{li}^{*}T_{kj}&=&0,&& i\neq j, k\neq l,\\
T_{kj}T_{lj}^{*} - qT_{lj}^{*}T_{kj}&=& -(1-q^2)\sum_{i<j} T_{ki}T_{li}^{*} ,&&k\neq l,\\
qT_{kj}T_{ki}^{*}- T_{ki}^{*}T_{kj} &=&  (1-q^2)\sum_{k<l\leq\mathrm{min}\{i,j\}} \epsilon_{(k,l]}T_{li}^{*}T_{lj},&& i\neq j, \\
T_{kj}T_{kj}^{*}- T_{kj}^{*}T_{kj} &=& (1-q^2)\left(\sum_{k<l\leq j} \epsilon_{(k,l]} T_{lj}^{*}T_{lj} - \sum_{k\leq l<j} T_{kl}T_{kl}^{*}\right). &&
\end{array}\right. 
\end{equation}

When $\epsilon = +  = (+,+,\ldots,+)$, we find back $\mcO_q(T(N))$. 

In what follows we need a slightly more general class of $*$-algebras $\mcO_q^{\epsilon}(T(N))$ parametrized by $\epsilon \in \sqcup_{M=0}^N \R^M$.

\begin{Def}
Assume that $M \leq N$, and let $\epsilon \in \R^M$. We define $\mcO_q^{\leq M}(B(N))\subseteq \mcO_q(B(N))$ as the unital subalgebra generated by the $T_i^{\pm 1}$ and $T_{ij}$ with $i\leq M$ and $j\leq N$. We define $\mcO_{q,\epsilon}^{\R}(B(N))$  to be the $*$-subalgebra of $\mcO_{q,\widetilde{\epsilon}}^{\R}(B(N))$ generated by $\mcO_q^{\leq M}(B(N))$, where 
\begin{equation}\label{EqWideEps}
\widetilde{\epsilon}  = (\epsilon,0,\ldots,0)\in \R^N.
\end{equation}
Similarly, we define $\mcO_q^{\epsilon}(T(N))$ as the $*$-subalgebra of $\mcO_q^{\widetilde{\epsilon}}(T(N))$ generated by the $T_i^{\pm1}$ and $T_{ij}$ with $i\leq M$ and $j\leq N$. 
\end{Def} 


We denote  
\[
\pi_T: \mcO_{q,\epsilon}^{\R}(B(N)) \rightarrow \mcO_q^{\epsilon}(T(N))
\]
for the natural quotient map. 

\begin{Lem}
The multiplication maps
\begin{equation}\label{EqBijMult3}
\mcO_q^{\leq M}(B(N))^* \otimes \mcO_q^{\leq M}(B(N)) \rightarrow \mcO_{q,\epsilon}^{\R}(B(N)),\quad x^*\otimes y\mapsto x^*y
\end{equation}
\begin{equation}\label{EqBijMult4}
\mcO_q^{\leq M}(B(N)) \otimes \mcO_q^{\leq M}(B(N))^* \rightarrow \mcO_{q,\epsilon}^{\R}(B(N)),\quad x\otimes y^*\mapsto xy^*
\end{equation}
are bijective. 
\end{Lem}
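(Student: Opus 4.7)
The plan is to first prove bijectivity of \eqref{EqBijMult3}; the statement for \eqref{EqBijMult4} then follows by applying the involution $(x^*y)^* = y^*x$, which identifies the two maps up to flipping the tensor factors. Moreover, it suffices to treat the full case $M=N$, $\epsilon \in \R^N$: once \eqref{EqBijMult3} is known bijective as a map into $\mcO_{q,\widetilde{\epsilon}}^{\R}(B(N))$, its restriction into the $*$-subalgebra $\mcO_{q,\epsilon}^{\R}(B(N))$ is automatically injective, and its image is a $*$-subalgebra containing the generators that by definition span $\mcO_{q,\epsilon}^{\R}(B(N))$.

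For surjectivity in the case $M=N$, I would use the four cross relations in \eqref{EqXComStarClassT} as a rewriting system that pushes each $T$-generator past any $T^*$-generator immediately to its left. Each relation expresses $T_{kj}T_{li}^*$ as a scalar multiple of $T_{li}^*T_{kj}$ plus explicit corrections: when $k\ne l$ and $i=j$ the wrong-order corrections are $T_{ki}T_{li}^*$ with $i<j$; when $k=l$ and $i\ne j$ the corrections are already in correct order; and in the diagonal case the corrections contain both correct-order terms $T_{lj}^*T_{lj}$ and wrong-order terms $T_{kl}T_{kl}^*$ with $l<j$. In every wrong-order correction the column index of the $T$-factor has strictly decreased, so by decreasing induction on this index, combined with the PBW reduction inside each of $\mcO_q^{\leq M}(B(N))$ and its $*$-image, any word in the generators rewrites as a linear combination of normally ordered monomials $m_\beta^*m_\alpha$.

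For injectivity I would invoke Bergman's diamond lemma. Fixing PBW bases $\{m_\alpha\}$ for $\mcO_q^{\leq M}(B(N))$ and $\{m_\beta^*\}$ for $\mcO_q^{\leq M}(B(N))^*$ (which exist via \eqref{EqUnivRelInv} and the standard PBW basis of $U_q(\mfb^-(N))^{\cop}$), I would endow $\mcO_q^{\leq M}(B(N))^* \otimes \mcO_q^{\leq M}(B(N))$ with the candidate associative product obtained by normal-ordering using the rewriting system above. Termination is guaranteed by the weight argument used for surjectivity, so by the diamond lemma it suffices to resolve the overlap ambiguities $T_{k_1j_1}T_{k_2j_2}T_{li}^*$ and $T_{kj}T_{l_1i_1}^*T_{l_2i_2}^*$, together with those coming from overlaps with the defining relations of $\mcO_q(B(N))$ and $\mcO_q(B(N))^*$. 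Once the candidate product is verified to be associative, the universal property of $\mcO_{q,\epsilon}^{\R}(B(N))$ yields a two-sided inverse to \eqref{EqBijMult3}.

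The main obstacle will be precisely the confluence check; in principle this is a lengthy direct computation, but it is forced a priori by the $R$-matrix origin of the cross relations through \eqref{EqUniRelMN}, namely the braid and Hecke identities for $\hat{R}$ and $\hat{R}_\epsilon$. As a lower-tech alternative, one can first settle the case $\epsilon=(+,\ldots,+)$, where $\mcO_{q,+}^{\R}(B(N))$ inherits its decomposition from the Gauss factorization of $\mcO_q^{\R}(GL(N,\C))$ (compare Theorem~\ref{TheoRepGLN} and the argument of \cite{DCF19}), and then exploit that \eqref{EqXComStarClassT} depends polynomially on $\epsilon$ with $\epsilon$-independent leading symbol, so that the result for general $\epsilon$ follows by a flatness/deformation argument.
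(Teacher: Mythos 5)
Your overall architecture matches the paper's: reduce to the case $M=N$, obtain surjectivity by using the cross relations \eqref{EqXComStarClassT} as a normal-ordering rewriting system with induction on the decreasing column index, and obtain injectivity by equipping the tensor product with an associative product that splits the multiplication map. Two small remarks on the reductions: deducing \eqref{EqBijMult4} from \eqref{EqBijMult3} via the involution is fine, and your restriction argument for injectivity when $M<N$ is exactly the paper's; but for surjectivity when $M<N$ you assert that the image is closed under multiplication without explaining why the rewriting never produces generators $T_{ij}$ with $i>M$ — the point is that every such correction term in \eqref{EqXComStarClassT} carries a factor $\widetilde{\epsilon}_i=0$.

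The genuine gap is in the injectivity step. You correctly see that one must endow $\mcO_q^{\leq M}(B(N))^*\otimes\mcO_q^{\leq M}(B(N))$ with an associative product compatible with the relations, but you leave the associativity — the confluence of all overlap ambiguities — as an unperformed ``lengthy direct computation'', and the claim that confluence is forced by the braid and Hecke identities for $\hat{R}$ and $\hat{R}_{\epsilon}$ separately is not a proof: what is needed is a compatibility between the two, and that is precisely the nontrivial content. The paper supplies it by observing that $\hat{R}_{\epsilon}$ arises from twisting the universal $R$-matrix by the Hopf algebra endomorphism $\nu_{\epsilon}$ of \eqref{Eqnu}, so that there is an honest skew bicharacter $\mbr_{\epsilon}$ determined by $R_{\epsilon}=\Sigma\circ\hat{R}_{\epsilon}$, descending to $\mcO_q(B(N))\otimes\mcO_q(B^-(N))$. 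The candidate product is then the bicharacter-twisted product
\[
(z^*\otimes x)(y^*\otimes w) = \mbr_{\epsilon}(S(x_{(1)}),y_{(1)}^*)\, z^*y_{(2)}^{*}\otimes x_{(2)}w\; \mbr(x_{(3)},y_{(3)}^*),
\]
whose associativity is automatic from the bicharacter identities \eqref{EqDefProp}; no diamond lemma is needed, and the universal relations give the splitting map. Your fallback (Gauss decomposition for $\epsilon=+$ plus a ``flatness/deformation argument'') does not close the gap either: a leading-symbol or semicontinuity argument only bounds the algebra from above, i.e.\ re-proves that the normally ordered monomials span, whereas injectivity is a lower bound on the size of $\mcO_{q,\epsilon}^{\R}(B(N))$ and requires either an explicit splitting as above or a sufficiently large supply of representations, neither of which the deformation argument provides.
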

Here we note that, on the left hand side, we consider $\mcO_q^{\leq M}(B(N))^*$ to be a formal anti-linear, anti-isomorphic copy of $\mcO_q^{\leq M}(B(N))$. 
\begin{proof}
Let us first prove the result for $\epsilon \in \R^N$. From the concrete commutation relations \eqref{EqXComStarClassT} and an easy induction argument, it is already clear that the maps will be surjective.

Let us show that \eqref{EqBijMult4} is injective, the proof for \eqref{EqBijMult3} is similar. Note first that there exists a unique skew bicharacter
\begin{equation}\label{EqPairingreps}
\mbr_{\epsilon}: \mcO_q(GL(N,\C)) \otimes \mcO_q(GL(N,\C)) \rightarrow \C
\end{equation}
such that, with $R_{\epsilon} = \Sigma \circ \hat{R}_{\epsilon}$, 
\[
(\id\otimes \id\otimes \mbr_{\epsilon})X_{13}X_{24} = R_{\epsilon}.
\]
In fact, $\mbr_{\epsilon}$ is obtained by pairing with $(\nu_{\epsilon}\otimes \id)\msR = (\id\otimes \nu_{\epsilon})\msR$, where $\nu_{\epsilon}$ is (the completion of) the Hopf algebra endomorphism of $U_q(\mfb(N))$, resp.~ $U_q(\mfb^-(N))$ with 
\begin{equation}\label{Eqnu}
\nu_{\epsilon}(K_i) = K_i,\qquad \nu_{\epsilon}(E_i) = \epsilon_{i+1}E_i,\qquad \nu_{\epsilon}(F_i) = \epsilon_{i+1}F_i.
\end{equation}
This $\mbr_{\epsilon}$ descends to a skew bicharacter
\[
\mbr_{\epsilon}: \mcO_q(B(N)) \otimes \mcO_q(B^-(N)) \rightarrow \C.
\]
We can then define on $\mcO_q^{\leq M}(B(N))^* \otimes \mcO_q^{\leq M}(B(N))$ a unique associative product such that 
\[
(z^*\otimes x)(y^*\otimes w) := \mbr_{\epsilon}(S(x_{(1)}),y_{(1)}^*)z^*y_{(2)}^{*}\otimes x_{(2)}w \mbr(x_{(3)},y_{(3)}^*),
\]
where we view $\mcO_q(B^-(N)) = \mcO_q(B(N))^*$ with respect to the compact real form on $\mcO_q(GL(N,\C))$. The universal relations for $\mcO_{q,\epsilon}^{\R}(B(N))$ now allow to construct a splitting map 
\[
 \mcO_{q,\epsilon}^{\R}(B(N)) \rightarrow \mcO_q^{\leq M}(B(N)) \otimes \mcO_q^{\leq M}(B(N))^* 
\]
for \eqref{EqBijMult4}, from which the injectivity of \eqref{EqBijMult4} directly follows. 

Assume now in general that $\epsilon \in \R^M$. Clearly, the multiplication maps \eqref{EqBijMult3} and \eqref{EqBijMult4} will then be injective by the first part of the proof. But they are also still surjective: all the right hand terms of \eqref{EqXComStarClassT} involving $T_{ij}$ for $i >M$ involve a factor $\widetilde{\epsilon}_i = 0$, from which it follows directly that the ranges of our multiplication maps are already closed under multiplication. 
\end{proof}



Fix $\epsilon \in \R^N$, and recall the notation \eqref{EqDualOqMN} and \eqref{EqEtaEps}. Define $\msE_{\epsilon} \in \msU_q^{\post}(\mfgl(N,\C))$ by 
\[
\msE_{\epsilon}\xi = (\eta_{\epsilon})_{\wt(\xi)}\xi,
\]
which is meaningful since $\xi \in V_{\lambda}$ has $\wt(\xi) \in P_{\posint}$ for $\lambda \in P_{\posint}^{\dom}$. In the above, we put $0^0=1$ by convention. For example, for the vector module we get 
\[
E_{\epsilon} = \theta(\msE_{\epsilon}) = \sum_i \epsilon_{[i]}e_{ii}.
\]



\begin{Lem}
There is a unique  $*$-homomorphism
\begin{equation}\label{EqiB}
i_{B}^{\epsilon}: \mcO_q(H(N)) \rightarrow \mcO_{q,\epsilon}^{\R}(B(N)),\quad Z \mapsto T^{*}(E_{\epsilon}\otimes 1) T.
\end{equation}
\end{Lem}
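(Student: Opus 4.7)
Uniqueness of $i_B^{\epsilon}$ is immediate, since the matrix entries of $Z$ generate $\mcO_q(H(N))$ as a unital algebra. For existence, the universal property of $\mcO_q(H(N))$ reduces the task to verifying that the element $\hat{Z} := T^*(E_\epsilon\otimes 1)T \in M_N(\mcO_{q,\epsilon}^{\R}(B(N)))$ is self-adjoint and satisfies the reflection equation
\[
\hat{R}_{12}\hat{Z}_{23}\hat{R}_{12}\hat{Z}_{23} = \hat{Z}_{23}\hat{R}_{12}\hat{Z}_{23}\hat{R}_{12}.
\]
Self-adjointness is immediate because $E_\epsilon = \sum_i \epsilon_{[i]} e_{ii}$ is diagonal with real entries, so $\hat{Z}^*=\hat{Z}$.

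For the reflection equation, I plan to substitute $\hat{Z}_{23}=T_{23}^*(E_\epsilon)_2 T_{23}$ on both sides. Since $(E_\epsilon)_2$ acts only on leg $2$ by a scalar on each basis vector $e_a$, it commutes with both $T_{13}$ and $T_{13}^*$, which live in legs $1$ and $3$. I then process the middle pair of $T$'s by the defining relation $T_{23}\hat{R}_{12}T_{23}^* = T_{13}^*\hat{R}_{\epsilon,12}T_{13}$ of $\mcO_{q,\epsilon}^{\R}(B(N))$, and move the outer $\hat{R}_{12}$ past the outer pair $T_{13}T_{23}$ (resp.\ $T_{23}^*T_{13}^*$) using the FRT relation $\hat{R}_{12}T_{13}T_{23}=T_{13}T_{23}\hat{R}_{12}$ and its adjoint. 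After these manipulations, the LHS equals $T_{23}^*T_{13}^*\,\hat{R}_{12}(E_\epsilon)_2\hat{R}_{\epsilon,12}(E_\epsilon)_2\,T_{13}T_{23}$ and the RHS equals $T_{23}^*T_{13}^*(E_\epsilon)_2\hat{R}_{\epsilon,12}(E_\epsilon)_2\hat{R}_{12}\,T_{13}T_{23}$, so it remains only to prove the purely matrix identity
\[
\hat{R}_{12}(E_\epsilon)_2\hat{R}_{\epsilon,12}(E_\epsilon)_2 = (E_\epsilon)_2\hat{R}_{\epsilon,12}(E_\epsilon)_2\hat{R}_{12}.
\]

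The final step is to establish this identity. A direct computation, using the telescoping property $\epsilon_{[i]}\epsilon_{(i,j]}=\epsilon_{[j]}$, shows that $(E_\epsilon)_2\hat{R}_{\epsilon,12}(E_\epsilon)_2 = (E_\epsilon)_1(E_\epsilon)_2\,\hat{R}_{12}$, so the identity reduces to $(E_\epsilon)_1(E_\epsilon)_2$ commuting with $\hat{R}_{12}$. This holds because $\hat{R}$ sends $e_a\otimes e_b$ to a linear combination of $e_b\otimes e_a$ and $e_a\otimes e_b$, preserving the multiset $\{a,b\}$, while $(E_\epsilon)_1(E_\epsilon)_2$ acts as the scalar $\epsilon_{[a]}\epsilon_{[b]}$, which depends only on this multiset.

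The only genuinely substantive step is the matrix identity above; everything else is bookkeeping. I expect this matrix identity to be the main obstacle in the sense that finding the correct reformulation $(E_\epsilon)_2\hat{R}_{\epsilon,12}(E_\epsilon)_2 = (E_\epsilon)_1(E_\epsilon)_2\hat{R}_{12}$ requires exploiting precisely how $\hat{R}_{\epsilon}$ differs from $\hat{R}$ (through the factor $\epsilon_{(i,j]}$) and the telescoping law for $\epsilon_{[\cdot]}$, but once noticed the verification is elementary.
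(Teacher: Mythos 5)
Your proof is correct and follows the same route as the paper: the paper's proof likewise reduces everything to the defining relations together with the matrix identity $(1\otimes E_{\epsilon})\hat{R}_{\epsilon}(1\otimes E_{\epsilon})\hat{R} = \hat{R}(1\otimes E_{\epsilon})\hat{R}_{\epsilon}(1\otimes E_{\epsilon})$, which is exactly the identity you isolate. Your observation that $(1\otimes E_\epsilon)\hat{R}_{\epsilon}(1\otimes E_\epsilon) = (E_\epsilon\otimes E_\epsilon)\hat{R}$ via the telescoping law is a clean way of carrying out the verification the paper leaves as "easily computed".
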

\begin{proof}
This follows directly from the defining relations, using the easily computed identity
\[
(1\otimes E_{\epsilon})\hat{R}_{\epsilon}(1\otimes E_{\epsilon})\hat{R} = \hat{R}(1\otimes E_{\epsilon})\hat{R}_{\epsilon}(1\otimes E_{\epsilon}).
\]
\end{proof}
We denote then also 
\begin{equation}\label{EqiT}
i_{T}^{\epsilon}: \mcO_q(H(N)) \rightarrow \mcO_{q}^{\epsilon}(T(N)),\quad Z \mapsto T^{*}(E_{\epsilon}\otimes 1) T.
\end{equation}

We can also give again a more global form of these maps as follows, using notation as in \eqref{EqFormGlobClas} and viewing $T_V \in \End(V)\otimes \mcO_q(B(N))$:
\begin{Lem}\label{LemGlobalFormMapi}
The map $i_{B}^{\epsilon}$ from \eqref{EqiB} is given by 
\begin{equation}\label{EqFormGlob}
i_{B}^{\epsilon}:   \mcO_q(H(N)) \rightarrow \mcO_{q,\epsilon}^{\R}(B(N)),\quad Z_V(\xi,\eta) \mapsto \sum_{ij} T_V(e_i,\xi)^* \langle e_i,\msE_{\epsilon}e_j\rangle T_V(e_j,\eta),
\end{equation}
where $\{e_i\}$ is an arbitrary orthonormal basis of $V$. 
\end{Lem}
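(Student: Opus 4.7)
The plan is to establish the matrix-valued identity
\[
(\mathrm{id}_{\mathrm{End}(V)} \otimes i_B^\epsilon)(Z^V) = T_V^*\,(E_\epsilon^V \otimes 1)\,T_V \in \mathrm{End}(V)\otimes\mcO_{q,\epsilon}^{\R}(B(N)),
\]
where $Z^V \in \mathrm{End}(V)\otimes\mcO_q(H(N))$ is the matrix with entries $Z_V(e_i, e_j)$ and $E_\epsilon^V := \msE_\epsilon|_V$. Taking the $(\xi,\eta)$-matrix coefficient then yields \eqref{EqFormGlob}, and basis-independence of the right-hand side is automatic because $\sum_{ij}\langle e_i,\msE_\epsilon e_j\rangle\, e_i\otimes e_j^*$ canonically represents $\msE_\epsilon$ in $V\otimes V^*$.

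The induction is over positive integral $U_q(\mfgl(N,\C))$-modules. The base case $V=V_{[1]}$ is precisely the defining formula \eqref{EqiB}, and restriction to a subcomodule $W\subseteq V$ is automatic. For tensor products $V\otimes W$, two structural facts drive the step: the multiplicativity $T_{V\otimes W} = (T_V)_{13}(T_W)_{23}$ of corepresentation matrices, and the grouplike character of $\msE_\epsilon$ arising from $(\eta_\epsilon)_{\omega+\omega'} = (\eta_\epsilon)_\omega(\eta_\epsilon)_{\omega'}$, which gives $E_\epsilon^{V\otimes W} = E_\epsilon^V \otimes E_\epsilon^W$. Substituting these, and observing that operators supported on disjoint $\mathrm{End}$-factors commute, one rewrites
\[
T_{V\otimes W}^*(E_\epsilon^{V\otimes W}\otimes 1)T_{V\otimes W} = (T_W)_{23}^*\,\bigl[(T_V)_{13}^*(E_\epsilon^V)_1(T_V)_{13}\bigr]\,(E_\epsilon^W)_2\,(T_W)_{23},
\]
and the bracketed factor equals $(\mathrm{id}\otimes i_B^\epsilon)(Z^V)_{13}$ by the inductive hypothesis.

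The main obstacle, and the heart of the proof, is matching this expression with $(\mathrm{id}\otimes i_B^\epsilon)(Z^{V\otimes W})$. Via the algebra isomorphism $\Phi:\mcO_q(H(N))\xrightarrow{\sim}(\mcO_q(M_N(\C)),*)$ of Theorem \ref{LemCommBraid}, the element $Z^{V\otimes W}$ corresponds to the ordinary product $X^V_{13}X^W_{23}$; converting this back via \eqref{EqRevBraid} expresses $Z^{V\otimes W}$ as a braided product involving $Z^V$, $Z^W$, and $\hat{R}^{V,W}$-factors. Applying $i_B^\epsilon$, which is already known to be a $*$-homomorphism, and invoking the inductive hypothesis for $W$ together with the $\hat{R}_\epsilon$-twisted relation $T_{23}\hat{R}_{12}T^*_{23} = T^*_{13}\hat{R}_{\epsilon,12}T_{13}$ inside $\mcO_{q,\epsilon}^{\R}(B(N))$, one recovers the target expression. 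The critical compatibility is that the $\epsilon$-factors produced upon commuting $E_\epsilon$-operators past $T$'s and $T^*$'s exactly match the $\epsilon$-dependence built into $\hat{R}_\epsilon$; this traces back to the factorization $\msR = \widetilde{\msR}\,\msQ$ of \eqref{EqFactR} and to the way $\msE_\epsilon$ interacts with the weight grading.
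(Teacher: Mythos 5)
Your proposal is correct, and its computational core coincides with the paper's: both arguments rest on exactly the two facts you isolate, namely that $\msE_{\epsilon}$ is grouplike and that it intertwines $\msR$ with $\msR_{\epsilon}=(\nu_{\epsilon}\otimes\id)\msR$ (in the paper this is the identity $(\msE_{\epsilon}\otimes 1)\msR_{\epsilon}=\msR(\msE_{\epsilon}\otimes 1)$, which is your ``critical compatibility'' between the $\epsilon$-factors and $\hat{R}_{\epsilon}$), together with the conversion between the ordinary and braided products via $\mbr^{\pm 1}$. The difference is purely organizational: the paper uses these facts to check once, via the global formula for the product of $\mcO_q(H(N))$, that the right-hand side of \eqref{EqFormGlob} defines a $*$-homomorphism, and then concludes because two $*$-homomorphisms agreeing on the generating matrix $Z$ coincide. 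Your version unwinds that same multiplicativity check into an induction over tensor powers of $V_{[1]}$ followed by restriction to subcomodules; the inductive step for $V\otimes W$ is literally the multiplicativity of the candidate map on products of matrix coefficients of $V$ and $W$, so nothing new is needed, but you do incur the (easy) extra bookkeeping of basis-independence, stability under passing to orthogonal direct summands, and the fact that every positive integral module embeds in a tensor power of the vector module — all of which the paper's uniqueness-on-generators argument absorbs for free.
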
 
\begin{proof}
Viewing $U_q(\mfgl(N,\C)) \subseteq \msU_q^{\post}(\mfgl(N,\C))$, it is easily seen that $\epsilon_{i+1} \msE_{\epsilon} E_i = E_i \msE_{\epsilon}$. Hence, with $\nu_{\epsilon}$ as in \eqref{Eqnu}, we have $\msE_{\epsilon} \nu_{\epsilon}(X) = X\msE_{\epsilon}$  for $X \in U_q(\mfb(N))$, and so 
\[
(\msE_{\epsilon}\otimes 1)\msR_{\epsilon}  = \msR(\msE_{\epsilon}\otimes 1),\qquad \textrm{with } \msR_{\epsilon} = (\nu_{\epsilon}\otimes \id)\msR.
\] 
Since also $\msE_{\epsilon}$ is grouplike, i.e.\ $\Delta^{\post}(\msE_{\epsilon}) = \msE_{\epsilon}\otimes \msE_{\epsilon}$, we find that 
\[
\msR\Delta^{\post}(\msE_{\epsilon}) = (\msE_{\epsilon} \otimes 1)\msR_{\epsilon}(1\otimes \msE_{\epsilon}).
\]
Noting that $(\id\otimes S)\msR_{\epsilon}$ is the inverse of $\msR_{\epsilon}$ in $\msU_q(\mfgl(N,\C))\widehat{\otimes} \msU_q(\mfgl(N,\C))^{\opp}$, it is now easily verified through the global formula \eqref{EqDefPhi} for the product of $\mcO_q(H(N))$, that the map defined by \eqref{EqFormGlob} is a $*$-homomorphism. To show that it coincides with $i_{B}^{\epsilon}$ it is sufficient to verify equality on the generating matrix $Z$, which is immediate. 
\end{proof}

Recall now the localisation $\mcO_q^{\loc}(H(N))$ from \eqref{EqLocHN}. More generally, recalling Definition \ref{DefQuotRankM}, we can consider the localisation 
\[
\mcO_{q,\leq M}^{\loc}(H(N)) = \mcO_{q}^{\leq M}(H(N))[Z_1^{-1},\ldots,Z_M^{-1}].
\] 
Our aim is to give a different description of  $\mcO_{q,\leq M}^{\loc}(H(N))$. We start with the following lemma.

\begin{Lem}\label{LemImZ}
We have
\[
i_{B}^{\epsilon}(Z_{\brk}) = (\eta_{\epsilon})_1 T_1^*T_1\ldots (\eta_{\epsilon})_k T_k^*T_k,\qquad i_{T}^{\epsilon}(Z_{\brk}) = (\eta_{\epsilon})_1 T_1^2\ldots (\eta_{\epsilon})_k T_k^2.
\]
\end{Lem}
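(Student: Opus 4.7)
The plan is to use the global formula \eqref{EqFormGlob} for $i_B^{\epsilon}$ applied to $Z_{\brk}$, viewed as a matrix coefficient on the exterior power comodule. Recall from \eqref{EqBraidMinorMatrix} that $Z_{\brk} = Z_{[k],[k]} = Z_V(e_{[k]},e_{[k]})$ with $V = \wedge_q^k(\C^N)$, where $e_{[k]} = e_1\wedge_q\ldots\wedge_q e_k$. Taking the orthonormal basis $\{e_I\mid I\in \binom{\brN}{k}\}$ of $V$, \eqref{EqFormGlob} gives
\[
i_B^{\epsilon}(Z_{\brk}) = \sum_{I,J\in \binom{\brN}{k}} T_V(e_I,e_{[k]})^*\,\langle e_I,\msE_{\epsilon}e_J\rangle\, T_V(e_J,e_{[k]}).
\]
Since each $e_I$ is a weight vector of weight $\sum_{j\in I}e_j$, we have $\msE_{\epsilon}e_I = \bigl(\prod_{j\in I}\epsilon_{[j]}\bigr)e_I = \bigl(\prod_{j\in I}(\eta_{\epsilon})_j\bigr)e_I$, so $\msE_{\epsilon}$ is diagonal and the double sum collapses to a single sum over $I$ with $T_V(e_I,e_{[k]}) = T_{I,[k]}$.

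Next I show that $T_{I,[k]}=0$ unless $I=[k]$, and $T_{[k],[k]} = T_1T_2\cdots T_k$. The cleanest way is to expand $\wedge^k\delta(e_{[k]})$ through the tensor product:
\[
\delta^{\otimes k}(e_1\otimes\cdots\otimes e_k) = \sum_{i_1,\ldots,i_k} e_{i_1}\otimes\cdots\otimes e_{i_k} \otimes X_{i_1,1}\cdots X_{i_k,k}.
\]
Passing to $\mcO_q(B(N))$, upper triangularity forces $i_r\leq r$ in every non-vanishing term; coupled with the fact that repeated indices annihilate in $\wedge_q^k(\C^N)$, the tuple $(i_1,\ldots,i_k)$ must be a permutation of $\brk$ with $i_r\leq r$, i.e.\ $i_r=r$ for all $r$. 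Hence only $I=[k]$ contributes, and $T_{[k],[k]} = T_1T_2\cdots T_k$.

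Substituting back,
\[
i_B^{\epsilon}(Z_{\brk}) = \Bigl(\prod_{j=1}^k (\eta_{\epsilon})_j\Bigr)(T_1\cdots T_k)^*(T_1\cdots T_k).
\]
The last step is to rearrange this into the interleaved form. The relations in \eqref{EqXComStarClassT} imply that $T_i$ commutes with $T_j^*$ for $i\neq j$ (first relation with $k=j,l=i$ and $a=b=i,c=d=j$), and $T_i$ commutes with $T_i^*$ (fourth relation collapses at $k=j$ to $0$). Thus all factors $T_i,T_j^*$ pairwise commute, and since the scalars $(\eta_{\epsilon})_j\in\R$ commute with everything, we may re-order freely to obtain
\[
i_B^{\epsilon}(Z_{\brk}) = (\eta_{\epsilon})_1 T_1^*T_1\cdots(\eta_{\epsilon})_k T_k^*T_k.
\]
Pushing along the quotient $\pi_T$ sets $T_i^*=T_i$, which yields the second identity. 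The only real ``content'' is the upper-triangularity argument in the second paragraph; I do not anticipate any genuine obstacle.
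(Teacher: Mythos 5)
Your proof is correct and follows essentially the same route as the paper: apply the global formula \eqref{EqFormGlob} to $Z_{[k]}=Z_V(e_{[k]},e_{[k]})$, observe that $T_{I,[k]}$ vanishes for $I\neq[k]$, and use $T_{[k]}=T_1\cdots T_k$. The only cosmetic difference is that you justify the vanishing of the off-diagonal minors by an explicit upper-triangularity count, whereas the paper invokes the non-degeneracy of the pairing \eqref{EqNonDegPair} with $U_q(\mfb(N))$ applied to the highest weight vector; both arguments are fine.
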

\begin{proof}
This follows immediately upon applying the formula \eqref{EqFormGlob}, the fact that $T_{[k]}(\eta,\xi_{\varpi_{\brk}}) = 0$ for $\eta \perp \xi_{\varpi_{\brk}}$ by (for example) the non-degeneracy of \eqref{EqNonDegPair}, and the fact that $T_{\brk} = T_1\ldots  T_k$.
\end{proof}

\begin{Prop}\label{PropLocHT}
Let $\epsilon \in (\R\setminus\{0\})^M$. Then the map $i_T^{\epsilon}$ from \eqref{EqiT} factors through an injective $*$-homomorphism
\[
j_T^{\epsilon}: \mcO_{q,\leq M}^{\loc}(H(N)) \rightarrow \mcO_q^{\epsilon}(T(N)).
\]
Moreover, the range of $j_T^{\epsilon}$ is the unital $*$-algebra generated by all $T_i^{\pm 2}$ for $1\leq i \leq M$ and $T_iT_{ij}$ with $1\leq i\leq M$ and $i<j\leq N$. 
\end{Prop}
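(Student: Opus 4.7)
The plan is to establish the proposition in three steps: factorisation (combining both the descent through $\mcO_q^{\leq M}(H(N))$ and the extension to the localisation), identification of the range, and injectivity.

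First, applying the global formula of Lemma \ref{LemGlobalFormMapi} to the fundamental representation $V_{[k]} = \wedge_q^k(\C^N)$, whose basis $\{e_K\}_{K \in \binom{\brN}{k}}$ is diagonalised by $\msE_\epsilon$ with eigenvalues $\prod_{i \in K}\epsilon_{[i]}$, one obtains
\[
i_T^\epsilon(Z_{I,J}) = \sum_{K \in \binom{\brN}{k}}\bigl(\textstyle\prod_{i \in K}\epsilon_{[i]}\bigr)\,T_{K,I}^*\,T_{K,J}.
\]
Since $\epsilon_{[i]} = 0$ for $i > M$ by the convention \eqref{EqWideEps}, only terms with $K \subseteq [M]$ contribute, and for $k > M$ the sum is empty. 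Hence $I_{M+1} \subseteq \ker(i_T^\epsilon)$, so $i_T^\epsilon$ descends to $\mcO_q^{\leq M}(H(N))$. For $1 \leq k \leq M$, Lemma \ref{LemImZ} gives $i_T^\epsilon(Z_{\brk}) = \bigl(\prod_{j=1}^k \epsilon_{[j]}\bigr)(T_1\cdots T_k)^2$, which is invertible in $\mcO_q^\epsilon(T(N))$, so the descended map further extends to $j_T^\epsilon$ on the localisation $\mcO_{q,\leq M}^{\loc}(H(N))$.

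For the range, unpacking $i_T^\epsilon(Z) = T^*(E_\epsilon\otimes 1)T$ entrywise yields
\[
i_T^\epsilon(Z_{jl}) = \sum_{i=1}^{\min(j,l,M)} \epsilon_{[i]}\,T_{ij}^*\,T_{il} = \sum_{i=1}^{\min(j,l,M)} \epsilon_{[i]}\,(T_iT_{ij})^*\,T_i^{-2}\,(T_iT_{il})
\]
(with the convention $T_iT_{ii}=T_i^2$), which visibly lies in the $*$-subalgebra $\msA$ generated by the $T_i^{\pm 2}$ and $T_iT_{ij}$. Conversely, inductively on $i = 1,\ldots,M$, each generator $T_iT_{il}$ is recovered from $i_T^\epsilon(Z_{il})$ after subtracting the already-known contributions indexed by $p<i$: the base case $i=1$ is immediate from $i_T^\epsilon(Z_{1l}) = \epsilon_1 T_1T_{1l}$, and the inductive step yields $\epsilon_{[i]}T_iT_{il}$ as an explicit element of the image. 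This proves $\mathrm{Im}(j_T^\epsilon) = \msA$.

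The main obstacle is injectivity. I would construct a left inverse $\sigma: \msA \rightarrow \mcO_{q,\leq M}^{\loc}(H(N))$ by elevating the inductive recovery formulas above to definitions: set $\sigma(T_1T_{1l}) = \epsilon_1^{-1} Z_{1l}$ and $\sigma(T_1^{\pm 2}) = (\epsilon_1^{-1} Z_{\brone})^{\pm 1}$, and inductively
\[
\sigma(T_iT_{il}) = \epsilon_{[i]}^{-1}\Bigl(Z_{il} - \sum_{p<i}\epsilon_{[p]}\,\sigma(T_pT_{pi})^*\,\sigma(T_p^{-2})\,\sigma(T_pT_{pl})\Bigr),
\]
with $\sigma(T_i^{\pm 2})$ read off from Lemma \ref{LemImZ} as an appropriate product/ratio of leading minors $Z_{\brk}^{\pm 1}$. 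The technical heart of the proof is to verify that $\sigma$ respects the relations \eqref{EqCommHolT}--\eqref{EqXComStarClassT} of $\mcO_q^\epsilon(T(N))$ restricted to $\msA$; I expect this to be the most laborious part, requiring systematic use of the commutation relations \eqref{EqCommOqHN} in $\mcO_q(H(N))$ and the Laplace expansions \eqref{EqLaplExp1}--\eqref{EqLaplExp2}. Once $\sigma$ is known to be a well-defined $*$-homomorphism, it follows immediately from its definition that $\sigma\circ j_T^\epsilon$ fixes the generators $Z_{jl}$ and $Z_{\brk}^{-1}$ of $\mcO_{q,\leq M}^{\loc}(H(N))$, hence equals the identity, giving injectivity.
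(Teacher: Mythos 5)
Your factorisation step (kernel contains $I_{M+1}$ via the global formula on $\wedge_q^k(\C^N)$, invertibility of $i_T^{\epsilon}(Z_{\brk})$ via Lemma \ref{LemImZ}) and your identification of the range by the entrywise expansion $i_T^{\epsilon}(Z_{jl}) = \sum_{i}\epsilon_{[i]}T_{ij}^*T_{il}$ and induction are correct and essentially match the paper. The problem is the injectivity step, which is where the real content of the proposition lies, and your strategy for it has a genuine gap.

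You propose to build a left inverse $\sigma$ on $\msA$ by prescribing its values on the generators $T_i^{\pm 2}$, $T_iT_{ij}$ and then ``verifying that $\sigma$ respects the relations \eqref{EqCommHolT}--\eqref{EqXComStarClassT} restricted to $\msA$.'' This does not establish well-definedness: $\msA$ is a \emph{subalgebra} of $\mcO_q^{\epsilon}(T(N))$, not an algebra presented by those relations, so the ideal of all relations holding among your chosen generators inside $\msA$ is not known to be generated by the restrictions of the ambient defining relations. To make your scheme work you would first need a presentation (equivalently, a PBW-type linear basis) of $\msA$, or of $\mcO_{q,\leq M}^{\loc}(H(N))$ itself --- but producing such a basis for the localised REA is essentially the statement being proved. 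The paper avoids this circularity by arguing in the opposite direction: it takes an arbitrary element $C=\sum a_{\lambda kl}Z_{\lambda}(\xi_{\lambda,k},\xi_{\lambda,l})$ of $\ker(i_B^{\epsilon})$, expanded in the known linear basis of matrix coefficients of $\mcO_q(M_N(\C))$, pushes it into $\mcO_{q,\epsilon}^{\R}(B(N))$ where the bijectivity of the multiplication map \eqref{EqBijMult3} converts the relation into a statement in a \emph{tensor product} $\mcO_q^{\leq M}(B(N))^*\otimes \mcO_q^{\leq M}(B(N))$, and then extracts, by applying suitable raising operators to a maximal weight component, that the only way a coefficient survives is if $\langle\xi_{\lambda_0},\msE_{\widetilde\epsilon}\xi_{\lambda_0}\rangle=(\eta_{\widetilde\epsilon})_{\lambda_0}=0$, i.e. $(\lambda_0)_k>0$ for some $k\geq M+1$, forcing $C\in I_{M+1}$. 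If you want to salvage your approach, you would have to replace the ``verify the relations'' step by exactly this kind of linear-independence analysis, at which point you are reproducing the paper's argument.
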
 

Note that the proof will actually show that the restriction of $j_T^{\epsilon}$ to $\mcO_{q}^{\leq M}(H(N))$ is injective, so that it indirectly also shows that $\mcO_{q}^{\leq M}(H(N))$ embeds in its localisation. 
\begin{proof}
Let $\widetilde{\epsilon}$ be as in \eqref{EqWideEps}. 
It is clear by the formula for \eqref{EqFormGlob}, the definition of $I_{M+1}$ in Definition \ref{DefQuotRankM} and Lemma \ref{LemImZ} that $i_B^{\epsilon}$ factors through a $*$-homomorphism 
\begin{equation}\label{EqFactB}
j_B^{\epsilon}: \mcO_{q,\leq M}^{\loc}(H(N)) \rightarrow \mcO_{q,\epsilon}^{\R}(B(N))\subseteq \mcO_{q,\widetilde{\epsilon}}^{\R}(B(N))
\end{equation}

Choose now for each $\lambda \in P_{\posint}^{\dom}$ an orthonormal basis of weight vectors $\{\xi_{\lambda,i}\mid i \in I_{\lambda}\}$ for $V_{\lambda}$, and put $\omega_{\lambda,i} = \wt(\xi_{\lambda,i})$ and $\alpha_{\lambda,i} = \lambda - \omega_{\lambda,i}$. Recalling the notation \eqref{EqPosRootAdj}, we can then choose $x_{\lambda,i} \in U_q(\mfn)_{\alpha_{\lambda,i}}$ with $x_{\lambda,i}\xi_{\lambda,j} = \delta_{ij}\xi_{\lambda}$ whenever $\alpha_{\lambda,i} = \alpha_{\lambda,j}$. 

Assume then that we have a non-trivial finite linear combination 
\[
C = \sum_{\lambda\in P_{\posint}^{\dom}}\sum_{k,l\in I_{\lambda}} a_{\lambda kl} Z_{\lambda}(\xi_{\lambda,k},\xi_{\lambda,l}) \in \Ker(i_B^{\epsilon}),
\] 
with $a_{ikl} =0$ for all but finitely many indices. We will show that $C\in I_{M+1}$. 

Note first that $i_B^{\epsilon}(C)=0$ gives
\[
\sum_{\lambda}\sum_{k,l,r\in I_{\lambda}} a_{\lambda k l} T_{\lambda}(\xi_{\lambda,r},\xi_{\lambda,k})^* \langle \xi_{\lambda,r},\msE_{\widetilde{\epsilon}}\xi_{\lambda,r}\rangle T_{\lambda}(\xi_{\lambda,r},\xi_{\lambda,l}) =0. 
\] 
From \eqref{EqBijMult3} we deduce that
\begin{equation}\label{EqLinInd}
\sum_{\lambda}\sum_{k,l,r\in I_{\lambda}}  \langle \xi_{\lambda,r},\msE_{\widetilde{\epsilon}}\xi_{\lambda,r}\rangle a_{\lambda kl} T_{\lambda}(\xi_{\lambda,r},\xi_{\lambda,k})^* \otimes T_{\lambda}(\xi_{\lambda,r},\xi_{\lambda,l}) =0. 
\end{equation}
Let now $K_{\alpha,\beta}$, for $\alpha,\beta \in Q^+$, be the set of $(\lambda,k,l)$ with 
\[
a_{\lambda,k,l}\neq0,\qquad   \wt(\xi_{\lambda}) - \wt(\xi_{\lambda,k})= \alpha,\qquad \wt(\xi_{\lambda}) - \wt(\xi_{\lambda_i,l})= \beta.
\] 
Let $K$ be the set of $(\alpha,\beta)$ with $K_{\alpha,\beta}\neq \emptyset$, and pick $(\alpha_0,\beta_0)\in K$ maximal with respect to the coordinatewise partial order $(\alpha,\beta)\preccurlyeq (\alpha',\beta') \Leftrightarrow \alpha \preccurlyeq \alpha'$ and $\beta\preccurlyeq \beta'$. Pick $(\lambda_0,k_0,l_0) \in K_{\alpha_0,\beta_0}$. Then $x_{\lambda_0,k_0}\xi_{\lambda,k}\otimes x_{\lambda_0,l_0}\xi_{\lambda,l}$ will be a multiple of $\xi_{\lambda}\otimes \xi_{\lambda}$ for each $(\lambda,k,l) \in K_{\alpha_0,\beta_0}$, and zero for all other $(\lambda,k,l)\in K_{\alpha,\beta}$ with $(\alpha,\beta)\neq (\alpha_0,\beta_0)$. Hence upon applying $(\id\otimes \tau(-,x_{\lambda_{i_0},l_0}))\Delta$ to the second tensorand and $(\id\otimes \overline{\tau(x_{\lambda_{i_0},k_0},-*)})\Delta$ to the first tensorand of \eqref{EqLinInd}, we see that there exist $c_{\lambda kl} \in \C$ with $c_{\lambda_0,k_0,l_0} = 1$, with all other $c_{\lambda_0,k,l}= 0$, and  
\[
\sum_{(\lambda,k,l) \in K_{\alpha_0,\beta_0}}\sum_{r\in I_{\lambda}}  \langle \xi_{\lambda,r},\msE_{\widetilde{\epsilon}}\xi_{\lambda,r}\rangle a_{\lambda kl}c_{\lambda kl} T_{\lambda}(\xi_{\lambda,r},\xi_{\lambda})^* \otimes T_{\lambda}(\xi_{\lambda,r},\xi_{\lambda}) =0, 
\]
which simplifies to 
\[ 
\sum_{(\lambda,k,l) \in K_{\alpha_0,\beta_0}}  \langle \xi_{\lambda},\msE_{\widetilde{\epsilon}}\xi_{\lambda}\rangle a_{\lambda kl}c_{\lambda kl} T_{\lambda}(\xi_{\lambda},\xi_{\lambda})^* \otimes T_{\lambda}(\xi_{\lambda},\xi_{\lambda}) =0. 
\]
Since the $T_{\lambda}(\xi_{\lambda},\xi_{\lambda})$ are linearly independent, we deduce that 
\[
\langle \xi_{\lambda_{0}},\msE_{\widetilde{\epsilon}}\xi_{\lambda_{0}} \rangle = (\eta_{\widetilde{\epsilon}})_{\lambda_0} = 0.
\] 
By our assumption on $\epsilon$, this can only happen if $(\lambda_0)_k >0$ for some $k\geq M+1$. 

Now the linear span of all the $X_{\lambda}(\xi,\eta)$ with $\lambda_k>0$ for some $k\geq M+1$ equals the $2$-sided ideal generated by all $X_{[k]}(\xi,\eta)$ for $k\geq M+1$ in $\mcO_q(M_N(\C))$, since any $V_{\lambda}$ can be identified with the representation spanned by the tensor product of the highest weight vectors in $V_{[1]}^{\otimes\lambda_1}\otimes \ldots \otimes V_{[N]}^{\otimes \lambda_N}$. It follows that $I_{M+1}$ is the linear span of all the $Z_{\lambda}(\xi,\eta)$ with $\lambda_k>0$ for some $k\geq M+1$. In particular, $Z_{\lambda_0}(\xi_{\lambda_0,k_0},\xi_{\lambda_0,l_0}) \in I_{M+1}$. By induction on the number of non-zero terms in $C$, we now find that $C \in I_{M+1}$. It follows that \eqref{EqFactB} is injective.

Let us now determine the range of \eqref{EqFactB}. Let $\msA$ be the $*$-subalgebra of $\mcO_{q,\widetilde{\epsilon}}^{\R}(B(N))$ generated by the $(T_{ii}^{*}T_{ii})^{-1}$ and $T_{ii}^*T_{ij}$ for $1\leq i \leq M$ and $1\leq j \leq N$. Since 
\begin{equation}\label{EqAcB}
(\id\otimes i^{\widetilde{\epsilon}}_B)(Z) =  T^*\begin{pmatrix} \widetilde{\epsilon}_{[1]} & & \\ & \ddots & \\ & & \widetilde{\epsilon}_{[N]}\end{pmatrix} T,
\end{equation}
it follows by induction that $j^{\epsilon}_B(\mcO_{q,\leq M}^{\loc}(H(N)))$ contains all $\widetilde{\epsilon}_{[i]}T_{ii}^*T_{ij}$, and hence  $\msA \subseteq j^{\epsilon}_B(\mcO_{q,\leq M}^{\loc}(H(N))$. But \eqref{EqAcB} also shows that this inclusion holds in the other direction, so $\msA = j^{\epsilon}_B(\mcO_{q,\leq M}^{\loc}(H(N))$. 

The proposition now follows since the map $\pi_T: \msA \rightarrow \mcO_q^{\epsilon}(T(N))$ is injective.
\end{proof}

We have for $\mcO_q^{\epsilon}(T(N))$ a triangular decomposition: with $\mcO_q^{\leq M}(\msN(N)) \subseteq \mcO_q^{\leq M}(B(N))$ the unital algebra generated by the $T_{ij}$ with $i<j$, and with $\mcO_q(D(M))$ the algebra of Laurent polynomials in the $T_i$ for $1\leq i \leq M$, we have that multiplication provides an isomorphism 
\begin{equation}\label{EqTriangDecomp}
\mcO_q^{\leq M}(\msN(N)) \otimes \mcO_q(D(M)) \otimes \mcO_q^{\leq M}(\msN(N))^* \rightarrow \mcO_q^{\epsilon}(T(N)). 
\end{equation}
Consider the associated projection map 
\[
\msP_{\epsilon}: \mcO_q^{\epsilon}(T(N)) \rightarrow \mcO_q(D(M)),\quad XYZ \mapsto \varepsilon(X)Y\varepsilon(Z). 
\]
Then with $\mcO_q^{\epsilon}(T(N))_0$ the centralizer of $\mcO_q(D(M))$ in $\mcO_q^{\epsilon}(T(N))$, we obtain that $\msP_{\epsilon}:\mcO_q^{\epsilon}(T(N))_0 \rightarrow \mcO_q(D(M))$ is a $*$-homomorphism. In particular, we obtain a $*$-homomorphism
\[
\chi_{\HC}^{\epsilon} = \msP_{\epsilon} \circ i_T^{\epsilon}: \msZ(\mcO_q(H(N))) \rightarrow \mcO_q(D(M)),
\]
called the \emph{Harish-Chandra map}. 
\begin{Prop}\label{PropHC}
With $e_k$ the $k$-th elementary symmetric polynomial in $N$ variables, we have
\begin{equation}\label{EqHC}
\chi_{\HC}^{\epsilon}(\sigma_k) =  e_k(\widetilde{\epsilon}_{[1]}T_1^2,\widetilde{\epsilon}_{[2]}q^2T_2^2,\ldots, \widetilde{\epsilon}_{[N]}q^{2N-2}T_N^2). 
\end{equation}
\end{Prop}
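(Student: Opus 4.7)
The plan is to reduce to principal minors and then apply a quantum Cauchy--Binet. First, by comparing the inner permutation sum in the formula of Theorem~\ref{TheoCentrPol} with the quantum-minor formula \eqref{EqFormMinorZ}, one recognises
\[
\sigma_k \;=\; q^{2Nk} \sum_{I \in \binom{\brN}{k}} q^{-2\wt(I)}\, Z_{I,I},
\]
so the task reduces to computing $\chi_{\HC}^\epsilon(Z_{I,I})$ for each $I \in \binom{\brN}{k}$.

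Next, since $\wedge_q(\C^N)$ is a comodule algebra for $\mcO_q(M_N(\C))$, the $*$-homomorphism $Z \mapsto T^*(E_\epsilon \otimes 1) T$ lifts to the $k$-th exterior power as $Z^{[k]} \mapsto (T^{[k]})^* E_\epsilon^{[k]} T^{[k]}$. Upper-triangularity of $T$ forces $T_{J,I} = 0$ unless $J \preceq I$, while the diagonal matrix $E_\epsilon^{[k]}$ has entry $\widetilde{\epsilon}_{[J]} := \prod_{j \in J} \widetilde{\epsilon}_{[j]}$ at index $(J,J)$. This yields
\[
i_T^\epsilon(Z_{I,I}) \;=\; \sum_{J \preceq I} \widetilde{\epsilon}_{[J]}\, T_{J,I}^*\, T_{J,I}.
\]

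The main work is to compute $\msP_\epsilon$ of each summand. For $J = I$ the contribution is the clean term $\widetilde{\epsilon}_{[I]} \prod_{i \in I} T_i^2$, but for $J \preceq I$ with $J \neq I$ the product $T_{J,I}^* T_{J,I}$ sits in the ``wrong'' order with respect to the triangular decomposition \eqref{EqTriangDecomp} and has to be normal-ordered. I would do this by iterated application of the commutation relations \eqref{EqXComStarClassT}, proceeding by induction on $\wt(I) - \wt(J)$; the $q^{2(i-1)}$ factors in the claim emerge from telescoping $(1-q^2)$-series generated by the normal-ordering, as one already sees in the rank-$1$ base case $\msP_\epsilon(T_{kj}^* T_{kj}) = (1-q^2)\bigl(T_k^2 - \widetilde{\epsilon}_{(k,j]} T_j^2\bigr) + \text{(lower-order recursive terms)}$. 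After collecting the contributions, summation over $I$ with the prefactor $q^{2Nk - 2\wt(I)}$ reduces to a combinatorial identity verified by telescoping $q$-geometric series, whose value is exactly $e_k\bigl(\widetilde{\epsilon}_{[1]} T_1^2, q^2 \widetilde{\epsilon}_{[2]} T_2^2, \ldots, q^{2N-2} \widetilde{\epsilon}_{[N]} T_N^2\bigr)$.

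The principal obstacle is the inductive bookkeeping for $\msP_\epsilon(T_{J,I}^* T_{J,I})$ at arbitrary $J \preceq I$, since normal-ordering a quantum minor against its adjoint produces many cross-terms that must be tracked carefully. A cleaner alternative, which I would explore in parallel, is to identify $\chi_{\HC}^\epsilon \circ i_T^+$ (the case $\epsilon = +$) with the standard Harish-Chandra map on the Gelfand-type central invariants of $U_q(\mfgl(N))$ via the isomorphism $\iota$ of \eqref{EqUnivRelReal}; the general $\epsilon$-case would then follow by specialisation through the Hopf endomorphism $\nu_\epsilon$ of \eqref{Eqnu}.
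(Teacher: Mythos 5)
Your overall strategy is a genuinely different one from the paper's — a direct computation of $\msP_\epsilon(i_T^\epsilon(\sigma_k))$ via quantum minors and normal-ordering — but as written it has two real gaps. First, the reduction $\sigma_k = q^{2Nk}\sum_I q^{-2\wt(I)}Z_{I,I}$ does not follow from "comparing the inner permutation sum with \eqref{EqFormMinorZ}": that formula (and its source, the corner embedding $\mcO_q(H(k))\to\mcO_q(H(N))$) only identifies the \emph{leading} minors $Z_{[k]}$ with a permutation sum. For a general $I$, the minor $Z_{I,I}=\Phi^{-1}(X_{I,I})$ is defined through the covariantised product, and its expansion in the generators $Z_{ij}$ picks up $R$-matrix corrections, so the term-by-term match you invoke is not available; you would need to cite or prove the quantum-trace description of $\sigma_k$ (essentially the content of \cite{JW20}) separately. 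Second, and more seriously, the heart of your argument — the evaluation of $\msP_\epsilon(T_{J,I}^*T_{J,I})$ for $J\prec I$ and the final summation over $I$ — is explicitly left as "inductive bookkeeping" and a "combinatorial identity verified by telescoping". These are exactly the nontrivial steps: $T_{J,I}^*T_{J,I}$ sits in the wrong order for the decomposition \eqref{EqTriangDecomp}, reordering a quantum minor against its adjoint generates a cascade of cross-terms (already for $N=2$ your base case is a genuine recursion, not a closed formula), and no verification of the resulting identity is offered. So the proposal is a plan, not a proof.

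The alternative you mention at the end is in fact much closer to what the paper does, and is the efficient route: take the $\epsilon=+$ case as known (\cite[Corollary 3.34]{DCF19}), and transfer it to general $\epsilon$. However, the paper's transfer mechanism is not specialisation through $\nu_\epsilon$ — note that $\nu_\epsilon$ is not invertible when some $\epsilon_i=0$, and $\mcO_q^\epsilon(T(N))$ is not the image of $\mcO_q(T(N))$ under a map induced by $\nu_\epsilon$, so it is unclear how the HC map would be transported that way. Instead the paper observes that the localisation $\mcO_q^{\loc}(H(N))$ carries its own, $\epsilon$-independent triangular decomposition (via Proposition \ref{PropLocHT}) with projection $\widetilde{\msP}$ onto the subalgebra generated by the $Z_{[k]}^{\pm1}$, and that $\msP_\epsilon\circ i_T^\epsilon = i_T^\epsilon\circ\widetilde{\msP}$ for all $\epsilon$ with nonzero entries. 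Thus $\chi_{\HC}^\epsilon(\sigma_k)=i_T^\epsilon(\widetilde{\msP}(\sigma_k))$ is determined by the $\epsilon=+$ case together with $i_T^\epsilon(Z_{[k]})=\widetilde\epsilon_{[1]}T_1^2\cdots\widetilde\epsilon_{[k]}T_k^2$ (Lemma \ref{LemImZ}), and the degenerate $\epsilon$ are handled by continuity of both sides of \eqref{EqHC} in $\epsilon$. If you want to salvage your write-up with minimal work, this is the argument to make precise; your direct computation, while in principle viable, would amount to reproving the $\epsilon=+$ case from scratch.
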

Note that the right hand side indeed lies in $\mcO_q(D(M))$. 
\begin{proof}
If $\epsilon = +$ this is well-known, see e.g.~ \cite[Corollary 3.34]{DCF19}. To obtain the general case, let $\mcO_q(\widetilde{D}(N)) \subseteq \mcO_q^{\loc}(H(N))$ be generated by the $Z_{[k]}^{\pm 1}$, and let $\mcO_q(\widetilde{\msN}(N))$ be generated by the inverse images of the $T_iT_{ij}$ under $i_T$. Then it is clear from Proposition \ref{PropLocHT} that we obtain a triangular decomposition 
\[
\mcO_q(\widetilde{\msN}(N)) \otimes \mcO_q(\widetilde{D}(N)) \otimes \mcO_q(\widetilde{\msN}(N))^* \rightarrow \mcO_q^{\loc}(H(N)). 
\]
with associated projection map 
\[
\widetilde{\msP}: \mcO_q^{\loc}(H(N)) \rightarrow \mcO_q(\widetilde{D}(N)),\quad XYZ \mapsto \varepsilon(i_T(X))Y\varepsilon(i_T(Z)). 
\]
This triangular decomposition is then however seen to be compatible with any of the embeddings $i_T^{\epsilon}$ for $\epsilon \in (\R\setminus \{0\})^N$, i.e.~
\[
\msP_{\epsilon} \circ i_T^{\epsilon} = i_T^{\epsilon}\circ \widetilde{\msP}.
\]
The identity \eqref{EqHC} now follows in this case from Lemma \ref{LemImZ}. The general case follows since both sides of \eqref{EqHC}  are continuous in $\epsilon$.
\end{proof}

\subsection{Big cell representations of $\mcO_q^{\epsilon}(T(N))$}

To classify the irreducible big cell representations of $\mcO_q(H(N))$, we will relate them to the representation theory of the $\mcO_q^{\epsilon}(T(N))$.

We first introduce some terminology. 

\begin{Def}\label{def: T(N) highest weight}
Let $V$ be an $\mcO_q^{\epsilon}(T(N))$-module, where $\epsilon \in \R^M$. 

We call a vector $\xi \in V$ a \emph{weight vector} if it is a joint eigenvector of all $T_i$, for $1\leq i \leq M$. We call $\xi \in V$ a \emph{highest weight vector} if $\xi$ is a weight vector and $\mcO_q^{\leq M}(\msN(N))^*\xi = 0$. We call a weight vector $\xi \in V$ \emph{admissible} if the associated eigenvalues of the $T_i$ are strictly positive. We then say that $\xi$ has \emph{weight $r \in \R^M$} if $T_{i} \xi =q^{r_i} \xi$ for all $1\leq i\leq M$.  

We call $V$ \emph{admissible} if it is finitely generated, has a vector basis of admissible weight vectors, and is locally finite as an $\mcO_q^{\leq M}(\msN(N))^*$-module, i.e.~ the $\mcO_q^{\leq M}(\msN(N))^*\xi$ are finite dimensional for each $\xi\in V$. 

We call $V$ an \emph{admissible unitary module} if moreover $V$ is a pre-Hilbert space for which the $*$-structure is compatible with the inner product,
\[
\langle \xi,x\eta\rangle = \langle x^*\xi,\eta\rangle,\qquad \forall \xi,\eta \in V,x\in \mcO_q^{\epsilon}(T(N)).
\]
\end{Def}

Recall the triangular decomposition \eqref{EqTriangDecomp}. It then follows immediately from this decomposition that the weight spaces of admissible $\mcO_q^{\epsilon}(T(N))$-modules are finite dimensional. It is then also easily concluded that any admissible unitary module is a direct sum of \emph{admissible unitary highest weight modules}, i.e.~ admissible unitary modules generated by a highest weight vector.

More generally, we call an $\mcO_q^{\epsilon}(T(N))$-module $V$ a \emph{highest weight module} if it is generated by a highest weight vector. For example, using again the triangular decomposition \eqref{EqTriangDecomp}, we can define a notion of Verma module $M_r$ at a given highest weight $r$, uniquely determined by the fact that, with $\xi_r$ the highest weight vector, the map 
\[
\mcO_q^{\leq M}(\msN(N)) \rightarrow M_r,\quad x \mapsto x\xi_r
\]
is a linear isomorphism. Then, up to a scalar, there exists a unique $*$-invariant Hermitian scalar product $\langle -,-\rangle$ on $M_r$. Following for example the discussion in \cite[Section 5]{JL92}, we also see that if $V$ is an arbitrary highest weight module with highest weight $r$, then from the fact that the kernel of the natural map $M_r \rightarrow V$ must necessarily be in the kernel of $\langle -,-\rangle$, it follows that (up to a scalar) there exists a unique $*$-invariant Hermitian scalar product on $V$. We call $V$ \emph{unitarizable} if this scalar product is positive-definite. Note that if the Hermitian scalar product is only semi-positive-definite, then its kernel will be $\mcO_q^{\epsilon}(T(N))$-invariant, and the quotient by this kernel will be unitarizable.  In any case, we always normalize the scalar product such that the highest weight vector has norm $1$. We see in particular that there is a one-to-one correspondence between unitarizable highest weight modules and admissible unitary highest weight modules (with their normalized highest weight vectors fixed). 

We summarize the above results in the following proposition.

\begin{Prop}\label{PropUniqueHW}
For $\epsilon \in \R^M$, there exists an admissible unitary highest weight-module $V_r$ for  $\mcO_q^{\epsilon}(T(N))$ with highest weight $r\in \R^M$ if and only if the invariant form on the Verma module $M_r$ is positive semi-definite. Moreover, $V_r$ is then unique up to unitary equivalence.
\end{Prop}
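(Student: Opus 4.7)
The plan is to execute the strategy indicated in the prose preceding the proposition: build a universal Verma-type object carrying a canonical contravariant Hermitian form, and show that admissible unitary highest weight modules of weight $r$ are precisely the quotients of this Verma module by the radical of the form.

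First I would define $M_r$ precisely. Using the triangular decomposition \eqref{EqTriangDecomp}, let $J_r$ be the left ideal of $\mcO_q^{\epsilon}(T(N))$ generated by the elements $T_i - q^{r_i}$ for $1 \leq i \leq M$ together with the augmentation ideal of $\mcO_q^{\leq M}(\msN(N))^*$. Set $M_r = \mcO_q^{\epsilon}(T(N))/J_r$ with distinguished generator $\xi_r := \overline{1}$. From \eqref{EqTriangDecomp} the multiplication map $\mcO_q^{\leq M}(\msN(N)) \to M_r$, $x \mapsto x\xi_r$, is a linear isomorphism; weight-space finite-dimensionality, admissibility, and the obvious universal property (any highest weight module of weight $r$ is a quotient of $M_r$) follow at once.

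Next I construct the contravariant form. Composing the triangular projection $\msP_{\epsilon}: \mcO_q^{\epsilon}(T(N)) \to \mcO_q(D(M))$ introduced before Proposition \ref{PropHC} with the $*$-character $\chi_r$ on $\mcO_q(D(M))$ determined by $T_i \mapsto q^{r_i}$ yields a linear functional $\phi_r := \chi_r \circ \msP_{\epsilon}$. Set
\[
\langle x\xi_r, y\xi_r\rangle \;:=\; \phi_r(x^* y), \qquad x,y \in \mcO_q^{\epsilon}(T(N)).
\]
Well-definedness reduces to checking that $\phi_r$ annihilates $J_r$ and $J_r^*$: the former is clear because $\msP_{\epsilon}$ kills any right factor in the augmentation ideal of $\mcO_q^{\leq M}(\msN(N))^*$ and $\chi_r$ kills $T_i - q^{r_i}$; the latter uses that the $*$-operation swaps the two outer factors of \eqref{EqTriangDecomp} while mapping $\mcO_q(D(M))$ to itself (with $\chi_r(d^*) = \overline{\chi_r(d)}$ because $r \in \R^M$). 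The same symmetry gives hermiticity, and $*$-invariance $\langle zx\xi_r, y\xi_r\rangle = \langle x\xi_r, z^* y\xi_r\rangle$ is then immediate. Uniqueness up to scalar is standard: any $*$-invariant Hermitian form is diagonal with respect to the weight decomposition of $M_r$, and on the one-dimensional top-weight space $\C\xi_r$ it reduces to a single complex number.

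With the form in hand, both implications are essentially formal. If $\langle -,-\rangle$ is positive semi-definite, its radical $K$ is a submodule (by Cauchy--Schwarz combined with $*$-invariance: if $\langle v,v\rangle = 0$ then $|\langle zv, w\rangle|^2 = |\langle v, z^*w\rangle|^2 \leq \langle v,v\rangle\langle z^*w, z^*w\rangle = 0$), and $V_r := M_r/K$ inherits a positive-definite $*$-compatible inner product and the admissibility of $M_r$. Conversely, given any admissible unitary highest weight module $V$ of weight $r$ with unit highest weight vector $\eta$, the universal property of $M_r$ produces a surjection $\pi: M_r \to V$ with $\pi(\xi_r) = \eta$, and the pullback of the inner product of $V$ is a $*$-invariant Hermitian form on $M_r$ taking value $1$ on $\xi_r$; by uniqueness it coincides with $\langle -,-\rangle$. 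Hence $\langle-,-\rangle$ is positive semi-definite, $\Ker(\pi) = K$, and $V \cong M_r/K$ canonically, yielding existence, the semi-definiteness condition, and uniqueness up to unitary equivalence in one stroke. The main point requiring care is the compatibility of $\msP_{\epsilon}$ with the $*$-structure, i.e.\ the identity $\phi_r(x^*) = \overline{\phi_r(x)}$; this is where one must track that $*$ swaps $\mcO_q^{\leq M}(\msN(N))$ and $\mcO_q^{\leq M}(\msN(N))^*$ while preserving the diagonal factor in \eqref{EqTriangDecomp}.
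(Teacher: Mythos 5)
Your proof is correct and follows essentially the same route as the paper, which establishes the proposition through the preceding discussion: the Verma module $M_r$ built from the triangular decomposition \eqref{EqTriangDecomp}, the unique (up to scalar) $*$-invariant Hermitian form, and the correspondence between unitarizable highest weight modules and admissible unitary ones. The only difference is that you make explicit the construction of the contravariant form via $\chi_r\circ\msP_{\epsilon}$, a detail the paper delegates to the reference \cite{JL92}.
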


\begin{Def}
We call a vector $r\in \R^M$ \emph{$\epsilon$-admissible} if there exists an admissible unitary highest weight module for $\mcO_q^{\epsilon}(T(N))$ with highest weight $r$.  
\end{Def}

Let us now return to $\mcO_q(H(N))$. Assume that $\pi$ is an \emph{irreducible} big cell representation of rank $M$ on a Hilbert space $\Hsp_{\pi}$. By irreducibility, it will necessarily factor over some $\mcO_q(O_s)$ (Definition \ref{DefQuantOrb}), and we then also call $\pi$ a big cell representation of $\mcO_q(O_s)$. Furthermore, there exist unique signs $\eta_k \in \{\pm1\}$ such that the $\eta_k \pi(Z_{[k]})$ for $k \leq M$ are positive operators. We will then refer to $\eta$ or $\widetilde{\eta} = (\eta,0,0,\ldots,0) \in \{-1,0,1\}^N$ as the \emph{signature} of $\pi$ (we will show in Theorem \ref{TheoUnique} that there is no clash with the notion of signature from the Introduction). In the following we will write again
\[
\eta_{I} = \eta_{i_1}\eta_{i_2}\ldots\eta_{i_k},\qquad I = \{i_1,\ldots,i_k\} \in \binom{\brN}{k}. 
\]
By irreducibility, it is also clear that the $Z_{[k]}$ will have a joint eigenvector basis, so that $V_{\pi}$, the sum of their weight spaces, is dense in $\Hsp_{\pi}$. The subspace $V_{\pi}$ will clearly be invariant under $\mcO_q(H(N))$. 

In the following proposition, we reprise the notation from \eqref{EqiT} and  Proposition \ref{PropLocHT}.

\begin{Prop}\label{PropCorrHT}
Let $\pi$ be an irreducible big cell representation of $\mcO_q(H(N))$ of signature $\eta\in \{\pm 1\}^M$, and let $\epsilon \in \{\pm 1\}^M$ be the unique sign vector such that 
\[
\eta_i = (\eta_{\epsilon})_i =  \epsilon_{[i]},\qquad 1\leq i \leq M.
\]
\begin{itemize}
\item The restriction of $\pi$ to $V_{\pi}$ can be uniquely extended to an admissible unitary highest weight module $\pi'$ of $\mcO_q^{\epsilon}(T(N))$ on $V_{\pi}$ through the map $i_{T}^{\epsilon}$.
\item Conversely, any admissible unitary highest weight module of $\mcO_q^{\epsilon}(T(N))$ restricts and completes to an irreducible big cell representation of rank $M$ for $\mcO_q(H(N))$. 
\end{itemize}
\end{Prop}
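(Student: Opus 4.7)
The overall strategy is to use the $*$-homomorphism $i_T^\epsilon$ of \eqref{EqiT} as a bridge between the two representation categories. By Proposition \ref{PropLocHT}, upon localising at the non-vanishing leading minors, $i_T^\epsilon$ descends to an isomorphism $j_T^\epsilon$ of $\mcO_{q,\leq M}^{\loc}(H(N))$ onto the $*$-subalgebra $\mcA \subseteq \mcO_q^\epsilon(T(N))$ generated by $T_i^{\pm 2}$ and $T_iT_{ij}$. Any big cell representation of rank $M$ of $\mcO_q(H(N))$ factors through this localisation, so the correspondence is effected by extracting positive square roots $T_i = (T_i^2)^{1/2}$ to pass between representations of $\mcA$ and representations of the full $\mcO_q^\epsilon(T(N))$.

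For the first bullet, I would start with $\pi$ irreducible big cell of rank $M$ with signature $\eta$. The operators $\pi(Z_{[k]})$ are mutually commuting, bounded, self-adjoint, and invertible, so they admit a joint spectral decomposition; the $q$-commutation relations \eqref{EqCommDs} force the other generators to act as weighted shifts preserving signs on spectral orbits, and irreducibility forces the joint spectrum to be discrete with constant signs $\eta_k$. The resulting dense invariant subspace $V_\pi$ of joint eigenvectors carries a representation of $\mcO_{q,\leq M}^{\loc}(H(N)) \cong \mcA$. Choose the unique $\epsilon$ so that the ratios $\pi(Z_{[i]})\pi(Z_{[i-1]})^{-1}$ become positive up to the sign prefactors dictated by Lemma \ref{LemImZ} (this fixes the $\eta$-to-$\epsilon$ correspondence of the statement). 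Define $T_i$ on $V_\pi$ as the positive square root of the corresponding positive operator, and $T_{ij}$ on $V_\pi$ as $T_i^{-1}$ times the image under $\pi \circ (j_T^\epsilon)^{-1}$ of $T_iT_{ij}$. These operators satisfy the defining relations of $\mcO_q^\epsilon(T(N))$: all relations become, after suitable multiplication by $T_i$ on the left and/or right, identities in $\pi(\mcA)$ that already hold, and uniqueness of positive square roots propagates them back to the $T_i$ and $T_{ij}$ themselves. Admissibility is immediate; a highest weight vector is produced as a vector of maximal $T_i$-weight in $V_\pi$ (existing thanks to boundedness of $\pi$), which is automatically annihilated by all $T_{ij}^*$.

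For the second bullet, pull back an admissible unitary highest weight $\mcO_q^\epsilon(T(N))$-module $V_r$ along $i_T^\epsilon$. Central elements of $\mcO_q(H(N))$ act as scalars on $V_r$: on the highest weight vector they act via Proposition \ref{PropHC} by the Harish-Chandra scalar $\chi_{\HC}^\epsilon(\sigma_k)(r)$, and this propagates to all of $V_r$ using that $V_r$ is already cyclic as an $\mcA$-module (since $T_iT_{ij}\xi_r$ is a non-zero scalar multiple of $T_{ij}\xi_r$, iteratively generating $V_r = \mcO_q^{\leq M}(\msN(N))\xi_r$). Hence by Corollary \ref{CorUniversalEnv} the pullback is bounded and extends to a Hilbert space representation. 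Lemma \ref{LemImZ} together with positivity of the $T_i$ yields the big cell property, and the rank equals exactly $M$ since $I_{M+1} \subseteq \Ker(i_T^\epsilon)$ while $\pi(Z_{[M]})$ is non-zero. The main obstacle is irreducibility of the pullback: any closed $\pi(\mcO_q(H(N)))$-invariant subspace is automatically $\pi(\mcO_q(H(N)))''$-invariant, and this bicommutant contains the positive square roots $T_i$ (as Borel functions of elements in $\pi(\mcA)$) and consequently the $T_{ij}$, so the subspace is $\mcO_q^\epsilon(T(N))$-invariant, and irreducibility of $V_r$ as a highest weight module closes the argument.
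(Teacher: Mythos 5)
Your strategy is the same as the paper's: both directions go through $i_T^{\epsilon}$, Lemma \ref{LemImZ} and Proposition \ref{PropLocHT}, with positive square roots producing the $T_i$ and Corollary \ref{CorUniversalEnv} giving boundedness of the pullback. Two steps, however, do not hold up as written. In the forward direction you exhibit a highest weight vector $\xi$ but never show that it \emph{generates} $V_{\pi}$, i.e.\ that the extension is a highest weight module (nor that it is finitely generated, which is part of admissibility in Definition \ref{def: T(N) highest weight}). This is where irreducibility of $\pi$ enters: $\pi(\mcO_q(H(N)))\xi = \pi'(\mcO_q^{\epsilon}(T(N)))\xi$ is a sum of finite-dimensional weight spaces whose closure is all of $\Hsp_{\pi}$, whence $V_{\pi} = \pi(\mcO_q(H(N)))\xi$ and $\xi$ is cyclic.

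More seriously, your irreducibility argument for the pullback claims that $\pi(\mcO_q(H(N)))''$ contains the $T_i$ as Borel functions of elements of $\pi(\mcA)$. But $\mcA$ contains $T_i^{-2}$, and on an infinite-dimensional admissible unitary highest weight module the operators $T_i^{-1}$, and even some of the $T_i$ and $T_{ij}$ themselves, are unbounded: already for $N=2$, in $\mcS^{\pm}_{-c^2,a}$ one has $T_2^2 = \mp Z_{[2]}Z_{[1]}^{-1}$ with eigenvalues tending to infinity. Unbounded operators are not elements of the von Neumann algebra, and invariance of a closed subspace under merely affiliated operators does not follow formally, so this step fails. The repair (which is the paper's argument) is to work on weight spaces: a bounded intertwiner for $\pi$ preserves the finite-dimensional joint spectral subspaces of the commuting bounded self-adjoint operators $\pi(Z_{[k]})$, hence preserves $V_{\pi'}$ and commutes there with the algebraic $\mcO_q^{\epsilon}(T(N))$-action, so simplicity of $V_r$ forces it to be scalar.
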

\begin{proof}
As the $Z_{[k]}$ are diagonalizable with spectrum of constant sign $\eta_{[k]}$, it follows from Lemma \ref{LemImZ} and Proposition \ref{PropLocHT} that we can extend the restriction of $\pi$ to $V_{\pi}$ uniquely to an admissible unitary highest weight module $\pi'$ of $\mcO_q^{\epsilon}(T(N))$, and $V_{\pi}$ is then the linear span of weight vectors for $\mcO_q^{\epsilon}(T(N))$. As each $\pi(Z_{[k]})$ for $1\leq k\leq M$ is bounded, it also follows that any vector in $V_{\pi}$ must be locally $\mcO_q^{\leq M}(\msN(N))^*$-finite. In particular, there must exist a highest weight vector $\xi$ for $\pi'$. Since $\pi(\mcO_q(H(N)))\xi = \pi'(\mcO_q^{\epsilon}(T(N)))\xi$, it follows that $\pi(\mcO_q(H(N)))\xi$ is a direct sum of finite dimensional weight spaces for $\mcO_q(H(N))$.  Since the closure of $\pi(\mcO_q(H(N)))\xi$ equals $\Hsp_{\pi}$ by irreducibility of $\pi$, this implies that $V_{\pi} = \mcO_q(H(N)) \xi$, and hence $\xi$ is a cyclic highest weight vector for $\pi'$. 

We have now shown that $\pi'$ is an admissible unitary highest weight module. To see conversely that any admissible unitary highest weight module $(V_{\pi'},\pi')$ of $\mcO_q^{\epsilon}(T(N))$ restricts to an irreducible big cell representation of $\mcO_q(H(N))$ with signature $\eta$, let us first verify that $\pi = \pi'\circ i_T^{\epsilon}$ determines a bounded $*$-representation of $\mcO_q(H(N))$ on the pre-Hilbert space $V_{\pi'}$. Indeed, it is clear that $\pi$ will factor over a $*$-character on the center of $\mcO_q(H(N))$. Boundedness then follows from Corollary \ref{CorUniversalEnv}. To see that $\pi$ is irreducible, note that any bounded intertwiner $T$ for $\pi$ must preserve the weight spaces of the $Z_{[k]}$. Then $T$ preserves $V_{\pi'}$ and is also an intertwiner for the $\mcO_q^{\epsilon}(T(N))$-representation, hence constant. 
\end{proof}


The classification of the irreducible big cell representations thus follows from the classification of the admissible unitary highest weight modules of the $\mcO_q^{\epsilon}(T(N))$. To state the latter classification result, we first introduce the following auxiliary terminology.

\begin{Def}\label{DefEpsAda}
Let $\epsilon \in \{-1,0,1\}^{M}$. We say that $r \in \R^M$ is \emph{$\epsilon$-adapted} if and only if the following condition holds:  for any $1\leq s<t\leq M$ with $\epsilon_{(s,t]} = 1$ one has 
\begin{equation}\label{EqIneqPos}
(r_t+t)-(r_s+s)  \in \Z_{>0}.
\end{equation}
\end{Def}

It is easily seen that when $\epsilon \in \{\pm 1\}^M$, then \eqref{EqIneqPos} is satisfied if and only if there exist $\alpha_+,\alpha_- \in \R$ and $N_+,N_- \in \Z_{\geq 0}$ with $N_++ N_- = M$, a subset $I = \{i_1<\ldots<i_{N_+}\} \in \binom{[M]}{N_+}$ and integers $m_{i_1}<m_{i_2} < \ldots <m_{i_{N_+}}$ and  $m_{i_1'} <m_{i_2'} <\ldots  < m_{i_{N_-}'}$ for $I' = \{i_1'<\ldots< i_{N_-}\} = [M]\setminus I$, such that 
\[
r_s+s = \alpha_{\epsilon_{[s]}} + m_s,\qquad \forall 1\leq s \leq M. 
\]

\begin{Theorem}\label{TheoClassIrrepT}
Let $\epsilon \in \{-1,0,1\}^M$. Then $r$ is $\epsilon$-admissible for $\mcO_q^{\epsilon}(T(N))$ if and only if $r$ is $\epsilon$-adapted.
\end{Theorem}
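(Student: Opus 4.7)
By Proposition \ref{PropUniqueHW}, $r$ is $\epsilon$-admissible if and only if the canonical $*$-invariant Hermitian form $\langle\cdot,\cdot\rangle_r$ on the Verma module $M_r$ is positive semi-definite. The plan is to analyse this Shapovalov-type form using the triangular decomposition \eqref{EqTriangDecomp} and the Harish-Chandra map $\chi_{\HC}^{\epsilon}$ of Proposition \ref{PropHC}, reducing root-by-root to oscillator calculations of the type carried out for $\mcO_q(H(2))$ in Section \ref{SecExH2}. The guiding observation is that the deformed commutation relations \eqref{EqXComStarClassT} for $T_{ij}^{*}T_{ij}$ are quantum oscillator relations twisted by the sign $\epsilon_{(i,j]}$, so the norm chain $\|T_{ij}^{k}\xi_r\|^2$ has qualitatively different behaviour depending on whether $\epsilon_{(i,j]}$ equals $+1$, $-1$, or $0$.

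\textbf{Necessity.} Suppose $r$ is $\epsilon$-admissible, and fix $s<t$ in $\brM$ with $\epsilon_{(s,t]}=1$. I first treat the simple-pair case $t=s+1$: the subalgebra generated by $T_s,T_{s+1},T_{s,s+1}$ satisfies the quantum oscillator relation $T_{s,s+1}^{*}T_{s,s+1}=T_{s,s+1}T_{s,s+1}^{*}-(1-q^2)(T_{s+1}^{2}-T_s^{2})$, and a direct induction on $k$ computes $\|T_{s,s+1}^k\xi_r\|^2$ as a product of $k$ factors, the $j$-th one being $(q^{2r_s}-q^{2(r_{s+1}-j+1)})$ up to a strictly positive constant. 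Positivity for all $k$ forces this product to meet a zero before any negative factor appears, which happens precisely when $r_{s+1}-r_s\in \Z_{\geq 0}$, i.e.\ $(r_{s+1}+s+1)-(r_s+s)\in \Z_{>0}$. For general $s<t$ with $\epsilon_{(s,t]}=1$, I plan to argue inductively on $t-s$: the Harish-Chandra projection of $T_{st}^{*}T_{st}$ isolates a leading diagonal piece $(1-q^2)(T_s^{2}-\epsilon_{(s,t]}q^{2(t-s)}T_t^{2})$, with lower-order corrections that by induction are already controlled once the inequalities at all pairs of smaller length hold; the analogous chain yields the desired integrality at $(s,t)$. For pairs with $\epsilon_{(s,t]}\in\{-1,0\}$ the same recursion produces only non-negative factors and gives no constraint, matching the adapted condition.

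\textbf{Sufficiency and the main obstacle.} For the converse, the plan is to exhibit an admissible unitary highest weight module at each $\epsilon$-adapted $r$. In the classical case $\epsilon=+$ this is known: via the $*$-isomorphism $\iota$ of \eqref{EqUnivRelReal} the irreducible admissible unitary highest weight modules of $\mcO_q(T(N))$ are exactly the finite-dimensional $V_\lambda$ for $\lambda\in P_{\pos}^{\dom}$, and the $\epsilon$-adapted condition translates into dominance of the dual weight. For general $\epsilon$ I would build the module by gluing, along the triangular decomposition, the classical finite-dimensional pieces associated to $\epsilon_{i+1}=+1$ simple pairs with $q$-oscillator Fock-space pieces for the $\epsilon_{i+1}\in\{-1,0\}$ simple pairs. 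The main obstacle is that the long-root relations in \eqref{EqXComStarClassT} couple all the simple-pair blocks together in a way dictated by the sign products $\epsilon_{(i,j]}$, so positivity on simple roots does not automatically give positivity of the full Shapovalov form. The cleanest way to bypass this is to establish a Shapovalov-type determinant factorisation on each weight space of $M_r$, of the form
\begin{equation*}
\det\langle\cdot,\cdot\rangle_r|_{M_r(r-\mu)} \;=\; \prod_{s<t,\,k\geq 1}\bigl(q^{2r_s}-\epsilon_{(s,t]}q^{2(r_t+(t-s)-k)}\bigr)^{n(s,t,k,\mu)}
\end{equation*}
for suitable Kostant-type multiplicities $n(s,t,k,\mu)$, which one can hope to obtain by a continuous deformation in $\epsilon$ of the classical Shapovalov determinant for $U_q(\mfb^-(N))$ together with a direct check of each rank-two degeneration. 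Combined with the rank-two analysis of Necessity, this factorisation is equivalent to the positivity statement sought in the theorem.
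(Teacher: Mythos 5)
Your framework is the right one (Proposition \ref{PropUniqueHW} reduces everything to positive semi-definiteness of the invariant form on $M_r$), and your rank-two oscillator computation for a simple pair $t=s+1$ is sound. But both halves of your argument stop short of the actual content. For necessity with $t-s\geq 2$, the vector $T_{st}^k\xi_r$ is not the right test vector: already $\|T_{st}\xi_r\|^2=(1-q^2)\bigl(q^{2r_s}-\epsilon_{(s,t]}q^{2r_t}-\sum_{s<l<t}\epsilon_{(s,l]}\|T_{lt}\xi_r\|^2\bigr)$ couples to all intermediate indices, and your claim that these corrections are "controlled by induction" fails precisely in the critical case where $\epsilon_{(s,p]}<0$ for all $s<p<t$ (then no smaller pair imposes any constraint, the $k=1$ norm is automatically positive, and the obstruction only appears at higher $k$ through a recursion you have not computed). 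The paper's proof handles this by (i) a subdivision argument reducing to exactly that critical case, and (ii) replacing $T_{st}^k\xi_r$ by the Gelfand--Tsetlin vector $d_{t-1,s-1}^m\xi_r$ built from composite root vectors, whose norm is an explicit $q$-Pochhammer product $(q^{2((r_t+t)-(r_s+s)-m)};q^2)_m$ up to a positive factor. Your proposal contains neither ingredient.

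For sufficiency, everything rests on a Shapovalov-type determinant factorisation that you explicitly present as a hope rather than a proof; that factorisation \emph{is} the hard part of the theorem, so assuming it begs the question. Moreover, even granting the formula, non-negativity of the Gram determinant on each weight space does not by itself yield positive semi-definiteness of the form — one needs either a deformation argument tracking eigenvalue crossings or, as in the paper, an actual orthogonal spanning set. The paper obtains the latter by extending the Gelfand--Tsetlin construction of [STU90] to the $\epsilon$-deformed algebras $\mcO_q^{\epsilon}(T(N))$ (Proposition \ref{PropScalProd}), via a rescaling that reduces $\epsilon>0$ to $\epsilon=+$ and then a continuity/Zariski-density argument for general $\epsilon$; the norms $c_P$ are then explicit products, and $\epsilon$-adaptedness is checked to force every factor $\kappa_{ij}(P)\geq 0$ by a short induction. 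You would also need the reduction from $M<N$ to $M=N$ (the paper's first lemma of Section \ref{SecClassHW}), which your proposal does not address. In short: correct skeleton, but the two steps that constitute the proof — the right test vectors for necessity and the explicit orthogonal basis with computable norms for sufficiency — are missing.
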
 

We will prove this theorem in the next section. For now, let us deduce the consequences for the representation theory of $\mcO_q(H(N))$. 

\begin{Theorem}\label{TheoBigCellRep}
An irreducible big cell representation $\pi$ of $\mcO_q(H(N))$ of signature $(\eta_1,\ldots,\eta_m)$ factors through a unique admissible unitary highest weight module $V_{r(\pi)}$ of $\mcO_q^{\epsilon}(T(N))$, where $\eta_i = \epsilon_{[i]}$. Moreover, two irreducible big cell representations $\pi_1,\pi_2$ are unitary equivalent if and only if they have the same signature and $r(\pi_1) = r(\pi_2)$. We have $r = r(\pi)$ for some irreducible big cell representation $\pi$ of $\mcO_q(H(N))$ of signature $\eta$ if and only if $r$ is $\epsilon$-adapted for the unique $\epsilon \in \{\pm 1\}^M$ with $\eta_i = \epsilon_{[i]}$. 
\end{Theorem}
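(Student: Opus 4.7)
The plan is to assemble Theorem \ref{TheoBigCellRep} directly from three ingredients that precede it: Proposition \ref{PropCorrHT} (the correspondence between irreducible big cell representations of $\mcO_q(H(N))$ and admissible unitary highest weight modules of $\mcO_q^{\epsilon}(T(N))$), Proposition \ref{PropUniqueHW} (uniqueness of such modules given their highest weight), and Theorem \ref{TheoClassIrrepT} (the explicit classification of admissible highest weights). First, given an irreducible big cell representation $\pi$ of signature $\eta \in \{\pm 1\}^M$, I would read off the unique $\epsilon \in \{\pm 1\}^M$ determined by $\eta_i = \epsilon_{[i]}$ (equivalently $\epsilon_i = \eta_i \eta_{i-1}^{-1}$ with $\eta_0 = 1$) and invoke Proposition \ref{PropCorrHT} to realize the restriction of $\pi$ to the dense subspace $V_\pi$ of joint weight vectors for the $\pi(Z_{[k]})$ as a module over $\mcO_q^{\epsilon}(T(N))$ through $i_T^{\epsilon}$, automatically admissible unitary highest weight.

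Next, I would define $r(\pi) \in \R^M$ to be the highest weight of this associated module, which is well-defined by Proposition \ref{PropUniqueHW}. For the equivalence statement I would check that both directions of Proposition \ref{PropCorrHT} are functorial: a bounded intertwiner for $\pi$ preserves each eigenspace of $\pi(Z_{[k]})$, hence restricts to an intertwiner of the associated $\mcO_q^{\epsilon}(T(N))$-modules on $V_\pi$; conversely, any intertwiner of admissible unitary modules extends by continuity to an intertwiner of the completed Hilbert space representations, using the boundedness supplied by Corollary \ref{CorUniversalEnv}. Thus $\pi_1 \cong \pi_2$ if and only if the associated $\mcO_q^{\epsilon}(T(N))$-modules coincide, which by Proposition \ref{PropUniqueHW} is equivalent to the equality of signatures (which forces a common $\epsilon$) and of highest weights.

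For the last clause of the theorem, I would apply Theorem \ref{TheoClassIrrepT}: the set of weights that arise as $r(\pi)$ is precisely the set of $\epsilon$-admissible weights in the sense of Definition \ref{def: T(N) highest weight}, which Theorem \ref{TheoClassIrrepT} identifies with the set of $\epsilon$-adapted weights from Definition \ref{DefEpsAda}.

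I do not expect any serious obstacle: the content of the theorem is packaged in the preceding results, and the only points demanding mild care are (i) verifying the bijection on morphisms (restriction/extension of bounded intertwiners), and (ii) observing that the signature $\eta$ uniquely determines $\epsilon$, so that ``same signature and same $r$'' is an unambiguous criterion.
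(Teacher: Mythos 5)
Your proposal is correct and follows essentially the same route as the paper, which proves the first two assertions by combining Proposition \ref{PropCorrHT} with Proposition \ref{PropUniqueHW} and the last assertion by invoking Theorem \ref{TheoClassIrrepT}. The extra details you supply (the bijection on intertwiners and the fact that the signature $\eta$ determines $\epsilon$ uniquely) are exactly the points the paper leaves implicit, and they are handled correctly.
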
 
\begin{proof}
The first part of the theorem follows from Proposition \ref{PropCorrHT} and Proposition \ref{PropUniqueHW}. The latter part then follows from Theorem \ref{TheoClassIrrepT}. 
\end{proof}

\begin{Theorem}\label{TheoUnique}
\begin{enumerate}
\item Let $s \in \R^N$. Then the $*$-character $\chi_s$ on $\msZ(\mcO_q(H(N)))$ arises as the central $*$-character of an irreducible big cell representation if and only if the roots of 
\[
P_s(x) = \sum_{k=0}^N (-1)^k s_k x^{N-k}
\] 
are of the form 
\begin{equation}\label{EqFormRoots}
\{ q^{2\alpha+2m_1},\ldots,q^{2\alpha + 2m_{N_+}},-q^{2\beta + 2n_1},\ldots,-q^{2\beta + 2n_{N_-}},\underbrace{0,\ldots, 0}_{N_0}\}
\end{equation}
with $\alpha,\beta\in \R$ and $m_i, n_i \in \Z$ with $m_i\neq m_j$ and $n_i \neq n_j$ for all $i\neq j$. Moreover, if $\chi_s$ arises from the irreducible big cell representation $\pi$ and $P_s$ is of the above form, then necessarily $\pi$ has rank $N-N_0$ and there are exactly $N_+$ values $\eta_k=+$, with $\eta \in \{\pm 1\}^M$ the signature of $\pi$.
\item There is at most one irreducible big cell representation of a given rank and signature with fixed central $*$-character. 
\end{enumerate}
\end{Theorem}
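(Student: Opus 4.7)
Our strategy combines Theorem \ref{TheoBigCellRep}, which bijects irreducible big cell representations of rank $M$ and signature $\eta$ with $\epsilon$-adapted highest weights $r \in \R^M$ (where $\epsilon$ is determined by $\eta_i = \epsilon_{[i]}$), with Proposition \ref{PropHC}, which gives the explicit Harish-Chandra image of the central generators $\sigma_k$. The central character of $\pi$ is obtained by evaluating Proposition \ref{PropHC} on the highest weight vector, where each $T_i$ acts by $q^{r_i}$.

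For the forward direction of part (1), suppose $\pi$ is an irreducible big cell representation of rank $M$ with signature $\eta$, corresponding to the $\epsilon$-adapted $r \in \R^M$. Setting $\mu_i := r_i + i - 1$, Proposition \ref{PropHC} gives
\[
s_k = e_k\bigl(\eta_1 q^{2\mu_1},\ldots,\eta_M q^{2\mu_M},0,\ldots,0\bigr),
\]
so that $P_s(x) = x^{N-M}\prod_{i=1}^M (x-\eta_i q^{2\mu_i})$; in particular the rank equals $N - N_0$ with $N_0 = N - M$. The $\epsilon$-adaptedness condition of Definition \ref{DefEpsAda} unpacks (using $\epsilon_{(s,t]} = \epsilon_{[t]}/\epsilon_{[s]} = \eta_t/\eta_s$) to $\mu_t - \mu_s \in \Z_{>0}$ whenever $s<t$ and $\eta_s = \eta_t$. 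Splitting $[M] = I_+ \sqcup I_-$ by the sign of $\eta_i$, the values $\{\mu_i\}_{i \in I_+}$ form a strictly increasing sequence of integer translates of some common $\alpha \in \R$, and similarly $\{\mu_i\}_{i \in I_-}$ of some $\beta \in \R$. Reading this off gives the form \eqref{EqFormRoots} with $N_\pm = |I_\pm|$. Conversely, given $P$ of the described form, I would choose the simplest signature $\eta = (\underbrace{+,\ldots,+}_{N_+},\underbrace{-,\ldots,-}_{N_-})$, order $m_1<\ldots<m_{N_+}$ and $n_1<\ldots<n_{N_-}$, set $\mu_i = \alpha + m_i$ for $i \leq N_+$, $\mu_i = \beta + n_{i-N_+}$ for $i > N_+$, and $r_i = \mu_i - (i-1)$. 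This $r$ is $\epsilon$-adapted by construction, and Theorem \ref{TheoBigCellRep} produces the required representation.

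For part (2), fix the rank $M$ and the signature $\eta$; these determine $I_\pm$ and the vector $\epsilon$. The central character $\chi_s$ determines the multisets $\{\eta_i q^{2\mu_i}\}_{i\in I_+}$ and $\{\eta_i q^{2\mu_i}\}_{i\in I_-}$ of positive and negative nonzero roots of $P_s$, and hence the multisets $\{\mu_i\}_{i\in I_+}$ and $\{\mu_i\}_{i\in I_-}$. The $\epsilon$-adaptedness condition then forces these to be listed in strictly increasing order along $I_+$ and along $I_-$, respectively, uniquely determining $r$ and, via Theorem \ref{TheoBigCellRep}, the representation $\pi$ itself.

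The main point requiring care is not mathematical depth but bookkeeping: correctly translating the $\epsilon$-adaptedness condition between the index-based formulation of Definition \ref{DefEpsAda} and the sign-partitioned multiset description of the roots of $P_s$. Beyond this, the proof reduces to elementary symmetric function manipulations and to invoking Proposition \ref{PropHC} and Theorem \ref{TheoBigCellRep} in both directions.
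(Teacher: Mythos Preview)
Your proposal is correct and follows essentially the same approach as the paper: both apply Proposition~\ref{PropHC} to the highest weight vector to identify the roots of $P_s$ as $\eta_i q^{2(r_i+i-1)}$, then translate $\epsilon$-adaptedness into the same-sign-roots condition via $\epsilon_{(s,t]}=\eta_t/\eta_s$, and invoke Theorem~\ref{TheoBigCellRep} in both directions. The paper compresses your forward and uniqueness arguments into a single sentence by appealing to the discussion following Definition~\ref{DefEpsAda}, but the content is the same.
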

\begin{proof}
Let $\pi$ be an irreducible big cell representation of rank $M$ with signature $\eta \in \{\pm 1\}^M$ and highest weight $r$ for the associated $\mcO_q^{\epsilon}(T(N))$-module, where $\epsilon \in \{\pm 1\}^M$ with $\eta_k = \epsilon_{[k]}$. By applying $i_T^{\epsilon}(\sigma_k)$ on a highest weight vector, we see from Proposition \ref{PropHC} that we will have 
\[
\sigma_k^{\pi} = e_k(\eta_1q^{2r_1},\eta_2q^{2r_2+2},\ldots,\eta_N q^{2r_N+2N-2}),
\]
where we put $\eta_m= 0$ for $m>M$ (and take the values $r_m$ for $m>M$ arbitrary). It follows that the roots of $P_s(x)$ for $s_k = \sigma_k^{\pi}$ are given by the $\eta_k q^{2(r_k+k)-2}$. From Theorem \ref{TheoBigCellRep} and the discussion following Definition \ref{DefEpsAda}, we see that then necessarily these roots must be of the form stated in the first item of the theorem, with $N_0 = N-M$ and $N_+$ the number of positive values of $\eta$. 

On the other hand, given an arbitrary $\epsilon \in \{\pm 1\}^M$ for $M = N- N_0$, any set of roots of the form \eqref{EqFormRoots} for which $N_+$ equals the number of positive $\epsilon_{[k]}$ can be ordered in a unique way as to have them satisfy the $\epsilon$-adaptedness condition. Then again by Theorem \ref{TheoBigCellRep}, we obtain the `if'-part of the first item as well as the second item of the theorem.
\end{proof}

As an immediate corollary, we have the following. 

\begin{Cor}\label{CorFinBigCellOs}
For a given $s\in \R^N$, there are only finitely many irreducible big cell representations of $\mcO_q(O_s)$.
\end{Cor}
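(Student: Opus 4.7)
The plan is to deduce the corollary directly from Theorem \ref{TheoUnique}, which has done all the heavy lifting. I would start by observing that any irreducible big cell representation $\pi$ of $\mcO_q(O_s)$ must restrict to the $*$-character $\chi_s$ on the center $\msZ(\mcO_q(H(N)))$, so the central $*$-character of $\pi$ is $\chi_s$ and hence the multiset of roots of $P_s$ is fixed. By part (1) of Theorem \ref{TheoUnique}, this multiset has the form \eqref{EqFormRoots}, and from this the invariants $N_0$ (the multiplicity of $0$ as a root), $N_+$ (the number of positive roots) and $N_- = N - N_0 - N_+$ (the number of negative roots) are intrinsically determined by $s$.

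The second step is to bound the set of possible (rank, signature) data. By the same part of Theorem \ref{TheoUnique}, the rank of $\pi$ is forced to be $M := N - N_0$, and the signature $\eta \in \{\pm 1\}^M$ of $\pi$ must have exactly $N_+$ entries equal to $+1$. Consequently, the signature ranges over the finite set of such sign vectors, of cardinality $\binom{M}{N_+}$.

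Finally, I would invoke part (2) of Theorem \ref{TheoUnique}, which asserts that for each fixed rank and signature there is at most one irreducible big cell representation with prescribed central $*$-character $\chi_s$. Putting these observations together yields the bound
\[
\#\{\text{irreducible big cell representations of }\mcO_q(O_s)\} \;\leq\; \binom{M}{N_+} \;<\; \infty,
\]
which proves the corollary. There is essentially no obstacle here: the content is entirely packaged in Theorem \ref{TheoUnique}, and the corollary amounts to a finiteness count for the discrete data (signature) once the continuous data ($\alpha$, $\beta$ and the integer shifts $m_i$, $n_i$) have been pinned down by $s$.
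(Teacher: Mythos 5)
Your proof is correct and is essentially the paper's own argument: the paper likewise deduces the corollary from part (2) of Theorem \ref{TheoUnique} and records the same explicit bound $\binom{N_++N_-}{N_+}=\binom{M}{N_+}$ coming from the finitely many admissible signatures once the rank and the number of positive entries are pinned down by $s$.
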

\begin{proof}
This follows from the second part of Theorem \ref{TheoUnique}. We moreover get the explicit bound $\binom{N_++N_-}{N_+}$, with $N_++N_-$ the number of non-zero entries of $s$ and $N_+$ the number of strictly positive entries of $s$. 
\end{proof}

\subsection{Classification of highest weights for $\mcO_q^{\epsilon}(T(N))$}\label{SecClassHW}

We now prove Theorem \ref{TheoClassIrrepT}. First, we show that we can restrict to the case $M= N$. 

\begin{Lem}
Assume Theorem \ref{TheoClassIrrepT} holds when $M=N$. Then it also holds for $M\leq N$ arbitrary.
\end{Lem}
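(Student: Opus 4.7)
The plan is to embed $\mcO_q^{\epsilon}(T(N))$ as a $*$-subalgebra of $\mcO_q^{\widetilde{\epsilon}}(T(N))$, where $\widetilde{\epsilon} = (\epsilon, 0, \ldots, 0) \in \R^N$, and reduce both directions of the theorem to the $M = N$ case applied to $\widetilde{\epsilon}$. The basic observation is that for any extension $\widetilde{r} = (r, r_{M+1}, \ldots, r_N) \in \R^N$ of $r$, the vector $\widetilde{r}$ is $\widetilde{\epsilon}$-adapted if and only if $r$ is $\epsilon$-adapted: the condition $\widetilde{\epsilon}_{(s,t]} = 1$ requires $\widetilde{\epsilon}_i \neq 0$ for all $s < i \leq t$, forcing $t \leq M$, and in this range $\widetilde{\epsilon}_{(s,t]}$ coincides with $\epsilon_{(s,t]}$. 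In particular, the added components $r_{M+1}, \ldots, r_N$ play no role in the adaptedness condition.

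For the direction $\epsilon$-\emph{adapted} $\Rightarrow$ $\epsilon$-\emph{admissible}, I would pick any extension $\widetilde{r}$ of $r$ (for instance $r_{M+1} = \cdots = r_N = 0$). Since $\widetilde{r}$ is then $\widetilde{\epsilon}$-adapted, the assumed $M = N$ case yields an admissible unitary highest weight module $V_{\widetilde{r}}^{\widetilde{\epsilon}}$ of $\mcO_q^{\widetilde{\epsilon}}(T(N))$ with highest weight vector $\xi_{\widetilde{r}}$. Restricting to $\mcO_q^{\epsilon}(T(N))$ and passing to the cyclic submodule $W = \mcO_q^{\epsilon}(T(N)) \cdot \xi_{\widetilde{r}}$, one obtains an admissible unitary highest weight module of $\mcO_q^{\epsilon}(T(N))$ at weight $r$: since $T_{ij}^*\xi_{\widetilde{r}} = 0$ for $i \leq M$, the vector $\xi_{\widetilde{r}}$ remains a highest weight vector for the subalgebra, and finite generation, the admissible weight basis, and local $\mcO_q^{\leq M}(\msN(N))^*$-finiteness all descend from $V_{\widetilde{r}}^{\widetilde{\epsilon}}$.

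For the converse $\epsilon$-\emph{admissible} $\Rightarrow$ $\epsilon$-\emph{adapted}, the strategy is to promote the given admissible unitary highest weight module $V_r$ of $\mcO_q^{\epsilon}(T(N))$ to one for $\mcO_q^{\widetilde{\epsilon}}(T(N))$ at some extension $\widetilde{r}$; the $M = N$ case then gives $\widetilde{\epsilon}$-adaptedness of $\widetilde{r}$, equivalently $\epsilon$-adaptedness of $r$. By Proposition \ref{PropUniqueHW}, admissibility is equivalent to positive semi-definiteness of the Shapovalov form on the relevant Verma module. The crucial point is that $\widetilde{\epsilon}_i = 0$ for $i > M$ makes all $\widetilde{\epsilon}$-dependent correction terms in \eqref{EqXComStarClassT} vanish whenever indices exceed $M$; in particular, the $*$-subalgebra $\mcA^{\mathrm{new}} \subseteq \mcO_q^{\widetilde{\epsilon}}(T(N))$ generated by $T_i^{\pm 1}$ and $T_{ij}$ with $i > M$ is isomorphic to $\mcO_q^{(0,\ldots,0)}(T(N-M))$, for which the $M = N$ case trivially renders every weight $r'$ admissible (there being no adaptedness conditions). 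Combining this with a PBW-type identification $M_{\widetilde{r}}^{\widetilde{\epsilon}} \cong M_r^{\epsilon} \otimes M_{r'}^{\mathrm{new}}$ (with $r' = (r_{M+1}, \ldots, r_N)$) together with the compatibility of the Shapovalov forms with this decomposition, positive semi-definiteness of the form on $M_r^{\epsilon}$ lifts to positive semi-definiteness on $M_{\widetilde{r}}^{\widetilde{\epsilon}}$.

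The main obstacle will be verifying that the Shapovalov form on $M_{\widetilde{r}}^{\widetilde{\epsilon}}$ genuinely decouples along this PBW factorization, i.e.\ matches the tensor product of the forms on the two subfactor Verma modules. This requires a careful analysis of the cross-relations between old generators $T_{ij}$ ($i \leq M$) and new generators $T_{kl}$ ($k > M$), once again exploiting the vanishing of $\widetilde{\epsilon}_{(k,l]}$-type coefficients whenever the index range crosses $M$ in order to control the mixed terms in the Shapovalov computation.
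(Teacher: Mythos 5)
Your argument for the implication ``$\epsilon$-adapted $\Rightarrow$ $\epsilon$-admissible'' is correct and is essentially the paper's own: extend $\epsilon$ by zeros and $r$ arbitrarily (the paper uses $1$'s, you use $0$'s; either works since $\widetilde{\epsilon}_{(s,t]}=1$ forces $t\leq M$), invoke the $M=N$ case, and cut down to the cyclic $\mcO_q^{\epsilon}(T(N))$-submodule generated by the highest weight vector.

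The converse direction, however, has a genuine gap. Your plan is to induce the given $\mcO_q^{\epsilon}(T(N))$-module \emph{up} to $\mcO_q^{\widetilde{\epsilon}}(T(N))$ via a claimed factorization $M_{\widetilde{r}}^{\widetilde{\epsilon}}\cong M_r^{\epsilon}\otimes M_{r'}^{\mathrm{new}}$ under which the invariant Hermitian form becomes a tensor product of forms. You flag the verification of this decoupling as ``the main obstacle,'' and it really is one: the cross-relations in \eqref{EqXComStarClassT} mix the two factors. For instance, with $k\leq M<l$ and a common second index $j$, the relation $T_{kj}T_{lj}^{*}-qT_{lj}^{*}T_{kj}=-(1-q^2)\sum_{i<j}T_{ki}T_{li}^{*}$ produces correction terms involving products of ``old'' and ``new'' generators, and there exist PBW monomials of identical weight lying in different tensor factors (e.g.\ for $N=4$, $M=2$, both $T_{14}$ and $T_{13}T_{34}$ shift the weight by $-e_1+e_4$), so the corresponding vectors need not be orthogonal and the form does not obviously split. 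Positivity of the form on $M_r^{\epsilon}$ therefore does not transfer to $M_{\widetilde{r}}^{\widetilde{\epsilon}}$ without a substantial computation that you have not supplied (nor is the asserted isomorphism of the ``new'' subalgebra with $\mcO_q^{(0,\ldots,0)}(T(N-M))$ justified beyond matching generators and relations).

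This entire construction is also unnecessary. The paper's argument for necessity goes \emph{down} rather than up: the elements $T_i^{\pm1}$ and $T_{ij}$ with $i<j\leq M$ generate inside $\mcO_q^{\epsilon}(T(N))$ a copy (image) of $\mcO_q^{\epsilon}(T(M))$ -- all sums appearing in \eqref{EqCommHolT} and \eqref{EqXComStarClassT} close up when every index is $\leq M$ -- and this is itself a full ``$M=N$'' instance of the theorem. The highest weight vector of your admissible unitary module remains a highest weight vector for this subalgebra, and the cyclic submodule it generates is an admissible unitary highest weight module of $\mcO_q^{\epsilon}(T(M))$ at weight $r$. Since the $\epsilon$-adaptedness condition only involves indices $s<t\leq M$, the hypothesis applied to $\mcO_q^{\epsilon}(T(M))$ immediately gives that $r$ is $\epsilon$-adapted. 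You should replace your induction argument by this restriction.
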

\begin{proof}
Let $M\leq N$, and let $\pi$ be an admissible unitary highest weight module of $\mcO_q^{\epsilon}(T(N))$ for $\epsilon \in \{-1,0,1\}^M$. By restricting $\pi$ to $\mcO_q^{\epsilon}(T(M))$, we obtain that $r$ needs to be $\epsilon$-adapted by the hypothesis. To see that this condition is also sufficient, let $r$ be $\epsilon$-adapted and put $\tilde{r} = (r,1,\ldots,1)$ and $\tilde{\epsilon} = (\epsilon,0,\ldots,0)$. Then $\tilde{r}$ is $\tilde{\epsilon}$-adapted and hence $\tilde{\epsilon}$-admissible, again by the hypothesis. There hence exists an admissible unitary highest module $V_{\tilde{r}}$ of $\mcO_q^{\tilde{\epsilon}}(T(N))$ with highest weight $\tilde{r}$. If $\xi$ is its highest weight vector, we then immediately obtain that $\mcO_q^{\epsilon}(T(N))\xi$ is a highest weight $*$-representation of $\mcO_q^{\epsilon}(T(N))$ at highest weight $r$, so $r$ is $\epsilon$-admissible. 
\end{proof}

The classification in Theorem \ref{TheoClassIrrepT} for the case $N=M$ can be done straightforwardly using \emph{Gelfand-Tsetlin bases}. We recall the specifics \cite{STU90} (see also \cite[Section 7.3.3]{KS97}), adapted to the $\epsilon$-deformed setting.  
The following proposition holds for arbitrary $\epsilon \in \R^N$. 
\begin{Prop}\label{PropScalProd}
Let $r\in \R^N$. Let $M_{r}$ be the Verma module for $\mcO_{q}^{\epsilon}(T(N))$ at highest weight $r$, and let $V_r$ be the corresponding simple quotient. Write 
\[
w = (w_1,\ldots,w_N) = (q^{r_1},\ldots,q^{r_N}).
\]
Then $V_r$ has a spanning set $\xi(P)$ (as a vector space) labeled by symbols $P$ where $P=(P_1,\ldots, P_{N-1})$ for 
\[
P_k = (P_{1k},\ldots,P_{kk}) \in \Z_{\geq 0}^{k},
\] 
where $\xi(P)$ is a weight vector with
\[
T_k \xi(P) = w_k q^{\sum_{i=k}^{N-1} P_{ki}- \sum_{i=1}^{k-1} P_{i,k-1}},
\] 
and where 
\begin{equation}\label{EqInnProdxi}
\langle \xi(P),\xi(P')\rangle = \delta_{P,P'} c_P
\end{equation}
for 
\begin{equation}\label{EqExprC}
c_P = \tau_1(P)\ldots\tau_{N-1}(P)c_P' 
\end{equation}
with
\[
0< c_P' =  \prod_{k=1}^{N-1} \prod_{1\leq i \leq j \leq k} (q^{-1}-q)^{-2P_{ik}} q^{-P_{ik}((r_j+j-r_i-i)    + \sum_{l=k}^{N-1} (P_{jl}-P_{il}) + (r_{j+1}+j+1-r_i-i)+ \sum_{l=k+1}^{N-1} (P_{j+1,l}-P_{il}) )} 
\]
and with 
\begin{multline*}
\tau_{k}(P) = \prod_{1\leq i \leq j \leq k} \left(\epsilon_{(i,j]}\frac{w_j^2}{w_i^2} q^{2(j-i+1)+2 \sum_{l=k}^{N-1}(P_{jl} - P_{il})};q^2\right)_{P_{ik}} \\ \qquad \times \left(\epsilon_{(i,j+1]}\frac{w_{j+1}^2}{w_{i}^2} q^{2(j-i+1) -2P_{ik}+2 \sum_{l=k+1}^{N-1}(P_{j+1,l} -P_{il})};q^2\right)_{P_{ik}} 
\end{multline*}
where we use the notation 
\[
(a;q^2)_m = (1-a)(1-q^2a)\ldots (1-q^{2m-2}a).
\]
\end{Prop}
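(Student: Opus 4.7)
My plan is to construct the Gelfand--Tsetlin type spanning set by adapting the standard quantum Gelfand--Tsetlin construction (cf.~\cite{STU90} and \cite[\S7.3.3]{KS97}) to the $\epsilon$-deformed setting and reading off the inner products from a direct recursive computation. First I would observe that the entire statement is continuous in $\epsilon$: every matrix coefficient of a fixed basis element with respect to a fixed generator is a polynomial in the $\epsilon_i$, and the Hermitian form on $M_r$ (with a chosen ordered monomial basis in the $T_{ij}^{\;*}$) has entries that depend polynomially on the $\epsilon_{(i,j]}$. So it is enough to treat the case $\epsilon \in (\R\setminus\{0\})^N$; the general case follows by specialisation.

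Next, for such generic $\epsilon$ I would set up the triangular decomposition \eqref{EqTriangDecomp}, so that as a vector space $M_r$ is freely generated over the highest weight vector $\xi_r$ by ordered monomials in the $T_{ij}^*$. Mimicking Zhelobenko's reduction operators, I would introduce explicit lowering operators $\Psi_{ik}$ for $1\leq i\leq k\leq N-1$ as specific combinations of the $T_{ij}^*$ and powers of the $T_l$ that are adapted to the tower $\mcO_q^{\epsilon}(T(1))\subset \mcO_q^{\epsilon}(T(2))\subset \ldots \subset \mcO_q^{\epsilon}(T(N))$ obtained by restricting to the top-left corner. Setting $\xi(P) := \prod_{k,i}\Psi_{ik}^{P_{ik}}\xi_r$ with an appropriately chosen ordering, one checks by the $q$-commutation of the $T_l$ with the $T_{ij}^*$ that the $\xi(P)$ are weight vectors with exactly the weights claimed in the proposition, and that they span $M_r$ (and hence $V_r$).

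The inner product computation is where the real work lies. Because the $\xi(P)$ are weight vectors, orthogonality for $P\neq P'$ is automatic from the triangular decomposition. The diagonal coefficient $\langle\xi(P),\xi(P)\rangle$ is then obtained by induction on $|P|$, using the adjoint relation $(\Psi_{ik})^* = \Psi_{ik}^{\sharp}$ (for an explicit raising operator $\Psi_{ik}^{\sharp}$) together with the $\epsilon$-deformed commutation relations in \eqref{EqXComStarClassT}. Each time an operator $\Psi_{ik}^{\sharp}\Psi_{ik}$ is moved past a lower-level $\Psi_{jl}$, one picks up a factor of the form $1 - \epsilon_{(i,j]}\,q^{2(\cdots)}\,w_j^2/w_i^2$; collecting these factors reproduces precisely the two $q$-Pochhammer products defining $\tau_k(P)$, while the purely $q$-combinatorial prefactor $c_P'$ comes from the repeated $q$-shifted commutators of the $\Psi_{ik}$ among themselves (as in the undeformed case $\epsilon=+$, where it matches the known formulas in \cite{STU90}). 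The main obstacle is the bookkeeping in this recursion: one must ensure that the specific normalisation of the $\Psi_{ik}$ makes the $\tau_k(P)$-factors match exactly, and that they correctly encode the kernel of $M_r\twoheadrightarrow V_r$ (so that $\xi(P)$ survives in $V_r$ if and only if $c_P\neq 0$). Once the formula is verified on the generators and checked to be compatible with the induction step, positivity of $c_P'$ is clear from its $q$-exponential form, and the claim follows.
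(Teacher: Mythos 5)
Your overall strategy (Gelfand--Tsetlin lowering operators in the spirit of \cite{STU90}, weight computation, then an inner-product recursion) is the same as the paper's, but two of your key steps are asserted rather than proved, and one of them is asserted incorrectly. First, the orthogonality $\langle \xi(P),\xi(P')\rangle = 0$ for $P\neq P'$ is \emph{not} automatic from the triangular decomposition: distinct patterns $P$ can have the same $(T_1,\ldots,T_N)$-weight (already for $N=3$, the weight only determines $P_{11}+P_{12}$, $P_{22}-P_{11}$ and $P_{12}+P_{22}$, three linearly dependent forms in three unknowns), so orthogonality is a genuinely nontrivial property of the specific lowering operators, which is exactly what the Gelfand--Tsetlin construction of \cite{STU90} is designed to achieve. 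Second, the spanning of $V_r$ by the $\xi(P)$ is not something you can read off from the recursion ``encoding the kernel of $M_r\twoheadrightarrow V_r$'': the $\xi(P)$ need not span $M_r$ for non-generic $r$, and the paper has to prove spanning by a separate argument, namely constructing an auxiliary module $\widetilde{M}_r$ with explicit Gelfand--Tsetlin action formulas (verified via \cite[Theorem 2.11]{STU90} for $\epsilon=+$ and large integral weights, then extended by rescaling and Zariski density), showing it is a highest weight module, and using the universal property of the Verma module to conclude that $M_r\to V_r$ factors through $\widetilde{M}_r$.

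Your handling of the $\epsilon$-dependence is also inverted relative to what actually works. Reducing general $\epsilon$ to $\epsilon\in(\R\setminus\{0\})^N$ by continuity does not help, because for nonzero $\epsilon$ you still face the full recursive computation, which you only sketch. The paper instead passes through the isomorphism with a deformed $\breve{U}_q^{\epsilon}(\mfu(N))$ (with square roots $K_i^{1/2}$ adjoined), and for $\epsilon_i>0$ rescales the generators by fourth roots of the $\epsilon_i$ so as to land in a literal copy of the undeformed algebra with shifted weight $q^{-r_i'}=\epsilon_{(i,N]}^{1/2}q^{-r_i}$; the formula \eqref{EqInnProdxi} is then imported directly from \cite{STU90}, and only afterwards is continuity in $\epsilon$ invoked to cover the remaining (degenerate) cases. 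If you want to keep your direct-recursion approach, you would need to actually carry out the commutation bookkeeping and separately establish both orthogonality and spanning; as written, the proposal leaves the hardest parts of the proposition unproved.
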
 
\begin{proof}
Put $U_q^{\epsilon}(\mfu(N))$ the $*$-algebra as defined in Definition \ref{DefUqglN}, except that the relation \eqref{EqFundCommEF} has been changed to 
\begin{equation}\label{EqFundCommEFEpsilon}
E_iF_j - F_jE_i = \delta_{ij} \frac{\epsilon_i\widehat{K}_i - \widehat{K}_i^{-1}}{q-q^{-1}}.
\end{equation}
We keep the same $*$-structure \eqref{EqDefStar}. 

The $*$-algebra isomorphism \eqref{EqUnivRelReal} deforms to an isomorphism of $*$-algebras $\iota_{\epsilon}:O_q^{\epsilon}(T(N)) \cong U_q^{\epsilon}(\mfu(N))$, in such a way that we have 
\[
\iota_{\epsilon}(T_i) = K_i^{-1},\qquad \iota_{\epsilon}(T_{i,i+1}) = (q^{-1}-q)F_iK_{i+1}^{-1}. 
\]
We will in the following view this isomorphism as an identification. 

It will be convenient to add self-adjoint square roots $K_i^{1/2}$ of the $K_i$ to $U_q^{\epsilon}(\mfu(N))$. We denote the resulting $*$-algebra as $\breve{U}_q(\mfu(N))$. Clearly any admissible $\mcO_q^{\epsilon}(T(N))$-module extends uniquely to a $\breve{U}_q(\mfu(N))$-module for which the $K_i^{1/2}$ have positive eigenvalues. 

Consider then the following elements in $\breve{U}_q(\mfu(N))$: 
\[
d_{kk} = 1,\qquad  c_{kk} = 1,
\]
\begin{equation}\label{EqDefAltGenUq}
d_{k,k-1} = f_k = q^{-1/2}F_k \hat{K}_k^{1/2},\qquad c_{k-1,k} = e_k =f_k^*  = q^{1/2} \hat{K}_i^{-1/2}E_i,
\end{equation}
and inductively 
\[
d_{ki} = \langle K_{i+1}K_k^{-1}q^{k-i}\rangle_{i+1,k} f_kd_{k-1,i} - \langle K_{i+1} K_k^{-1}q^{k-i-1}\rangle_{i+1,k} d_{k-1,i}f_k,
\]
\[
c_{ik} = d_{ki}^* = c_{i,k-1}e_k \langle K_{i+1}K_k^{-1}q^{k-i}\rangle_{i+1,k} - e_k c_{i,k-1}  \langle K_{i+1} K_k^{-1}q^{k-i-1}\rangle_{i+1,k},
\]
where $0 \leq i <k \leq N-1$ and 
\[
\langle a\rangle_{i,k} = \frac{\epsilon_{(i,k]}a -a^{-1}}{q-q^{-1}}. 
\]
For $P = (P_1,\ldots, P_{N-1})$ with $P_k = (P_{1k},\ldots,P_{kk}) \in \Z_{\geq 0}^{k}$ a symbol as above, write
\[
d_{k}^{P_k}  = d_{k,0}^{P_{1k}}\ldots d_{k,k-1}^{P_{kk}},\quad  d^P = d_1^{P_1}\ldots d_{N-1}^{P_{N-1}}, \qquad c_k^{P_k} = (d_k^{P_k})^*,\quad c^P = (d^P)^*. 
\]
With $\xi_r$ the highest weight vector of $M_r$, put 
\[
\xi(P) = d^P \xi_r \in M_r. 
\]
Then we claim that \eqref{EqInnProdxi} holds (in $M_r$ and hence in $V_r$). To see this, one can either repeat the arguments of \cite{STU90}, or argue as follows. Assume we are in the case $\epsilon_i>0$ for all $i$, and write
\[
\widetilde{e}_i = \epsilon_{i+1}^{-1/4} e_i,\qquad \widetilde{f}_i = \epsilon_{i+1}^{-1/4} f_i,\qquad \widetilde{K}_i^{1/2} = \epsilon_{(i,N]}^{1/4}K_i^{1/2}.
\]
Then the $\widetilde{e}_i,\widetilde{f}_i,\widetilde{K}_i^{1/2}$ establish an isomorphic copy of $\breve{U}_q(\mfu(N))$. Up to reindexing, the elements 
\[
\widetilde{d}_{k,k-1} = \widetilde{f}_k,\quad \widetilde{d}_{ki} = \langle \widetilde{K}_{i+1}\widetilde{K}_k^{-1}q^{k-i}\rangle \widetilde{f}_k\widetilde{d}_{k-1,i} - \langle \widetilde{K}_{i+1} \widetilde{K}_k^{-1}q^{k-i+1}\rangle \widetilde{d}_{k-1,i}\widetilde{f}_k
\]
and their adjoints then correspond to the elements as defined in \cite{STU90}. Upon noticing that 
\[
\widetilde{K}_i \xi_r = q^{-r_i'} \xi_r,\qquad q^{-r_i'} = \epsilon_{(i,N]}^{1/2} q^{-r_i}, 
\]
and
\[
d^P  = \left(\prod_{1\leq i \leq k< N} \left(\epsilon_{(i,k+1]}^{1/4} \prod_{i< j \leq k} \epsilon_{(i,j]}^{1/2}\right)^{P_{i,k}}\right)\widetilde{d}^P,
\]
one can easily deduce \eqref{EqInnProdxi} directly from \cite{STU90}. This identity however continues to hold true for general $\epsilon$ by continuity of the multiplication structure in the parameter $\epsilon$.

To finish the proof, we need to show that the $\xi(P)$ span $V_r$. To see this, let $\widetilde{M}_r$ be a vector with basis elements $\xi'(P)$ labeled by the $P$ as above. Define on $\widetilde{M}_r$ operators $K_i,e_i,f_i$ by the following formulas: 
\[
K_i \xi'(P) = q^{-r_i} q^{\sum_{j=1}^{i-1} P_{j,i-1}-\sum_{j=i}^{N-1} P_{ij}}\xi'(P),
\]
\[
e_i \xi'(P) = \sum_{j=1}^{i} a_{j,i}(P) \widetilde{\xi}(P+\delta_{j}(i-1) - \delta_{j}(i)),
\]
\[
f_i \xi'(P) = \sum_{j=1}^{i} b_{j,i}(P) \widetilde{\xi}(P-\delta_{j}(i-1) + \delta_{j}(i)),
\]
where  $\widetilde{\xi}(P\pm \delta_{j}(i-1) \mp \delta_{j}(i))$ means replacing $P_{j,i-1}$ by $P_{j,i-1}\pm 1$ and $P_{j,i}$ by $P_{j,i}\mp 1$, and where, with 
\[
[x]_\epsilon = \frac{\epsilon q^x - q^{-x}}{q-q^{-1}},\qquad [x]^{\epsilon} = \frac{q^x - \epsilon q^{-x}}{q-q^{-1}},
\]
\begin{multline*}
a_{j,i}(P) = - \frac{\prod_{1\leq k\leq j} [-r_{k}+r_{j} -\sum_{l=i+1}^{N-1} P_{k,l} + \sum_{l=i}^{N-1} P_{j,l}+j-k]_{\epsilon_{(k,j]}}}
{\prod_{1\leq k< j} [-r_{k}+r_{j} -\sum_{l=i+1}^{N-1} P_{k,l} + \sum_{l=i}^{N-1} P_{j,l}+j-k]_{\epsilon_{(k,j]}}} \\
 \times \frac{\prod_{j<k\leq i+1} [-r_{k}+r_{j} -\sum_{l=i+1}^{N-1} P_{k,l} + \sum_{l=i}^{N-1} P_{j,l}+j-k]^{\epsilon_{(j,k]}}}{\prod_{j<k\leq i} [-r_{k}+r_{j} -\sum_{l=i+1}^{N-1} P_{k,l} + \sum_{l=i}^{N-1} P_{j,l}+j-k]^{\epsilon_{(j,k]}}},
\end{multline*}
\begin{multline*}
b_{j,i}(P) = \frac{\prod_{1\leq k\leq j} [-r_{k}+r_{j} -\sum_{l=i+1}^{N-1} P_{k,l} + \sum_{l=i}^{N-1} P_{j,l}+j-k]_{\epsilon_{(k,j]}}}
{\prod_{1\leq k< j} [-r_{k}+r_{j} -\sum_{l=i+1}^{N-1} P_{k,l} + \sum_{l=i}^{N-1} P_{j,l}+j-k]_{\epsilon_{(k,j]}}} \\
 \times \frac{\prod_{j<k\leq i-1} [-r_{k}+r_{j} -\sum_{l=i-1}^{N-1} P_{k,l} + \sum_{l=i}^{N-1} P_{j,l}+j-k]^{\epsilon_{(j,k]}}}{\prod_{j<k\leq i} [-r_{k}+r_{j} -\sum_{l=i+1}^{N-1} P_{k,l} + \sum_{l=i}^{N-1} P_{j,l}+j-k]^{\epsilon_{(j,k]}}}
\end{multline*}
It can then be shown that these elements satisfy the defining relations for $\breve{U}_q^{\epsilon}(\mfu(N))$ (using the alternative generators in \eqref{EqDefAltGenUq}). Namely, we use \cite[Theorem 2.11]{STU90} to see that the defining relations hold when evaluated on a given $\xi'(P)$ when $\epsilon = +$ and $r$ is any large enough integral weight, after which one can conclude by a rescaling argument together with a Zariski density argument. 

Similarly, one concludes that 
\[
\xi'(P) = d^P \xi_r',
\]
where $\xi_r' = \xi'(0)$ corresponds to the diagram $0_{ik}=0$ for all $i,k$. It follows that $\widetilde{M}_r$ is a highest weight representation. By the universal property of $M_r$, we obtain a (necessarily surjective) $\breve{U}_q^{\epsilon}(\mfu(N))$-linear map $M_r\rightarrow\widetilde{M}_r$ sending $\xi(P)$ to $\xi'(P)$. But since the quotient map $M_r \rightarrow V_r$ must factor through $\widetilde{M}_r$, it follows that the $\xi(P)$ span $V_r$. 
\end{proof}

We can now prove Theorem \ref{TheoClassIrrepT}. We restrict now to the case of $\epsilon \in \{-1,0,1\}^{N}$. 

Assume first that $r$ is $\epsilon$-admissible. Assume that $s<t$ with $\epsilon_{(s,t]} = 1$. We can find a subdivision $s = t_0 <t_1< \ldots < t_n = t$ such that the following holds: each $\epsilon_{(t_i,t_{i+1}]} =1$ and $\epsilon_{(t_i,p]} <0$ for $p<t_{i+1}$. If the condition \eqref{EqIneqPos} holds for all $t_i < t_{i+1}$, it clearly also holds for $s<t$. We may hence assume already that $\epsilon_{(s,p]} <0$ for $p<t$. 

Consider the symbol $P$ with $P_{ik} = 0$ unless $i=s$ and $k = t-1$, in which case we write $P_{s,t-1} = m$. Then one easily sees that the sign of \eqref{EqExprC} equals the sign of 
\[
\left(\frac{w_{t}^2}{w_s^2} q^{2(t-s)-2m};q^2\right)_m = (q^{2((r_t+t)-(r_s +s)-m)};q^2)_m.
\]
The latter will be $\geq 0$ for all $m\in \Z_{\geq 0}$ if and only if $(r_t+t) - (r_s+s) -1 \in \Z_{\geq 0}$. Hence $r$ is $\epsilon$-adapted.

Conversely, assume that $r$ is $\epsilon$-adapted.  By Proposition \ref{PropUniqueHW} we only need to show that the scalar product in Proposition \ref{PropScalProd} is positive semi-definite, which will hold if and only if we have $c_M \geq0$ for all $M$, or equivalently, $\prod_{k=1}^{N-1}\tau_k(P) \geq 0$. 
 
Now we can reorder
\[
\prod_{k=1}^{N-1}\tau_k(P) = \prod_{i\leq j} \kappa_{ij}(P)
\]
with $\kappa_{ii}(P) = \prod_{k\geq i} (q^2;q^2)_{P_{ik}}>0$ and, for $i<j$, 
\begin{multline*}
\kappa_{ij}(P) = \prod_{k\geq j} \left(\epsilon_{(i,j]}\frac{w_j^2}{w_i^2} q^{2(j-i+1)+2 \sum_{l=k}^{N-1}(P_{jl} - P_{il})};q^2\right)_{P_{ik}} \\ \qquad \times \prod_{k\geq j+1}  \left(\epsilon_{(i,j]}\frac{w_{j}^2}{w_{i}^2} q^{2(j-i) -2P_{ik}+2 \sum_{l=k+1}^{N-1}(P_{jl} -P_{il})};q^2\right)_{P_{ik}}.
\end{multline*}
Clearly $\kappa_{ij}(P) \geq 0$ if $\epsilon_{(i,j]} \leq 0$. On the other hand, if $i<j$ with $\epsilon_{(i,j]} = 1$, we may by $\epsilon$-adaptedness write $r_j - r_i = m_{ij}+i-j+1$ for $m_{ij}\geq 0$. Then we can rewrite 
\[
\kappa_{ij}(P) = \prod_{k\geq j} \alpha_{ijk} \prod_{k\geq j+1} \beta_{ijk} 
\]
with 
\[
\alpha_{ijk} = \left(q^{2(m_{ij} + \sum_{l=k+2} P_{jl} - \sum_{l=k+1} P_{il}) + 2 -2P_{ik} + 2P_{j,k+1} +2P_{jk} + 2};q^2\right)_{P_{ik}},
\]
\[
\beta_{ijk} =  \left(q^{2(m_{ij} + \sum_{l=k+2} P_{jl} - \sum_{l=k+1} P_{il}) + 2 -2P_{ik} + 2P_{j,k+1}};q^2\right)_{P_{ik}}.
\]
Assume that $\kappa_{ij}(P) \neq 0$. Then as $m_{ij} \geq 0$, we see by considering $\beta_{ij,N-1}$ that necessarily $m_{ij} - P_{i,N-1}\geq 0$, and hence $\beta_{ij,N-1}>0$. By induction, we then see that 
\begin{equation}\label{EqPosm}
m_{ij} + \sum_{l=k+1} P_{jl} - \sum_{l=k} P_{il} \geq 0,\qquad \textrm{for all }k\geq j+1,
\end{equation}
and so all $\beta_{ijk} >0$ for $k\geq j+1$. It is then clear from the form of the $\alpha_{ijk}$ and \eqref{EqPosm} that also $\alpha_{ijk}>0$ for all $k\geq j$. 

It follows that $\kappa_{ij}(P) \geq 0$ for all $i\leq j$, and hence $r$ is $\epsilon$-admissble. 

This finishes the proof of Theorem \ref{TheoClassIrrepT}.

\section{A quantized version of Sylvester's law of inertia}\label{section: sylvester's law}

We now use the classification of irreducible  big cell representations for $\mcO_q(H(N))$ to prove our main results. 

\subsection{Weak containment} 

Our aim is to show that any representation of $\mcO_q(H(N))$ is weakly contained in a big cell representation. We will need some preparatory lemmas. Recall that 
\[
\Ad_{\lambda}(\pi) = (\pi \otimes \lambda)\circ \Ad_q^U
\]
for $\pi$, resp.~ $\lambda$ a representation of $\mcO_q(H(N))$, resp.~ $\mcO_q(U(N))$. For flexibility, we also use the notation
\[
\pi *\lambda = \Ad_{\lambda}(\pi) 
\] 

Let us restate Lemma \ref{LemWeakContGen} for our particular case of interest. 

\begin{Lem}\label{LemWeakCont}
Let $\pi$ be a representation of $\mcO_q(H(N))$. Then $\pi$ is weakly contained in $\Ad_{\lambda_{\standard}}(\pi)$. 
\end{Lem}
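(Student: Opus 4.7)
The plan is to view this as a direct specialization of Lemma \ref{LemWeakContGen} to the present setting. I would take $\mathbb{G} = U_q(N)$ with associated Hopf $*$-algebra $\mcO(\mathbb{G}) = \mcO_q(U(N))$, $A = \mcO_q(H(N))$, and $\alpha = \Ad_q^U$ the coaction defined in \eqref{EqActAdqU}. Under this identification, the representation $\pi * \lambda_{\standard}$ of the general lemma is precisely $(\pi \otimes \lambda_{\standard}) \circ \Ad_q^U = \Ad_{\lambda_{\standard}}(\pi)$ in the notation of the present section.

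Two ingredients then need to be quoted. First, that $\Ad_q^U$ is indeed a coaction of $\mcO_q(U(N))$ on the unital $*$-algebra $\mcO_q(H(N))$, which was observed just after \eqref{EqActAdqU}. Second, and this is the key input, that $U_q(N)$ is a coamenable compact quantum group; this is precisely the content of the proposition recalled in Section \ref{section: quantum group intros}, citing \cite[Theorem 2.7.14]{NT13}. With these two facts in hand, Lemma \ref{LemWeakContGen} applies directly and gives the conclusion $\pi \preccurlyeq \Ad_{\lambda_{\standard}}(\pi)$.

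There is essentially no obstacle: the lemma is a pure restatement for the convenience of later sections, and all the work has been done in the general Lemma \ref{LemWeakContGen}, whose proof exploits coamenability to factor $\pi$ through $\pi * \lambda_{\standard}$ via $\id \otimes \varepsilon$, where $\varepsilon$ extends from $\mcO_q(U(N))$ to a $*$-character on the reduced C$^*$-envelope $C_q(U(N))$ precisely because $U_q(N)$ is coamenable. The proof therefore reduces to a one-line citation.
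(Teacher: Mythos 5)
Your proposal is correct and matches the paper exactly: the paper introduces this lemma with the words ``Let us restate Lemma \ref{LemWeakContGen} for our particular case of interest,'' i.e.\ it is precisely the specialization of Lemma \ref{LemWeakContGen} to $\mathbb{G}=U_q(N)$, $A=\mcO_q(H(N))$, $\alpha=\Ad_q^U$, with coamenability of $U_q(N)$ as the key input.
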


\begin{Lem}\label{LemDirInt2}  Any big cell representation $\pi$ of $\mcO_q(H(N))$ is a direct integral of irreducible big cell representations. Moreover, if $\pi$ factors over some $\mcO_q(O_s)$, then $\pi$ is a direct sum of irreducible big cell representations.
\end{Lem}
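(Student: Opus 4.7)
The plan is to reduce the decomposition to the representation theory of $\mcO_q^\epsilon(T(N))$ via the correspondence of Proposition \ref{PropCorrHT}, and then extract irreducible highest weight components using a maximality argument combined with the classification in Theorem \ref{TheoUnique}.

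First I would reduce to the case of fixed rank and signature. By the very definition of a big cell representation, $\pi$ splits as a direct sum of big cell representations of constant rank, so assume $\pi$ has rank $M$. The bounded commuting self-adjoint operators $\{\pi(Z_{\brk})\}_{k=1}^M$ have trivial kernels, and their joint spectral decomposition decomposes $\pi$ according to the signs of the spectra into components of fixed signature $\eta \in \{\pm 1\}^M$. Fixing such a component and choosing $\epsilon \in \{\pm 1\}^M$ with $\epsilon_{\brk} = \eta_k$, Proposition \ref{PropLocHT} lets us view $\pi$ as a representation of $\mcO_{q,\leq M}^{\loc}(H(N))$, whose image in $\mcO_q^\epsilon(T(N))$ under $j_T^\epsilon$ is generated by $T_k^{\pm 2}$ and $T_k T_{ij}$. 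Taking the positive (possibly unbounded) self-adjoint square root of $\pi(T_k^2)$ and setting $\pi(T_{ij}) := \pi(T_k)^{-1}\pi(T_kT_{ij})$ on an appropriate dense invariant subspace, we extend $\pi$ to a $*$-representation of $\mcO_q^\epsilon(T(N))$.

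Next I would disintegrate over $\msZ(\mcO_q(H(N)))$, reducing the first (direct integral) assertion to the second: if each fiber $\pi_s$, a representation of $\mcO_q(O_s)$, is shown to be a direct sum of copies of a single irreducible big cell representation, then reassembling yields the direct integral decomposition of $\pi$. So assume $\pi$ factors through $\mcO_q(O_s)$. By Theorem \ref{TheoUnique} there is at most one $\epsilon$-adapted highest weight $r$ compatible with $(s,\eta)$; let $\rho_s$ denote the corresponding unique irreducible big cell representation and $V_r$ its underlying $\mcO_q^\epsilon(T(N))$-module. Writing
\[
\Hsp_r := \{\xi \in \Hsp_\pi \mid \pi(T_k)\xi = q^{r_k}\xi \text{ and }\pi(T_{ij})^*\xi = 0 \text{ for all }i<j\},
\]
any orthonormal basis $\{\xi_i\}$ of $\Hsp_r$ yields pairwise orthogonal cyclic submodules $\mcO_q^\epsilon(T(N))\xi_i \cong V_r$. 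A standard maximality argument then gives $\pi \cong \bigoplus_i \rho_s$: the orthogonal complement of $\bigoplus_i\mcO_q^\epsilon(T(N))\xi_i$, if nonzero, would by the spectral argument below contain a highest weight vector of weight $r$, contradicting orthogonality to $\Hsp_r$.

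The main obstacle will be showing that $\Hsp_r$ is nonzero, equivalently that the joint spectrum of $(\pi(T_1),\ldots,\pi(T_M))$ attains a maximum at $q^r$. The key ingredient is the quantum Cayley--Hamilton identity from Theorem \ref{TheoCentrPol}: the matrix identity $P_s(\pi(Z))=0$ in $M_N(\C)\otimes B(\Hsp)$ forces the joint spectral values of $\pi(Z)$ to be confined to the roots of $P_s$, which by Theorem \ref{TheoUnique} are $\{\eta_k q^{2(r_k+k-1)}\}_{k=1}^N$ for the unique $\epsilon$-adapted $r$. Combining this with the $q$-commutation relations $\pi(T_{ij})^*\pi(T_k) = q^{\delta_{ki}-\delta_{kj}}\pi(T_k)\pi(T_{ij})^*$, which force $\pi(T_{ij})^*$ to shift $T_k$-eigenvalues by fixed integer amounts, constrains the joint spectrum of the $\pi(T_k)$ to the discrete set $\{(q^{r_1-\omega_1},\ldots,q^{r_M-\omega_M}) \mid \omega \in \Z_{\geq 0}^M\}$. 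A density argument using cyclicity then shows the maximum $q^r$ is actually attained, so $\Hsp_r\neq 0$.
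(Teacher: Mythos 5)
Your architecture runs parallel to the paper's (fix rank and signature, transfer to $\mcO_q^{\epsilon}(T(N))$, locate highest weight vectors, split off cyclic highest weight modules), but your reduction is different: the paper first disintegrates into \emph{factorial} big cell representations, whereas you disintegrate over the centre $\msZ(\mcO_q(H(N)))$ and then lean on the uniqueness statement of Theorem \ref{TheoUnique}. That substitution is where the proof breaks, because fixing the central character is strictly weaker than factoriality, and factoriality is the hypothesis that actually powers the key analytic step.

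That step is your claim that the quantum Cayley--Hamilton identity confines the joint spectrum of the $\pi(T_k)$, equivalently of the $\pi(Z_{\brk})$, to a discrete set. The identity $P_s(\pi(Z))=0$ controls the spectrum of $\pi(Z)$ as a self-adjoint operator on $\C^N\otimes\Hsp_{\pi}$; it says nothing about the spectra of the quantum minors, which are noncommutative polynomials in the entries rather than spectral data of $\pi(Z)$. Already $\pi(Z_{11})$ is a corner compression of $\pi(Z)$, and a compression of an operator with two-point spectrum can have arbitrary continuous spectrum inside the convex hull of that spectrum. The $q$-commutation relations with the $T_{ij}^*$ only \emph{shift} eigenvalues if eigenvalues exist; they do not exclude continuous spectrum (multiplication by $e^x$ and translation by $\log q^2$ on $L^2(\R)$ satisfy the same commutation relation). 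So neither the orthonormal basis of weight vectors nor $\Hsp_r\neq 0$ is established, and without a highest weight vector the whole decomposition collapses. The mechanism that does work, and that the paper uses, is factoriality: by \eqref{EqCommDs} the spectral projection of $\pi(Z_{\brk})$ onto $\pm q^{2\Z}t$ commutes with every generator and lies in $\pi(\mcO_q(H(N)))''$, hence is a central projection of the von Neumann algebra and must be $0$ or $1$ in a factor representation; this gives discreteness of the spectrum away from $0$, after which boundedness of the $Z_{\brk}$ and the $q^{-2}$-shifts of the $T_{ij}^*$ produce highest weight vectors, and factoriality is used once more to see that the highest-weight space is a single weight space. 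Your proof can be repaired by disintegrating further into factor representations before running the rest of your argument; your uniqueness-based endgame via Theorem \ref{TheoUnique} is then a legitimate alternative to the paper's second use of factoriality.
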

\begin{proof}
It is sufficient to consider big cell representations of rank $M$. However, such a representation will clearly disintegrate into a direct integral of factorial big cell representations $\pi$ of rank $M$, i.e.~ with $\pi(\mcO_q(H(N))''$ a factor. It is thus sufficient to show that any factorial big cell representation $\pi$ is a direct sum of irreducibles. This will then automatically prove also the moreover part of the lemma, by Corollary \ref{CorFinBigCellOs}.

Note now that a factorial big cell representation $\pi$ automatically has a fixed signature $\eta$, and that moreover $\Hsp_{\pi}$ has an orthonormal basis of weight vectors for the $Z_{\brk}$ with $k\leq M$. Let $V_{\pi}$ be the algebraic sum of these eigenspaces. Then with $\epsilon \in \{\pm 1\}^M$ such that $\eta = \eta_{\epsilon}$, we have that $\pi$ extends on $V_{\pi}$ to a $^*$-representation of $\mcO_{q}^{\epsilon}(T(N))$ through $i_{\epsilon}^T$. Let $V_h \subseteq V_{\pi}$ be the subspace of vectors $\xi$ with $\mcO_q^{\leq M}(\msN(N))^* \xi = 0$. Then $V_h$ will be invariant under the $Z_{\brk}$, and hence the linear span of the highest weight vectors for $\pi$. As $i_{\epsilon}^T(Z_{\brk})T_{ij}^* = q^{-2 \delta_{k \in [i,j)}}T_{ij}^*i_{\epsilon}^T(Z_{\brk})$, we have by boundedness of the operators $Z_{\brk}$ that the vector space $V_h$ will be non-empty. 

Now any element in $\pi(\mcO_q(H(N)))'$ will preserve the homogenous components of $V_h$. On the other hand, if $\xi,\eta \in V_h$ are orthogonal vectors, it follows from the triangular decomposition \eqref{EqTriangDecomp} that $\mcO_q(H(N))\xi$ and $\mcO_q(H(N))\eta$ will be orthogonal.  By factoriality of $\pi$, we hence deduce that $V_h$ must consist of a single homogeneous component, which moreover must then generate the representation $\pi$. If then $\{\xi_k\}$ is an orthonormal basis of $V_h$, it follows that $\pi = \oplus_k \pi_k$ with $\pi_k$ the restriction of $\pi$ to the closure of $\mcO_q(H(N)) \xi_k$. Since clearly each $\pi_k$ is an irreducible big cell representation, we are done. 
\end{proof}

In the next lemma, we will use that for $A$ a unital $*$-algebra with coaction $\alpha: A \rightarrow A\otimes \mcO(\mathbb{G})$ by a compact quantum group $\mathbb{G}$, and $\pi \cong \int_X^{\oplus} \pi_x \rd \mu(x)$ a direct integral decomposition of a representation $\pi$ of $A$, then
\[
\pi * \lambda \cong \int_X^{\oplus} (\pi_x * \lambda)\rd \mu(x)
\]
for any representation $\lambda$ of $\mcO(\mathbb{G})$. Indeed, the isomorphism is established through the unitary operator 
\[
\int_X^{\oplus} \left(\Hsp_{\pi_x}\otimes \Hsp_{\lambda}\right)\rd \mu(x) \cong \left(\int_X^{\oplus} \Hsp_{\pi_x} \rd \mu(x)\right)\otimes \Hsp_{\lambda}.
\]
We will use this in combination with the fact that any representation of a (separable) type $I$ C$^*$-algebra disintegrates into a direct integral of irreducible representations. We reprise the notation from Section \ref{SecExH2}.

\begin{Lem}\label{LemDichH2}
If a representation $\pi$ of $\mcO_q(H(2))$ is $\Ad^U_q$-compatible and normal, then $\Ker(\pi(z))$ is $\mcO_q(H(2))$-invariant and $\pi(Z)_{\mid \C^2 \otimes \Ker(\pi(z))} = 0$.
\end{Lem}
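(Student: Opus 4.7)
The plan is to split the proof into three stages. The invariance of $\Ker(\pi(z))$ under $\pi(\mcO_q(H(2)))$ is purely algebraic: from the relations recorded in Section~\ref{SecExH2}, $zw=q^2 wz$ and $vz=q^2 zv$, while centrality of $T=qz+q^{-1}u$ forces $z$ to commute with $u$. Hence if $\pi(z)\xi=0$, then $\pi(z)\pi(w)\xi=q^{-2}\pi(w)\pi(z)\xi=0$, and similarly $\pi(z)\pi(v)\xi=0$ and $\pi(z)\pi(u)\xi=0$, so each of $w,v,u$ preserves $\Ker(\pi(z))$.

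Next I would reduce the vanishing assertion $\pi(Z)|_{\C^2\otimes\Ker(\pi(z))}=0$ to a statement about the central elements $T$ and $D=uz-q^{-2}vw$. From $u=qT-q^2z$, $wv=-q^2D+qTz-z^2$, $vw=-q^2D+q^3Tz-q^4z^2$ and $w=v^*$, one reads off for $\xi\in\Ker(\pi(z))$ that
\[
\pi(u)\xi=q\pi(T)\xi,\qquad \|\pi(v)\xi\|^2=\|\pi(w)\xi\|^2=-q^2\langle\xi,\pi(D)\xi\rangle,
\]
so the task is exactly to show $\pi(T)$ and $\pi(D)$ annihilate $\Ker(\pi(z))$.

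For this I would use the hypothesis of normal $\Ad^U_q$-compatibility to disintegrate $\pi$ over the central $*$-characters. By Theorem~\ref{TheoCenter} the coaction $\Ad^U_q$ is trivial on $\msZ(\mcO_q(H(2)))$, hence by normality the induced coaction $\alpha$ on $M=\pi(\mcO_q(H(2)))''$ is trivial on the weak closure of $\pi(\msZ(\mcO_q(H(2))))$. The coaction therefore descends fiberwise in a measurable disintegration $\pi=\int^\oplus \pi_x\,d\mu(x)$, with each $\pi_x$ itself normal $\Ad^U_q$-compatible, factoring through $\mcO_q(O_{s(x)})$ for $s(x)=(t(x),d(x))$, and with $\pi_x(T),\pi_x(D)$ acting as the scalars $t(x),d(x)$. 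It suffices to show that $\Ker(\pi_x(z))\neq 0$ forces $t(x)=d(x)=0$.

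Finally, invoke type~I-ness (Corollary~\ref{CorTypeIH2}) and the classification in Proposition~\ref{PropRank1Tensor}: up to amplification, the only irreducible representations with $0$ in the spectrum of $\pi(z)$ are the zero representation, for which $s=(0,0)$ and there is nothing to prove, and the $*$-characters $\chi_\theta$ from $\Lambda_-$, which carry $\chi_\theta(w)=qce^{-2\pi i\theta}\neq 0$. I rule out the second possibility by a direct computation: writing
\[
\Ad^U_q(z)=z\otimes U_{11}^*U_{11}+w\otimes U_{11}^*U_{21}+v\otimes U_{12}^*U_{11}+u\otimes U_{12}^*U_{21},
\]
the element $(\chi_\theta\otimes\id)\Ad^U_q(z)$ has a nonzero coefficient $qce^{-2\pi i\theta}$ in front of the matrix coefficient $U_{11}^*U_{21}$, which is linearly independent from $U_{11}^*U_{11}$, $U_{12}^*U_{11}$, $U_{12}^*U_{21}$ in $\mcO_q(U(2))$ by Peter--Weyl; hence $(\chi_\theta\otimes\id)\Ad^U_q(z)\neq 0$, while $\Ad^U_q$-compatibility would demand it equals $\chi_\theta(z)\cdot 1=0$. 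So $\chi_\theta$ cannot occur, every $\pi_x$ with $\Ker(\pi_x(z))\neq 0$ is an amplification of the zero representation, $t(x)=d(x)=0$, and the result follows. The main technical obstacle is justifying the fiberwise descent of the coaction in the central disintegration so that each $\pi_x$ inherits $\Ad^U_q$-compatibility; this rests squarely on the triviality of $\Ad^U_q$ on the center asserted by Theorem~\ref{TheoCenter}.
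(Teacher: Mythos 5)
Your stages 1 and 2 are correct (the $q$-commutation gives invariance of $\Ker(\pi(z))$, and the identities $\pi(u)\xi=q\pi(T)\xi$, $\|\pi(v)\xi\|^2=\|\pi(w)\xi\|^2=-q^2\langle\xi,\pi(D)\xi\rangle$ on $\Ker(\pi(z))$ correctly reduce everything to showing $t(x)=d(x)=0$ on any fiber with nontrivial kernel). The disintegration over central characters in stage 3 is also a reasonable route, parallel to the paper's use of Proposition \ref{PropNorComp}. The problem is in stage 4, and it is twofold. First, your list of irreducibles with $0$ as an eigenvalue of $\pi(z)$ is incomplete: besides the zero representation and the $\chi_\theta$ from $\Lambda_-$, case (2) of Proposition \ref{PropRank1Tensor} with $\lambda\neq 0$ contains the $*$-character $\chi(z)=\chi(v)=\chi(w)=0$, $\chi(u)=q\lambda\neq 0$, which has $\Ker(\chi(z))=\C$ and central character $(t,d)=(\lambda,0)\neq(0,0)$. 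This is exactly the case $d=0$, $t\neq0$ that your stage-2 reduction requires you to exclude, and it is not addressed.

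Second, and more fundamentally, the inference ``$\chi_\theta$ is not itself $\Ad^U_q$-compatible, hence $\chi_\theta$ cannot occur as a constituent of the compatible fiber $\pi_x$'' is not valid: a direct summand or direct-integral constituent of a compatible representation need not be compatible, because the projection onto $\Ker(\pi_x(z))$, while central in $\pi_x(\mcO_q(H(2)))''$, is not known to be coinvariant under the coaction. (Compare: $\chi_\theta*\lambda_{\standard}$ is normal compatible, yet its irreducible constituents $\mcS^{\pm}_{-c^2,a}$ are not compatible either.) To close this you need one of two things: either show that the kernel projection $p$ satisfies $\alpha(p)=p\otimes 1$ — which the paper obtains only via the faithful conditional expectation onto the trivial coinvariants of $\mcO_q(O_s)$, i.e.\ the machinery behind Proposition \ref{PropNorCompErg} — or do what the paper does: use Proposition \ref{PropNorCompErg} to replace $\pi_x$ by the single concrete model $\chi*\lambda_{\standard}$ (stable unitary equivalence preserves triviality of kernels) and then check by the spectral analysis of $xa^*c+yc^*c+xc^*a$ on $L^2(SU_q(2))$ (cf.\ \cite{MNW91}) that this operator has no zero eigenvector when $(x,y)\neq(0,0)$. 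Your Peter--Weyl computation (modulo the index slip $U_{12}^*$ versus $U_{21}^*$) only establishes the incompatibility of the character $\chi_\theta$ itself, which is strictly weaker than what is needed.
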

\begin{proof} 
The $\mcO_q(H(2))$-invariance of $\Ker(\pi(z))$ is immediate as $z$ $q$-commutes with the other generators. Further, since $\pi$ is assumed normal, we have by Proposition \ref{PropNorComp} that $\Ad_{\lambda_{\standard}}(\pi)$ is unitarily equivalent to a direct sum of copies of $\pi$. By Corollary \ref{CorTypeIH2} and the remark preceding the lemma, it is then sufficient to prove that the kernel of $\pi(z)$ is zero for $\pi  = \Ad{\lambda_{\standard}}(\pi')$ with $\pi'$ an irreducible representation of $\mcO_q(H(2))$ distinct from the zero $*$-character  $Z \mapsto 0$. 

If $\pi'$ is as in case (1) of  Proposition \ref{PropRank1Tensor}, then there exists $m>0$ with $m\leq \pi'(Z)$, hence also $m\leq (\Ad_{\lambda_{\standard}}(\pi'))(z) = (\langle e_1,-\rangle e_1\rangle \pi' \otimes \lambda_{\standard})(U_{13}^*Z_{12}U_{13})$.

If we are in the case (2) or case (3), we may by Proposition \ref{PropNorCompErg} assume that $\pi'$ is any non-zero $*$-character $\chi$ of $\mcO_q(H(2))$, say $\chi(Z) = \begin{pmatrix} 0 & x \\ x & y\end{pmatrix}$ for $x,y\in \R$. But it can be checked by a direct spectral computation that 
\[
(\Ad_{\lambda_{\standard}}(\chi))(z) = xa^*c + yc^*c + xc^*a  \in B(L^2(SU_q(2)))
\] 
does not have $0$ in its pointspectrum if $(x,y)\neq (0,0)$, cf.~ \cite{MNW91}.
\end{proof}

\begin{Lem}\label{LemDecompSign}
Let $\pi$ be a big cell representation of $\mcO_q(H(2))$ of signature $(\eta_1,\eta_2)$ with $\eta_1\eta_2 = -1$. Then $\pi*\mbs = \pi_1 \oplus \pi_2$ with $\pi_1$ of signature $(\eta_1,\eta_2)$ and $\pi_2$ of signature $(\eta_2,\eta_1)$. 
\end{Lem}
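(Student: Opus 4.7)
My plan is to first promote $\pi \ast \mbs$ to a big cell representation of rank $2$, then split it into its positive- and negative-$z$-spectrum components and identify these via the classification of irreducibles. Centrality of $Z_{[2]} = \Det_q(Z)$ (Theorem \ref{TheoCentrPol}) makes $(\pi \ast \mbs)(Z_{[2]}) = \pi(Z_{[2]}) \otimes I$ automatically of trivial kernel, since $\pi$ itself is rank $2$. The nontrivial content is triviality of $\Ker((\pi \ast \mbs)(z))$: I would get this by lifting to the thicker representation $\pi \ast \lambda_{\standard}$, which by Proposition \ref{PropNorComp} is normal $\Ad^U_q$-compatible, so Lemma \ref{LemDichH2} applies and gives that on its $z$-kernel every matrix coefficient of $Z$ vanishes; in particular the invertible central operator $(\pi \ast \lambda_{\standard})(D) = \pi(D) \otimes I$ vanishes there, forcing that kernel to be $0$. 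To descend from $\lambda_{\standard}$ back to $\mbs$ I would use $\lambda_{\standard} \cong \bigoplus_{\N} \int_{(\R/\Z)^{N}} \mbs \ast \chi_\theta \, d\theta$ together with the observation that a diagonal $*$-character $\chi_\theta$ of $\mcO_q(U(2))$ only rescales the off-diagonal entries of $Z$ under the adjoint coaction, so $(\pi \ast \mbs \ast \chi_\theta)(z) = (\pi \ast \mbs)(z)$ independently of $\theta$; hence $\Ker((\pi \ast \mbs)(z)) \otimes L^2((\R/\Z)^{N})$ embeds into the already-trivial $\Ker((\pi \ast \lambda_{\standard})(z))$.

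Once $(\pi \ast \mbs)(z)$ is known to be self-adjoint and invertible, its positive and negative spectral projections $P_\pm$ provide the candidate decomposition. The $q$-commutations of Section \ref{SecExH2}, namely $zw = q^2 wz$, $zv = q^{-2} vz$, together with $zu = uz$ (which follows from $u = qT - q^2 z$ and centrality of $T$), show that multiplication by any generator of $\mcO_q(H(2))$ only rescales the $z$-spectrum by a positive power of $q$ and hence preserves sign. Therefore $P_\pm$ are intertwiners and yield a splitting $\pi \ast \mbs = \pi_+ \oplus \pi_-$ with $\pm(\pi_\pm)(z) > 0$.

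Finally, both $\pi_\pm$ are rank-$2$ big cell representations factoring through the same central quotient $\mcO_q(O_s)$ as $\pi$, whose associated spectral weight lies in $\Lambda_-$ by $\eta_1\eta_2 = -1$. Proposition \ref{PropRank1Tensor} lists exactly two irreducible big cell representations with such a central character, namely $\mcS^+_{-c^2,a}$ of signature $(+,-)$ and $\mcS^-_{-c^2,a}$ of signature $(-,+)$; the sign of $z$ then forces $\pi_+$ to decompose into copies of $\mcS^+_{-c^2,a}$ and $\pi_-$ into copies of $\mcS^-_{-c^2,a}$. Labelling $\pi_1$ as whichever of $\pi_\pm$ carries signature $(\eta_1, \eta_2)$ and $\pi_2$ as the other gives the statement. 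The main obstacle is the kernel-vanishing step: reducing to $\pi \ast \lambda_{\standard}$ via Lemma \ref{LemDichH2} and exploiting the $\chi_\theta$-invariance of the diagonal of $Z$ is the only genuinely non-formal ingredient in the argument.
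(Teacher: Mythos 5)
Your construction of the splitting is sound and genuinely different from the paper's. The kernel-vanishing step works: $\pi*\lambda_{\standard}$ is normal $\Ad^U_q$-compatible by Proposition \ref{PropNorComp}, Lemma \ref{LemDichH2} plus injectivity of $(\pi*\lambda_{\standard})(D)=\pi(D)\otimes 1$ kills $\Ker((\pi*\lambda_{\standard})(z))$, and since $(\rho*\chi_\theta)(Z_{ij})=e^{2\pi i(\theta_j-\theta_i)}\rho(Z_{ij})$ the operator $(\pi*\lambda_{\standard})(z)$ is indeed an amplification of $(\pi*\mbs)(z)$. The $q$-commutation relations then make the spectral projections $P_\pm$ of $(\pi*\mbs)(z)$ into intertwiners, as you say. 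By contrast, the paper reduces to $\pi$ irreducible (hence $\pi\cong\mcS^{\pm}_{-c^2,a}$) and simply quotes the explicit decomposition $\mcS^{\pm}_{-c^2,a}*\mbs\cong\Hsp\otimes(\mcS^+_{-c^2,a}\oplus\mcS^-_{-c^2,a})$ from an external reference; your route is self-contained and does not need the reduction to irreducibles.

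However, there is a genuine gap: nothing in your argument shows that \emph{both} $P_+$ and $P_-$ are nonzero. Your final step only says that $\pi_+$ is a (possibly zero) multiple of $\mcS^+_{-c^2,a}$ and $\pi_-$ a (possibly zero) multiple of $\mcS^-_{-c^2,a}$; if the component of signature $(\eta_2,\eta_1)$ were zero, the lemma as stated would fail, and this is exactly the part that is used downstream (Lemma \ref{LemPermSign} needs a nonzero subrepresentation with the \emph{flipped} signature). This non-vanishing is precisely the content the paper imports from the cited Proposition 1.14 of \cite{DeC20}. The gap can be closed with tools already available: by Corollary \ref{CorWeakContErg}, $\mcS^{\mp}_{-c^2,a}\preccurlyeq \mcS^{\pm}_{-c^2,a}*\lambda_{\standard}$, and the $\chi_\theta$-invariance you already established gives $\mcS^{\pm}_{-c^2,a}*\lambda_{\standard}\cong\Hsp\otimes(\mcS^{\pm}_{-c^2,a}*\mbs)$; if the opposite-sign spectral component of $\mcS^{\pm}_{-c^2,a}*\mbs$ vanished, one would get $\mcS^{\mp}_{-c^2,a}\preccurlyeq\Hsp\otimes\mcS^{\pm}_{-c^2,a}$, which is impossible since weak containment forces the spectrum of $z$ to have the wrong sign. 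You should add such an argument (for general $\pi$, first reduce to $\pi$ factorial or irreducible so that it factors over a single $\mcO_q(O_s)$, which your proof also tacitly assumes when invoking a common central character).
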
  
\begin{proof}
By Corollary \ref{CorTypeIH2} and the discussion above Lemma \ref{LemDichH2}, it is sufficient to prove the lemma for $\pi$ irreducible. Now the conditions imply that we are in the case $(3,b)$ of Proposition \ref{PropRank1Tensor}, so $\Hsp_{\pi} \cong \mcS_{-c^2,a}^{\pm}$ for some choice of sign and $a,c>0$.  But then 
\[
\mcS^{\pm}_{-c^2,a}*\mbs \cong \Hsp \otimes (\mcS^+_{-c^2,a} \oplus \mcS^-_{-c^2,a})
\]
with $\Hsp$ some countable multiplicity Hilbert space, see e.g.~ \cite[Proposition 1.14]{DeC20}.
\end{proof}

\begin{Prop}\label{PropBigCell}
Let $\pi$ be a normal $\Ad^U_q$-compatible representation of $\mcO_q(H(N))$. Then $\pi$ is a big cell representation.
\end{Prop}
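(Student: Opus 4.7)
The plan is to reduce to a factorial representation of fixed rank $M$ and then apply Lemma \ref{LemDichH2} through the equivariant embeddings of Proposition \ref{PropEquik}. For the reduction, observe that the generators $\sigma_k$ of $\msZ(\mcO_q(H(N)))$ are $\Ad^U_q$-invariant by Theorem \ref{TheoCenter}, so their spectral projections in $\pi(\mcO_q(H(N)))''$ lie in the coaction-fixed centre; the ideals $I_M$ of Definition \ref{DefQuotRankM} are likewise $\Ad^U_q$-invariant, being generated by the $U_q(\mfgl(N,\C))$-subcomodule spanned by the $k$-minors $Z_{I,J}$ with $|I|=|J|=M$, so the projections onto $\Ker(\pi(I_M))$ are coaction-invariant. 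Splitting $\pi$ along these, together with any remaining coaction-fixed central projections of $\pi(\mcO_q(H(N)))''$, reduces to $\pi$ factorial, of fixed rank $M$, and factoring through some $\mcO_q(O_s)$, with each component still normal $\Ad^U_q$-compatible.

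For $1\le k\le M$ I would then show $\Ker(\pi(Z_{\brk}))\subseteq \Ker(\pi(I_k))$. By the $q$-commutation \eqref{EqCommDs}, the subspace $\Ker(\pi(Z_{\brk}))$ is $\mcO_q(H(N))$-invariant. The map $\rho_{k,k+1}\colon \mcO_q(H(2))\to \mcO_q(H(N))$ of Proposition \ref{PropEquik} is $U_q(\mfu(2))_{k,k+1}$-equivariant, and this infinitesimal equivariance integrates to compatibility of $\rho_{k,k+1}$ with the Hopf $*$-algebra surjection $\mcO_q(U(N))\twoheadrightarrow \mcO_q(U(2))$ dual to the block inclusion $U(2)\hookrightarrow U(N)$ on the $(k,k+1)$-coordinates. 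By Proposition \ref{PropNorComp}(2), the pullback $\pi\circ\rho_{k,k+1}$ is therefore normal $\Ad^U_q$-compatible for $\mcO_q(H(2))$, and Lemma \ref{LemDichH2} forces all four entries of the matrix $\rho_{k,k+1}(Z)$ to vanish on $\Ker(\pi(Z_{\brk}))$. Chaining the analogous conclusions from Lemma \ref{LemDichH2} applied through the family of block-subgroup embeddings covering the full $U_q(\mfgl(N,\C))$-orbit of $Z_{[k],[k]}$, which spans the entire irreducible comodule $V_{\varpi_{[k]}}\otimes V_{\varpi_{[k]}}^*$ of all $k$-minors, then yields $\pi(Z_{I,J})=0$ on $\Ker(\pi(Z_{\brk}))$ for every $I,J\in \binom{\brN}{k}$, so $\Ker(\pi(Z_{\brk}))\subseteq \Ker(\pi(I_k))$.

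Because $\mcO_q(H(N))$ is type $I$ by Theorem \ref{TheoIntroTypeI}, the factorial rank-$M$ representation $\pi$ is a multiplicity extension of an irreducible representation, so every non-zero invariant subspace of $\pi$ has rank exactly $M$. But the restriction of $\pi$ to the invariant subspace $\Ker(\pi(Z_{\brk}))\subseteq \Ker(\pi(I_k))$ has rank at most $k-1<M$, which forces $\Ker(\pi(Z_{\brk}))=0$ for every $1\le k\le M$. Together with the rank assumption this exhibits $\pi$ as a big cell representation of rank $M$, completing the proof.

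The main obstacle is the chaining argument in the second paragraph: upgrading from the four specific $k$-minors arising from a single $\rho_{k,k+1}$ to the vanishing of every $k$-minor on $\Ker(\pi(Z_{\brk}))$. This will require a careful selection of block-subgroup equivariant embeddings whose combined Lemma \ref{LemDichH2} conclusions exhaust the $U_q(\mfgl(N,\C))$-orbit of $Z_{[k],[k]}$, and it is the technical heart of the argument.
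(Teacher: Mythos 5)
Your overall strategy (split off coaction-fixed central pieces, then kill $\Ker(\pi(Z_{\brk}))$ using Lemma \ref{LemDichH2} through the equivariant embeddings of Proposition \ref{PropEquik}) is in the right spirit, but as written it has two genuine gaps.

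First, the final step is circular. You invoke Theorem \ref{TheoIntroTypeI} to conclude that a factorial rank-$M$ representation is a multiple of an irreducible. But Theorem \ref{TheoIntroTypeI} is Theorem \ref{TheoTypeI}, proven at the very end of the paper; its proof relies on Theorem \ref{TheoQSLI}, whose proof in turn explicitly uses Proposition \ref{PropBigCell}. You cannot use the type $I$ property here. The circularity is avoidable: the projection $p$ onto $\Ker(\pi(Z_{\brk}))$ is a spectral projection of the self-adjoint element $\pi(Z_{\brk})\in\pi(A)$, hence lies in $\pi(A)''$, and it lies in $\pi(A)'$ by the $q$-commutation \eqref{EqCommDs}; so for factorial $\pi$ it is $0$ or $1$, and the value $1$ is excluded once you know $\pi(Z_{\brk})=0$ forces $\pi(I_k)=0$. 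But that is not the argument you wrote.

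Second, the ``chaining argument'' that you yourself flag as the technical heart is not carried out, and the mechanism you propose for it is not the right one. Applying Lemma \ref{LemDichH2} through $\rho_{k,k+1}$ only tells you that the four entries of $\rho_{k,k+1}(Z)$ (namely $Z_{\brk}$, $Z_{[k-1]}Z_{[k+1]}$ and two mixed terms) vanish on $\Ker(\pi(Z_{\brk}))$; no collection of block-subgroup embeddings obviously produces all $Z_{I,J}$ with $|I|=|J|=k$ this way. What is actually needed is that $\Ker(\pi(Z_{\brk}))$ is \emph{coaction}-invariant, i.e.\ $\Ad_q^U(p)=p\otimes 1$ (which for $k=1$ the paper establishes via the representations $\mbs_i$, the reduced word for $w_0$, and the conditional expectation $E$), after which $\pi(x\rhd Z_{\brk})p=0$ for all $x\in U_q(\mfu(N))$ and one uses that $Z_{\brk}$ is cyclic for the adjoint action on the span of the $k$-minors. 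Note also that $V_{\varpi_{\brk}}\otimes V_{\varpi_{\brk}}^*$ is \emph{not} irreducible as a comodule, so the cyclicity of $Z_{\brk}$ itself requires justification. The paper sidesteps all of this by inducting on $N$: after splitting along $\Ker(\pi(Z_{11}))$ (where Lemma \ref{LemDichH2} and the global formula \eqref{EqGlobForm} apply directly), the case $\Ker(\pi(Z_{11}))=0$ is handled by pulling back along $\rho_{2,N}$ to a normal $\Ad^{U(N-1)}_q$-compatible representation of $\mcO_q(H(N-1))$ and applying the induction hypothesis, using $\rho_{2,N}(Z_{[M]})=Z_{[1]}^{M-1}Z_{[M+1]}$ and the injectivity of $\pi(Z_{11})$ to recover injectivity of the higher leading minors. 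You should either adopt that induction or supply complete proofs of the coaction-invariance of $\Ker(\pi(Z_{\brk}))$ for all $k$ and of the cyclicity claim.
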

\begin{proof}
Let us in the proof write $\Ad^U_q= \Ad^{U(N)}_q$. 

We give a proof by induction. 

The lemma holds trivially for $N=1$. 

Assume now that the lemma holds for $N-1$, and let $\pi$ be a normal $\Ad^{U(N)}_q$-compatible representation of $\mcO_q(H(N))$. Let $\Hsp_1 = \Ker(\pi(Z_{11}))$ and $\Hsp_2 = \Hsp^{\perp}_1$. Then $\pi$ restricts to representations $\pi_i$ of $\mcO_q(H(N))$ on $\Hsp_i$. Let us show that these representations are normal $\Ad^{U(N)}_q$-compatible. Indeed, consider $\mcO_q(H(2))$ embedded in the upper left hand corner of $\mcO_q(H(N))$. As the restriction of $\pi$ to $\mcO_q(H(2))$ is normal $\Ad^{U(2)}_q$-compatible by Proposition \ref{PropNorComp}, we have by the previous lemma that $\pi_1(Z_{ij}) = 0$ for $1\leq i,j\leq2$, and hence $(\pi_1 * \mbs_1)(Z_{11}) =0$. Since $(\pi*\mbs_i)(Z_{11}) = \pi(Z_{11})\otimes 1$ for $i\geq 2$, we find by using 
\[
w_0 = (s_1s_2\ldots s_N)(s_2s_3\ldots s_N)\ldots (s_{N-1}s_N)s_N
\]
that $(\pi_1*\lambda_{\standard})(Z_{11}) = 0$. Writing $p_1$ for the projection onto $\Hsp_1$, we have by the $q$-commutation relations for $Z_{11}$ that $p_1 \in \msZ(\pi(\mcO_q(H(N)))'')$ and, by the arguments above, $p_1\otimes 1 \leq \Ad_q^{U(N)}(p_1) $. On the other hand, since $E(z) = \left(\id\otimes \int_{U_q(N)}\right)\Ad_q^{U(N)}(z)$ is a faithful conditional expectation of $\pi(\mcO_q(H(N)))''$ onto $\pi(\msZ(\mcO_q(H(N))))''$, it follows easily that $p_1 = E(p_1)$ and hence in fact $\Ad_q^{U(N)}(p_1) = p_1\otimes 1$. Hence $\pi_1$ is an $\Ad^{U(N)}_q$-compatible normal representation of $\mcO_q(H(N))$, and the same must then hold for $\pi_2$. 

It hence suffices to prove our proposition in two specific cases, namely $\pi(Z_{11}) =0$ and $\Ker(\pi(Z_{11}))= \{0\}$. 

In the first case, it follows by $\Ad^{U(N)}_q$-compatibility that in fact $\pi(U_q(\mfu(N))\rhd Z_{11})=0$, hence, by \eqref{EqGlobForm}, $\pi(Z_{ij})=0$ for all $i,j$, and $\pi$ is up to amplification with some multiplicity Hilbert space the unique big cell representation of rank $0$. 

Consider now the second case. Denoting by $\mcO_q(U(N)) \rightarrow \mcO_q(U(N-1))$ the restriction map to the lower right corner, we obtain by restriction that $\pi$ is an $\Ad^{U(N-1)}_q$-compatible normal representation, using again Proposition \ref{PropNorComp}. Through the map $ \rho_{2,N}$ defined in \eqref{EqMapk}, we obtain an $\Ad^{U(N-1)}_q$-compatible normal representation $\pi$ of $\mcO_q(H(N-1))$. By induction, $\msH_{\pi}$ is a big cell representation of $\mcO_q(H(N-1))$. But since $\Ker(\pi(Z_{11}))= 0$ by assumption, this makes $\pi$ into a big cell representation of $\mcO_q(H(N))$.  
\end{proof}

\begin{Theorem}\label{TheoWeakCont}
Any irreducible representation $\pi$ of $\mcO_q(H(N))$ is weakly contained in an irreducible big cell representation. 
\end{Theorem}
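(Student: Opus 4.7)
The strategy is to string together the machinery already established: start with the weak containment $\pi \preccurlyeq \pi * \lambda_{\standard}$ from Lemma \ref{LemWeakCont}, observe that $\pi * \lambda_{\standard}$ is a normal $\Ad^U_q$-compatible representation by Proposition \ref{PropNorComp}(1), and conclude via Proposition \ref{PropBigCell} that it is a big cell representation. So $\pi$ is at least weakly contained in \emph{some} big cell representation; the work is to refine this to a single \emph{irreducible} big cell rep.

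To perform the refinement, I would localize to a fixed central character. Since $\pi$ is irreducible, it factors through $\mcO_q(O_s)$ for some $s \in \R^N$ (the image of its central character). Because $\Ad^U_q$ acts trivially on $\msZ(\mcO_q(H(N)))$ (Theorem \ref{TheoCenter}), the amplified representation $\pi * \lambda_{\standard}$ factors through the same $\mcO_q(O_s)$. The ``moreover'' clause of Lemma \ref{LemDirInt2} then gives an honest direct sum decomposition
\[
\pi * \lambda_{\standard} \;\cong\; \bigoplus_{i=1}^n \sigma_i^{\oplus m_i},
\]
where the $\sigma_i$ are the irreducible big cell representations of $\mcO_q(O_s)$; the finiteness of the index set is exactly Corollary \ref{CorFinBigCellOs}.

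It remains to pass from weak containment in a finite direct sum to weak containment in a single summand. Extending everything to the universal C$^*$-envelope $C_q(O_s)$ (Corollary \ref{CorUniversalEnv}), the relation $\pi \preccurlyeq \bigoplus_i \sigma_i^{\oplus m_i}$ is equivalent to the ideal inclusion $\bigcap_{i=1}^n \ker(\sigma_i) \subseteq \ker(\pi)$ in $C_q(O_s)$. Since $\pi$ is irreducible, $\ker(\pi)$ is a primitive, hence prime, ideal of $C_q(O_s)$ (Dixmier's classical result that primitive ideals in C$^*$-algebras are prime). A finite intersection of closed two-sided ideals sitting inside a prime ideal must contain one of them, so $\ker(\sigma_{i_0}) \subseteq \ker(\pi)$ for some $i_0$, which is precisely $\pi \preccurlyeq \sigma_{i_0}$.

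The main obstacle is this last localization step: without the finiteness furnished by Corollary \ref{CorFinBigCellOs} (which in turn rests on the big cell classification via Theorem \ref{TheoUnique}), one could only extract weak containment in a possibly uncountable direct integral, from which primality of $\ker(\pi)$ would not be enough to pick out a single irreducible summand. The role of the type $I$ property (Theorem \ref{TheoIntroTypeI}), although not invoked by name in the argument, is what makes the ideal-theoretic picture of $C_q(O_s)$ tractable in the background.
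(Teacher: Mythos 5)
Your proof is correct and follows essentially the same route as the paper: weak containment in $\pi*\lambda_{\standard}$ via Lemma \ref{LemWeakCont}, normal $\Ad^U_q$-compatibility via Proposition \ref{PropNorComp}, Proposition \ref{PropBigCell} to get a big cell representation, and then Lemma \ref{LemDirInt2} together with Corollary \ref{CorFinBigCellOs} to reduce to a finite direct sum of irreducible big cell representations. Your explicit justification of the final step (primeness of the primitive ideal $\ker(\pi)$ in $C_q(O_s)$ forcing weak containment in a single summand) is a correct filling-in of a step the paper leaves implicit.
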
 
\begin{proof}
By Lemma \ref{LemWeakCont}, $\pi$ is weakly contained in $\pi*\lambda_{\standard}$, which by Proposition \ref{PropBigCell} is a big cell representation. Now as $\pi*\lambda_{\standard}$ acts as a $*$-character on the center of $\mcO_q(H(N))$, it follows by Lemma \ref{LemDirInt2} and Corollary \ref{CorFinBigCellOs} that $\pi*\lambda_{\standard}$ must be an amplification of a finite number of irreducible big cell representations of $\mcO_q(H(N))$. Hence $\pi$ must be weakly contained in an irreducible big cell representation.
\end{proof}

\subsection{Quantized Sylvester's law of inertia}

We prove our main theorems. We start with Theorem \ref{TheoEigZ}.

\begin{Theorem}\label{TheoEigZRef}
Let $s\in \R^N$. Then $s$ is centrally admissible if and only if all non-zero roots of the polynomial 
\begin{equation}\label{EqCharPolq}
P_s(z) = \sum_{k=0}^N (-1)^k s_k z ^{N-k}
\end{equation}
are real with multiplicity $1$, and moreover the quotients of non-zero roots of the same sign lie in $q^{2\Z}$.  
\end{Theorem}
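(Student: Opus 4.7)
The plan is to deduce the theorem by combining two key inputs already established: (i) Theorem~\ref{TheoUnique}, which classifies the central $*$-characters coming from irreducible big cell representations, and (ii) Theorem~\ref{TheoWeakCont} (together with its supporting lemmas), which shows that every factor representation has the same central character as some irreducible big cell representation. The equivalence of the two descriptions of the admissible root sets is then a short bookkeeping step.

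For the forward direction, let $\pi$ be a factor representation with central $*$-character $\chi_\pi^{\msZ}(\sigma_k)=s_k$. By Lemma~\ref{LemWeakCont} we have $\pi\preccurlyeq \pi*\lambda_{\standard}$. The representation $\pi*\lambda_{\standard}$ is normal and $\Ad^U_q$-compatible by Proposition~\ref{PropNorComp}, and hence is a big cell representation by Proposition~\ref{PropBigCell}. Since the center $\msZ(\mcO_q(H(N)))=\mcO_q(H(N))^{\Ad^U_q}$ (Theorem~\ref{TheoCenter}) is fixed by the coaction, $\pi*\lambda_{\standard}$ still acts on the center by the scalars $s_k$, and in particular factors over $\mcO_q(O_s)$. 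By Lemma~\ref{LemDirInt2}, it therefore decomposes as a direct sum of irreducible big cell representations, each of which has central character $\chi_\pi^{\msZ}$. Applying the first part of Theorem~\ref{TheoUnique} to any such summand shows that the roots of $P_s$ must be of the form \eqref{EqFormRoots}.

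For the backward direction, I would check that the numerical condition in the theorem is equivalent to the structural form \eqref{EqFormRoots}: if the non-zero real roots with positive sign are distinct and quotient-wise lie in $q^{2\Z}$, fix any positive root $q^{2\alpha}$ and write all other positive roots as $q^{2\alpha+2m_i}$ with pairwise distinct $m_i\in\Z$; do the same on the negative side with a base point $-q^{2\beta}$ and integers $n_j$. Zero roots contribute the remaining multiplicity. Conversely any multiset of the form \eqref{EqFormRoots} manifestly satisfies the stated real/distinct/$q^{2\Z}$-quotient conditions. Given this, the backward direction is immediate from Theorem~\ref{TheoUnique}: any such $s$ arises as the central $*$-character of an irreducible big cell representation, so in particular is centrally admissible.

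There is no substantial obstacle here; the theorem is essentially a repackaging of Theorem~\ref{TheoUnique} plus the reduction to big cell representations provided by Theorem~\ref{TheoWeakCont}/Proposition~\ref{PropBigCell}. The only point requiring mild care is making sure that passing to $\pi*\lambda_{\standard}$ preserves the central character, which follows because $\sigma_k$ is $\Ad^U_q$-coinvariant and therefore $\pi*\lambda_{\standard}(\sigma_k) = \pi(\sigma_k)\otimes 1 = s_k\cdot 1$.
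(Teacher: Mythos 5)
Your proof is correct and follows essentially the same route as the paper: the paper's own argument simply invokes Theorem \ref{TheoWeakCont} to produce an irreducible big cell representation with the given central character and then applies Theorem \ref{TheoUnique}, whereas you inline the proof of Theorem \ref{TheoWeakCont} (via Lemma \ref{LemWeakCont}, Propositions \ref{PropNorComp} and \ref{PropBigCell}, and Lemma \ref{LemDirInt2}) so as to start directly from a factor representation. The bookkeeping identification of the numerical condition with the form \eqref{EqFormRoots} is likewise exactly the remark the paper makes immediately after the theorem statement.
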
 
In other words, $s$ is admissible if and only if we have a factorisation 
\[
P_s(z) = z^{N_0}(z- q^{2\alpha + 2m_1})\ldots (z- q^{2\alpha + 2m_{N_+}}) (z+ q^{2\beta + 2n_1})\ldots (z+ q^{2\beta + 2n_{N_-}})
\]
where $\alpha,\beta \in \R$ and $m_i,n_i \in \Z_{\geq 0}$ with the $m_i-m_j \neq 0$ and $n_i-n_j\neq 0$ for $i\neq j$, which is the formulation in  Theorem \ref{TheoEigZ}.

\begin{proof}
Assume that $s$ is centrally admissible. By Theorem \ref{TheoWeakCont} it follows that there exists an irreducible big cell representation $\pi$ of $\mcO_q(H(N))$ such that $\sigma_k^{\pi} = s_k$. The theorem now follows from Theorem \ref{TheoUnique}.
\end{proof}

To prove Theorem \ref{TheoQLI}, we start with the following lemmas.

\begin{Lem}\label{LemPermSign}
Let $\pi$ be an irreducible big cell representation of rank $M$ with signature $\eta$, and assume $k < N_++ N_-$ with $\eta_k \eta_{k+1} < 0$. Then there exists an irreducible big cell representation $\pi' \subseteq\pi * \mbs_k$ of rank $M$ such that $\pi'$ has signature 
\[
\eta' = (\eta_1,\ldots,\eta_{k-1},\eta_{k+1},\eta_k,\eta_{k+2},\ldots,\eta_M). 
\]
\end{Lem}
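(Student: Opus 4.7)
The strategy is to reduce the lemma to the rank-two case of Lemma~\ref{LemDecompSign} by localizing the action of $\mbs_k$ to the $2\times 2$ block at positions $k, k+1$. Since $\mbs_k = \mbs \circ \res_k$ factors through the Hopf $*$-algebra quotient $\res_k: \mcO_q(U(N)) \to \mcO_q(SU(2))$ corresponding to the subgroup $SU_q(2)_k$ embedded at positions $k, k+1$ of $U_q(\mfu(N))$, the effect of tensoring with $\mbs_k$ on $\pi$ is concentrated in this $2\times 2$ slot.

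First I would isolate which leading minors are affected. For every $j \neq k$, I would show that
\[
(\mathrm{id} \otimes \res_k)\Ad^U_q(Z_{[j]}) = Z_{[j]} \otimes 1,
\]
so that $(\pi * \mbs_k)(Z_{[j]}) = \pi(Z_{[j]}) \otimes 1$. This follows because $Z_{[j]}$ is the matrix coefficient of $\Ad^U_q$ at the vector $e_{[j]} = e_1 \wedge_q \cdots \wedge_q e_j$, which is $SU_q(2)_k$-invariant: for $j<k$ it involves none of $e_k, e_{k+1}$, and for $j > k$ it contains the invariant wedge $e_k \wedge_q e_{k+1}$ that spans the trivial one-dimensional piece of $\wedge_q^2(\C^2)$; a short calculation with the coproducts of $E_k, F_k, \hat{K}_k$ confirms that $E_k e_{[j]} = F_k e_{[j]} = 0$ and $\hat{K}_k e_{[j]} = e_{[j]}$ in both cases. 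Combined with the $\Ad^U_q$-invariance of the ideals $I_{M+1}$ (their minor-generators transform as a covariant family under the $\mcO_q(M_N(\C))$-coaction) and with Theorem~\ref{TheoCenter}, this shows that $\pi * \mbs_k$ has the same central $*$-character as $\pi$, has rank at most $M$, and has leading-minor signs agreeing with $\eta$ at all positions $j \neq k$.

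Next I would reduce to the rank-two situation. Via Proposition~\ref{PropEquik} with parameters $k$ and $l = k+1$, I obtain the $U_q(\mfu(2))$-equivariant $*$-homomorphism $\rho = \rho_{k,k+1}: \mcO_q(H(2)) \to \mcO_q(H(N))$ sending $Z_{[1]}^{(2)} \mapsto Z_{[k]}^{(N)}$ and $Z_{[2]}^{(2)} \mapsto Z_{[k-1]}^{(N)} Z_{[k+1]}^{(N)}$. The equivariance should intertwine $(\pi * \mbs_k) \circ \rho$ with $(\pi \circ \rho) * \mbs$, so that Lemma~\ref{LemDecompSign}, applied to $\pi \circ \rho$, produces a subrepresentation of $(\pi \circ \rho) * \mbs$ exhibiting the sign swap at the $\mcO_q(H(2))$-level. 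Using the $q$-commutation relations \eqref{EqCommDs}---which imply that the positive and negative spectral projections of $(\pi * \mbs_k)(Z_{[k]})$ lie in the centre of the commutant of $(\pi * \mbs_k)(\mcO_q(H(N)))$---I would extract these components as orthogonal summands $\pi_+, \pi_-$ of $\pi * \mbs_k$. The component with the flipped sign at position $k$ is shown to be nonzero by the previous step, and is therefore a big cell representation of $\mcO_q(O_s)$ of rank $M$ whose signature differs from $\eta$ precisely at position $k$ in the required way. Finally, Lemma~\ref{LemDirInt2} decomposes this summand into a direct sum of irreducible big cell representations, and any constituent provides the desired $\pi'$.

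\paragraph{Main obstacle.} The critical step is establishing the intertwining $(\pi * \mbs_k) \circ \rho \cong (\pi \circ \rho) * \mbs$. The map $\rho$ is only $U_q(\mfu(2))_{k,k+1}$-equivariant, and its image is not stable under the full $\mcO_q(U(N))$-coaction $\Ad^U_q$---only under the $\mcO_q(SU(2))$-sub-coaction that underlies $\mbs_k$. Making the intertwining precise, perhaps after adjusting $\rho$ by the commuting central-like factor $Z_{[k-1]}^{(N)}$ that appears in $\rho(Z_{[2]}^{(2)})$ (analogous to the twist $\widetilde{\kappa}_{k,l}$ used in the proof of Proposition~\ref{PropEquik}), is where the genuine technical work lies. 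Once this intertwining is in place, the spectral extraction and the identification of $\pi'$ via Lemma~\ref{LemDirInt2} are essentially routine consequences of the results already established.
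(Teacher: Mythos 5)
Your proposal is correct and follows essentially the same route as the paper: the paper likewise observes that $(\pi*\mbs_k)(Z_{[r]}) = \pi(Z_{[r]})\otimes 1$ for $r<k$ or $r>k+1$, pulls back along $\rho_{k,k+1}$ to apply Lemma~\ref{LemDecompSign} to $\pi\circ\rho_{k,k+1}$ (which has signature $(\eta_{[k]},\eta_{[k-1]}\eta_{k+1})$ with product $\eta_k\eta_{k+1}=-1$), and concludes with Lemma~\ref{LemDirInt2}. The intertwining $(\pi*\mbs_k)\circ\rho_{k,k+1}=(\pi\circ\rho_{k,k+1})*\mbs$ that you flag as the main obstacle is exactly what the $U_q(\mfu(2))_{k,k+1}$-equivariance in Proposition~\ref{PropEquik} provides, since $\mbs_k$ factors through the dual pairing with that copy of $U_q(\mfsu(2))$, so no further adjustment of $\rho$ is needed.
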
 
\begin{proof}
Assume that $\pi$ factors over $\mcO_q(O_s)$. Clearly we will have 
\[
(\pi*\mbs_k)(Z_{[r]}) = \pi(Z_{[r]})\otimes 1
\]
for $r < k$ or $r>k+1$. On the other hand, consider the representation $\pi \circ \rho_{k,k+1}$ of $\mcO_q(H(2))$ with $\rho_{k,k+1}$ as in Proposition \ref{PropEquik}. Then clearly $\pi \circ \rho_{k,k+1}$ has fixed signature $(\eta_{[k]},\eta_{[k-1]}\eta_{k+1})$ with $\eta_{[k]}(\eta_{[k-1]}\eta_{k+1}) = \eta_k\eta_{k+1} = -1$ by the assumption on $k$. By equivariance of $\rho_{k,k+1}$, we have $(\pi * \mbs_k) \circ \rho_{k,k+1} = (\pi \circ \rho_{k,k+1})*\mbs$, hence by Lemma \ref{LemDecompSign} it follows that $\pi * \mbs_k$ contains a subrepresentation of signature $\eta'$. As $\pi * \mbs_k$ splits as a direct sum of irreducible big cell representations of $\mcO_q(O_s)$ by Lemma \ref{LemDirInt2}, it follows that there exists an  irreducible big cell representation $\pi' \subseteq\pi * \mbs_k$ of signature $\eta'$. 
\end{proof}

\begin{Lem}\label{LemWeakStandTens}
Let $s \in \R^N$ be centrally admissible, and let $\pi,\pi'$ be representations of $\mcO_q(O_s)$. Then $\pi' \preccurlyeq  \Ad_{\lambda_{\standard}}(\pi)$.
\end{Lem}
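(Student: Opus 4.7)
The plan is to reduce this statement directly to the general framework established in Section~\ref{SecApCQG}, specifically Corollary~\ref{CorWeakContErg}, by verifying its three hypotheses for the triple $(A, \alpha, \mathbb{G}) = (\mcO_q(O_s), \Ad_q^U, U_q(N))$. The coamenability of $U_q(N)$ is already recorded in the paper, so that hypothesis is immediate.

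First I would observe that by Proposition~\ref{PropTrivializesCent}, the coaction $\Ad_q^U: \mcO_q(H(N)) \to \mcO_q(H(N)) \otimes \mcO_q(U(N))$ descends to a coaction on the quotient $\mcO_q(O_s)$, and by the same proposition (equation \eqref{EqTrivCoinv}) this descended coaction has trivial coinvariants, $\mcO_q(O_s)^{\Ad_q^U} = \C$. Next, I would note that for any representation $\pi$ of $\mcO_q(O_s)$, the representation $\Ad_{\lambda_{\standard}}(\pi) = (\pi\otimes\lambda_{\standard})\circ \Ad_q^U$ of $\mcO_q(H(N))$ again factors through $\mcO_q(O_s)$: this follows immediately from the fact that the generators $\sigma_k$ of the center are $\Ad_q^U$-coinvariant, so $\Ad_{\lambda_{\standard}}(\pi)(\sigma_k) = \pi(\sigma_k)\otimes 1 = s_k \cdot 1$, showing that the defining relations of $\mcO_q(O_s)$ are preserved.

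With these verifications in hand, Corollary~\ref{CorWeakContErg} applied to $A = \mcO_q(O_s)$ with its $\mcO_q(U(N))$-coaction directly yields $\pi'\preccurlyeq \pi * \lambda_{\standard} = \Ad_{\lambda_{\standard}}(\pi)$ for any two representations $\pi, \pi'$ of $\mcO_q(O_s)$, which is precisely the assertion. There is essentially no hard step here — the lemma is a specialization of the general ergodic weak-containment principle, and the only thing to check is that central admissibility of $s$ guarantees that $\mcO_q(O_s)$ admits a representation at all, so that the statement is non-vacuous (this is built into the definition of centrally admissible). The main conceptual content was already absorbed into Corollary~\ref{CorWeakContErg}, which in turn rested on coamenability of $U_q(N)$ and the Haar-state-induced faithful state on the universal C$^*$-envelope $C_q(O_s)$ from Corollary~\ref{CorUniversalEnv}.
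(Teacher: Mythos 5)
Your proposal is correct and is exactly the paper's argument: the paper's entire proof is the one-line remark that the lemma is a special case of Corollary \ref{CorWeakContErg}, and you have simply spelled out the (routine) verification of its hypotheses via Proposition \ref{PropTrivializesCent} and the coamenability of $U_q(N)$.
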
 
\begin{proof}
This is a special case of Corollary \ref{CorWeakContErg}.
\end{proof}

For $\lambda \in \mbT_q(N)$ an admissible (hence finite dimensional) irreducible representation, let us denote by $\lambda^c$ the contragredient unitary representation, uniquely determined by the fact that the trivial representation is contained in $\lambda^c * \lambda$. By taking direct sums, one can define $\lambda^c$ for any admissible representation. Then we have the following standard Frobenius reciprocity. 

\begin{Lem}\label{LemFrobRec}
Let $\pi,\pi' \in \mbH_q(N)$ be irreducible, and let $\lambda \in \mbT_q(N)$ be finite dimensional. If $\pi \subseteq \pi'*\lambda$, then $\pi' \subseteq \pi*\lambda^c$. 
\end{Lem}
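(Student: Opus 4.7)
The plan is to execute the standard Frobenius-reciprocity argument for a rigid tensor W$^*$-category acting on a module W$^*$-category. Finite-dimensional admissible representations of $\mcO_q(T(N))$ correspond through $\iota$ of \eqref{EqUnivRelReal} to finite-dimensional admissible representations of $U_q(\mfu(N))$, i.e.~to ordinary finite-dimensional unitary representations of the compact quantum group $U_q(N)$. The contragredient $\lambda^c$ is thus available together with evaluation and coevaluation intertwiners
\[
\epsilon : \lambda^c * \lambda \to \C, \qquad \iota_R : \C \to \lambda * \lambda^c
\]
satisfying the zig-zag identities. Note that we will need the coevaluation $\iota_R$ on the right, not only the one defining $\lambda^c$; in the unitary setting both are available.

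First, the assumed inclusion $\pi \subseteq \pi' * \lambda$ provides an isometric intertwiner $V : \Hsp_\pi \to \Hsp_{\pi'} \otimes \Hsp_\lambda$. From this I would build the candidate
\[
S := (V^* \otimes \id_{\Hsp_{\lambda^c}}) \circ (\id_{\Hsp_{\pi'}} \otimes \iota_R) : \Hsp_{\pi'} \to \Hsp_\pi \otimes \Hsp_{\lambda^c}.
\]
The construction makes it tautological that $S$ is an intertwiner from $\pi'$ to $\pi * \lambda^c$: $\iota_R$ intertwines the trivial representation with $\lambda * \lambda^c$ and $V^*$ intertwines $\pi' * \lambda$ with $\pi$, so tensoring with identities and invoking associativity of the $\mbT_q(N)$-action on $\mbH_q(N)$ yields the required equivariance.

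The only nontrivial point, and the step I expect to be the main obstacle, is to show $S \ne 0$. The standard zig-zag computation gives
\[
(\id_{\Hsp_\pi} \otimes \epsilon) \circ (S \otimes \id_{\Hsp_\lambda}) \circ V \;=\; V^* V \;=\; \id_{\Hsp_\pi},
\]
where the crucial cancellation comes from applying $(\id_{\Hsp_\lambda} \otimes \epsilon) \circ (\iota_R \otimes \id_{\Hsp_\lambda}) = \id_{\Hsp_\lambda}$ on the $\lambda$-legs in the middle. Since the right hand side is nonzero, $S$ must be nonzero as well.

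Finally, irreducibility of $\pi'$ forces $S^* S \in \Hom(\pi',\pi') = \C$ by Schur, so that $S^* S$ is a strictly positive scalar; the polar decomposition of $S$ then produces, after normalization, an isometric intertwiner realizing $\pi'$ as a subrepresentation of $\pi * \lambda^c$. The main conceptual item to justify is the availability of the right-handed coevaluation and the zig-zag identity for admissible representations of $\mcO_q(T(N))$, but this reduces immediately to the well-known rigidity of the category of finite-dimensional unitary representations of $U_q(N)$.
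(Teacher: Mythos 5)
Your proof is correct and is precisely the standard rigidity argument that the paper leaves implicit (the lemma is stated there without proof, as "standard Frobenius reciprocity"): build $S=(V^*\otimes\id)(\id\otimes\iota_R)$, use the conjugate equation to see $S\neq 0$, and normalize via Schur's lemma. One cosmetic caveat: finite-dimensional admissible representations of $\mcO_q(T(N))$ correspond to admissible $U_q(\mfu(N))$-modules with highest weights in $P_{\pos}^{\dom}$, which are not all representations of the compact quantum group $U_q(N)$ (that would require integral weights); but the category of finite-dimensional admissible unitary $U_q(\mfu(N))$-modules is still rigid with two-sided unitary duals, so your appeal to the right coevaluation and the zig-zag identities goes through unchanged.
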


\begin{Lem}\label{LemMultT}
Let $\eta$ be a signature
\[
\eta = \diag(\underbrace{1,\ldots,1}_{N_+},\underbrace{-1,\ldots,-1}_{N_-}),
\]
and let $\pi,\pi'$ be two associated irreducible big cell representations with respective weights $r$ and $r'$.
\begin{enumerate}
\item If $N_+N_- = 0$, then there exists an admissible $\mcO_q(T(N))$-representation $\lambda$ with $\pi'\subseteq \pi * \lambda$. 
\item If $N_+N_-\neq 0$, then there exists an admissible $\mcO_q(T(N))$-representation $\lambda$ with $\pi'\subseteq \pi * \lambda$ if and only if $r_{N_++N_-} - r_{N_+} \in (r_{N_++N_-}' - r_{N_-}') + \Z$. 
 \end{enumerate} 
\end{Lem}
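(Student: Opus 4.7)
The plan is to recast the operation $\pi\mapsto \pi*\lambda$ at the level of admissible unitary highest weight modules of $\mcO_q^\epsilon(T(N))$, where $\epsilon\in\{\pm 1\}^M$ satisfies $\eta=\eta_\epsilon$. By Proposition \ref{PropCorrHT} the representations $\pi,\pi'$ arise from such highest weight modules $\widetilde\pi,\widetilde{\pi}'$ with highest weights $r,r'$. I would first check that $\pi*\lambda$ is again a big cell representation of rank $M$ and signature $\eta$, using that $(\pi*\lambda)(Z_{\brk}) = \pi(Z_{\brk})\otimes \lambda(\Delta_k^2)$ with $\Delta_k = T_1\dots T_k$ positive and invertible in $\lambda$. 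The key structural step is to identify the induced $\mcO_q^\epsilon(T(N))$-module structure on the dense subspace $V_\pi\otimes\Hsp_\lambda$ with the tensor product $\widetilde\pi\otimes\lambda$ along a coaction $\Delta^\epsilon:\mcO_q^\epsilon(T(N))\to \mcO_q^\epsilon(T(N))\otimes \mcO_q(T(N))$ satisfying $\Delta^\epsilon\circ i_T^\epsilon = (i_T^\epsilon\otimes \pi_T)\circ \Ad_q^T$. When $\epsilon=+$ this is just the Hopf algebra coproduct of $\mcO_q(T(N))$; in general it is constructed by inspection of the defining relations.

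For necessity in (2): a subrepresentation $\pi'\subseteq \pi*\lambda$ yields a highest weight vector in $\widetilde\pi\otimes\lambda$ of weight $r'$. A branching argument analogous to the Gelfand-Tsetlin decomposition of tensor products of $U_q(\mfu(N))$-modules then forces $r'=r+\mu|_{[M]}$ for some weight $\mu\in\R^N$ of $\lambda$. Admissibility of $\lambda$ places all the $\mu_i$ in a common $\Z$-coset, so $(r'_i-r_i)-(r'_j-r_j)\in\Z$ for $1\leq i,j\leq M$. Combined with the adaptedness of $r$ and $r'$ (which forces $r'_i-r_i\bmod\Z$ to be constant within the positive block $[1,N_+]$ and within the negative block $(N_+,M]$ separately), one extracts the stated congruence across the two blocks (interpreting $r'_{N_-}$ in the statement as $r'_{N_+}$, since within the positive block the class of $r'_i-r_i\bmod\Z$ is constant).

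For sufficiency in (1) and (2): set $\mu_i := r'_i-r_i$ for $i\leq M$ and extend $\mu$ to $\R^N$ with all coordinates in the same $\Z$-coset. By the extended-signature hypothesis (vacuous in case (1), the given congruence in case (2)), the resulting $\mu$ lies in $P_\pos$. Choose $\nu\in P_\pos^{\dom}$ with $\nu-\mu\in Q^+$, which is possible after adding a sufficiently large integer multiple of $(1,\dots,1)$ and reordering. Then $\mu$ is a weight of the admissible irreducible module $V_\nu\in\mbT_q(N)$, and the branching rule applied to $\widetilde\pi\otimes V_\nu$ produces a highest weight summand of weight $r+\mu|_{[M]}=r'$. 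Hence $\pi'\subseteq \pi*V_\nu$.

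The main obstacle is establishing the branching rule for $\widetilde\pi\otimes\lambda$ in the twisted setting $\epsilon\neq +$. For $\epsilon=+$ the statement reduces, via the isomorphism $\iota$ of \eqref{EqUnivRelReal}, to the standard tensor product decomposition of admissible unitary highest weight $U_q(\mfu(N))^{\cop}$-modules. For general $\epsilon$, $\mcO_q^\epsilon(T(N))$ is only a comodule algebra over $\mcO_q(T(N))$, and one must both construct $\Delta^\epsilon$ explicitly and track the highest weights arising in the tensor product; this is most naturally done by working directly with the Gelfand-Tsetlin basis of Proposition \ref{PropScalProd} and transferring the highest-weight combinatorics from the Hopf case by continuity in $\epsilon$.
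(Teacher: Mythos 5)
Your necessity argument is essentially the paper's: the weights of $\pi*\lambda$ restricted to the $T_i$ (equivalently, the eigenvalues of the $(\pi*\lambda)(Z_{[k]})$) are sums of a weight of $\widetilde\pi$ (all lying in $r+\Z^M$) and a weight of $\lambda$ (all lying in a single coset $\gamma+\Z^N$), and combining this with the explicit form of $\epsilon$-adapted weights gives the stated congruence. No genuine branching rule is needed for this half, only the additivity of weights, and your reading of $r'_{N_-}$ as $r'_{N_+}$ is the intended one.

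The sufficiency half, however, has a genuine gap. Your construction sets $\mu=r'-r$, picks $\nu\in P_{\pos}^{\dom}$ with $\mu$ a weight of $V_\nu$, and then asserts that "the branching rule applied to $\widetilde\pi\otimes V_\nu$ produces a highest weight summand of weight $r+\mu$". This is exactly the step you flag as "the main obstacle", and it is not merely unproved but false as stated, already in the untwisted case $\epsilon=+$ where everything is finite-dimensional: for $N=2$, the weight $(1,2)$ occurs in $V_{(3,0)}$ and $(1,0)+(1,2)=(2,2)$ is dominant, yet $V_{(2,2)}\not\subseteq V_{(1,0)}\otimes V_{(3,0)}=V_{(4,0)}\oplus V_{(3,1)}$. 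In general the multiplicity of $V_{r+\mu}$ in $V_r\otimes V_\nu$ can vanish for non-extremal weights $\mu$ of $V_\nu$, so "reducing to the standard tensor product decomposition" does not give what you need, and "continuity in $\epsilon$" cannot transfer a statement that fails at $\epsilon=+$. The paper avoids the branching problem entirely: the only containment it ever uses is the always-valid one coming from the tensor product of the \emph{highest} weight vectors (which generates a unitarizable, hence irreducible, highest weight module of weight equal to the sum of the two highest weights). Concretely, after normalizing $\alpha=\alpha'=0$ by a one-dimensional twist, it exhibits a common auxiliary representation $\pi_0$ and finite-dimensional admissible $\lambda_1,\lambda_2$ with $\pi\subseteq\pi_0*\lambda_1$ and $\pi'\subseteq\pi_0*\lambda_2$ via this top-component argument, and then invokes Frobenius reciprocity (Lemma \ref{LemFrobRec}) to conclude $\pi'\subseteq\pi_0*\lambda_2\subseteq\pi*\lambda_1^{c}*\lambda_2$. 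To repair your proof you would either need to substitute this reciprocity argument, or prove a genuine occurrence criterion for non-highest weights in $\widetilde\pi\otimes V_\nu$, which your proposal does not supply.
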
 
\begin{proof}
We know from Theorem \ref{TheoBigCellRep} that 
\[
r = (\alpha + m_1,\ldots,\alpha + m_{N_+},\beta + n_1,\ldots,\beta + n_{N_-}),
\]
with $m_1\leq m_2\leq \ldots \leq m_{N_+}$ and $n_1\leq n_2 \leq \ldots \leq n_{N_-}$ in $\Z_{\geq 0}$, and $\alpha,\beta \in \R$. Similarly 
\[
r' = (\alpha' + m_1',\ldots,\alpha'+ m_{N_+}',\beta' + n_1',\ldots,\beta' + n_{N_-}'),
\]
Now any weight vector for $\mcO_q(T(N))$ has weight of the form $(\gamma + k_1,\ldots,\gamma + k_N)$ for $\gamma \in \R$ and $k_1 \leq \ldots \leq k_N$ in $\Z_{\geq 0}$. This already shows that the conditions in the lemma are necessary. To see that they are sufficient, let us first multiply both representations with a one-dimensional admissible representation $T_{ij} \mapsto c\delta_{ij}$ of $\mcO_q(T(N))$ to ensure $\alpha = \alpha' = 0$. Pick then $p \in \Z_{\geq 0}$ large enough so that 
\[
m_1 \leq m_2 \leq \ldots \leq m_{N_+} \leq n_1 + p \leq \ldots \leq n_{N_-} + p,
\]
\[
m_1' \leq m_2 '\leq \ldots \leq m_{N_+}' \leq n_1' + p \leq \ldots \leq n_{N_-}' + p.
\]
Consider now the finite dimensional admissible representation $\lambda_1 =   \pi_{(m_1,\ldots,m_{N_+},n_1+p,\ldots,n_{N_-}+p,K,\ldots,K)} \in \mbT_q(N)$, where $K \in \Z_{\geq n_{N-}+p}$ is arbitrary. Let  $\pi_0 =  \pi_{S,(0,\ldots,0,\beta -p,\ldots,\beta-p,0,\ldots 0)}$. Then by restricting to the tensor product of the highest weights, we see that
\[
\pi_{S,r}  \subseteq \pi_0 * \lambda_1.
\]
Similarly, there exists a finite dimensional admissible representation $\lambda_2 \in \mbT_q(N)$ with 
\[
\pi_{S,r'} \subseteq \pi_0*\lambda_2.
\]
Hence, by the Frobenius reciprocity of Lemma \ref{LemFrobRec}, 
\[
\pi_{S,r'} \subseteq \pi_0 * \lambda_2 \subseteq \pi_{S,r} * \lambda_1^{c} *\lambda_2. 
\]
\end{proof}

We now prove Theorem \ref{TheoQLI}.

\begin{Theorem}\label{TheoQSLI}
Let $\pi \in \mbH_q(N)$ be a representation of extended signature $([r],N_+,N_-,N_0)$. Then $\Ad_{\lambda}(\pi)$ has the same extended signature for any $\lambda \in \mbGL_q(N,\C)$. Moreover, if $\pi,\pi' \in \mbH_q(N)$ are irreducible representations of the same extended signature, then there exists such a $\lambda\in \mbGL_q(N,\C)$ with 
\[
\pi' \preccurlyeq  \Ad_{\lambda}(\pi).
\]
\end{Theorem}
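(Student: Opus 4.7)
The strategy reduces both halves of the theorem to assertions about irreducible big cell representations, where Theorem \ref{TheoBigCellRep} makes everything explicit. A technical observation used throughout is that weak containment is preserved by the $\Ad$-operation: if $\pi$ factors through $C^{*}(\pi_{bc})$ via a $*$-homomorphism $\phi$ of C$^{*}$-algebras, then $\Ad_{\lambda}(\pi) = (\phi \otimes \id) \circ \Ad_{\lambda}(\pi_{bc})$ factors through the minimal-tensor-product $*$-homomorphism $\phi \otimes \id$, which is norm-decreasing; hence $\pi \preccurlyeq \pi_{bc}$ implies $\Ad_{\lambda}(\pi) \preccurlyeq \Ad_{\lambda}(\pi_{bc})$.

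For the ``moreover'' part, start from irreducible $\pi, \pi'$ with common extended signature and use Theorem \ref{TheoWeakCont} to produce irreducible big cell representations $\pi_{bc} \succcurlyeq \pi$, $\pi_{bc}' \succcurlyeq \pi'$ sharing the same extended signature (the central character determines the spectral weight by Theorem \ref{TheoEigZRef}, hence the extended signature). Iterating Lemma \ref{LemPermSign} with the $\mbs_{i_{k}} \in \mbU_{q}(N)$ standardises the signature patterns of $\pi_{bc}, \pi_{bc}'$ to $\eta = (+,\ldots,+,-,\ldots,-)$, after which Lemma \ref{LemMultT} produces an admissible $\lambda_{T} \in \mbT_{q}(N)$ with $\pi_{bc}' \subseteq \pi_{bc} * \lambda_{T}$. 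Since Lemma \ref{LemWeakStandTens} yields $\pi_{bc} \preccurlyeq \Ad_{\lambda_{\standard}}(\pi)$, chaining gives
\[
\pi' \preccurlyeq \pi_{bc}' \preccurlyeq \Ad_{\lambda_{T}}(\pi_{bc}) \preccurlyeq \Ad_{\lambda_{T}}(\Ad_{\lambda_{\standard}}(\pi)) = \Ad_{\lambda_{\standard}*\lambda_{T}}(\pi),
\]
so the second assertion holds with $\lambda' = \lambda_{\standard} * \lambda_{T}$.

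For the invariance part, by Theorem \ref{TheoIntroTypeI} take $\pi$ irreducible, and by the preceding reduction it suffices to show $\Ad_{\lambda}(\pi_{bc}) \in \mbH_{q}^{\varsigma_{\ext}}(N)$ for an irreducible big cell representation $\pi_{bc}$ of extended signature $\varsigma_{\ext}$. Decomposing $\lambda \cong \lambda_{U} \oast \lambda_{T}$ by Theorem \ref{TheoRepGLN}, the $\mbU_{q}(N)$-factor preserves the central character by Theorem \ref{TheoCenter}, reducing to $\lambda = V_{\mu}$ irreducible with $\mu \in P_{\pos}^{\dom}$. The $\mbT_{q}(N)$-equivariance of the embedding $i_{T}$, namely $(i_{T} \otimes \id) \circ \Ad_{q}^{T} = \Delta \circ i_{T}$, identifies $\Ad_{V_{\mu}}(\pi_{bc})$ after localisation with a tensor-product construction on the $\mcO_{q}(T(N))$-side, whose weight-space decomposition writes $\Ad_{V_{\mu}}(\pi_{bc})$ as a direct sum of big cell representations whose highest weights differ from that of $\pi_{bc}$ by weights of $V_{\mu}$. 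Since all such weights lie in $\mu + Q \subseteq \mu + \Z^{N}$, these shifts preserve both the discrete signature $(N_{+}, N_{-}, N_{0})$ (via the $\mbGL_{q}$-stability of the ideals $I_{M}$ of Definition \ref{DefQuotRankM}, a quantum Sylvester statement) and the continuous $[\beta - \alpha] \in \R/\Z$ (because integer shifts vanish modulo $\Z$).

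The delicate step is this last one: verifying invariance of the genuinely quantum parameter $[\beta - \alpha] \in \R/\Z$ under $\Ad_{V_{\mu}}$. There is no classical analogue of this continuous invariant, and its preservation rests on the integrality of the root lattice $Q \subseteq \Z^{N}$.
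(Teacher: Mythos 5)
Your argument is correct and follows essentially the same route as the paper's proof: both parts reduce to irreducible big cell representations, the invariance of the extended signature is obtained by splitting $\lambda$ into its $\mbU_q(N)$- and $\mbT_q(N)$-parts (the former preserving the central $*$-character, the latter shifting highest weights by weights of an admissible $\mcO_q(T(N))$-module, which are all congruent modulo $\Z^N$ so that $[\beta-\alpha]$ survives), and the transitivity statement is assembled from Theorem \ref{TheoWeakCont}, Lemma \ref{LemPermSign}, Lemma \ref{LemWeakStandTens} and Lemma \ref{LemMultT} exactly as in the paper. Your explicit verification that $\Ad_{\lambda}$ preserves weak containment (via contractivity of $\phi\otimes\id$ on the minimal tensor product) is a detail the paper leaves implicit.
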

\begin{proof}
Let us prove the first part of the theorem.

Clearly $\pi_1\preccurlyeq  \pi_2$ and $\pi_2$ of extended signature $([r],N_+,N_-,N_0)$ implies $\pi_1$ of the same extended signature. By the remark following Theorem \ref{TheoRepGLN}, it is hence enough to show that if $\pi$ is of extended signature $([r],N_+,N_-,N_0)$, then also  $\pi * \lambda_{\standard}*\lambda'$ has extended signature $([r],N_+,N_-,N_0)$, for $\lambda_{\standard}$ the standard representation of $\mcO_q(U(N))$ and $\lambda'$ an irreducible admissible representation of $\mcO_q(T(N))$. Now since the center of $\mcO_q(H(N))$ is $Ad_q^U$-coinvariant, clearly $\pi * \lambda_{\standard}$ has the same signature as $\pi$. By Proposition \ref{PropNorComp} and Proposition \ref{PropBigCell} we may hence assume that $\pi$ is an irreducible big cell representation, and we want to show that $\pi*\lambda'$ has the same extended signature. But since $\pi$ factors through $\mcO_q(H(N)) \rightarrow \mcO_q^{\leq M}(H(N)) \rightarrow \mcO_q^{\leq M}(T(N))$, and $\Ad_q^T$ factors through a coaction on $\mcO_q^{\leq M}(T(N))$, it follows that $\pi*\lambda'$ has rank $M$. Moreover, since $\Ad_q^T(Z_{[k]}) = Z_{[k]}\otimes T_1^2\ldots T_k^2$, we see that $\pi*\lambda'$ is a big cell representation. As the $(\pi*\lambda')(Z_{[k]})$ are diagonalizable, $\pi*\lambda'$ must be a direct sum of irreducible big cell representations. But since the extended signature of an irreducible big cell representation can be read of from any of its weight spaces, it follows straightforwardly that $\pi*\lambda'$ has extended signature $([r],N_+,N_-,N_0)$. This proves the first part of the theorem. 

To prove the second part of the theorem, assume that $\pi,\pi'$ are irreducible representations of $\mcO_q(H(N))$ of extended signature $([r],N_+,N_-,N_0)$. We may assume that $\pi,\pi'$ are irreducible. To show that there exists $\lambda \in \mbGL_q(N)$ so that $\pi' \preccurlyeq  \pi*\lambda$, we may by Lemma \ref{LemWeakCont} and Proposition \ref{PropBigCell} assume that $\pi$ is an irreducible big cell representation.  By applying Lemma \ref{LemPermSign} repeatedly and then applying Lemma \ref{LemWeakStandTens}, we may assume that $\pi'$ and $\pi$ have signature $\diag(\underbrace{1,\ldots,1}_{N_+},\underbrace{-1,\ldots,-1}_{N_-},\underbrace{0,\ldots,0}_{N_0})$. But for these the conclusion of the theorem follows immediately from Lemma \ref{LemMultT}.
\end{proof} 

\begin{Theorem}\label{TheoTypeI}
The $*$-algebra $\mcO_q(H(N))$ is C$^*$-faithful and type $I$. 
\end{Theorem}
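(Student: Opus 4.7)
The plan separates into two parts, of very different difficulty.

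For the C$^*$-faithfulness, I would invoke the injective $*$-homomorphism $i_T : \mcO_q(H(N)) \hookrightarrow \mcO_q(T(N))$ of Proposition \ref{PropInclChol}, which reduces matters to C$^*$-faithfulness of $\mcO_q(T(N))$. The latter follows from the non-degenerate Hopf algebra pairing between $\mcO_q(T(N))$ and $U_q(\mfu(N))^{\cop}$ from \eqref{EqUnivRelReal}, since the family of finite-dimensional unitary highest weight representations $V_\lambda$ for $\lambda\in P_{\pos}^{\dom}$ separates points (any nonzero $f$ is detected by some $\tau(f,x)$, which in turn is recovered as a matrix coefficient of some $V_\lambda$).

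For the type $I$ condition I would use the Sakai criterion \cite{Sak67}: it suffices to show that for every irreducible $*$-representation $\pi$ of $\mcO_q(H(N))$, the norm closure $C^*(\pi)$ contains a nonzero compact operator. The first step is the reduction to big cell representations via Theorem \ref{TheoWeakCont}: any irreducible $\pi$ is weakly contained in some irreducible big cell representation $\pi'$, giving a surjection $C^*(\pi') \twoheadrightarrow C^*(\pi)$. Since type $I$ is stable under (separable) quotients and an irreducible representation of a type $I$ primitive C$^*$-algebra automatically has $C^*(\pi)\supseteq K(\mcH_\pi)$, the entire problem reduces to proving $C^*(\pi')$ is type $I$ for each irreducible big cell representation $\pi'$.

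For this second step, I would use the explicit description from Proposition \ref{PropCorrHT} and Proposition \ref{PropScalProd}: $\pi'$ carries a Gelfand-Tsetlin orthonormal basis $\{\xi(P)\}$, and the commuting family $\pi'(Z_{[k]})$ for $k\leq M$ consists of bounded self-adjoint operators with discrete spectra accumulating only at $0$, via the explicit formula of Lemma \ref{LemImZ} in terms of the weights $T_i^2$. Continuous functional calculus therefore puts all spectral projections of the $\pi'(Z_{[k]})$ into $C^*(\pi')$. The hard part is that these joint spectral projections are typically still infinite-dimensional, because the weight of $(T_1,\ldots,T_M)$ on $\xi(P)$ imposes only $M$ linear constraints on the roughly $\binom{N}{2}$ entries of $P$.

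To finish, I would combine the spectral projections with the action of off-diagonal elements of $\mcO_q(H(N))$ (which, under $i_T^\epsilon$, correspond via the triangular decomposition \eqref{EqTriangDecomp} to the nilpotent part $\mcO_q^{\leq M}(\msN(N))^*$ that annihilates the highest weight vector $\xi_r$) to cut down the joint eigenspaces further. Concretely, one looks for an element $a\in \mcO_q(H(N))$ whose image is a rank-one partial isometry onto $\C\xi_r$ (or more modestly a nonzero compact), using that $\xi_r$ is characterized inside $\mcH_{\pi'}$ by (i) a specific joint eigenvalue of the $Z_{[k]}$ and (ii) annihilation by the preimages under $i_T^\epsilon$ of the lowering operators. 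This mirrors the $N=2$ argument in Corollary \ref{CorTypeIH2}, where $\pi(z)=\pi(Z_{[1]})$ is itself already compact. The main obstacle is the infinite multiplicity in the joint spectrum of the $Z_{[k]}$; I expect this to be resolved by exploiting the quantum Cayley--Hamilton identity \eqref{EqCHQuant} together with the $q$-commutation relations \eqref{EqCommDs} to build polynomial combinations of $\pi'(Z_{[k]})$ with off-diagonal quantum minors whose spectral projections become finite rank.
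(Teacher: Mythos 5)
Your C$^*$-faithfulness argument is exactly the paper's: injectivity of $i_T$ from Proposition \ref{PropInclChol} plus separation of points of $\mcO_q(T(N))$ by its finite-dimensional $*$-representations.

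For the type $I$ part there is a genuine structural gap. Your reduction correctly notes that it suffices to prove that $C^*(\pi')$ is a type $I$ \emph{C$^*$-algebra} for every irreducible big cell representation $\pi'$ (since $\pi\preccurlyeq\pi'$ makes $C^*(\pi)$ a quotient of $C^*(\pi')$). But your plan for the second step only aims to produce a nonzero compact (or rank-one) operator in $C^*(\pi')$, and this is strictly weaker: knowing $K(\Hsp_{\pi'})\subseteq C^*(\pi')$ tells you nothing about the quotient $C^*(\pi')/K(\Hsp_{\pi'})$, through which every irreducible $\pi$ not equivalent to $\pi'$ itself must factor. Sakai's criterion requires $C^*(\pi)\supseteq K(\Hsp_\pi)$ for \emph{every} irreducible $\pi$, and the non-big-cell ones are precisely those you have no handle on at this stage (their classification is deferred to the follow-up papers). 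So as written the argument establishes Sakai's condition only for the big cell irreducibles. Incidentally, your worry about infinite multiplicity in the joint spectrum of the $\pi'(Z_{[k]})$ is unfounded: by Lemma \ref{LemImZ} the joint eigenspaces are the weight spaces of the associated admissible $\mcO_q^{\epsilon}(T(N))$-module, which are finite-dimensional by the triangular decomposition \eqref{EqTriangDecomp}; so getting compacts into $C^*(\pi')$ is the easy half, and the hard half is the one your plan does not address.

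The paper's proof avoids this entirely and is worth comparing. It picks an explicit $*$-character $\chi_r$ with the same extended signature as $\pi$ (available from Theorem \ref{Character classification theorem}) and uses the quantized Sylvester law, Theorem \ref{TheoQSLI}, to get $\pi\preccurlyeq\chi_r*\lambda$ for some $\lambda\in\mbGL_q(N,\C)$. Since every irreducible representation of $\mcO_q^{\R}(GL(N,\C))$ factors through $\mcO_q(U(N))\otimes\mcO_q(T(N))$ with the $\mcO_q(T(N))$-part finite-dimensional, $C^*(\pi)$ becomes a quotient of a C$^*$-subalgebra of $C(U_q(N))\otimes B(\Hsp_{\lambda'})$, and type $I$-ness is inherited from the known type $I$ property of $C_q(U(N))$. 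In other words, the paper trades the analysis of $C^*(\pi')$ for big cell $\pi'$ against the already-established representation theory of the quantum group acting on the orbit of a character; if you want to salvage your route, you would essentially have to carry out the full classification of irreducibles weakly contained in a given big cell representation, which is the content of the sequels.
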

\begin{proof}
The C$^*$-faithfulness of $\mcO_q(H(N))$ follows easily from known results, for example the injectivity of $\pi_T$ in Proposition \ref{PropInclChol} and the fact that the finite-dimensional $*$-representations of $\mcO_q(T(N))$ separate the elements of $\mcO_q(T(N))$. 

Let now $\varsigma_{\ext} = ([r],N_+,N_-,N_0)$ be an extended signature, and let $\chi_r$ be a $*$-character of extended signature $\varsigma_{\ext}$. For example, if $N_+N_-\neq 0$ and $L = \max\{N_+,N_-\}-|N_+-N_-|$, we can take the $*$-character
\[
\chi_r: \mcO_q(H(N)) \mapsto \C,\qquad Z \mapsto \pm \left(q^{-r}\sum_{i=N_0+L+1}^N e_{ii}+ \sum_{i=1}^L(e_{N_0+i,N-i} + e_{N-i,N_0+i})- q^{r} \sum_{i=1}^L e_{N-i+1,N-i+1}\right)
\] 
where $|N_+-N_-| = \pm(N_+-N_-)$.

Let $\pi$ be an arbitrary irreducible representation of $\mcO_q(H(N))$. Then $\pi$ has a well-defined extended signature $\varsigma_{\ext}$. From Theorem \ref{TheoQSLI} we know that $\pi$ is weakly contained in $\chi_r * \lambda$ for some representation $\lambda$ of $\mcO_q^{\R}(GL(N,\C))$. However, every irreducible representation of $\mcO_q^{\R}(GL(N,\C))$ factors through an irreducible representation of $\mcO_q(U(N)) \otimes \mcO_q(T(N))$. In particular, there exists an irreducible (and hence finite-dimensional) admissible representation $\lambda'$ of $\mcO_q(T(N))$ such that $C^*(\pi)$ is a quotient C$^*$-algebra of a C$^*$-subalgebra of $C(U_q(N))\otimes B(\Hsp_{\lambda'})$. As the latter C$^*$-algebra is type $I$, it follows that $C^*(\pi)$ is type $I$. 
\end{proof}


\begin{thebibliography}{00}
\bibitem[Bau00]{Bau00} P. Baumann, Another proof of Joseph Letzter’s separation of variables theorem for quantum groups, \emph{Transform. Groups} \textbf{5} (1) (2000), 3--20.
\bibitem[Ch84]{Ch84} I.V. Cherednik, Factorizing particles on a half line, and root systems, \emph{Teoret. Mat. Fiz.} \textbf{61} (1) (1984), 35--44.
\bibitem[DeC20]{DeC20} K. De Commer, Invariant quantum measure on $q$-deformed twisted adjoint orbits, \emph{preprint}, arXiv:2001.11734.
\bibitem[DCF19]{DCF19} K. De Commer and M. Flor\'{e}, The field of quantum $GL(N,\C)$  in the C$^*$-algebraic setting, \emph{Sel. Math. New Ser.} \textbf{25} (3) (2019), https://doi.org/10.1007/s00029-019-0456-0.
\bibitem[DCNTY19]{DCNTY19} K. De Commer, S. Neshveyev, L. Tuset and M. Yamashita, Ribbon braided module categories, quantum symmetric pairs and {K}nizhnik-{Z}amolodchikov equations, \emph{Comm. Math. Phys.} \textbf{367} (3) (2019), 717--769.
\bibitem[DCNTY20]{DCNTY20}  K. De Commer, S. Neshveyev, L. Tuset and M. Yamashita, Comparison of quantizations of symmetric spaces: cyclotomic Knizhnik-Zamolodchikov equations and Letzter-Kolb coideals, \emph{preprint} (2020), arXiv:2009.06018.
\bibitem[DCMo24]{DCMo24} K. De Commer and S.T. Moore, Representation theory of the reflection equation algebra II: Theory of Shapes, \emph{in preparation}.
\bibitem[DK94]{DK94} M. Dijkhuizen and T. Koornwinder, CQG algebras: A direct algebraic approach to compact quantum groups, \emph{Lett. Math. Phys.} \textbf{32} (1994), 315--330.
\bibitem[DN98]{DN98} M.S. Dijkhuizen and M. Noumi, A family of quantum projective spaces and related q-hypergeometric orthogonal polynomials, \emph{Trans. Amer. Math. Soc.} \textbf{350} (8) (1998), 3269--3296.
\bibitem[DM02a]{DM02a} J. Donin and A. Mudrov, Reflection equation- and FRT-type algebras, \emph{Czechoslovak J. Phys.} \textbf{52} (2002), 1201--1206.
\bibitem[DM02b]{DM02b} J. Donin and A. Mudrov, Explicit equivariant quantization on coadjoint orbits of $GL(n,\C)$, \emph{Lett. Math. Phys.}  \textbf{62} (1) (2002), 17--32.
\bibitem[DKM03]{DKM03} J. Donin, P.P. Kulish, A.I. Mudrov, On a universal solution to the reflection equation, \emph{Lett.
Math. Phys.} \textbf{63} (2003), 179--194.
\bibitem[FRT88]{FRT88} L. D. Faddeev, N. Yu. Reshetikhin and L. A. Takhtajan, Quantization of Lie groups and Lie algebras, In: \emph{Algebraic analysis} \textbf{Vol. I}, 129--139, Academic Press, Boston, MA (1988).
\bibitem[Flo20]{Flo20} M. Flor\'{e}, Multiparameter quantum Cauchy-Binet formulas, \emph{Algebr Represent. Theor.} (2020), https://doi.org/10.1007/s10468-020-09971-z.
\bibitem[GLR85]{GLR85} P. Ghez, R. Lima and J.E. Roberts, W$^*$-categories, \emph{Pacific J. Math.} \textbf{120} (1) (1985), 79--109.
\bibitem[JL92]{JL92} A. Joseph and G. Letzter, Local finiteness of the adjoint action for quantized enveloping algebras, \emph{J. Algebra} \textbf{153} (1992), 289--318.
\bibitem[JW20]{JW20}  D. Jordan and N. White, The center of the reflection equation algebra via quantum minors, \emph{J. Algebra}\textbf{542} (2020), 308--342.
\bibitem[Koe91]{Koe91} H.T. Koelink, On $*$-representations of the Hopf $*$-algebra associated with the quantum group $U_q(n)$, \emph{Comp. Math.} \textbf{77} (2) (1991), 199-231.
\bibitem[KS97]{KS97} A. Klimyk and K. Schm\"{u}dgen, Quantum groups and their representations, \emph{Texts and
Monographs in Physics}, Springer (1997).
\bibitem[Kol14]{Kol14} S. Kolb, Quantum symmetric Kac-Moody pairs, \emph{Adv. Math.} \textbf{267} (2014), 395--469. 
\bibitem[KS09]{KS09} S. Kolb and J. Stokman, Reflection equation algebras, coideal subalgebras, and their centres, \emph{Sel. Math. New Ser.} \textbf{15} (2009), 621--664.
\bibitem[KS92]{KS92} P.P. Kulish and E.K. Sklyanin, Algebraic structures related to reflection equations, \emph{J. Phys. A} \textbf{25} (1992), 5963---5976.
\bibitem[KS93]{KS93} P.P. Kulish and R. Sasaki, Covariance Properties of Reflection Equation Algebras, \emph{Progr. Theoret. Phys.} \textbf{89} (3) (1993), 741--761.
\bibitem[KSS93]{KSS93} P.P. Kulish, R. Sasaki and C. Schwiebert, Constant solutions of reflection equations and quantum groups, \emph{J. Math. Phys.} \textbf{34} (1) (1993), 286--304. 
\bibitem[Let99]{Let99} G. Letzter, Symmetric pairs for quantized enveloping algebras, \emph{J. Algebra} \textbf{220} (2) (1999), 729--767.
\bibitem[LS91]{LS91} S. Levendorskii and Y. Soibelman, Algebras of Functions on Compact Quantum Groups, Schubert Cells and Quantum Tori, \emph{Commun. Math. Phys.} \textbf{139} (1991), 141--170.
\bibitem[Maj91]{Maj91} S. Majid, Examples of braided groups and braided matrices, \emph{J. Math. Phys.} \textbf{32} (12) (1991), 3246--3253.
\bibitem[Maj94]{Maj94} S. Majid, The quantum double as quantum mechanics, \emph{J. Geom. Phys.} \textbf{13} (1994), 169--202.
\bibitem[Maj95]{Maj95} S. Majid, Foundations of Quantum Group Theory, \emph{Cambridge University Press} (1995).
\bibitem[MNW91]{MNW91} T. Masuda, Y. Nakagami and J. Watanabe, Noncommutative Differential Geometry on the Quantum Two Sphere of Podle\'{s} I:An Algebraic Viewpoint, \emph{K-theory} \textbf{5} (1991), 151--175.
\bibitem[Mo24]{Mo24} S.T. Moore, Representation theory of the reflection equation algebra III: Classification of irreducible representations, \emph{in preparation}.
\bibitem[Mud02]{Mud02} A. Mudrov, Characters of $U_q(\mathfrak{gl}(n))$-Reflection Equation Algebra, \emph{Letters in Mathematical Physics} \textbf{60} (3) (2002), 283--291.
\bibitem[Mud06]{Mud06} A Mudrov, On quantization of the Semenov-Tian-Shansky Poisson bracket on simple algebraic groups, \emph{Algebra i Analiz} \textbf{18} (5) (2006), 156--172.
\bibitem[NT13]{NT13} S. Neshveyev and L. Tuset, Compact quantum groups and their representation categories, \emph{Cours
Sp\'{e}cialis\'{e}s [Specialized Courses]} \textbf{20}, Soci\'{e}t\'{e} Math\'{e}matique de France, Paris (2013).
\bibitem[PW91]{PW91} B. Parshall and J.- P. Wang, Quantum linear groups, \emph{Mem. Amer. Math. Soc.} \textbf{439} (1991).
\bibitem[Pod87]{Pod87} P. Podle\'{s}, Quantum spheres, \emph{Lett. Math. Phys.} \textbf{14} (1987), 193--202.
\bibitem[PS95]{PS95} P.N. Pyatov and P.A. Saponov, Characteristic relations for quantum matrices, \emph{J. Phys. A} \textbf{28} (15) (1995), 4415--4421.
\bibitem[RY01]{RY01} N. Reshetikhin and M. Yakimov, Quantum invariant measures, \emph{Comm. Math. Phys.} \textbf{224} (2001), 399--426.
\bibitem[Sak67]{Sak67} S. Sakai, On type $I$ C$^*$-algebras, \emph{Proc. Amer. Math. Soc.} \textbf{18} (1967), 861--863.
\bibitem[STU90]{STU90}  Y. Shibukawa, T. Takebayashi  and K. Ueno, Construction of Gelfand-Tsetlin Basis for $U_q(\mathrm{gl}(N+1))$-modules, \emph{Publ. RIMS, Kyoto Univ.} \textbf{26} (1990), 667--679.
\bibitem[Vae01]{Vae01} S. Vaes, The unitary implementation of a locally compact quantum group action, \emph{J. Funct. Anal.} \textbf{180} (2001), 426--480.
\bibitem[Wor83]{Wor83} S.L.\ Woronowicz, Duality in the C$^*$-algebra theory, \emph{Proceedings of the International
Congress of Mathematicians, Warsaw} (1983), 1347--1356.
\bibitem[Wor87]{Wor87} S. L. Woronowicz, Compact matrix pseudogroups, \emph{Comm. Math. Phys.} \textbf{111} (1987), 613--665.
\bibitem[Wor88]{Wor88} S. L. Woronowicz, Tannaka-Krein duality for compact matrix pseudogroups. Twisted $SU(N)$ groups, \emph{Invent. Math.} \textbf{93} (1) (1988), 35--76.
\bibitem[Wor95]{Wor95} S.L. Woronowicz, C$^*$-algebras generated by unbounded elements, \emph{Rev. Math. Phys.} \textbf{7} (3) (1995), 481--521.
\end{thebibliography}
\end{document}